\def\Vu{\stackrel{\cV^u}{\longleftrightarrow} \hspace{-2.8ex} \mbox{\f /}\;\;\;}
\def\Va{\stackrel{\cV^a}{\longleftrightarrow} \hspace{-2.8ex} \mbox{\f /}\;\;\;}
\def\Vv{\stackrel{\cV^v}{\longleftrightarrow} \hspace{-2.8ex} \mbox{\f /}\;\;\;}
\def\IE{{\mathbb E}}
\def\IP{{\mathbb P}}
\def\IR{{\mathbb R}}
\def\IN{{\mathbb N}}
\def\IL{{\mathbb L}}
\def\IQ{{\mathbb Q}}
\def\IZ{{\mathbb Z}}
\def\n{\noindent}
\def\dsl{\textstyle\sum\limits}
\def\dis{\displaystyle}
\def\o{\omega}
\def\fr{\mbox{\footnotesize $\dis\frac{1}{2}$}}
\def\fd{\mbox{\footnotesize $\dis\frac{1}{2d}$}}
\def\faa{\mbox{\footnotesize $\dis\frac{a}{L^2}$}}
\def\ov{\overline}
\def\ve{\varepsilon}
\def\f{\footnotesize}
\def\r{\rightarrow}
\def\point{{\mbox{\large $.$}}}
\def\wh{\widehat}
\def\wt{\widetilde}
\def\cA{{\cal A}}
\def\cB{{\cal B}}
\def\cC{{\cal C}}
\def\cD{{\cal D}}
\def\cE{{\cal E}}
\def\cE{{\cal E}}
\def\cI{{\cal I}}
\def\cF{{\cal F}}
\def\cG{{\cal G}}
\def\cV{{\cal V}}
\newtheorem{theorem}{Theorem}[section]
\newtheorem{lemma}[theorem]{Lemma}
\newtheorem{corollary}[theorem]{Corollary}
\newtheorem{proposition}[theorem]{Proposition}
\newtheorem{remark}[theorem]{Remark}
\newtheorem{example}[theorem]{Example}
\begin{document}

\noindent
~

\bigskip
\begin{center}
{\bf ON BULK DEVIATIONS FOR THE LOCAL BEHAVIOR \\ OF RANDOM INTERLACEMENTS}
\end{center}

\begin{center}
Alain-Sol Sznitman
\end{center}

\begin{abstract}
We investigate certain large deviation asymptotics concerning random interlacements in $\IZ^d, d \ge 3$. We find the principal exponential rate of decay for the probability that the average value of some suitable non-decreasing local function of the field of occupation times, sampled at each point of a large box, exceeds its expected value. We express the exponential rate of decay in terms of a constrained minimum for the Dirichlet energy of functions on $\IR^d$ that decay at infinity. An application concerns the excess presence of random interlacements in a large box. Our findings exhibit similarities to some of the results of van~den~Berg-Bolthausen-den~Hollander in their work on moderate deviations of the volume of the Wiener sausage. An other application relates to recent work of the author on macroscopic holes in connected components of the vacant set.

\medskip
\begin{center}
{\bf R\'esum\'e}
\end{center}
Nous \'etudions certaines asymptotiques de grandes d\'eviations pour les entrelacs al\'eatoires sur $\IZ^d, d \ge 3$. Nous d\'eterminons le taux principal de d\'ecroissance exponentielle pour la probabilit\'e que la valeur moyenne d'une fonction croissante au sens large du champ des temps d'occupation, \'echantillonn\'ee en chaque point d'une grande bo\^ite, d\'epasse son esp\'erance. Nous exprimons le taux de d\'ecroissance exponentielle en termes d'un minimum sous contrainte de l'\'energie de Dirichlet de fonctions sur $\IR^d$ qui s'annulent \`a l'infini. Cela s'applique au cas d'une pr\'esence excessive des entrelacs dans une grande bo\^ite. Nos r\'esultats dans cet exemple pr\'esentent des similarit\'es avec certains de ceux de van~den~Berg-Bolthausen-den~Hollander dans leur article sur les d\'eviations mod\'er\'ees du volume de la saucisse de Wiener. Une autre application a trait aux travaux r\'ecents de l'auteur concernant les trous macroscopiques dans les composantes de l'ensemble vacant.

\end{abstract}

\vspace{2 cm} 
\n
Departement Mathematik  %\hfill June 2019
\\
ETH Z\"urich\\
CH-8092 Z\"urich\\
Switzerland

\vfill

~
\newpage
\thispagestyle{empty}
~

\newpage
\setcounter{page}{1}

\setcounter{section}{-1}
\section{Introduction}

Random interlacements have deep links with random walks and with the Gaussian free field, see \cite{Szni10}, \cite{DrewPrevRodr}, and in some respects behave as models of statistical mechanics having a continuous symmetry, such as the (massless) Gaussian free field. 
In the present work we investigate certain large deviation asymptotics related to the occupation times of the continuous-time random interlacements on $\IZ^d, d \ge 3$. In essence, we aim at finding the principal exponential rate of decay of the probability that the average value of a non-decreasing local function of the field of occupation times of random interlacements at level $u > 0$, sampled at each point of a large box of $\IZ^d$ centered at the origin, exceeds its expected value. For instance, if $\cI^u \subseteq \IZ^d$ stands for the random interlacements at level $u$, we establish a formula for the principal exponential rate of decay of the probability that the fraction of sites of $\cI^u$ in a large box centered at the origin exceeds a value $\nu$ that is bigger than $1 - e^{-u/g(0,0)}$ (with $g(\cdot,\cdot)$ the Green function of the simple random walk on $\IZ^d$), i.e.~bigger than the probability that the origin lies in $\cI^u$. As an other illustration, given any integer $R$, we establish a formula for the principal exponential rate of decay of the probability that in a large box centered at the origin, the fraction of sites $x$ that are disconnected by $\cI^u$ from the sphere $S(x,R)$ of sites at sup-distance $R$ from $x$ exceeds a value $\nu$ that is bigger than the probability $\IP[0 \stackrel{\cV^u}{\longleftrightarrow} \hspace{-2.8ex} \mbox{\f /}\;\;\;S(0,R)]$ that $0$ gets disconnected from $S(0,R)$ by $\cI^u$ ($\cV^u = \IZ^d \backslash \cI^u$ is the so-called vacant set of random interlacements). The principal exponential rates of decay that we obtain are expressed in terms of constrained minima for the Dirichlet energy of functions on $\IR^d$ that decay at infinity. The first example mentioned above exhibits similarities with some of the results obtained by van~den~Berg-Bolthausen-den~Hollander in their work \cite{BoltHollVanb01} on moderate deviations of the volume of Wiener sausage. The second example is related to the recent work of the author concerning macroscopic holes in the connected components of the vacant set of random interlacements in the strongly percolative regime, see \cite{Szni}.

\medskip
We now describe our results in more details. We consider $\IZ^d, d \ge 3$, and, given $u \ge 0$, denote by $(L^u_x)_{x \in \IZ^d}$ the field of occupation times of continuous-time random interlacements at level $u$, by $\cI^u$ the random interlacements at level $u$, and by $\cV^u = \IZ^d \backslash \cI^u$ the corresponding vacant set at level $u$ (so that $\cI^u = \{x \in \IZ^d$; $L^u_x > 0\}$ and $\cV^u = \{x \in \IZ^d$; $L^u_x = 0\}$). We denote by $\IP$ the probability measure governing these objects, and by $\IE$ the corresponding expectation. We refer to \cite{CernTeix12}, \cite{DrewRathSapo14c}, Section 1 of \cite{Szni15}, and below (\ref{1.11}) for further details and references.

\medskip
We probe the field of occupation times with the help of local functions. Each local function $F$ comes with a non-negative integer $R$ (its range), and is a function on $[0,\infty)^{B(0,R)}$ (with $B(0,R) = \{x \in \IZ^d; |x|_\infty \le R\}$ the closed ball in sup-distance with center $0$ and radius $R$). Throughout this article, we assume that $F$ satisfies (\ref{2.1}). In essence, this condition requires that $F$ is non-decreasing in each variable, $F(0) = 0$, $F$ satisfies a sub-linear growth condition (which incidentally is automatically fulfilled when $F$ is bounded), and it then follows from (\ref{2.1}) (see below (\ref{2.1})) that the map
\begin{equation}\label{0.1}
u \ge 0 \mapsto \theta(u) = \IE \big[F\big((L^u_y)_{|y|_\infty \le R}\big)\big] \ge 0, \;\mbox{is continuous}
\end{equation}
($\theta$ is finite, non-decreasing, and vanishes for $u = 0$, as a result of the previously mentioned requirements on $F$). Some examples of local functions $F$ of interest can be found in Example \ref{exam2.1} of Section 2. For instance, $F_0(\ell) = \ell$ (with $R = 0$ and $\theta(u) = u$, see (\ref{1.12})) satisfies (\ref{2.1}). More pertinent examples for our purpose are for instance
\begin{equation}\label{0.2}
\begin{array}{l}
\mbox{$F(\ell) = 1\{\ell > 0\}$ (with $R = 0$ and $\theta(u) = 1-e^{-u/g(0,0)}$ where $g(\cdot,\cdot)$ is the Green}
\\
\mbox{function of the simple random walk, see the beginning of Section 1).}
\end{array}
\end{equation}
and for $R \ge 0$,
\begin{equation}\label{0.3}
\begin{array}{l}
\mbox{$F(\ell) = 1\{$any path in $B(0,R)$ from $0$ to $S(0,R)$ meets a $y$ with $\ell_y > 0\}$}
\\
\mbox{(with $\ell \in [0,\infty)^R$ and $\theta(u) = \IP[0 \Vu S(0,R)]$ the probability that}
\\
\mbox{$\cI^u$ disconnects $0$ from $S(0,R)$).}
\end{array}
\end{equation}

\medskip
Given $u > 0$ and a local function $F$ as above (that is, satisfying (\ref{2.1})), one knows by the spatial ergodic theorem (see Theorem 2.8, p.~205 of \cite{Kren85}, and also (4.4) of \cite{LiSzni15}) that
\begin{equation}\label{0.4}
\mbox{$\IP$-a.s., $\;\mbox{\f $\dis\frac{1}{|B(0,N)|}$} \; \dsl_{x \in B(0,N)} F\big((L^u_{x+ \point})\big) \underset{N \r \infty}{\longrightarrow} \theta(u) \quad (= \IE[F((L^u_\point))])$},
\end{equation}
where $|B(0,N)|$ stands for the number of sites in $B(0,N)$.

\medskip
Actually, it is expedient to consider slightly more general sequences than $B(0,N)$, namely sequences obtained as the discrete blow-up of a model shape $D$ in $\IR^d$:
\begin{equation}\label{0.5}
D_N = (ND) \cap \IZ^d, \;N \ge 1,
\end{equation}
where we assume that
\begin{equation}\label{0.6}
\begin{array}{l}
\mbox{$D \subseteq \IR^d$ is the closure of a smooth bounded domain containing $0$,}
\\
\mbox{or of an open $|\cdot|_\infty$-ball, which contains $0$}.
\end{array}
\end{equation}
We are interested by large deviation events of the {\it excess} type:
\begin{equation}\label{0.7}
\cA_N = \big\{ \dsl_{x \in D_N} F\big((L^u_{x + \point})\big) > \nu \,|D_N|\big\}, \; N \ge 1,
\end{equation}
where $\nu \in (0,\infty)$ is chosen bigger than $\theta (u)$. We have less to say on {\it deficit} deviation events, where the inequality in (\ref{0.7}) is reversed and $\nu < \theta(u)$, see Remarks \ref{rem4.6} and \ref{rem5.5}.

\medskip
We derive asymptotic lower and upper bounds on $\frac{1}{N^{d-2}} \; \log \IP[\cA_N]$ in our main Theorems \ref{theo4.2} and \ref{theo5.1}. As an application of these theorems, we show in Corollary \ref{cor5.9}, that when $F$ satisfying (\ref{2.1}) is bounded, non-constant (this implies that $\theta$ is bounded and strictly increasing, see (\ref{2.2})), then, for $u > 0$, and $\theta(u) < \nu < \theta_\infty = \lim_{v \r \infty} \theta(v)$, one has
\begin{equation}\label{0.8}
\begin{array}{l}
\lim\limits_N \; \mbox{\f $\dis\frac{1}{N^{d-2}}$} \;\log \IP[\cA_N]  
\\[2ex]
= - \inf\Big\{ \fd \;\dis\int_{\IR^d} |\nabla \varphi |^2 \,dz; \varphi \ge 0, \,\varphi \in C^\infty_0 (\IR^d) \;\mbox{and} \; \strokedint_D \theta\big((\sqrt{u} + \varphi)^2\big) \,dz > \nu\Big\}
\\[2ex]
=  -\min\Big\{  \fd \;\dis\int_{\IR^d} |\nabla \varphi |^2 \,dz; \varphi \ge 0, \,\varphi \in D^1 (\IR^d) \;\mbox{and} \;\strokedint_D \theta\big((\sqrt{u} + \varphi)^2\big) \,dz = \nu\Big\}
\end{array}
\end{equation}
where $\strokedint_D dz$ refers to the normalized integral $\frac{1}{|D|} \int_D \dots dz$, with $|D|$ the Lebesgue measure of $D$, and $D^1(\IR^d)$ stands for the space of locally integrable functions that vanish at infinity, with finite Dirichlet integral (see Chapter 8~\S 2 in \cite{LiebLoss01}). One can actually omit the condition $\varphi \ge 0$ both in the infimum and the minimum in (\ref{0.8}) (this is expedient when perturbing around a minimizer $\varphi$). Further, when $d \ge 5$, one can replace $D^1(\IR^d)$ by the more traditional Sobolev space $H^1(\IR^d)$, see Remark \ref{rem5.10} 1). Also, when $D$ is a Euclidean ball, there is a spherically symmetric minimizer in the variational problem on the second line of (\ref{0.8}), see Remark \ref{rem5.10} 2). Moreover, it is conceivable that the boundedness assumption made on $F$ might be relaxed. Here it mainly plays a role in Theorem \ref{theo5.1} for the derivation of the asymptotic upper bound, see Remarks \ref{rem4.6} 2) and \ref{rem5.2} 2).

\medskip
Simple random walk is known to informally correspond to the ``singular limit $u \r 0$'' for random interlacements, see for instance Section 7 of \cite{Szni15}. The statement (\ref{0.8}) naturally leads to the asymptotic upper bound (see Corollary \ref{cor5.11} and (\ref{5.88})): for each $y \in \IZ^d$,
\begin{equation}\label{0.8a}
\begin{array}{l}
\limsup\limits_N \mbox{\f $\dis\frac{1}{N^{d-2}}$} \; \log P_y [\cA^0_N] \le
\\
- \min  \Big\{ \mbox{\f $\dis\frac{1}{2d}$}  \;\dis\int_{\IR^d} |\nabla \varphi |^2 \,dz; \varphi \ge 0, \varphi \in D^1(\IR^d) \; \mbox{and} \; \dis\strokedint_D \theta(\varphi^2) \, dz = \nu\Big\},
\end{array}
\end{equation}
where $P_y$ stands for the law of the simple random walk with unit jump rate starting at $y$, $\cA^0_N$ is defined as $\cA_N$ with $L^u_\point$ is replaced by the field of occupation times of the walk, and $\nu$ belongs to $(0, \theta_\infty)$. The lower bound corresponding to (\ref{0.8a}) remains open and we refer to Remark \ref{rem5.12} for more on this topic.

\medskip
The statement (\ref{0.8}) can for instance be applied to the number of sites of the interlacements $\cI^u$ present in $D_N$. Namely, with $F$ as in (\ref{0.2}), we show in Theorem \ref{theo6.1} that for $u > 0$ and $1 - e^{-u/g(0,0)} < \nu < 1$,
\begin{equation}\label{0.9}
\begin{array}{l}
\lim\limits_N \; \mbox{\f $\dis\frac{1}{N^{d-2}}$} \;\log \IP \big[| \cI^u \cap D_N| > \nu \,|D_N|\big]
\\[2ex]
= - \inf\Big\{  \fd \;\dis\int_{\IR^d} |\nabla \varphi |^2 \,dz; \varphi \ge 0, \,\varphi \in C^\infty_0 (\IR^d) \;\mbox{and}  \;\strokedint_D 1 - e^{(\sqrt{u} + \varphi)^2 / g(0,0)} dz > \nu\Big\}
\\[2ex]
= -\min\Big\{  \fd \;\dis\int_{\IR^d} |\nabla \varphi |^2 \,dz; \varphi \ge 0, \,\varphi \in D^1 (\IR^d) \;\mbox{and} \;
\strokedint_D 1 - e^{(\sqrt{u} + \varphi)^2 / g(0,0)} dz = \nu\Big\}
\end{array}
\end{equation}
(and one can drop the condition $\varphi \ge 0$ in the above expressions).

\medskip
The above statement bears some flavor of the asymptotics on the moderate deviations of the volume of the Wiener sausage, obtained in Theorem 1 of  \cite{BoltHollVanb01}, see Remark \ref{rem6.2} 1).

\begin{samepage}
\medskip
Corollary \ref{cor5.9} (i.e.~(\ref{0.8})) can also be applied to the number of sites in $D_N$, such that the ball of radius $r$ at these points is disconnected by $\cI^u$ from the concentric sphere of radius $R( > r)$. Namely, as shown in Theorem \ref{theo6.3}, when $0 \le r < R $ are integers, for $u > 0$, and $\IP [B(0,r) \Vu S(0,R)] < \nu < 1$, one has with $\theta_{r,R}(a) = \IP[B(0,r) \Va S(0,R)]$,
\begin{equation}\label{0.10}
\begin{array}{l}
\lim\limits_N \; \mbox{\f $\dis\frac{1}{N^{d-2}}$} \;\log \IP \big[| \{x \in D_N; \, B(x,r) \Vu S(x,R)\}| > \nu \,|D_N|\big]
\\[2ex]
= - \inf\Big\{  \fd \;\dis\int_{\IR^d} |\nabla \varphi |^2 \,dz;\;\varphi \ge 0, \; \varphi \in C^\infty_0 (\IR^d) \;\mbox{and}  \;\strokedint_D  \theta_{r,R} \big((\sqrt{u} + \varphi)^2\big)\,dz > \nu \Big\}
\\[2ex]
= -\min\Big\{  \fd \;\dis\int_{\IR^d} |\nabla \varphi |^2 \,dz; \;\varphi \ge 0, \;\varphi \in D^1 (\IR^d) \;\mbox{and} \;
\strokedint_D  \theta_{r,R} \big((\sqrt{u} + \varphi)^2\big)\,dz = \nu\Big\}
\end{array}
\end{equation}
(again, one can drop the condition $\varphi \ge 0$ in the above expressions).

\end{samepage}

\medskip
%As explained in Remark \ref{rem6.6}, 
The above asymptotics can be linked to the results in \cite{Szni}. There, the connected component $\cC^u_N$ of $S(0,N)$ in $\cV^u \cup S(0,N)$ was considered, together with its thickening $\wt{\cC}^u_N$ (the $\wt{L}_0(N)$-neighborhood of $\cC^u_N$, for a suitably chosen $\wt{L}_0(N) = o(N)$). 
%in \cite{Szni} 
The asymptotic behavior of the probability of a ``macroscopic hole'' of volume $\nu \,|B(0,N)|$ left by $\wt{\cC}^u_N$ in $B(0,N)$, i.e.~of the event $|B(0,N) \backslash \wt{\cC}^u_N| > \nu \,|B(0,N)|$, was investigated. The spirit of the question was similar to that of ``phase separation and the emergence of a macroscopic Wulff shape'' for Bernoulli percolation or for the Ising model, see for instance \cite{Cerf00}, \cite{Bodi99} (however, with capacity replacing perimeter in the exponential costs). Specifically it was shown in Theorem 3.1 of \cite{Szni}  that when the vacant set $\cV^u$ is in the strongly percolating regime (i.e.~when $0 < u< \ov{u}$, which informally corresponds to a regime of local presence and uniqueness for the infinite cluster of $\cV^u$), one has
\begin{equation}\label{0.11}
\limsup\limits_N \; \mbox{\f $\dis\frac{1}{N^{d-2}}$} \;\log \IP\big[|B(0,N) \backslash \wt{\cC}^u_N| > \nu\,|B(0,N)|\big] \le - \mbox{\f $\dis\frac{1}{d}$} \; (\sqrt{\ov{u}} - \sqrt{u})^2 \,{\rm cap}_{\IR^d} (B_{\wt{\nu}})
\end{equation}
where $\wt{\nu} = 2^d \nu$, $B_{\wt{\nu}}$ is a Euclidean ball of volume $\wt{\nu}$, and ${\rm cap}_{\IR^d} (B_{\wt{\nu}})$ its Brownian capacity (see for instance p.~58 of \cite{PortSton78}, for its definition). In addition, when $\wt{\nu} < \o_d$ ($=$ the volume of a Euclidean ball or radius $1$), it was shown in Theorem 3.2 of \cite{Szni} that:
\begin{equation}\label{0.12}
\liminf\limits_N \; \mbox{\f $\dis\frac{1}{N^{d-2}}$} \; \log \IP\big[ |B(0,N) \backslash \wt{\cC}^u_N| > \nu\,|B(0,N)|\big] \ge - \frac{1}{d} \;(\sqrt{u}_{**} - \sqrt{u})^2 \,{\rm cap}_{\IR^d} (B_{\wt{\nu}}),
\end{equation}
where $u_{**}$ ($\ge u_* \ge \ov{u}$) is the threshold for the strongly non-percolative behavior of $\cV^u$ (informally $u > u_{**}$ corresponds to the regime where the probability that the cluster of the origin in $\cV^u$ contains a point at distance $L$ decays rapidly with $L$).  It is plausible, but presently open, that $\ov{u} = u_* = u_{**}$ (the corresponding equalities for level-set percolation of the Gaussian free field have recently been shown in \cite{DumGosRodrSev}). In this case the right members in (\ref{0.11}) and (\ref{0.12}) coincide with $ - \frac{1}{d}\;(\sqrt{u}_{*} - \sqrt{u})^2 \,{\rm cap}_{\IR^d} (B_{\wt{\nu}})$. Here, we show in Proposition \ref{prop6.5} that for $D = [-1,1]^d$,
\begin{equation}\label{0.13}
\begin{array}{l}
\mbox{when $0 < u < u_*$, and $\wt{\nu} = |D| \,\nu < \o_d$, if one successively let $R$ and $r$ tend to}
\\
\mbox{infinity, the right member of (\ref{0.10}) tends to $-\frac{1}{d} \; (\sqrt{u}_* - \sqrt{u})^2 \,{\rm cap}_{\IR^d} (B_{\wt{\nu}})$}.
\end{array}
\end{equation}

\n
Thus, the plausible but presently unproven equalities $\ov{u} = u_* = u_{**}$ have a remarkable consequence. When $u<u_*$ and $|D| \,\nu < \o_d$, if one looks at the asymptotic rate of exponential decay of the probability that a proportion bigger than $\nu$ of sites in $B(0,N)$ have the $|\cdot |_\infty$-ball of radius $r$ around them disconnected by $\cI^u$ within sup-distance $R$, the following happens: 

\n
in the limit where $R$ and $r$ successively tend to infinity, this rate recovers the asymptotic rate of exponential decay found in \cite{Szni} for the probability that the thickened component $\wt{\cC}^u_N$ leaves a ``hole'' $B(0,N) \backslash \wt{\cC}^u_N$, which occupies a proportion bigger than $\nu$ of the sites of $B(0,N)$. (This hole is then nearly spherical, see Remark 3.1 3) of \cite{Szni}.)

\medskip
The statement (\ref{0.13}) brings some heuristic insight into the role of the thickening of $\cC^u_N$, and one can naturally wonder about what happens in the absence of thickening, i.e.~when one considers $B(0,N) \backslash \cC^u_N$ in place of $B(0,N) \backslash \wt{\cC}^u_N$. Is it the case that for $0 < u < u_*$ and $\IP[0 \Vu \infty] < \nu < 1$, one has (with $D = [-1,1]^d$ and $\theta_0(a) = \IP[ 0 \Va \infty]$)
\begin{equation}\label{0.14}
\begin{array}{l}
\lim\limits_N \; \mbox{\f $\dis\frac{1}{N^{d-2}}$} \;\log \IP \big[ |B(0,N) \backslash \cC^u_N| > \nu\, |B(0,N)| \big] 
\\
= - \inf\Big\{  \fd \;\dis\int_{\IR^d} |\nabla \varphi |^2 \,dz;\;\varphi \ge 0, \; \varphi \in C^\infty_0 (\IR^d), \;\strokedint_D  \theta_0 \big((\sqrt{u} + \varphi)^2\big)\,dz > \nu \Big\} ?
\end{array}
\end{equation}
We refer to Remark \ref{rem6.6} 2) for more on this topic. We also refer to Remark \ref{rem6.6} 1) where (\ref{0.13}) is used to produce examples for which the right member of (\ref{0.8}) is not a convex function of $\nu$.

\medskip
Also, it is perhaps useful to describe the link between the results of the present article and that of \cite{LiSzni15}. In \cite{LiSzni15} a large deviation principle was derived for the occupation time profile  $\frac{1}{N^d} \;\sum_{x \in D_N} L_{x,u} \;\delta_{\frac{x}{N}}$(viewed as a positive measure), when $D$ is a closed box in $\IR^d$. Recast in the present set-up, via approximation, this result amounts, in essence, to the derivation of large deviation principles, as $N$ goes to infinity, for the random vectors made of the averages of $L_{x,u}$ over boxes $D_N^1, \dots, D_N^m$ defined as in (\ref{0.5}) (with $D^1 \dots, D^m$ arbitrary closed boxes in $\IR^d$). In the present framework this only involves the {\it linear} local function $F_0(\ell) = \ell$ (with $R = 0$, see above (\ref{0.2})). On the other hand, here, we consider excess deviations for the average over $D_N$ corresponding to a {\it non-linear}, {\it finite-range}, non-decreasing local function $F$.

\medskip
We now provide a brief overview of the proofs. An important feature underpinning the large deviation asymptotics analyzed in this work is a loose decomposition of random interlacements, when looked at in a box $B$ of large size $L$, into an {\it undertow} that heuristically reflects a local value for the box of the level of the interlacement, and a {\it wavelet} part that captures local information, so that between well-separated such boxes the mutual dependence is ``channeled'' via the ``undertow'', but the ``wavelet parts'' are independent. This type of decomposition is more delicate for random interlacements than in the case of the Gaussian free field, see for instance Section 5 of \cite{Szni15}, or Section 5 of \cite{Szni}. Incidentally, it would be of interest to see if such decompositions could be supplied in further models  with continuous symmetry. Here, we access the above mentioned decomposition in Section 2, with the coupling based on the soft local time technique of \cite{PopoTeix15}, and more specifically for our purpose in the form developed in \cite{ComeGallPopoVach13}. Informally, the wavelet part in a box $B$ corresponds to the collection of excursions in the trajectories of the full interlacements from $B$ to the boundary of a large concentric box, and the undertow to the number of such excursions in the interlacements up to level $u$, scaled by the capacity of $B$. Actually, we derive in Proposition \ref{prop2.2} of Section 2 super-polynomial decay estimates that show that in a box $B$ of large size $L$ various quantities, in particular averages involving the local functions $F$, concentrate near a value dictated by the undertow in the box $B$. This fact brings into play the notion of {\it good-boxes}, which also provides flexibility in tracking the undertow, see Lemma \ref{lem2.4}. Combined with the independence of the ``wavelet parts'', these super-polynomial decay estimates lead to super-exponential estimates in Section 3. In essence, once we choose the size $L$ of boxes $B$ so that there are approximately $N^{d-2}/\log N$ such boxes in $B(0,N)$, see (\ref{4.8}), except on an event $\cB$, see (\ref{3.10}), of negligible probability for our purpose, most boxes $B$ meeting $B(0,N)$ are good. In good boxes we have flexibility on how to track the undertow, and quantities involving the wavelet parts concentrate near a value dictated by the undertow. These features are used both for the lower and upper bounds of the large deviation asymptotics in Sections 4 and 5 respectively.

\medskip
For the lower bound we use the change of probability method, and replace the interlacements (governed by $\IP$), by {\it tilted interlacements} (governed by $\wt{\IP}_N$), as introduced in \cite{LiSzni14}. In essence, the tilted interlacements correspond to random interlacements with a spatially slowly varying parameter $(\sqrt{u} + \varphi (\frac{x}{N}))^2$, where $\varphi$ should be thought as coming from the variational formulas in (\ref{0.8}). Actually, we do not directly apply the relative entropy inequality to bound $\IP[\cA_N]$ from below, but rather replace the event $\cA_N$ with the help of the super-exponential bounds of Section 3, by an event $\cA^1_N$, see (\ref{4.22}), which solely involve constraints on the average value of $L^u_\point$ over the various boxes $B$ inside $B(0,N)$, see (\ref{4.21}). Importantly, this change makes it tractable to check that $\cA^1_N$ is typical under $\wt{\IP}_N$, see Lemma \ref{lem4.5}.

\medskip
For the upper bound we use a coarse graining procedure. With the super-exponential estimates of Section 3, we first replace $\cA_N$ by a not too large collection of events that mostly cover $\cA_N$. Each such event imposes a specific constraint on the averages $\langle \ov{e}_B, L^u\rangle$ of the occupation time $L^u_\point$ with respect to the normalized equilibrium measure of the box $B$, simultaneously over a collection of well-separated boxes $B$ of size $L$ intersecting $B(0,N)$ and at mutual distance of order $KL$, see (\ref{5.12}) and Proposition \ref{prop5.3}. One then derives bounds on the probability of all such events. For this purpose, a central step is to introduce for each such event a well-adapted linear combination of the $\langle \ov{e}_B, L^u\rangle$, and bound its exponential moment. This is performed in Proposition \ref{prop5.6} and crucially relies on the bounds developed in Proposition \ref{prop5.4}. Dealing with an ``excess event'' plays an important role, see Remark \ref{rem5.5}. The desired bounds on the probability of the events in the coarse graining now come via an exponential Chebyshev inequality and appear in Proposition \ref{prop5.6}. They involve the discrete Dirichlet forms of certain discrete superharmonic functions. Then, one has to relate these exponential bounds involving the discrete Dirichlet forms and a quite porous constraint, see (\ref{5.48}), with the continuous variational problems, similar to (\ref{0.8}), where the constraint involving $\theta$ is present in full. This is performed in Proposition \ref{prop5.7}. The combination of the lower bound in Theorem \ref{theo4.2} and the upper bound in Theorem \ref{theo5.1} acquires its full strength in Corollary \ref{cor5.9}, see also (\ref{0.8}). Several properties of the variational problem in (\ref{0.8}) are discussed in Remark \ref{rem5.10}. The upper bounds of Theorem \ref{theo5.1} also lead to upper bounds in the case of the occupation of the simple random walk (that formally corresponds to the level $u=0$) in Corollary \ref{cor5.11}, see also (\ref{0.8a}).

\medskip
We will now describe the organization of this article. Section 1 recalls various facts about the simple random walk, random interlacements, and tilted interlacements. In Section 2 we specify the assumptions on the local functions, and recall some facts about the coupling attached to the soft local-time technique. We introduce the notion of {\it good boxes} and prove the important super-polynomial estimates in Proposition \ref{prop2.2}. In addition, Lemma \ref{lem2.4} provides some useful features of good boxes. In Section 3 we obtain super-exponential estimates in Proposition \ref{prop3.1} that show that up to a negligible probability for our purpose, most boxes that we have to consider, are good. Section 4 is devoted to the proof of Theorem \ref{theo4.2} that contains our main lower bound on the exponential decay of $\IP[\cA_N] $. With the help of the super-exponential estimates of Section 3, we first replace $\cA_N$ by $\cA^1_N$ in Proposition \ref{prop4.3}. Then, the exponential lower bound on $\IP[\cA_N]$ is derived with the help of the change of probability method and tilted interlacements in Proposition \ref{prop4.4}. In Section 5 we derive the main upper bound on the exponential decay of $\IP[\cA_N] $ in Theorem \ref{theo5.1}. The coarse graining procedure relies on the super-exponential estimates of Section 3 and appears in Proposition \ref{prop5.3}. The exponential Chebyshev bound on the probability of events entering the coarse graining is provided in Proposition \ref{prop5.6}, and heavily relies on the controls derived in Proposition \ref{prop5.4}. The last main step linking the upper bound on the exponential rate of decay obtained in Proposition \ref{prop5.6}  to a constrained variational problem in the continuum appears in Proposition \ref{prop5.7}. Then, Corollary \ref{cor5.9} combines Theorems \ref{theo4.2} and \ref{theo5.1}, and proves (\ref{0.8}). Various properties of the constrained variational problem and the corresponding minimizers appear in Remark \ref{rem5.10}. An application to the simple random walk is given in Corollary \ref{cor5.11}. Section 6 contains applications to the interlacement sausage in Theorem \ref{theo6.1}, and to finite pockets in the vacant set in Theorem \ref{theo6.3}. Proposition \ref{prop6.5} provides a link to the results concerning ``macroscopic holes'' from \cite{Szni}, further explained in Remark \ref{rem6.6}.

\medskip
Finally, let us state our convention about constants. Throughout the article we denote by $c,c',\wt{c}$ positive constants changing from place to place that simply depend on the dimension $d$. Numbered constants $c_0,c_1,c_2$ refer to the value corresponding to their first appearance in the the text. Dependence on additional parameters appears in the notation.

\bigskip
\section{Notation and some useful facts}
\setcounter{equation}{0}

In this section we introduce some notation and recall various results concerning continuous time simple random walks, random interlacements, and tilted interlacements on $\IZ^d$, $d\ge3$.

\medskip
We begin with some notation. For $s,t$ real numbers, we write $s \wedge t$ and $s \vee t$ for the minimum and the maximum of $s$ and $t$, and denote by $[s]$ the integer part of $s$, when $s$ is non-negative. We write $| \cdot |_2$ and $| \cdot |_\infty$ for the Euclidean and the supremum norms on $\IR^d$. Given $x,y$ in $\IZ^d$, we write $x \sim y$ to state that $x$ and $y$ are neighbors, i.e.~$|y - x|_2 = 1$. For $x$ in $\IZ^d$ and $r \ge 0$, we let $B(x,r) = \{y \in \IZ^d$; $|y - x|_\infty \le r\}$ stand for the closed ball of radius $r$ and center $x$ for the sup-norm. When $z \in \IR^d$ and $r \ge 0$, we write $B_{\IR^d}(z,r)$ for the corresponding closed-ball in $\IR^d$ with center $z$ and radius $r$ in the sup-norm.  Given $A, A'$ subsets of $\IZ^d$, we denote by $d(A, A') = \inf\{|x - x'|_\infty$; $x \in A, x' \in A'\}$ the mutual distance between $A$ and $A'$. When $A = \{x\}$, we write $d(x,A')$ in place of $d(\{x\}, A')$ for simplicity. We say that a subset $B$ of $\IZ^d$ is a box when it is a translate of some set $\IZ^d \cap [0,L)^d$, with $L \ge 1$. We often write $[0,L)^d$ in place of $\IZ^d \cap [0,L)^d$, when this causes no confusion. When $A$ is a subset of $\IZ^d$, we let $|A|$ stand for the cardinality of $A$ and write $A \subset \subset \IZ^d$ to state that $A$ is a finite subset of $\IZ^d$. We denote by $\partial A = \{y \in \IZ^d \backslash A$; $\exists x \in A$, $|y - x| = 1\}$ and $\partial_i A = \{x \in A$; $\exists y \in \IZ^d \backslash A$; $|y - x| = 1\}$ the boundary and the internal boundary of $A$. Given $h, h'$ functions on $\IZ^d$, we write $h_+ = \max \{h,0\}$ and $h_- = \max\{- h,0\}$ for the positive and negative part of $h$ as well as $\langle h, h'\rangle = \sum_{x \in \IZ^d} h(x) \,h'(x)$ when the sum is absolutely convergent. We also use the notation $\langle \rho, f \rangle$ for the integral of a function $f$ (on an arbitrary space) with respect to a measure (on the same space) when this quantity is meaningful.

\medskip
Given a non-empty $U \subseteq \IZ^d$, we write $\Gamma(U)$ for the space of right-continuous, piecewise constant functions from $[0,\infty)$ to $U \cup \partial U$, with finitely many jumps on any finite time interval that remain constant after their first visit to $\partial U$. The space $\Gamma(U)$ is endowed with the canonical coordinate process denoted by $(X_t)_{t \ge 0}$. If $A \subseteq \IZ^d$, we will refer to $H_A = \inf\{t \ge 0; X_t \in A\} (\le \infty)$ as the entrance time of $X$ in $A$, and $T_A = \inf\{t \ge 0$; $X_t \notin A\}( \le \infty)$, as the exit time of $X$ from $A$. For $U \subset \subset \IZ^d$, the space $\Gamma (U)$ will be convenient to record certain excursions in the interlacement trajectories, see (\ref{1.21}). When $U = \IZ^d$, we view the law $P_x$ of the simple random walk with unit jump rate starting at $x$, as a measure on $\Gamma(\IZ^d)$, and denote by $E_x$ the corresponding expectation. We write $g_U(x,y)$, $x,y \in \IZ^d$, for the Green function of the simple random walk killed outside $U \subset \IZ^d$ (it is symmetric and vanishes if $x$ or $y$ does not belong to $U$). When $U = \IZ^d$ we drop the subscript and simply write $g(x,y)$ in place of $g_{\IZ^d}(x,y)$. For $A \subset \subset  \IZ^d$, we denote by $e_A$ the equilibrium measure of $A$ (i.e.~$e_A(x) = 0$ if $x \notin A$ and $e_A(x) = P_x \,[$after its first jump $X_\point$ never enters $A]$ if $x \in A$). We let $h_A$ stand for the equilibrium potential of $A$, so that for all $x$ in $\IZ^d$, $h_A(x) = \sum_{y \in \IZ^d} g(x,y) \,e_A(y) = P_x[H_A < \infty]$. We write ${\rm cap}(A)$ for the capacity of $A$, i.e.~the total mass of $e_A$. When in addition $A$ is not empty, we write $\ov{e}_A$ for the normalized equilibrium measure of $A$ and $m_A$ for the normalized counting measure on $A$.

\medskip
We denote by $L$ the generator of the simple random walk on $\IZ^d$, namely
\begin{equation}\label{1.1}
Lh(x) = \mbox{\f $\dis\frac{1}{2d}$} \; \dsl_{y \sim x} \; h(y) - h(x), \; \mbox{for $h$: $\IZ^d \rightarrow \IR$ and $x \in \IZ^d$}.
\end{equation}
The Dirichlet form is denoted by
\begin{equation}\label{1.2}
\cE(h,h) = \fr \;\dsl_{y \sim x} \; \fd \;\big(h(y) - h(x)\big)^2 ( \le \infty), \; \mbox{for $h$: $\IZ^d \r \IR$}.
\end{equation}

\n
When $h_1,h_2$ are functions with finite Dirichlet form, $\cE(h_1,h_2)$ is defined by polarization. In addition, the resolvent operator is defined as
\begin{equation}\label{1.3}
G h(x) = E_x\Big[\dis\int^\infty_0 h(X_s)\,ds\Big] = \dsl_{y \in \IZ^d} g(x,y)\,h(y),\; \mbox{for $x \in \IZ^d$},
\end{equation}
whenever the function $h$ is such that the last sum is absolutely convergent.

\medskip
The exponential bounds in the lemma below, for the time spent by the simple random walk in a box $B$, and for the number of sites in $B$ visited by the simple random walk, will be used in the proof of the super-polynomial decay in Proposition \ref{prop2.2} of Section 2, and in the implementation of the change of probability method to derive the main asymptotic lower bound, in Lemma \ref{lem4.5} of Section 4. We recall the convention concerning constants stated at the end of the Introduction.

\begin{lemma}\label{lem1.1}
There is a constant $c_0$ such that for any $L \ge 1$ integer and $B = z + [0,L)^d$, $z \in \IZ^d$, one has
\begin{align}
&E_x \Big[\exp\Big\{ \mbox{\f $\dis\frac{c_0}{L^2}$} \;\dis\int^\infty_0 1\{X_s \in B\}\,ds\Big\}\Big] \le 2, \; \mbox{for all $x \in \IZ^d$, and} \label{1.4}
\\[1ex]
&E_x \Big[\exp\Big\{ \mbox{\f $\dis\frac{c_0}{L^2}$} \; | {\rm range}(X) \cap B|\Big\}\Big] \le 2, \; \mbox{for all $x\in \IZ^d$} \label{1.5}
\end{align}
(with ${\rm range}(X)$ the set of points visited by the trajectory $X$).
\end{lemma}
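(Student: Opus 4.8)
The plan is to prove both exponential bounds via a single, standard argument: bound the relevant additive functional by a comparison with the Green function, which is uniformly controlled on a box of side length $L$, and then use the elementary inequality $E[e^{\lambda Y}] \le (1-\lambda M)^{-1}$ valid whenever $Y \ge 0$ and $E[(Y/M)^k] \le k!$ for all $k$ (i.e.\ a ``sub-exponential with scale $M$'' bound on all moments), choosing $\lambda = c_0/L^2$ with $c_0$ small. So the heart of the matter is to establish the right moment estimates with the scale $M$ proportional to $L^2$.

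For \eqref{1.4}, set $Y = \int_0^\infty 1\{X_s \in B\}\,ds$. Under $P_x$ one has $E_x[Y] = \sum_{y \in B} g(x,y) \le \sum_{y \in B} g(0,y-x)$, and since $g(0,w) \le c\,|w|_\infty^{-(d-2)}$ for $w \ne 0$ (and is bounded at $0$), summing over a box of side $L$ gives $E_x[Y] \le c L^2$ uniformly in $x$ — this is the key dimensional input ($d \ge 3$ makes $\sum_{|w|_\infty \le L} |w|^{-(d-2)}$ of order $L^2$). For the higher moments I would use the Markov property at the successive returns to $B$: by the strong Markov property, $E_x[Y^k] \le k!\,\big(\sup_{y} E_y[Y\,;\,H_B<\infty]\big)\cdots$, more precisely $E_x[Y^k] \le k!\,(\sup_{y \in \IZ^d} E_y[Y])^k$, which follows from the representation $Y^k = k!\int_{0 \le s_1 \le \cdots \le s_k} \prod 1\{X_{s_i}\in B\}\,ds$ and iterating the Markov property at the times $s_1 < \cdots < s_k$, each factor contributing at most $\sup_y E_y[Y] \le c L^2$. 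Hence $E_x[(Y/(cL^2))^k] \le k!$, and then $E_x[\exp\{\lambda Y\}] \le \sum_k \lambda^k (cL^2)^k = (1 - \lambda c L^2)^{-1} \le 2$ as soon as $\lambda c L^2 \le \tfrac12$, i.e.\ for $c_0 := 1/(2c)$.

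For \eqref{1.5}, let $Z = |\mathrm{range}(X) \cap B|$. Here the clean bound comes from comparing with the time spent: writing $Z = \sum_{y \in B} 1\{H_y < \infty\}$, we have $E_x[Z] = \sum_{y \in B} P_x[H_y < \infty] = \sum_{y \in B} g(x,y)/g(y,y) \le g(0,0)^{-1} E_x[Y]$, so $E_x[Z] \le c L^2$ again. For the moments, the same ordered-integral / strong-Markov iteration works: bounding the $k$-th factorial moment of $Z$ by $k!\,(\sup_{y}E_y[Z])^k$ via the strong Markov property applied at the successive first-visit times to the relevant points (using that from any starting point the expected number of further distinct points of $B$ ever visited is again $\le c L^2$). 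Then exactly as before $E_x[\exp\{\lambda Z\}] \le (1-\lambda c L^2)^{-1} \le 2$ for $\lambda = c_0/L^2$ with $c_0$ small, and one simply takes $c_0$ to be the minimum of the two constants obtained so that a single $c_0$ works for both inequalities.

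The main obstacle — really the only non-bookkeeping point — is getting the factorial growth of the moments cleanly, i.e.\ justifying $E_x[Y^k] \le k!\,(\sup_y E_y[Y])^k$ (and its analogue for $Z$) rigorously from the strong Markov property, since the naive recursion only controls the conditional expectation of the remaining occupation given the current position. One handles this by the time-ordered representation above: $Y^k/k! = \int 1\{X_{s_1}\in B,\dots,X_{s_k}\in B\}\,ds_1\cdots ds_k$ over $s_1<\cdots<s_k$, and conditioning successively on $\mathcal F_{s_{k-1}}, \dots, \mathcal F_{s_1}$ bounds each stage by $\sup_y E_y[Y]$. The rest — the Green-function sum $\sum_{|w|_\infty\le L} g(0,w) \le cL^2$ for $d\ge3$, and the passage from the moment bound to the exponential moment — is routine.
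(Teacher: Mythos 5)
Your argument is correct, and for \eqref{1.4} it is essentially the paper's proof: the time-ordered representation with iterated strong Markov property is exactly Kac's moment formula, combined with the uniform estimate $\sup_x \sum_{y\in B} g(x,y)\le c\,L^2$, and then the geometric series gives the exponential bound for $c_0$ small. For \eqref{1.5} you take a genuinely different route. The paper does not redo any moment computation for the range: it passes to the discrete skeleton of the walk, notes that $|{\rm range}(X)\cap B|$ is at most the number of skeleton steps spent in $B$, and uses Jensen's inequality (conditionally on the skeleton) on the i.i.d.\ exponential holding times to dominate the left member of \eqref{1.5} by the left member of \eqref{1.4}, so \eqref{1.5} is a one-line corollary of \eqref{1.4}. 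Your direct approach, bounding $E_x[Z]=\sum_{y\in B} P_x[H_y<\infty]=g(0,0)^{-1}\sum_{y\in B}g(x,y)\le cL^2$ and iterating the strong Markov property at successive first-visit times, also works; the only bookkeeping point to make explicit is that the iteration naturally controls ordered sums, e.g.\ $Z^k\le k!\sum_{(y_1,\dots,y_k)\in B^k} 1\{H_{y_1}\le\cdots\le H_{y_k}<\infty\}$, and conditioning successively at $H_{y_{k-1}},\dots,H_{y_1}$ then yields $E_x[Z^k]\le k!\,(cL^2)^k$ (equivalently, work with factorial moments and the generating function $E[(1+t)^Z]$), after which your series bound goes through. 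So both inequalities are established; the paper's skeleton-plus-Jensen reduction simply buys economy, while your argument is self-contained and does not use the continuous-time structure for \eqref{1.5}.
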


\begin{proof}
The proof of (\ref{1.4}) is classical: it follows from Kac's moment formula, see \cite{MarcRose06}, p.~74, 116, and the estimate $\sup_{y \in \IZ^d} (G1_B)(y) \le c \,L^2$. As for (\ref{1.5}), it follows by considering the discrete skeleton of the walk, and using Jensen's inequality when integrating out the exponential variables describing the time spent between consecutive jumps of the walk, to dominate the left member of (\ref{1.5}) by the left member of (\ref{1.4}).
\end{proof}

In conjunction with tilted interlacements (see \cite{LiSzni14} and Lemma \ref{lem1.2} below), we will consider in Section 4 certain random walks with drifts that we now describe. We are given
\begin{equation}\label{1.6}
\mbox{$\wt{f}: \IZ^d \rightarrow (0,\infty)$ such that $\wt{f} = 1$ except on a finite set},
\end{equation}
(informally $\wt{f}\,^{\!2}$ can be viewed as the density with respect to the counting measure on $\IZ^d$ of a reversibility measure for the random walk with drift corresponding to the generator $\wt{L}$ in (\ref{1.10}) below), and we introduce the finitely supported function on $\IZ^d$:
\begin{equation}\label{1.7}
\wt{V}(x) = - \mbox{\f $\dis\frac{L \wt{f}}{\wt{f}}$}\;(x), \; \mbox{for $x \in \IZ^d$},
\end{equation}
as well as the measure $\wt{\lambda}$ on $\IZ^d$ such that
\begin{equation}\label{1.8}
\wt{\lambda}(x) = \wt{f}^2(x), \;\mbox{for $x \in \IZ^d$}.
\end{equation}

\n
For $h_1,h_2$ functions on $\IZ^d$ we write $\langle h_1,h_2 \rangle_{\wt{\lambda}}$ for $\sum_{x \in \IZ^d} \,h_1(x)\,h_2(x)\,\wt{\lambda}(x)$, when this sum is absolutely convergent (note that when $\wt{f}$ is identically equal to $1$ the quantity $\langle h_1,h_2\rangle_{\wt{\lambda}}$ coincides with $\langle h_1,h_2\rangle$). 

\medskip
The walks under consideration correspond to the measures on $\Gamma(\IZ^d)$:
\begin{equation}\label{1.9}
\wt{P}_x = \mbox{\f $\dis\frac{1}{\wt{f}(x)}$} \; \exp\Big\{ \dis\int^\infty_0 \wt{V}(X_s) \,ds\Big\} \,P_x, \; \mbox{for $x \in \IZ^d$}.
\end{equation}

\n
By Corollary 1.3 of \cite{LiSzni14}, one knows that $\wt{P}_x$ is a probability measure for each $x$ in $\IZ^d$, and that under $\wt{P}_x$, $(X_t)_{t \ge 0}$ is a reversible Markov chain on $\IZ^d$ with reversible measure $\wt{\lambda}$ and its semi-group on $L^2(\wt{\lambda})$ has the bounded generator (with $h \in L^2(\wt{\lambda})$):
\begin{equation}\label{1.10}
\wt{L} \,h = \mbox{\f $\dis\frac{1}{\wt{f}}$} \; L\,(\wt{f}\,h) + \wt{V} \,h, \;\mbox{so that} \; \wt{L} \,h(x) = \fd \;\dsl_{y \sim x} \;\mbox{\f $\dis\frac{\wt{f}(y)}{\wt{f}(x)}$} \;\big(h(y) - h(x)\big), \; \mbox{for $x \in \IZ^d$}.
\end{equation}

\n
We let $\wt{E}_x$ stand for the $\wt{P}_x$-expectation and also consider the resolvent operator:
\begin{equation}\label{1.11}
\wt{G} \,h(x) = \wt{E}_x\Big[\dis\int^\infty_0 h(X_s)\,ds\Big], \; x \in \IZ^d,
\end{equation}

\n
for $h$: $\IZ^d \rightarrow \IR$ such that the expectation corresponding to $|h|$ is finite. When $\wt{f} = 1$ identically, $\wt{P}_x$ coincides with $P_x$ and $\wt{G}$ coincides with $G$ in (\ref{1.3}). The resolvent operator $\wt{G}$ will enter the formula for the Laplace transform of the occupation times of tilted interlacements in Lemma \ref{lem1.2} below.

\medskip
We now turn to continuous-time random interlacements on $\IZ^d$, $d \ge 3$. We introduce some notation, recall some properties, but mostly refer to Section 1 of \cite{Szni17} for further details. As an aside, random interlacements have deep links with the Gaussian free field, and we refer to \cite{DrewPrevRodr} where the role of random interlacements as efficient highways in the sub-level sets of the Gaussian free field is highlighted. We let $\Omega$ stand for the sample space on which random interlacements are defined, $\IP$ stand for the probability governing random interlacements, and $\IE$ for the corresponding expectation (see (1.36) of \cite{Szni17} for details).
The occupation time $L^u_x$ at site $x$ and level $u$ records the total time spent at $x$ by the trajectories with label at most $u$ in the cloud $\omega$ of interlacement trajectories. When $V$ is a finitely supported function on $\IZ^d$, we write $\langle L^u, V\rangle$ for $\sum_{x \in \IZ^d} L^u_x \,V(x)$, and likewise when $A \subset \subset \IZ^d$ is not empty we write $\langle m_A, L^u \rangle = \frac{1}{|A|} \, \sum_{x \in A} L^u_x$. One knows that
\begin{equation}\label{1.12}
\IE [L^u_x] = u, \; \mbox{for all $x \in \IZ^d$ and $u \ge 0$}.
\end{equation}

\n
In addition, if $V$: $\IZ^d \rightarrow \IR$ is finitely supported and such that $\sum_{n \ge 0} (G|V|)^n 1$ is a finite function (where $|V|$ is to be understood as the multiplication operator by the function $|V|$), one knows that (see Theorem 2.1 of \cite{Szni12d})
\begin{equation}\label{1.13}
\IE[\exp\{ \langle L^u, V \rangle \}] = \exp\big\{u \big\langle V, \dsl_{n \ge 0} (GV)^n 1\big\rangle\big\}.
\end{equation}

\n
Actually, a more general identity than (\ref{1.13}) is known under the assumption of ``finiteness of the gauge'', see (2.2) and (1.10) of \cite{LiSzni15}. It has close links to the Dirichlet form, see Corollary 4.2 of \cite{LiSzni15}. We also refer to Lemma 3.1 of \cite{ChiaNitz19} for perturbation identities for the expression under the exponential in the right member of (\ref{1.13}). Formula (\ref{1.13}) will suffice for our purpose here, and we will use it recurrently throughout this article.

\medskip
We now briefly recall some facts about {\it tilted interlacements} from \cite{LiSzni14}. We will mainly use tilted interlacements in Section 4, when implementing the change of probability method to derive our main asymptotic lower bound on $\IP[\cA_N]$ (with $\cA_N$ as in (\ref{0.7})). With $\wt{f}$ and $\wt{V}$ as in (\ref{1.6}), one knows from the proof of (2.7) of \cite{LiSzni14} that 
\begin{equation}\label{1.14}
\wt{\IP} = e^{\langle L^u, \wt{V}\rangle} \IP \;\mbox{is a probability measure}.
\end{equation}
We recall the notation from (\ref{1.8}), (\ref{1.11}). In Lemma \ref{lem4.5} of Section 4 we will need the following

\begin{lemma}\label{lem1.2}
If $V$ is finitely supported on $\IZ^d$ and $\sup_{x \in \IZ^d} (\wt{G} |V|) (x) < 1$, then for $u \ge 0$
\begin{equation}\label{1.15}
\wt{\IE} [e^{\langle L^u, V\rangle}] = \exp\{u \langle V, (I - \wt{G} V)^{-1} 1 \rangle_{\wt{\lambda}}\}
\end{equation}
($\wt{\IE}$ stands for the $\wt{\IP}$-expectation).
\end{lemma}

\begin{proof}
Consider $\wt{U} \supseteq \{\wt{f} \not= 1\} \cup \{V \not= 0\}$ a large enough finite subset of $\IZ^d$ such that $d (\partial_i \,\wt{U}, \{\wt{f} \not= 1\}) \ge 2$. As in (2.13) of \cite{LiSzni14}, one finds that
\begin{equation}\label{1.16}
\begin{split}
\wt{\IE} [e^{\langle L^u, V\rangle}] & = \exp \Big\{ u \dsl_{x \in \wt{U}} e_{\wt{U}}(x) \, \big(\wt{E}_x\big[e ^{\int^\infty_0 V(X_s)ds}\big] - 1\big)\Big\}
\\
& =\exp \Big\{ u \dsl_{x \in \wt{U}} e_{\wt{U}}(x) \, \dsl_{k \ge 1} [(\wt{G} V)^k 1](x)\Big\}.
\end{split}
\end{equation}

\n
Now, by a similar identity as in (2.23) of \cite{LiSzni14}, we see that
\begin{equation}\label{1.17}
\dsl_{x \in \wt{U}} e_{\wt{U}}(x) \,\dsl_{k \ge 1} [(\wt{G} V)^k 1](x) = \big\langle V,\dsl_{\ell \ge 0} (\wt{G} V)^\ell 1\big\rangle_{\wt{\lambda}} = \langle V, (I - \wt{G} V)^{-1} 1\rangle_{\wt{\lambda}}.
\end{equation}

\n
The claim (\ref{1.15}) now follows from (\ref{1.16}) and (\ref{1.17}). This proves Lemma \ref{lem1.2}.
\end{proof}

As a next topic, given a scale $L$, an important role will be played by the decomposition of random interlacements into ``wavelet'' and ``undertow'' components, respectively carrying ``local'' and ``longer range'' information (with respect to the scale $L$). With this in mind, we consider positive integers $L \ge 1$ and $K \ge 100$, and define
\begin{equation}\label{1.18}
\IL = L \, \IZ^d (\subseteq \IZ^d).
\end{equation}

\n
Later, in Sections 4 and 5, we will choose $L$ of order $N^{\frac{2}{d}} (\log N)^{\frac{1}{d}}$, see (\ref{4.8}) so that $\IL \cap B(0,N)$ contains an order of $N^{d-2} / (\log N)$ points, when $N$ is large.

\medskip
For each $z \in \IL$ we consider the boxes in $\IZ^d$:
\begin{equation}\label{1.19}
B_z = z + [0,L)^d \subseteq U_z = z + [-K L + 1, KL - 1)^d.
\end{equation}

\n
Given $z \in \IL$, $B = B_z$, $U = U_z$, as well as $u \ge 0$, we write (see (1.42), (2.14) of \cite{Szni17})
\begin{align}
x_B  = &\;z, \label{1.20}
\\[1ex]
N_u(B)  = &\; \mbox{the total number of excursions from $B$ to $\partial U$ in all trajectories of}\label{1.21}
\\[-0.5ex]
& \; \mbox{the interlacements with labels at most $u$},\nonumber
\end{align}
and denote by
\begin{equation}\label{1.22}
\begin{array}{l}
\mbox{$Z^B_\ell, \ell \ge 1$, the successive excursions going from $B$ to $\partial U$ in the}
\\
\mbox{trajectories of the interlacements}
\end{array}
\end{equation}
(the $Z^B_\ell, \ell \ge 1$, are $\Gamma(U)$-valued variables, where $\Gamma(U)$ has been defined above (\ref{1.1})).

\medskip
In essence, the $(Z^B_\ell)_{\ell \ge 1}$, play the role of ``wavelet'' variables capturing local information, and the $N_u(B)$ (or more precisely the $N_u(B)/{\rm cap}(B)$) the role of an ``undertow'' in the informal scheme described above. In the next two sections we will develop alternative ways of ``tracking the undertow'', see Remark \ref{rem2.5}.

\medskip
Given $a \ge 0$, $x$ in $\IZ^d$, and $B, U$ as above, we write
\begin{equation}\label{1.23}
L^B_{a,x} = \dsl_{1 \le \ell \le a} \;\dis\int_0^{T_U (Z^B_\ell)} 1\{Z^B_\ell(s) = x\} \,ds \; \mbox{($=0$, when $a < 1$ or $x \notin U$)},
\end{equation}

\n
where $T_U(Z^B_\ell)$ denotes the exit time of $Z^B_\ell$ from $U$. So, for $x$ in $U$, $L^B_{a,x}$ records the time spent at $x$ by the first $[a]$ excursions from $B$ to $\partial U$ in the interlacements. When $\rho$ is some measure on $\IZ^d$, we use the notation
\begin{equation}\label{1.24}
\langle \rho, L^B_a \rangle \; = \; \dsl_{x \in \IZ^d} \;\rho(x) \, L^B_{a,x} \quad \mbox{(a finite sum by (\ref{1.23}))}.
%\langle \rho, L^B_a \rangle \; \mbox{for} \; \dsl_{x \in \IZ^d} \;\rho(x) \, L^B_{a,x} \quad \mbox{(a finite sum by (\ref{1.23}))}.
\end{equation}
We will also recurrently use the bounds (see (2.16), p.~53 of \cite{Lawl91})
\begin{equation}\label{1.25}
c\,L^{d-2} \le {\rm cap} ([0,L)^d) \le c'\, L^{d-2}, \; \mbox{for $L \ge 1$}.
\end{equation}

\section{Good boxes and coupling of excursions}
\setcounter{equation}{0}

In this section, we specify the type of local functions of the field of occupation times of random interlacements that enter the large deviation estimates to be proven in Sections 4 and 5. We then recall some facts about the soft local time technique, as developed in \cite{PopoTeix15}, and, actually, more specifically for our purpose, in \cite{ComeGallPopoVach13}, see also Section 4 of \cite{Szni17}. It provides a coupling of the excursions $Z^B_\ell, \ell \ge 1$, when $B$ runs over a finite collection of well-separated boxes (of side-length $L$), with independent excursions (see below (\ref{2.15})). This coupling grants us with the tool to prove the rarity of bad-boxes, when combined with the super-polynomial bounds of Proposition \ref{prop2.2}, which rely on concentration estimates. The actual definition of good and bad boxes $B$ is stated in (\ref{2.76}) towards the end of the section. It pertains to the good or bad behavior of the excursions $Z^B_\ell, \ell \ge 1$, as measured via certain functionals depending over the first $\alpha \,{\rm cap}(B)$ excursions, when $\alpha$ ranges over a finite subset $\Sigma$ of $(0,\infty)$, see (\ref{2.11}). It involves information on the ``wavelet part'' of the interlacement, in the terminology from the end of Section 1. In Lemma \ref{lem2.4}, we will see how good boxes provide alternative ways to track the ``undertow'' in the interlacements, see also Remark \ref{rem2.5}. Combined with the super-exponential bounds of Proposition \ref{prop3.1} in the next section, this flexibility will be an important asset, allowing the replacement of the event $\cA_N$ of (\ref{0.7}) by more convenient events, see in particular (\ref{4.21}), (\ref{4.22}), and (\ref{5.12}), (\ref{5.15}), in the respective derivations of the asymptotic lower bounds, and upper bounds on $\IP[\cA_N]$, in Sections 4 \hbox{and 5}.

\medskip
We first record the requirements that we will impose on the local functions of the field of occupation times of random interlacement. There is an integer $R \ge 0$, controlling the range, and a non-negative function $F$ on $[0,\infty)^{B(0,R)}$ such that
\begin{equation}\label{2.1}
\left\{ \begin{array}{rl}
{\rm i)} & \mbox{$F$: $[0,\infty)^{B(0,R)} \rightarrow [0,\infty)$ is measurable and non-decreasing in}\\
&\quad \;\; \mbox{each variable},
\\[1ex]
{\rm ii)} & F(0) = 0,
\\[2ex]
{\rm iii)} & \mbox{for $\ell, \ell' \in [0,\infty)^{B(0,R)}, \;F(\ell + \ell') \le F(\ell) + c(F)  \big(1_{ \{ \ell' \neq 0 \}}+ \sum_{|x|_\infty \le R} \ell'_x$\big)}.
\end{array}\right.
\end{equation}

\n
As we immediately note: 
\begin{equation*}
\begin{array}{l}
\mbox{under (\ref{2.1}), $u \ge 0 \r \theta(u) = \IE\big[F\big((L^u_y)_{|y|_\infty \le R}\big)\big] \ge 0$ is a non-decreasing}
\\
\mbox{continuous function, and $\theta(0) = 0$.}
\end{array}
\end{equation*}

\medskip\n
Indeed $\theta$ is finite by (\ref{1.12}) and ii), iii), as well as non-decreasing by i) and $\theta(0) = 0$ by ii). In addition by iii), for $u \ge 0$, $h > 0$, $\theta(u+h) \le \theta(u) + c(F) |B(0,R)| (\IP[0 \in \cI^h] + \IE[L_0^h]) = \theta(u) + c(F) |B(0,R)| (1 - e^{-h/g(0,0)} + h)$, and the continuity of $\theta$ follows. 

\medskip
We view $R$ as a function of $F$, so, $c(F)$ in iii) is a positive constant depending on $d$ and $F$ (and hence, possibly on $R$). Incidentally, iii) is automatic when $F$ is bounded: one simply chooses $c(F) = \|F\|_\infty$. Note that $F$ naturally extends to a non-decreasing function from $[0,\infty]^{B(0,R)}$ into $[0,\infty]$ (there is no ambiguity in the order in which limits are taken) and that $F(\infty, \dots , \infty) = \|F\|_\infty$. We will use the shorthand notation $F((L^u_\point))$ in place of $F((L^u_y)_{|y|_\infty \le R})$, and $F((L^u_{x+ \point}))$ in place of $F((L^u_{x+y})_{|y|_\infty \le R})$.

\medskip
We always assume (\ref{2.1}), and sometimes also require the additional condition
\begin{equation}\label{2.2}
\begin{array}{l}
\mbox{$\theta(\cdot)$ is strictly increasing},
\\
\mbox{(or equivalently under (\ref{2.1}): $F$ is non-constant).}
\end{array}
\end{equation}

\n
To see the above mentioned equivalence note that $\theta$ strictly increasing clearly implies $F$ non-constant. Conversely, pick $a, b > 0$ such that $\ell_y \ge a$ for all $y \in B(0,R)$ ensures $F(\ell) \ge b$. Then, for $u \ge 0$, $h > 0$, $\theta (u + h) \ge \theta(u) + \IE [F((L_\point^{u + h}))$, $L^u_y = 0$ and $L^{u+h}_y \ge a$ for all $y \in B(0,R)] > \theta(u)$, and $\theta$ is strictly increasing.

\bigskip\n
\begin{example}\label{exam2.1} \rm ~

\medskip\n
1) With $R = 0$, we set (and keep the notation $F_0$ for the choice (\ref{2.3})):
\begin{equation}\label{2.3}
\mbox{$F_0(\ell) = \ell$ for $\ell \ge 0$, so that $\theta(u) = u$ for $u \ge 0$ (see (\ref{1.12}))}.
\end{equation}

\medskip\n
2) With $R = 0$, we set (this was (\ref{0.2}) in the Introduction)
\begin{equation}\label{2.4}
\mbox{$F(\ell) = 1\{ \ell > 0\}$, for $\ell \ge 0$, so that $\theta (u) = \IP[0 \in \cI^u] = 1 - e^{-u/g(0,0)}$, for $u \ge 0$}.
\end{equation}

\medskip\n
3) With $R \ge 0$, we set
\begin{equation}\label{2.5}
\left\{ \begin{array}{l}
F(\ell) = 1\Big\{ \dsl_{|y|_\infty \le R} \ell_y > 0\Big\}, \;\mbox{for $\ell \in [0,\infty)^{B(0,\ell)}$, so that}
\\
\theta(u) = \IP[\cI^u \cap B(0,R) \not= \emptyset] = 1 - e^{-u \,{\rm cap}(B(0,R))}, \;\mbox{for $u \ge 0$}.
\end{array}\right.
\end{equation}

\medskip\n
4) With $R \ge 0$, we set (this was (\ref{0.3}) in the Introduction)
\begin{equation}\label{2.6}
\left\{ \begin{split}
F(\ell) = &\;1\{\mbox{any path from $0$ to $S(0,R)$ in $B(0,R)$ meets a $y$ with $\ell_y > 0\}$}, 
\\
&\quad\; \mbox{for $\ell \in [0,\infty)^{B(0,R)}$, so that}
\\
\theta(u) =&\; \IP[0 \Vu S(0,R)], \;\mbox{for $u \ge 0$}. 
\end{split}\right.
\end{equation}

\medskip\n
5) With $0 \le r < R$ integers, we set
\begin{equation}\label{2.7}
\left\{ \begin{split}
F(\ell) = &\; 1\{\mbox{any path from $B(0,r)$ to $S(0,R)$ in $B(0,R)$ meets a $y$ with $\ell_y > 0\}$,} 
\\
&\quad\;  \mbox{so that}
\\ 
\theta(u) =&\; \mbox{$\IP[0$ does not belong to the $r$-neighborhood of the connected}
\\
&\quad\; \mbox{component of $S(0,R)$ in $\cV^u]= \IP[B(0,r) \Vu S(0,R)]$}.
\end{split}\right.
\end{equation}
\end{example}

This last example implicitly showed up above (\ref{0.10}) in the Introduction. It shares some flavor with the thickening of the component in the vacant set of the boundary of a large box as considered in (\ref{1.2}), (\ref{1.6}) of \cite{Szni}. Note that in all five examples above the function $\theta$ is analytic: by direct inspection in the first three examples, and in the case of the last two examples due to the fact that for any $A \subseteq B(0,R)$, $\IP[\cI^u \cap B(0,R) = A]$ depends analytically on $u$, see (\ref{2.17}) of \cite{Szni10}. In addition, in all five examples, the function $F$ is not identically equal to $0$ (actually, as $u \r \infty$, $\theta(u)$ tends to $\infty$ in the case of (\ref{2.3}) and to $1$ in the remaining four examples). Hence,  
\begin{equation}\label{2.8}
\mbox{in (\ref{2.3}) - (\ref{2.7}) the function $\theta$ is analytic and satisfies (\ref{2.2})}.
\end{equation}

\vspace{-3ex}
\hfill $\square$

\medskip
We now consider a local function $F$ (with range $R \ge 0$), satisfying (\ref{2.1}), as well as $L \ge 1$ and $K \ge 100$ integers (see above (\ref{1.18})), and for each $B = B_z$, $z \in \IL$, with corresponding $U = U_z$, we probe the occupation time left by the first $a$ excursions between $B$ and $\partial U$ in the interlacements by setting (see (\ref{1.23})):
\begin{align}
F^B_{a,x} & = F\big((L^B_{a,x+ \point})\big), \; \mbox{for $a \ge 0$, $x \in \IZ^d$, and}\label{2.9}
\\[1ex]
F^B_a & = \dsl_{x \in B} F^B_{a,x}, \;\mbox{for $a \ge 0$}. \label{2.10}
\end{align}

\n
We then introduce the main ingredient of what will enter the definition of a bad box in (\ref{2.76}) below. Informally, for $\alpha > 0$, a given fixed level, we have in mind that for large $L$, and $a$ of order $\alpha \,{\rm cap}(B)$, for a ``good box'', the spatial average $\frac{1}{|B|} F^B_{\alpha\,{\rm cap}(B)}$ should be close to $\theta(\alpha)$ (and Proposition \ref{prop2.2} will make this statement more precise). So, with this idea in mind, given $\alpha > 0$, $0 < \kappa < 1$, $\mu \ge 0$ and $B = B_z$, with $z \in \IL$, we introduce the (bad) event (see (\ref{1.24}) for notation)
\begin{equation}\label{2.11}
\begin{split}
\cB^{B,F}_{\alpha, \kappa, \mu} = &\; \big\{ \langle \ov{e}_B, L^B_{\alpha\,{\rm cap}(B)} \rangle \notin \big(\alpha(1 - \kappa), \alpha (1 + \kappa)\big)\big\} \;\cup
\\
&\; \Big\{ \mbox{\f $\dis\frac{1}{|B|}$} F^B_{\alpha \, {\rm cap}(B)} \notin \big(\theta(\alpha(1- \kappa)\big) - \mu , \theta\big(\alpha (1 + \kappa)\big) + \mu\big)\Big\}.
\end{split}
\end{equation}

\n
When there is no ambiguity about which $F$ we use, we will drop $F$ from the notation and when $\mu = 0$, we will also drop $\mu$ from the notation.

\medskip
We are now going to recall some facts about the soft local time technique, as developed in \cite{PopoTeix15} and \cite{ComeGallPopoVach13}, very much in the spirit of Section 4 of \cite{Szni17}. This will be the tool to control the events $\cB^{B,F}_{\alpha, \kappa, \mu}$ introduced above. In particular, as a result of Proposition \ref{prop2.2} below, we will see that when $K$ is sufficiently large, their probability has a super-polynomial decay in $L$.

\medskip
We thus consider $L \ge 1$, $K \ge 100$, and a non-empty finite subset $\cC$ of $\IL$ satisfying
\begin{equation}\label{2.12}
\mbox{for $z \not= z'$ in $\cC$, one has $|z - z'|_\infty \ge \ov{K} \,L$, where $\ov{K} = 2 K + 3$},
\end{equation}
so that for $z \not= z'$ in $\cC$ (see (\ref{1.19}) for notation)
\begin{equation}\label{2.13}
d(U_z,U_{z'}) \ge 3 L + 2.
\end{equation}

\n
We will often write $B \in \cC$ as a shorthand for $B_z, z\in \cC$. We use the soft local time technique, see \cite{ComeGallPopoVach13} especially, to couple the excursions $Z^B_\ell$, $\ell \ge 1$, $B \in \cC$ of the random interlacements, with independent excursions $\wt{Z}^B_\ell$, $\ell \ge 1$, $B \in \cC$, respectively distributed as $X_{\point \wedge T_U}$ under $P_{\ov{e}_B}$, as $B$ varies over $\cC$. We introduce the subsets of $\IZ^d$
\begin{equation}\label{2.14}
C = \textstyle{\bigcup\limits_{z \in \cC}} B_z \subseteq W = \bigcup\limits_{z \in \cC} U_z.
\end{equation}

\n
For $x$ in $\IZ^d$, we denote by $Q_x$ the probability measure governing two independent  conti\-nuous-time walks $X^1_\point$ and $X^2_\point$ on $\IZ^d$, respectively starting from $x$ and from the initial distribution $\ov{e}_C$ (i.e.~the normalized equilibrium measure of $C$). Letting $H_C$ denote the entrance time of $X^1_\point$ in $C$, we consider
\begin{equation}\label{2.15}
Y = \left\{ \begin{array}{ll}
X^1_{H_C}, & \mbox{on $\{H_C < \infty\}$}
\\[1ex]
X^2_0, & \mbox{on $\{H_C =  \infty\}$}.
\end{array}\right.
\end{equation}

\n
The soft local time technique constructs a coupling $\IQ^\cC$ of the law $\IP$ of the random interlacements with collections of independent right-continuous Poisson counting functions, with unit intensity, vanishing at $0$, $(n_{B_z}(0,t))_{t \ge 0}$, $z \in \cC$, and with independent collections of i.i.d. excursions $\wt{Z}^{B_z}_\ell$, $\ell \ge 1$, $z \in \cC$, having for each $z \in \cC$ the same law on $\Gamma (U_z)$ as $X_{\point \wedge T_{U_z}}$ under $P_{\ov{e}_{B_z}}$. We further define for $B \in \cC$
\begin{equation}\label{2.16}
n_B(a,b) = n_B(0,b) - n_B(0,a),\; \mbox{for $0 \le a \le b$}
\end{equation}
(the notation is consistent when $a = 0$). One then has
\begin{equation}\label{2.17}
\left\{ \begin{array}{l}
\mbox{under $\IQ^\cC$, as $B$ varies over $\cC$, the $((n_B(0,t))_{t \ge 0}, \wt{Z}^B_\ell, \ell \ge 1)$ are independent} 
\\
\mbox{collections of independent processes with $(n_B(0,t))_{t \ge 0}$ distributed as a}
\\
\mbox{Poisson counting process of intensity $1$ and $\wt{Z}^B_\ell$, $\ell \ge 1$, as i.i.d. $\Gamma(U)$-valued}
\\
\mbox{variables with same law as $X_{\point \wedge T_U}$ under $P_{\ov{e}_B}$}.
\end{array}\right.
\end{equation}

\n
The coupling measure $\IQ^\cC$ has the following crucial property, see Lemma 2.1 of \cite{ComeGallPopoVach13}. If for some $\delta \in (0,1)$ and all $B \in \cC$, $y \in B$ and $x \in \partial W$ (see (\ref{2.14})), one has
\begin{equation}\label{2.18}
\Big(1 - \mbox{\f $\dis\frac{\delta}{3}$}\Big) \;\ov{e}_B(y) \le Q_x [Y = y \, | \,Y \in B] \le \Big(1 +\mbox{\f $\dis\frac{\delta}{3}$}\Big)\;\ov{e}_B(y),
\end{equation}

\n
then, for any $B \in \cC$ and $m_0 \ge 1$, on the event (in the auxiliary space governed by $\IQ^\cC$):
\begin{equation}\label{2.19}
\begin{array}{l}
\wt{U}^{m_0}_B =
\\[1ex]
\{n_B(m,(1+ \delta)m) < 2 \delta m, (1-\delta) \,m < n_B(0,m) < (1 + \delta) \,m, \;\mbox{for all $m \ge m_0$}\},
\end{array}
\end{equation}

\n
one has for all $m \ge m_0$ the following inclusions among subsets of $\Gamma(U)$
\begin{align}
\{\wt{Z}^B_1,\dots,\wt{Z}^B_{(1-\delta)m}\} & \subseteq \{Z^B_1,\dots,Z^B_{(1+ 3 \delta)m}\} \; \mbox{and} \label{2.20}
\\[1ex]
\{Z^B_1,\dots, Z^B_{(1-\delta)m}\} & \subseteq \{\wt{Z}^B_1,\dots,\wt{Z}^B_{(1+ 3 \delta)m}\} ,\label{2.21}
\end{align}

\n
where $\wt{Z}^B_v$ and $Z^B_v$ respectively stand for $\wt{Z}^B_{[v]}$ and $Z^B_{[v]}$ when $v \ge 1$, and the sets in the left members of (\ref{2.20}) and (\ref{2.21}) are empty if $(1-\delta) \,m < 1$. Importantly, the favorable event $\wt{U}^{m_0}_B$ is solely defined in terms of $(n_B(0,t))_{t \ge 0}$. We now choose $m_0$ as a function of $L$ via
\begin{equation}\label{2.22}
m_0 = [(\log L)^2] + 1.
\end{equation}
Since $(n_B(0,t))_{t \ge 0}$ is a Poisson counting process of unit intensity, a standard exponential Chebyshev inequality yields that
\begin{equation}\label{2.23}
\lim\limits_{L \rightarrow \infty} \; \mbox{\f $\dis\frac{1}{\log L}$} \; \log \IQ^\cC \big[(U_B^{m_0})^c \big] = - \infty, \; \mbox{for all $B \in \cC$}
\end{equation}

\n
(the above probability does not depend on $\cC$, or on the choice of $B \in \cC$).

\medskip
We need some further notation. For $B \in \cC$, we define
\begin{equation}\label{2.24}
\mbox{$\wt{L}^B_{a,x}$ as well as $\wt{F}^B_{a,x}$ and $\wt{F}^B_a$, for $a \ge 0$ and $x$ in $\IZ^d$},
\end{equation}

\n
as in (\ref{1.23}), (\ref{2.9}), (\ref{2.10}), with the $\wt{Z}^B_\ell$, $\ell \ge 1$, in place of the $Z^B_\ell$, $\ell \ge 1$. Moreover, given $\alpha > 0$ and $0 < \delta < 1$, we set
\begin{equation}\label{2.25}
\alpha_0 = \mbox{\f $\dis\frac{1- \delta}{1 + 4 \delta}$} \; \alpha \le \alpha \le \alpha_1 = \mbox{\f $\dis\frac{1 + 4 \delta}{1 - \delta}$} \;\alpha.
\end{equation}

\n
From now on, we choose $\delta$ as a function of $\kappa$ (see above (\ref{2.11})) so that
\begin{equation}\label{2.26}
\delta = \delta(\kappa) \in \Big(0, \mbox{\f $\dis\frac{1}{2}$}\Big) \; \;\mbox{is such that}\;\; \mbox{\f $\dis\frac{1+4\delta}{1 + \delta}$} < 1 + \mbox{\f $\dis\frac{\kappa}{2}$} \;\mbox{and}\; \mbox{\f $\dis\frac{1-\delta}{1 + 4 \delta}$} \;\Big(1 - \mbox{\f $\dis\frac{\delta}{100}$}\Big) > 1 - \mbox{\f $\dis\frac{\kappa}{2}$}\;.
\end{equation}

\n
The next step is to introduce an event (on the auxiliary space governed by $\IQ^\cC$) that will give us a way to control the bad event in (\ref{2.11}) when $B \in \cC$,~cf.~Proposition \ref{prop2.2}. Thus, for $\alpha > 0$, $0 < \kappa < 1$, $\mu \ge 0$, and $B \in \cC$, $F$ satisfying (\ref{2.1}), we set
\begin{equation}\label{2.27}
\begin{array}{l}
\wt{\cB}^{B,F}_{\alpha, \kappa, \mu} = \wt{A}_1 \cup \wt{A}_2 \cup \wt{A}_3, \;\mbox{where}
\\[1ex]
\wt{A}_1 = (\wt{U}^{m_0}_B)^c, \wt{A}_2 = \big\{\langle \ov{e}_B, \wt{L}^B_{\alpha_1\,{\rm cap}(B)} \rangle \ge \alpha(1 + \kappa) \; \mbox{or $\langle \ov{e}_B, \wt{L}^B_{\alpha_0\,{\rm cap}(B)}\rangle \le \alpha(1 - \kappa) \}$, and}
\\[1ex]
\wt{A}_3 = \{ \wt{F}^B_{\alpha_1\,{\rm cap}(B)} \ge (\theta(\alpha(1 + \kappa)) + \mu) \,|B|\;\mbox{or} \;\wt{F}^B_{\alpha_0\,{\rm cap}(B)} \le (\theta(\alpha(1 - \kappa)) - \mu) \,|B|\}. 
\end{array}
\end{equation}

\n
We will write $\wt{\cB}^{B,F}_{\alpha,\kappa}$ when $\mu = 0$, and often drop the superscript $F$ when this causes no confusion. Incidentally, note that due to (\ref{2.17})
\begin{equation}\label{2.28}
\mbox{the events $\wt{\cB}^{B,F}_{\alpha, \kappa, \mu}$ as $B$ varies over $B_z$, $z \in \cC$, are i.i.d.}.
\end{equation}

\n
We now come to the key control of this section. It shows that when $K$ is large for $B$ in $\cC$, we can dominate the occurrence of the bad event in (\ref{2.11}) by the event in (\ref{2.27}), and when $\mu > 0$, the probability of the latter has super-polynomial decay in $L$. When, in addition, $\theta(\cdot)$ is strictly increasing, i.e.~satisfies (\ref{2.2}), this super-polynomial decay holds for $\mu = 0$ as well.

\begin{proposition}\label{prop2.2} (super-polynomial decay)

\smallskip\n
Assume that $F$ satisfies (\ref{2.1}). Then, for any $\alpha > 0$, $0 < \kappa < 1$, $\mu \ge 0$, $K \ge c_1(\alpha, \kappa, \mu, F)$, for large $L$, for any $\cC$ as in (\ref{2.12}) and $B \in \cC$, under $\IQ^\cC$
\begin{equation}\label{2.29}
\cB^{B,F}_{\alpha, \kappa, \mu} \subseteq \wt{\cB}^{B,F}_{\alpha, \kappa, \mu}\;.
\end{equation}
Moreover, when $\mu > 0$,
\begin{equation}\label{2.30}
\lim\limits_{L \rightarrow \infty} \; \mbox{\f $\dis\frac{1}{\log L}$} \; \log \IQ^\cC [\wt{\cB}^{B,F}_{\alpha, \kappa, \mu}] = - \infty
\end{equation}

\n
(the above probability does not depend on the choice of $\cC$ or of $B \in \cC$). In addition,
\begin{equation}\label{2.31}
\mbox{if (\ref{2.2}) holds, then (\ref{2.30}) holds for $\mu = 0$ as well.}
\end{equation}
\end{proposition}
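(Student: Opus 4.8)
The plan is to establish \eqref{2.29} by comparing the excursion counts $N_u(B)$ with the independent excursions via the soft local time coupling, then establish \eqref{2.30} by concentration estimates on the independent excursions, and finally deduce \eqref{2.31} from continuity of $\theta$ together with \eqref{2.2}. First, for the inclusion \eqref{2.29}: one needs to check that for $K$ large enough (depending on $\alpha,\kappa,\mu,F$), the hypothesis \eqref{2.18} of the coupling lemma of \cite{ComeGallPopoVach13} holds for all $B\in\cC$. This is a standard estimate controlling $Q_x[Y=y\mid Y\in B]$ by $\ov{e}_B(y)$ up to a multiplicative error that is small when $U_z$ is a large box around $B_z$, i.e.\ when $K$ is large; here the separation \eqref{2.13} guarantees that the entrance distribution into $C$ from $\partial W$ is, conditionally on landing in a given $B$, close to $\ov{e}_B$. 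Granting \eqref{2.18} with $\delta=\delta(\kappa)$ as in \eqref{2.26}, on the favorable event $\wt U^{m_0}_B$ we have the sandwiching \eqref{2.20}--\eqref{2.21} between the true excursions $Z^B_\ell$ and the independent excursions $\wt Z^B_\ell$. One then must translate this into the bad event \eqref{2.11}: if $\cB^{B,F}_{\alpha,\kappa,\mu}$ occurs and $\wt A_1=(\wt U^{m_0}_B)^c$ does not, then because $L^B_{a,\cdot}$ and $F^B_a$ are monotone in the number of excursions, the excursion inclusions force either $\langle\ov e_B,\wt L^B_{\alpha_1\mathrm{cap}(B)}\rangle$ or $\langle\ov e_B,\wt L^B_{\alpha_0\mathrm{cap}(B)}\rangle$ out of the corresponding window, or similarly for $\wt F^B_{\alpha_0\mathrm{cap}(B)}$, $\wt F^B_{\alpha_1\mathrm{cap}(B)}$; the choice \eqref{2.25}--\eqref{2.26} of $\alpha_0,\alpha_1,\delta$ is exactly what makes the windows match up (for instance $\alpha_1(1+\delta)\le \alpha_1$-shifted quantities land inside $(\alpha(1-\kappa),\alpha(1+\kappa))$ and $\theta$ is monotone, so $\theta$ of the shifted argument stays below $\theta(\alpha(1+\kappa))+\mu$). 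Also one needs $\alpha_1\mathrm{cap}(B)\le (1+3\delta)m$ and $(1-\delta)m\le \alpha_0\mathrm{cap}(B)$ for suitable $m\ge m_0$, which holds for large $L$ since $m_0=O((\log L)^2)$ while $\mathrm{cap}(B)\asymp L^{d-2}$ by \eqref{1.25}. Hence $\cB^{B,F}_{\alpha,\kappa,\mu}\subseteq\wt A_1\cup\wt A_2\cup\wt A_3=\wt{\cB}^{B,F}_{\alpha,\kappa,\mu}$.

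For \eqref{2.30} with $\mu>0$, by \eqref{2.23} the probability of $\wt A_1$ has super-polynomial decay in $L$, so it suffices to bound $\IQ^\cC[\wt A_2]$ and $\IQ^\cC[\wt A_3]$. For $\wt A_2$, the quantity $\langle\ov e_B,\wt L^B_{a,\cdot}\rangle$ is a sum of $[a]$ i.i.d.\ contributions $\int_0^{T_U}\sum_x\ov e_B(x)\,1\{\wt Z^B_\ell(s)=x\}\,ds$, each having exponential moments uniformly in $L$ by Lemma \ref{lem1.1}~\eqref{1.4} (applied with $B$ in place of $B$ and noting $\ov e_B$ is supported on $\partial_i B$, so $\langle\ov e_B,\wt L^B_1\rangle\le \int_0^{T_U}1\{\wt Z^B_1(s)\in B\}\,ds$); since $\IE[\langle\ov e_B,\wt L^B_1\rangle]=1/\mathrm{cap}(B)$, and $\alpha_i\mathrm{cap}(B)$ terms are summed, a standard exponential Chebyshev/large-deviation bound gives that the deviation of the normalized sum from $\alpha$ by a fixed amount $\asymp\kappa\alpha$ has probability $\le\exp\{-c(\alpha,\kappa)\,\mathrm{cap}(B)\}\le\exp\{-c\,L^{d-2}\}$, which is super-polynomially small in $L$. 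For $\wt A_3$, the variable $\wt F^B_a=\sum_{x\in B}F((\wt L^B_{a,x+\point}))$ is a sum over $x\in B$ of bounded-influence functionals of the i.i.d.\ excursions; by \eqref{2.1}~iii) each excursion changes $\wt F^B_a$ by at most $c(F)(1\{\cdots\neq 0\}+\sum_{|y|_\infty\le R}\wt\ell_y)$ summed over the finitely many $x$ within range, and again Lemma \ref{lem1.1}~\eqref{1.4}--\eqref{1.5} gives uniform exponential moments for these increments. The expectation $\IE[\wt F^B_{\alpha\mathrm{cap}(B)}]$ is $|B|\,\theta(\alpha)+o(|B|)$: indeed by construction $\wt Z^B_\ell$ are distributed as interlacement excursions, so for $x$ with $B(x,R)$ well inside $U$ (all but a boundary layer of size $\asymp R|\partial_i B|=o(|B|)$) the law of $(\wt L^B_{\alpha\mathrm{cap}(B),x+\point})$ converges to that of $(L^\alpha_{x+\point})$, whence $\IE[\wt F^B_{\alpha\mathrm{cap}(B),x}]\to\theta(\alpha)$; combining with a concentration inequality of McDiarmid/Azuma type adapted to unbounded increments with exponential moments (or directly via exponential Chebyshev on the sum, exploiting independence of the excursions and of their effect on disjoint range-$R$ windows up to the tessellation of $B$ into finitely many classes) yields $\IQ^\cC[\wt A_3]\le\exp\{-c(\alpha,\kappa,\mu,F)\,L^{d-2}/(\log L)^{?}\}$, again super-polynomially small; the positivity $\mu>0$ is used precisely to create the fixed gap $\theta(\alpha)\pm\mu/2$ absorbing the $o(1)$ boundary error and the fluctuation of the windows $\theta(\alpha(1\pm\kappa))$.

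For \eqref{2.31}, assume \eqref{2.2}, i.e.\ $\theta$ strictly increasing. The only place $\mu>0$ was essential above is in bounding $\wt A_3$: with $\mu=0$ the target windows become $(\theta(\alpha(1-\kappa)),\theta(\alpha(1+\kappa)))$, and since $\theta$ is continuous and strictly increasing, $\theta(\alpha(1+\kappa))>\theta(\alpha)>\theta(\alpha(1-\kappa))$ strictly, so there is a genuine fixed gap $\eta=\eta(\alpha,\kappa,F)>0$ on each side between $\theta(\alpha)$ and the window endpoints. Thus the same concentration argument applies verbatim with $\mu$ replaced by this $\eta$: once $L$ is large enough that the $o(|B|)$ boundary contribution to $\IE[\wt F^B_{\alpha_i\mathrm{cap}(B)}]$ is smaller than $\tfrac12\eta|B|$, the event $\wt A_3$ forces a deviation of order $\eta|B|$ of a sum of independent exponential-moment-bounded terms from its mean, hence has probability $\le\exp\{-c\,L^{d-2}/(\log L)^{?}\}$, which is super-polynomially small in $L$. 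Since $\IQ^\cC[\wt A_1]$ and $\IQ^\cC[\wt A_2]$ were already handled without using $\mu>0$, \eqref{2.30} holds for $\mu=0$, proving \eqref{2.31}.

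\medskip
\emph{Main obstacle.} The delicate step is the estimate on $\wt A_3$: one must show $\IE[\wt F^B_{\alpha\,\mathrm{cap}(B)}]=|B|\,\theta(\alpha)(1+o(1))$ with the error uniform in the relevant parameters, which requires controlling the discrepancy between the finite-$U$ excursion picture and the genuine interlacement field near $\partial_i B$ (using that this boundary layer is $o(|B|)$ and that $F\ge 0$ is monotone with the growth bound \eqref{2.1}~iii)), and then pushing a concentration inequality through for a sum of $|B|$ weakly dependent, unbounded-but-exponentially-integrable summands — the dependence being only through the shared pool of excursions, so that a block/coloring decomposition of $B$ into $O_R(1)$ sub-lattices of range-$R$-separated points, combined with the independence \eqref{2.17} of the excursions, reduces it to genuinely independent sums to which Lemma \ref{lem1.1} applies.
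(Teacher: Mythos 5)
The central gap is in your treatment of $\wt{A}_3$, specifically the claim that $E^{\IQ^\cC}[\wt{F}^B_{\alpha\,{\rm cap}(B)}]=|B|\,\theta(\alpha)+o(|B|)$ as $L\to\infty$, with the error coming only from a boundary layer of size $\asymp R\,|\partial_i B|$. This is not true for fixed $K$: the law of the field produced at $x+\point$ by $[\alpha\,{\rm cap}(B)]$ independent excursions stopped at $T_U$ does not converge to that of $(L^\alpha_{x+\point})$, even for $x$ deep inside $B$. The discrepancy is a bulk effect, not a boundary effect: a level-$\alpha$ interlacement trajectory contributing to $L^\alpha_{x+\point}$ may exit $U$ and later return to $B(x,R)$, and the probability that some trajectory does so is of order $\alpha\,{\rm cap}(B)(1+R)^{d-2}/(KL)^{d-2}\asymp \alpha(1+R)^{d-2}K^{-(d-2)}$ uniformly over $x$ in the bulk (this is exactly the estimate (\ref{2.67})--(\ref{2.68}) in the paper); in addition the deterministic excursion number $[\alpha\,{\rm cap}(B)]$ does not match the Poissonian excursion count of the level-$\alpha$ interlacement, again up to $K$-dependent factors. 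The correct statement is only the two-sided bound (\ref{2.55}) of Lemma \ref{lem2.3}: an asymptotic upper bound $\theta(\alpha)$ (obtained by Poissonization and stochastic domination at a slightly higher level), and a lower bound $\theta(\alpha)-c(\alpha,F)K^{-(d-2)/2}$, with an error that does not vanish as $L\to\infty$ but only as $K\to\infty$. This is precisely why the threshold $c_1$ in the proposition depends on $\mu$, and why for $\mu=0$ the strict monotonicity (\ref{2.2}) is needed to create a positive gap that is then beaten by taking $K$ large (see (\ref{2.69})--(\ref{2.74})). Your argument erases this mechanism, and your proof of (\ref{2.31}) ("once $L$ is large enough that the boundary contribution is smaller than $\tfrac12\eta|B|$") therefore does not go through as written: no matter how large $L$ is, the per-site deficit of the excursion field below the interlacement field stays of order $K^{-(d-2)/2}$.

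Two secondary points. First, your fallback for the concentration step, a "block/coloring decomposition of $B$ into range-$R$-separated sublattices reducing to genuinely independent sums", does not work: the dependence among the summands $F^B_{a,x}$, $x\in B$, is through the common pool of excursions $\wt{Z}_\ell$, not through overlapping $R$-windows, so spatially separated windows remain strongly dependent. The workable route (the paper's) is concentration in the excursion index: the Doob martingale with respect to $\cF_n=\sigma(\wt{Z}_1,\dots,\wt{Z}_n)$, made to have increments bounded by $c(F)L^{2+\frac14}$ through the truncation (\ref{2.46})--(\ref{2.48}), then Azuma--Hoeffding (\ref{2.53}), together with (\ref{2.54}) to compare truncated and untruncated sums. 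Second, in your bound for $\wt{A}_2$ the mean of a single excursion's contribution is not exactly $1/{\rm cap}(B)$; it equals $\sum_{x,y}\ov{e}_B(x)\,g_U(x,y)\,\ov{e}_B(y)$, which is strictly smaller, and for the lower-deviation part one needs $g_U\ge(1-\frac{\delta}{200})g$ on $B$, i.e. once more $K$ large (see (\ref{2.41})--(\ref{2.43})); this is another place where the $K$-threshold enters that your sketch passes over, although there your exponential-Chebyshev alternative could be repaired, unlike the expectation estimate above.
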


\begin{proof}
Recall that $\delta$, as chosen in (\ref{2.26}), is a function of $\kappa$. As in (4.11) of \cite{Szni17} (see also below (4.16) of the same reference), we can choose $c_2(\kappa) \ge 100$ such that
\begin{equation}\label{2.32}
\mbox{when $K \ge c_2(\kappa)$, for any $L \ge 1$, $\cC$ as in (\ref{2.12}), the bounds (\ref{2.18}) hold}.
\end{equation}

\n
From now, we will tacitly assume that $K \ge c_2(\kappa)$, except when explicitly stated otherwise, as in Lemma \ref{lem2.3} below.

\medskip
We first prove (\ref{2.29}). To this end we note that by (\ref{1.25}), for large $L$, the integer $m = [\frac{\alpha}{1 - \delta}\, {\rm cap}(B)] + 1$ satisfies $m \ge m_0$ (with $m_0$ as in (\ref{2.22})), and that $(1-\delta) m \ge \alpha\, {\rm cap}(B)$ together with $(1 + 3 \delta)\,m \le \frac{1 + 4 \delta}{1 - \delta} \, \alpha \,{\rm cap}(B)$. Hence, for any $\cC$ as in (\ref{2.12}) and $B \in \cC$, we find by (\ref{2.21}) and (\ref{2.25}) that
\begin{equation}\label{2.33}
\mbox{on} \; \wt{U}^{m_0}_B, \{Z^B_1,\dots , Z^B_{\alpha \,{\rm cap}(B)}\} \subseteq \{\wt{Z}^B_1,\dots, \wt{Z}^B_{\alpha_1 \,{\rm cap}(B)}\}.
\end{equation}

\n
Similarly, for large $L$, the integer $m = [\frac{\alpha}{(1 + 3 \delta)} \;{\rm cap}(B)]$ satisfies $m \ge m_0$, as well as $(1 + 3 \delta)\,m \le \alpha \,{\rm cap}(B)$ and $(1 - \delta) \,m \ge \frac{(1 - \delta)}{1 + 4 \delta} \;\alpha\; {\rm cap}(B)$. Hence, for any $\cC$ as in (\ref{2.12}) and $B \in \cC$, we find by (\ref{2.20}) and (\ref{2.25}) that
\begin{equation}\label{2.34}
\mbox{on} \; \wt{U}^{m_0}_B, \{\wt{Z}^B_1,\dots ,\wt{Z}^B_{\alpha_0 \,{\rm cap}(B)} \} \subseteq \{Z^B_1,\dots, Z^B_{\alpha \,{\rm cap}(B)}\}.
\end{equation}

\n
As a result, for large $L$, for any $\cC$ as in (\ref{2.12}) and $B \in \cC$,
\begin{equation}\label{2.35}
\begin{split}
\mbox{on $\wt{U}^{m_0}_B$, one has} &\;\langle \ov{e}_B, \wt{L}^B_{\alpha_0 \,{\rm cap} (B)}\rangle \le \langle \ov{e}_B, L^B_{\alpha \, {\rm cap}(B)} \rangle \le \langle \ov{e}_B, \wt{L}_{\alpha_1 \,{\rm cap}(B)} \rangle \;\mbox{and} 
\\
&\;\wt{F}^B_{\alpha_0 \,{\rm cap}(B)} \le F^B_{\alpha \, {\rm cap}(B)} \le \wt{F}^B_{\alpha_1 \, {\rm cap}(B)}.
\end{split}
\end{equation}

\n
Looking at the respective definitions in (\ref{2.11}) and (\ref{2.27}) the claim (\ref{2.29}) follows.

\medskip
We now turn to the proof of (\ref{2.30}). In view of (\ref{2.23}), we only need to show the super-polynomial decay in $L$ of the $\IQ^\cC$-probability of $\wt{A}_2$ and $\wt{A}_3$ in the notation of (\ref{2.27}). Given the i.i.d. nature of the $\wt{Z}^B_\ell$, $\ell \ge 1$, the proof will be based on concentration. We write $\wt{Z}_\ell$ as a shorthand for $\wt{Z}^B_\ell$, when $\ell \ge 1$. Our next step is to show that when $K \ge c'_2(\kappa)$ (from (\ref{2.41}) below),
\begin{equation}\label{2.36}
\mbox{$\IQ^\cC[\wt{A}_2]$ has super-polynomial decay in $L$.}
\end{equation}

\n
To this end, we define for $n \ge 0$ (with $T_U(\wt{Z}_\ell)$ the exit time of $\wt{Z}_\ell$ from $U$):
\begin{equation}\label{2.37}
T_n = \dsl^n_{\ell = 1} \;\dis\int_0^{T_U(\wt{Z}_\ell)} e_B\big(\wt{Z}_\ell (s)\big) \,ds \;\; \mbox{and} \; \;T'_n = \dsl^n_{\ell = 1} \;\Big(\dis\int_0^{T_U(\wt{Z}_\ell)} e_B\big(\wt{Z}_\ell(s)\big)\,ds\Big) \wedge L^{\frac{1}{4}}.
\end{equation}

\n
Using the convention $T_v = T_{[v]}$ and $T'_v = T'_{[v]}$, for $v \ge 0$, we see that
\begin{equation}\label{2.38}
T'_{\alpha_0 \,{\rm cap}(B)} \le T_{\alpha_0 \,{\rm cap}(B)} = \langle e_B, \wt{L}^B_{\alpha_0 \,{\rm cap}(B)}\rangle \;\;\mbox{and} \;\; T_{\alpha_1 \,{\rm cap}(B)} = \langle e_B, \wt{L}^B_{\alpha_1 \,{\rm cap}(B)}\rangle .
\end{equation}

\n
Next, we remark that under $\IQ^\cC$ the summands entering $T_n$ are i.i.d. and
\begin{equation}\label{2.39}
\begin{array}{l}
\mbox{$\dis\int_0^{T_U(Z_1)} e_B\big(\wt{Z}_1(s)\big)\,ds$ is stochastically dominated by}
\\
\mbox{$\tau$ distributed as $\dis\int^\infty_0 e_B(X_s) \, ds$ under $P_{\ov{e}_B}$}.
\end{array}
\end{equation}

\n
By Lemma 1.1 of \cite{Szni17} one knows that $\tau$ has an exponential distribution with parameter $1$. Since $\alpha_1 \le \alpha (1 + \frac{\kappa}{2})$, see (\ref{2.25}), (\ref{2.26}), it follows by the exponential Chebyshev inequality and (\ref{1.25}) that
\begin{equation}\label{2.40}
\IQ^\cC [ \langle e_B, \wt{L}^B_{\alpha_1\,{\rm cap}(B)}\rangle \ge \alpha (1 + \kappa) \,{\rm cap}(B)] \; \mbox{has super-polynomial decay in $L$}.
\end{equation}

\n
Further, by assuming $K \ge c'_2(\kappa) ( \ge c_2(\kappa)$ in (\ref{2.32})), we can make sure that
\begin{equation}\label{2.41}
g_U(x,y) \ge \Big(1 - \mbox{\f $\dis\frac{\delta}{200}$}\Big) \;g(x,y) \; \mbox{for all $x,y \in B$}.
\end{equation}
We then see that for large $L$, for any $n \ge 0$ (with $\tau$ as in (\ref{2.39}))
\begin{equation}\label{2.42}
\begin{split}
E^{\IQ^\cC} [T'_n]  \ge&\; E^{\IQ^\cC} [T_n] - n\, E[\tau \,1\{\tau \ge L^{\frac{1}{4}}\}] \stackrel{(\ref{2.41}), (\ref{2.37})}{\ge}
\\
&\; n  \Big(1 - \mbox{\f $\dis\frac{\delta}{200}$}\Big) \,\dsl_{x,y \in B} \ov{e}_B(x) \,g(x,y) \,e_B(y) - n \dis\int^\infty_{L^{\frac{1}{4}}} s \,e^{-s} ds \ge n  \Big(1 - \mbox{\f $\dis\frac{\delta}{150}$}\Big) .
\end{split}
\end{equation}
Now, for large $L$, we have $[\alpha_0 \,{\rm cap}(B)] (1 - \frac{\delta}{150}) \ge \alpha_0 (1 - \frac{\delta}{100}) \,{\rm cap}(B) \ge \alpha (1 - \frac{\kappa}{2})$ by (\ref{2.26}), and we thus find that
\begin{equation}\label{2.43}
\begin{array}{l}
\IQ^\cC [\langle e_B, \wt{L}^B_{\alpha_0\,{\rm cap}(B)} \rangle \le \alpha (1 - \kappa) \,{\rm cap}(B)] \le \IQ^\cC [T'_{\alpha_0\,{\rm cap}(B)}  \le \alpha (1 - \kappa) \,{\rm cap}(B)] \le
\\[0.5ex]
\IQ^\cC \Big[T'_{\alpha_0\,{\rm cap}(B)} - E^{\IQ^\cC} [T'_{\alpha_0\,{\rm cap}(B)} ] \le - \mbox{\f $\dis\frac{\alpha \kappa}{2}$} \;{\rm cap}(B)\Big] \le \exp\Big\{ - c \;\mbox{\f $\dis\frac{(\alpha \kappa)^2}{\alpha L^{d-2}}$} \; L^{2(d-2) - \frac{1}{2}}\Big\}
\end{array}
\end{equation}

\medskip\n
using Azuma-Hoeffding's inequality in the last step (see Lemma 4.1 on p.~68 of \cite{Ledo01}), and (\ref{1.25}). Together with (\ref{2.40}), this completes the proof of (\ref{2.36}).

\medskip
We will now bound $\IQ^\cC[\wt{A}_3]$ (see (\ref{2.27})). As in (\ref{2.37}) above, we introduce a truncation scheme in order to apply the Azuma-Hoeffding inequality for martingales (see Lemma 4.1, p.~68 of \cite{Ledo01}), and obtain concentration, see (\ref{2.53}) below. Recall that $R$ from (\ref{2.1}) controls the range of $F$. We introduce the notation
\begin{equation}\label{2.44}
B^R = \{x \in \IZ^d; d(x,B) \le R\} \; \; \mbox{and} \;\;B_R = \{x \in B; \, B(x,R) \subseteq B\}
\end{equation}
(so $B_R \subseteq B \subseteq B^R)$.

\medskip
For $x \in \IZ^d$, $A \subseteq \IZ^d$ and $\ell \ge 1$ we set
\begin{equation}\label{2.45}
L_x(\wt{Z}_\ell) = \dis\int_0^{T_U(\wt{Z}_\ell)} 1\{\wt{Z}_\ell(s) = x\}\,ds \; \; \mbox{and} \;\; L_A (\wt{Z}_\ell) = \dsl_{y \in A} \;L_y(\wt{Z}_\ell).
\end{equation}
Note that $L_A(\wt{Z}_\ell) = 0$ if $A \cap U = \emptyset$.

\medskip
To perform the truncation we define for each $\ell \ge 1$,
\begin{equation}\label{2.46}
\sigma_\ell = \inf\Big\{s \ge 0; \,|\wt{Z}_\ell  \,([0,s]) \cap B^R| \ge L^{2 + \frac{1}{4}} \; \mbox{or}\; \dis\int^{s \wedge T_U(\wt{Z}_\ell)}_0 1\{\wt{Z}_\ell(t) \in B^R\} \, dt \ge L^{2 + \frac{1}{4}}\Big\},
\end{equation}
and for $x \in \IZ^d$ or $A \subseteq \IZ^d$,
\begin{equation}\label{2.47}
\mbox{$L'_x(\wt{Z}_\ell)$ and $L'_A(\wt{Z}_\ell)$  as in (\ref{2.45}) with $T_U (\wt{Z}_\ell) \wedge \sigma_\ell$ in place of $T_U(\wt{Z}_\ell)$}.
\end{equation}

\n
With a similar spirit as in (\ref{2.37}), we define for $n \ge 0$
\begin{equation}\label{2.48}
\left\{ \begin{array}{l}
V_n = \dsl_{x \in B} \; F\Big(\Big(\dsl_{1 \le \ell \le n} \,L_{x+y} (\wt{Z}_\ell)\Big)_{|y|_\infty \le R}\Big) \big(\stackrel{(\ref{2.24})}{=}  \wt{F}^B_n\big), \;\mbox{and}
\\[3ex]
V'_n = \dsl_{x \in B} \; F\Big(\Big(\dsl_{1 \le \ell \le n} \,L'_{x+y} (\wt{Z}_\ell)\Big)_{|y|_\infty \le R}\Big)
\end{array}\right.
\end{equation}
(we also write $V_v = V_{[v]}$ and $V'_v = V'_{[v]}$ for $v \ge 0$).

\medskip
We then consider the filtration $(\cF_n)_{n \ge 0}$: with $\cF_0$ trivial and $\cF_n = \sigma(\wt{Z}_1,\dots, \wt{Z}_n)$ for $n \ge 1$, as well as the bounded martingale
\begin{equation}\label{2.49}
M'_n = E^{\IQ^\cC} [V'_{\alpha_1 \,{\rm cap}(B)} \,| \,\cF_n], \; n \ge 0.
\end{equation}

\n
Note that $M'_n$ is obtained by integrating out the variables $\wt{Z}_\ell$, $\ell > n$ among the i.i.d. variables $\wt{Z}_i$, $1 \le i \le \alpha_1 \,{\rm cap}(B)$ entering the definition of $V'_n$. We write $E^n$ for the integration over the variables $\wt{Z}_\ell$, $\ell > n$, so that for $n \ge 0$ such that $n + 1 \le \alpha_1\,{\rm cap}(B)$, we have
\begin{equation}\label{2.50}
M'_{n+1} - M'_n = E^{n+1} [V'_{\alpha_1 \,{\rm cap}(B)}] - E^n [V'_{\alpha_1 \,{\rm cap}(B)}].
\end{equation}

\n
If $\ov{Z}_{n+1}$ denotes an independent copy of $\wt{Z}_{n+1}$ and $\ov{E}^{n+1}$ stands for the integration over the $\wt{Z}_\ell$, $\ell > n+1$, and $\ov{Z}_{n+1}$, we have (with hopefully obvious notation)
\begin{equation}\label{2.51} 
\begin{split}
|M'_{n+1} - M'_n| = \Big|\ov{E}\,^{n+1} \Big[\dsl_{x \in B} \Big\{ & F\Big(\Big(\dsl_{\ell \le \alpha_1 \, {\rm cap}(B)} L'_{x+y} (\wt{Z}_\ell)\Big)_{|y|_\infty \le R}\Big)
\\
&\!\!\!-F\Big(\Big(\dsl_{\ell \le \alpha_1 \, {\rm cap}(B) \atop \ell \not= n+1} L'_{x+y} (\wt{Z}_\ell) + L'_{x+y} (\ov{Z}_{n+1})\Big)_{|y|_\infty \le R}\Big)\Big\}\Big]\Big|.
\end{split}
\end{equation}
Note that in the above sum, only those $x$ in $B$ within sup-distance at most $R$ from the range of $\wt{Z}_{n+1} ( \cdot \wedge \sigma_{n+1})$ or the range of $\ov{Z}_{n+1} ( \cdot \wedge \ov{\sigma}_{n+1})$ (where $\ov{\sigma}_{n+1}$ is defined as in (\ref{2.46}) with $\ov{Z}_{n+1}$ in place of $\wt{Z}_\ell$) have a possibly non-vanishing contribution. There are at most $c(R) \,L^{2 + \frac{1}{4}}$ such $x$ by (\ref{2.46}). Moreover, for any such $x$ the term inside the accolade in (\ref{2.51}) is by (\ref{2.1}) iii) in absolute value at most $c(F) \{1 + \sum_{|y|_\infty \le R} L'_{x+y} (\wt{Z}_{n+1}) + L'_{x+y} (\ov{Z}_{n+1})\}$. We thus find that for $n \ge 0$ with $n+1 \le \alpha_1\, {\rm cap}(B)$, one has (recall that we view $R$ as a function of $F$)
\begin{equation}\label{2.52}
\begin{split}
|M'_{n+1} - M'_n | & \le c'(F) \, L^{2 + \frac{1}{4}} + c''(F) \,\dsl_{x \in B^R} (L'_x (\wt{Z}_{n+1}) + \ov{E}^{n+1} [L'_x(\ov{Z}_{n+1})])
\\[-2ex]
&\!\!\!\stackrel{(\ref{2.46})}{\le} c(F) \, L^{2 + \frac{1}{4}}.
\end{split}
\end{equation}

\n
With this bound on the increments of the martingale $M'_n$, $n \ge 0$, we can apply the Azuma-Hoeffding inequality (see \cite{Ledo01}, p.~68) and find:
\begin{equation}\label{2.53}
\begin{array}{l}
\IQ^\cC \big[| V'_{\alpha_1 \,{\rm cap}(B)} - E^{\IQ^\cC} [V'_{\alpha_1 \,{\rm cap}(B)} ] | \ge L^{d-\frac{1}{10}}\big] \le 2\exp\Big\{ - \mbox{\f $\dis\frac{L^{2d-\frac{1}{5}}}{c'(F) \,\alpha_1 \,{\rm cap}(B)\, L^{4 + \frac{1}{2}}}$}\Big\}
\\
\stackrel{(\ref{1.25})}{\le} 2 \,\exp\Big\{- \mbox{\f $\dis\frac{c(F)}{\alpha_1}$} \;L^{\frac{1}{4}}\Big\}.
\end{array}
\end{equation}
A similar inequality holds with $\alpha_0$ in place of $\alpha_1$.

\medskip
We will now see that for positive $\beta$, the difference of the expectations of $V_{\beta \, {\rm cap}(B)}$ and $V'_{\beta\,{\rm cap}(B)}$ has super-polynomial decay in $L$, and if $K$ is large, $\frac{1}{|B|} \,E^{\IQ^\cC}[V_{\beta\, {\rm cap}(B)}]$ as $L$ tends to infinity, is not far from $\theta(\beta)$. We refer to (\ref{2.48}) for notation.

\begin{lemma}\label{lem2.3}
For $\beta > 0$, $K \ge 100$, $\cC$ as in (\ref{2.12}) and $B \in \cC$,
\begin{align}
&\mbox{$0 \le E^{\IQ^\cC}[V_{\beta \,{\rm cap}(B)}] - E^{\IQ^\cC}[V'_{\beta \,{\rm cap}(B)}]$ has super-polynomial decay in $L$, and} \label{2.54}
\\
&\theta(\beta) \ge \underset{L \r \infty}{\overline{\lim}} \,\mbox{\f $\dis\frac{1}{|B|}$} \;E^{\IQ^\cC} [V_{\beta \,{\rm cap}(B)}] \ge \underset{L \r \infty}{\underline{\lim}} \,\mbox{\f $\dis\frac{1}{|B|}$} E^{\IQ^\cC}[V_{\beta \,{\rm cap}(B)}] \ge \theta(\beta) - c(\beta, F)\,K^{-\frac{(d-2)}{2}}  \label{2.55}
\end{align}

\n
(the expectations in (\ref{2.54}) and (\ref{2.55}) do not depend on $\cC$ or on $B$ in $\cC$).
\end{lemma}

\begin{proof}
We first prove (\ref{2.54}). The difference in (\ref{2.54}) is clearly non-negative by the monotonicity assumption (\ref{2.1}) i) on $F$ and (\ref{2.48}). In addition, one has
\begin{equation}\label{2.56}
\begin{split}
E^{\IQ^\cC} [V_{\beta \,{\rm cap}(B)} - V'_{\beta \,{\rm cap}(B)}] & \le \dsl_{1 \le \ell \le \beta \, {\rm cap}\,(B)} \,E^{\IQ^\cC}[\sigma_\ell < \infty, V_{\beta \,{\rm cap}(B)}]
\\
&\!\!\!\!\!\!\! \stackrel{\rm symmetry}{\le} [\beta \,{\rm cap}(B)] \;E^{\IQ^\cC} [\sigma_1 < \infty, V_{\beta \,{\rm cap}(B)}].
\end{split}
\end{equation}
By (\ref{2.1}) ii) and iii), we find that (in the notation of (\ref{2.44}), (\ref{2.45}))
\begin{equation}\label{2.57}
V_{\beta \,{\rm cap}(B)} \le c'(F) \dsl_{1 \le \ell \le \beta \,{\rm cap}(B)} \big(|{\rm range} (\wt{Z}_\ell) \cap B^R| + L_{B^R}(\wt{Z}_\ell)\big) \stackrel{\rm def}{=} A.
\end{equation}

\n
By Lemma \ref{lem1.1} applied to $B^R$, we see in particular that
\begin{equation}\label{2.58}
E^{\IQ^\cC} [V_{\beta \,{\rm cap}(B)}] \le E^{\IQ^\cC}[A] \le \wt{c} \, (F) \, \beta \,{\rm cap}(B) (L + R)^2 \stackrel{(\ref{1.25})}{\le} c(F)\, \beta(L+R)^d.
\end{equation}

\n
Coming back to (\ref{2.56}) and inserting (\ref{2.57}) in the last expectation, we find
\begin{equation}\label{2.59}
\begin{array}{l}
E^{\IQ^\cC} [V_{\beta \,{\rm cap}(B)} - V'_{\beta \,{\rm cap}(B)}]  \le 
\\[1ex]
c'(F) \beta \,{\rm cap}(B) \big(E^{\IQ^\cC}[\sigma_1 < \infty, |{\rm range} (\wt{Z}_1) \cap B^R| + L_{B^R}(\wt{Z}_1)] \; +
\\[1ex]
\IQ^\cC [\sigma_1 < \infty] \,E^{\IQ^\cC} \big[\dsl_{2 \le \ell \le \beta\,{\rm cap}(B)} \big( |{\rm range} (\wt{Z}_\ell) \cap B^R |  + L_{B^R}(\wt{Z}_\ell) \big)\big].
\end{array}
\end{equation}

\n
By the Cauchy-Schwarz inequality and Lemma \ref{lem1.1}, we find that
\begin{equation}\label{2.60}
\begin{split}
E^{\IQ^\cC}  [\sigma_1 < \infty, |{\rm range} (\wt{Z}_1) \cap B^R| + L_{B^R}(\wt{Z}_1)] & \le c(L + R)^2 \, \IQ^\cC[\sigma_1 < \infty]^{\frac{1}{2}}
\\
& \le c(L+R)^2 \,e^{-c'\,L^{2 + \frac{1}{4}} / (L+R)^2}.
\end{split}
\end{equation}

\n
In a similar fashion, by Lemma \ref{lem1.1}, the second inequality of (\ref{2.58}), and (\ref{1.25}),
\begin{equation}\label{2.61}
\begin{array}{l}
\IQ^\cC[\sigma_1 < \infty] \,E^{\IQ^\cC} \big[\dsl_{2 \le \ell \le \beta \,{\rm cap}(B)} \big(|{\rm range} (\wt{Z}_\ell) \cap B^R| + L_{B^R}(\wt{Z}_\ell)\big)\big] \le 
\\
c\,\beta(L+R)^d \,e^{-c \,L^{2 + \frac{1}{4}} / (L+R)^2}.
\end{array}
\end{equation}
Inserting (\ref{2.60}) and (\ref{2.61}) into (\ref{2.59}) readily yields (\ref{2.54}).

\medskip
We now turn to the proof of (\ref{2.55}). We will first show that
\begin{equation}\label{2.62}
\underset{L \r \infty}{\overline{\lim}} \; \mbox{\f $\dis\frac{1}{|B|}$} \;E^{\IQ^\cC} [V_{\beta\,{\rm cap}(B)}] \le \theta(\beta).
\end{equation}

\n
To this end, we consider $Y$ an independent Poisson variable with parameter $\beta_1\,{\rm cap}(B)$, where $\beta_1 > \beta$, as well as $\wh{Z}_\ell$, $\ell \ge 1$, i.i.d. $P_{\ov{e}_B}$-distributed (that is, distributed as the simple random walk with initial distribution, $\ov{e}_B$), and independent of $Y$. Then (with hopefully obvious notation), $(\sum_{1 \le \ell \le Y} \,L_x (\wt{Z}_\ell))_{x \in B^R}$ is stochastically dominated by $(\sum_{1 \le \ell \le Y} L_x(\wh{Z}_\ell))_{x \in B^R}$, which itself is stochastically dominated by the restriction to $B^R$ of the field of occupation times of random interlacements at level $\beta_1$, i.e.~$(L^{\beta_1}_x)_{x \in B^R}$. Thus, by the monotonocity of $F$, we find that (using stochastic domination in the second inequality)
\begin{equation}\label{2.63}
\begin{split}
E^{\IQ^\cC} [V_{\beta\,{\rm cap}(B)}] & \le E^Y \Big[E^{\IQ^\cC}\Big[\dsl_{x \in B} \,F\Big(\big(\dsl_{1 \le \ell \le Y} L_{x+y} (\wt{Z}_\ell)\big)_{|y|_\infty \le R}\Big)\Big] \,\Big|\, Y \ge \beta \, {\rm cap}(B)\Big] 
\\
& \le \IE \Big[\dsl_{x \in B} F\big((L^{\beta_1}_{x+y})_{|y|_\infty \le R}\big)\Big] \;P[Y \ge  \beta \, {\rm cap}(B)]^{-1}
\\
&\!\!\!\!\stackrel{{\rm def.\, of} \,\theta}{=} |B| \,\theta(\beta_1) \, P[Y \ge  \beta \, {\rm cap}(B)]^{-1}.
\end{split}
\end{equation}

\n
Since $\beta_1 > \beta$, the last probability tends to $1$ as $L$ goes to infinity. Hence, the left member of (\ref{2.62}) is at most $\theta(\beta_1)$. Letting $\beta_1$ decrease to $\beta$ yields (\ref{2.62}) in view of the continuity of $\theta$, see below (\ref{2.1}).

\medskip
We will now prove that
\begin{equation}\label{2.64}
\underset{L \r \infty}{\underline{\lim}} \; \mbox{\f $\dis\frac{1}{|B|}$} \; E^{\IQ^\cC} [V_{\beta \, {\rm cap}(B)}] \ge \theta(\beta) - c (\beta, F)\,K^{-\frac{(d-2)}{2}} .
\end{equation}

\n
We consider $\beta_0 \le \beta$ and $Y$ an independent Poisson variable with parameter $\beta_0 \, {\rm cap}(B)$. By the monotonicity of the non-negative function $F$ we find (see (\ref{2.48}) for notation):
\begin{equation}\label{2.65}
\begin{array}{l}
E^{\IQ^\cC} [V_{\beta\,{\rm cap}(B)}] \ge 
\\[1ex]
E^Y \Big[E^{\IQ^\cC}\Big[\dsl_{x \in B_R} \,F\Big(\big(\dsl_{1 \le \ell \le Y} L_{x+y} (\wt{Z}_\ell)\big)_{|y|_\infty \le R}\Big)\Big] \,\Big|\, Y < \beta \, {\rm cap}(B)\Big]  \ge
\\[2ex]
\dsl_{x \in B_R} E^Y \Big[E^{\IQ^\cC}\Big[F\Big(\big(\dsl_{1 \le \ell \le Y} L_{x+y} (\wt{Z}_\ell)\big)_{|y|_\infty \le R}\Big)\Big] \,\Big] - 
\\[2ex]
E^Y \big[E^{\IQ^\cC} [V_Y], Y \ge \beta \, {\rm cap}(B)\big] \,P[Y < \beta \, {\rm cap}(B)]^{-1} \ge 
\\[1ex]
\dsl_{x \in B_R}  E^Y \Big[E^{\IQ^\cC} \Big[F\Big(\big(\dsl_{1 \le \ell \le Y} L_{x+y}(\wt{Z}_\ell)_{|y|_\infty \le R}\big)\Big)\Big] -
\\[2ex]
\wt{c}(F) (L+R)^2 \,E^Y[Y,Y \ge \beta \, {\rm cap}(B)] \,P[Y < \beta \, {\rm cap}(B)]^{-1},
\end{array}
\end{equation}
where we used a similar bound as in (\ref{2.58}) on the first term of the fourth line.

\medskip
Since $Y$ is Poisson distributed with parameter $\beta_0 \,{\rm cap}(B)$, where $\beta_0 < \beta$, using the Cauchy-Schwarz inequality and usual exponential bounds, one sees that the term in the last line of (\ref{2.65}) goes to $0$ as $L$ tends to infinity (see also (\ref{1.25})). Let us now focus on the first term after the last inequality of (\ref{2.65}). For $x \in B_R$, we denote by $A^{\beta_0}_x$ the event stating that one trajectory of the interlacement at level $\beta_0$ enters $B$ and after exiting $U$ visits $B(x,R)$. Then, the first term after the last inequality of (\ref{2.65}) is at least
\begin{equation}\label{2.66}
\begin{array}{l}
\dsl_{x \in B_R} \big(\IE\big[ F\big((L^{\beta_0}_{x+y})_{|y|_\infty \le R}\big)\big] - \IE\big[F\big( (L^{\beta_0}_{x+y}\big)_{|y|_\infty \le R}\big), A^{\beta_0}_x\big]\big) \stackrel{(\ref{2.1})}{\ge}
\\
|B_R|\,\theta(\beta_0) - \dsl_{x \in B_R} \,c(F) \;\IE\big[1 + \dsl_{|y|_\infty \le R} \,L^{\beta_0}_{x + y}, \, A_x^{\beta_0}\big] \stackrel{\rm Cauchy-Schwarz}{\ge}
\\
|B_R|\,\theta(\beta_0) - \dsl_{x \in B_R} \,c(F) \;\IE\Big[\Big(1 + \dsl_{|y|_\infty \le R} \,L^{\beta_0}_y\Big)^2\Big]^{\frac{1}{2}} \, \IP[A^{\beta_0}_x]^{\frac{1}{2}} \ge
\\
|B_R|\,\theta(\beta_0) - |B_R| \,c(F) \big(1 + |B(0,R)| \,\IE[(L_0^{\beta_0})^2]^{\frac{1}{2}}\big) \,\sup\limits_{x \in B_R} \IP[A_x^{\beta_0}]^{\frac{1}{2}} \stackrel{(\ref{1.13}), \beta_0 < \beta}{\ge}
\\
|B_R| \big(\theta(\beta_0) - c(\beta,F) \sup\limits_{x \in B_R} \IP[A_x^{\beta_0}]^{\frac{1}{2}}\big).
\end{array}
\end{equation}

\n
Then, see for instance (1.20) of \cite{Szni17}, one has (with the notation from below (\ref{1.19})):
\begin{equation}\label{2.67}
a = \sup\limits_{y \in B, x \in B_R} P_y[\mbox{$X$ enters $B(x,R)$ after exiting $U$}] \le c(1 + R)^{d-2} / (KL)^{d-2},
\end{equation}
so that keeping in mind that $\beta_0 \le \beta$, one finds that for any $x \in B_R$,
\begin{equation}\label{2.68}
\IP[A_x^{\beta_0}] \le 1 - e^{-\beta_0 \,{\rm cap}(B)\,a} \stackrel{(\ref{1.25})}{\le} c\,\beta(1 + R)^{d-2}/K^{d-2}.
\end{equation}

\n
Hence, the first term after the last inequality of (\ref{2.65}) is bigger or equal to $|B_R| (\theta(\beta_0) - c'(\beta,F)$ $K^{-\frac{(d-2)}{2}})$, and since, as mentioned above, the term in the last line of (\ref{2.65}) tends to zero as $L$ goes to infinity, the claim (\ref{2.64}) now follows by first letting $L$ go to infinity, and then $\beta_0$ to $\beta$ with the help of the continuity of $\theta$ (see below (\ref{2.1})). This concludes the proof of (\ref{2.55}) and hence of Lemma \ref{lem2.3}.
\end{proof}

We will now conclude the proof of Proposition \ref{prop2.2}. Consider $\rho > 0$ such that
\begin{equation}\label{2.69}
\theta\big(\alpha (1 + \kappa)\big) + \mu \ge \theta\Big(\alpha \big(1 + \mbox{\f $\dis\frac{3}{4}$} \;\kappa\big)\Big) +  \mbox{\f $\dis\frac{\mu}{2}$} + \rho
\end{equation}

\n
(such a $\rho$ can be chosen when $\mu > 0$, or when $\mu = 0$ and (\ref{2.2}) holds as well). Then,
\begin{equation}\label{2.70}
\begin{array}{l}
\IQ^\ell \big[\wt{F}^\beta_{\alpha_1 \,{\rm cap}(B)} \ge  \big(\theta\big(\alpha (1 + \kappa)\big) + \mu\big) \,|B|\big] \le \IQ^{\cC} \big[V_{\alpha_1 \, {\rm cap}(B)} - V'_{\alpha_1 \, {\rm cap}(B)} \ge \rho \,|B|\big] \;+
\\[1ex]
\IQ^\cC \Big[V'_{\alpha_1 \, {\rm cap}(B)} \ge \Big( \theta\big(\alpha \big(1 + \mbox{\f $\dis\frac{3}{4}$} \;\kappa\big)\Big) +  \mbox{\f $\dis\frac{\mu}{2}$}\Big) \,|B|\Big] .
\end{array}
\end{equation}
The first term in the right member of (\ref{2.70}) has super-polynomial decay in $L$ by Markov inequality and (\ref{2.54}) of Lemma \ref{lem2.3}. As for the second term, using the first inequality of (\ref{2.55}) with $\beta = \alpha_1 < (1 + \frac{\kappa}{2})\,\alpha$ (by (\ref{2.25}), (\ref{2.26})), for large $L$, it is bounded by
\begin{equation}\label{2.71}
\IQ^\cC \Big[V'_{\alpha_1 \, {\rm cap}(B)} - E^{\IQ^\cC}  [V'_{\alpha_1 \, {\rm cap}(B)} ] \ge 
\Big\{\theta\Big(\alpha \big(1 + \mbox{\f $\dis\frac{3}{4}$} \;\kappa\big)\Big) +  \mbox{\f $\dis\frac{\mu}{2}$} - \theta \Big(\big(1 + \mbox{\f $\dis\frac{\kappa}{2}$} \big)\;\alpha\Big) - \mbox{\f $\dis\frac{\mu}{4}$} \Big\} \,|B|\Big].
\end{equation}
The expression between the accolades is strictly positive (recall that when $\mu = 0$, we assume (\ref{2.2})), and by the concentration bound (\ref{2.53}), it has super-polynomial decay in $L$. This shows that
\begin{equation}\label{2.72}
\lim\limits_{L \r \infty} \; \mbox{\f $\dis\frac{1}{\log L}$}\; \log \IQ^\cC [\wt{F}^B_{\alpha_1 \, {\rm cap}(B)} \ge \big(\theta\big(\alpha(1 + \kappa)\big) + \mu\big)\big] = - \infty.
\end{equation}

\n
On the other hand, we also have
\begin{equation}\label{2.73}
\begin{array}{l}
\IQ^\cC \big[\wt{F}^B_{\alpha_0 \, {\rm cap}(B)} \le \big(\theta\big(\alpha(1 - \kappa)\big) - \mu\big) \,|B|\big] \le
\\[1ex]
 \IQ^{\cC} [V'_{\alpha_0 \, {\rm cap}(B)} \le \big(\theta\big(\alpha(1 - \kappa)\big) - \mu\big)\;|B|\big] =
\\[1ex]
\IQ^{\cC} \Big[V'_{\alpha_0 \, {\rm cap}(B)} - E^{\IQ^\cC} [V'_{\alpha_0 \, {\rm cap}(B)}] \le  
\Big\{ \theta\big(\alpha(1 - \kappa)\big) - \mu - \mbox{\f $\dis\frac{1}{|B|}$} \;E^{\IQ^\cC}[V'_{\alpha_0 \, {\rm cap}(B)}]\Big\} \;|B|\Big].
\end{array}
\end{equation}

\n
Keeping in mind that $\alpha_0 > (1 - \frac{\kappa}{2})\,\alpha$ by (\ref{2.25}), (\ref{2.26}), it now follows from (\ref{2.54}) and the rightmost inequality in (\ref{2.55}) of Lemma \ref{lem2.3} applied with $\beta = \alpha_0$, that when $K \ge c(\alpha, \kappa, \mu, F)$, then for large $L$ the expression between the accolades in the second line of (\ref{2.73}) is at most $\theta(\alpha(1 - \kappa)) - \mu - (\theta(\alpha (1 - \frac{\kappa}{2})) - \frac{\mu}{2}) < 0$. The concentration bound (\ref{2.53}), with now $\alpha_0$ in place of $\alpha_1$, shows that
\begin{equation}\label{2.74}
\lim\limits_{L \r \infty} \; \mbox{\f $\dis\frac{1}{\log L}$}\; \log \IQ^\cC [\wt{F}^B_{\alpha_0 \, {\rm cap}(B)} \le \big(\theta\big(\alpha(1 - \kappa)\big) - \mu\big)  |B| \big] = - \infty.
\end{equation}

\n
Together with (\ref{2.72}), this proves that when $K \ge c(\alpha, \kappa, \mu, F)$, 
\begin{equation}\label{2.75}
\lim\limits_{L \r \infty} \; \mbox{\f $\dis\frac{1}{\log L}$}\; \log \IQ^\cC [\wt{A}_3] = - \infty .
\end{equation}

\n
Combined with (\ref{2.36}) and (\ref{2.23}), this concludes the proof of (\ref{2.30}), (\ref{2.31}), and hence of Proposition \ref{prop2.2}.
\end{proof}

We will now specify what we mean by a ``good'' or ``bad'' box $B$. We consider a local function $F$ satisfying (\ref{2.1}), with range $R$ and associated function $\theta$, together with $\sum \subseteq (0,\infty)$ a non-empty finite subset, $0 < \kappa < 1$, $\mu \ge 0$, as well as $L \ge 1$, $K \ge 100$ and $z \in \IL$. With $B = B_z$, and $F_0(\ell) = \ell$ for $\ell \ge 0$, as in (\ref{2.3}), we introduce the event (see (\ref{2.11}) for notation)
\begin{equation}\label{2.76}
\cB^{B,F}_{\Sigma, \kappa, \mu} = \textstyle{\bigcup\limits_{\alpha \in \Sigma}} \big(\cB^{B,F}_{\alpha,\kappa,\mu} \cup \cB^{B,F_0}_{\alpha,\kappa, \mu =0}\big),
\end{equation}
and say that the box $B$ is {\it ($\Sigma,\kappa,\mu$)-bad} if the above event occurs and that it is {\it ($\Sigma,\kappa,\mu$)-good} otherwise. In other words, for a $(\Sigma,\kappa,\mu)$-good box $B$, for any $\alpha$ in $\Sigma$, $\langle \ov{e}_B, L^B_{\alpha\, {\rm cap}(B)}\rangle$ lies in $(\alpha(1-\kappa),\alpha(1 + \kappa))$, $\langle m_B, L^B_{\alpha\, {\rm cap}(B)}\rangle$ lies in $(\alpha(1-\kappa),\alpha(1 + \kappa))$ and $\frac{1}{|B|} \,F^B_{\alpha\, {\rm cap}(B)}$ lies in $(\theta(\alpha(1-\kappa)) - \mu$, $\theta(\alpha(1+\kappa)) + \mu)$, see (\ref{2.10}) for notation.

\medskip
In the next section, the above Proposition \ref{prop2.2} and the coupling measure $\IQ^\cC$ from the soft local time technique will endow us with a tool to ensure that up to a ``negligible probability'', ``most boxes'' of relevance are good.

\medskip
We now come to the last lemma of this section, which can be viewed as a convenient device to track the ``undertow'' of random interlacements in a ``good box'', see also Remark \ref{rem2.5} below. We recall the notation (\ref{1.21}), and (\ref{2.44}).

\begin{lemma}\label{lem2.4}(good boxes and the undertow)

\smallskip\n
Consider $F,R,\theta,\Sigma,\kappa,\mu$ as above, as well as $L \ge 1$, $K \ge 100$, $z \in \IL$ and $B = B_z$. Then, for any $u > 0$ and $\alpha \in \Sigma$,
\begin{equation}\label{2.77}
\begin{array}{l}
\mbox{if $B$ is $(\Sigma,\kappa,\mu)$-good and $\frac{N_u(B)}{{\rm cap}(B)} \le \alpha$, then $\langle \ov{e}_B, L^u \rangle$, $\langle m_B, L^u\rangle$ are smaller}
\\[0.5ex]
\mbox{than $(1 + \kappa)\,\alpha$ and $\frac{1}{|B|} \,\sum_{x \in B_R} F((L^u_{x + \point}))$ is smaller than $\theta((1 + \kappa)\alpha) + \mu$.}
\\[0.5ex]
\mbox{Similarly, if $\frac{N_u(B)}{{\rm cap}(B)} \ge \alpha$, then $\langle \ov{e}_B, L^u \rangle$, $\langle m_B, L^u\rangle$ are bigger than $(1-\kappa)\,\alpha$,}
\\[0.5ex]
\mbox{and $\frac{1}{|B|} \;\sum_{x \in B} \;F((L^u_{x + \point}))$ is bigger than $\theta((1- \kappa)\,\alpha) - \mu$}. 
\end{array}
\end{equation}
If $\Sigma$ satisfies the condition
\begin{equation}\label{2.78}
\mbox{for each $\alpha \in \Sigma$, $\Sigma  \cap \big((1 - \kappa)\,\alpha, (1 + \kappa)\,\alpha\big) = \{\alpha\}$},
\end{equation}
then,
\begin{equation}\label{2.79}
\begin{array}{l}
\mbox{if $B$ is $(\Sigma,\kappa,\mu)$-good, for all $u > 0$, at most two elements of $\Sigma$ lie in }
\\[0.5ex]
\mbox{the closed intervals with endpoints consisting of a pair of values among}
\\
\frac{N_u(B)}{{\rm cap}(B)}, \langle \ov{e}_B,L^u\rangle, \langle m_B,L^u\rangle.
\end{array}
\end{equation}
\end{lemma}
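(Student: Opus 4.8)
The plan is to prove the auxiliary bounds (\ref{2.77}) first, and then to read off (\ref{2.79}) from (\ref{2.77}) together with the spacing assumption (\ref{2.78}) by a short pigeonhole-type argument; the estimates (\ref{2.77}) carry the genuine content, while (\ref{2.79}) is essentially bookkeeping.

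For (\ref{2.77}), I would start from the fact that on the box $B$ the occupation field of the interlacement is recorded by its first $N_u(B)$ excursions from $B$ to $\partial U$: more precisely $L^B_{N_u(B),\cdot}\le L^u$ on all of $\IZ^d$, with equality on $B$ (each such excursion is a piece of a genuine trajectory with label $\le u$, so it cannot accumulate more time at any site than $L^u$ does, and it accounts for \emph{all} of the time spent inside $B$; this is how $N_u(B)$ and the $Z^B_\ell$ are set up in (\ref{1.21})--(\ref{1.23}), cf.\ Section~1 of \cite{Szni17}). Hence $\langle\ov{e}_B,L^u\rangle=\langle\ov{e}_B,L^B_{N_u(B)}\rangle$ and $\langle m_B,L^u\rangle=\langle m_B,L^B_{N_u(B)}\rangle$, while for the non-decreasing $F$ one gets, using $B(x,R)\subseteq B$ for $x\in B_R$, that $\sum_{x\in B_R}F((L^u_{x+\point}))=\sum_{x\in B_R}F^B_{N_u(B),x}\le F^B_{N_u(B)}$ and, in the other direction (by monotonicity of $F$ and $L^B_{N_u(B),\cdot}\le L^u$ pointwise), that $F^B_{N_u(B)}=\sum_{x\in B}F^B_{N_u(B),x}\le\sum_{x\in B}F((L^u_{x+\point}))$. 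If $N_u(B)/{\rm cap}(B)\le\alpha$, then $L^B_{N_u(B),\cdot}\le L^B_{\alpha\,{\rm cap}(B),\cdot}$ because enlarging the number of excursions only enlarges the occupation field, so the three quantities are dominated by $\langle\ov{e}_B,L^B_{\alpha\,{\rm cap}(B)}\rangle$, $\langle m_B,L^B_{\alpha\,{\rm cap}(B)}\rangle$ and $F^B_{\alpha\,{\rm cap}(B)}$ respectively; since $B$ is $(\Sigma,\kappa,\mu)$-good and $\alpha\in\Sigma$, the events $\cB^{B,F_0}_{\alpha,\kappa,0}$ and $\cB^{B,F}_{\alpha,\kappa,\mu}$ of (\ref{2.11}) do not occur, which (recalling that the function $\theta$ attached to $F_0$ is the identity, see (\ref{2.3})) bounds these by $\alpha(1+\kappa)$, $\alpha(1+\kappa)$ and $|B|\,(\theta(\alpha(1+\kappa))+\mu)$, and yields the first half of (\ref{2.77}). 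The case $N_u(B)/{\rm cap}(B)\ge\alpha$ is symmetric, now with the reversed inequality $L^B_{N_u(B),\cdot}\ge L^B_{\alpha\,{\rm cap}(B),\cdot}$ and the lower bounds coming from the same two good events.

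For (\ref{2.79}) I would argue by contradiction. Abbreviate $\beta=N_u(B)/{\rm cap}(B)$, $a_2=\langle\ov{e}_B,L^u\rangle$, $a_3=\langle m_B,L^u\rangle$, and suppose three distinct levels $\alpha<\alpha'<\alpha''$ of $\Sigma$ all lie in the closed interval $[m,M]$ spanned by two of the numbers $\beta,a_2,a_3$ (with $m$ and $M$ the smaller and larger of that pair). The idea is to test (\ref{2.77}) and (\ref{2.78}) at the \emph{middle} level $\alpha'$. If $\beta\ge\alpha'$, the second half of (\ref{2.77}) applied with $\alpha'$ gives $a_2,a_3>(1-\kappa)\alpha'$; moreover $m\ne\beta$ (otherwise $\beta=m\le\alpha<\alpha'$, contradicting $\beta\ge\alpha'$), so $m\in\{a_2,a_3\}$ and hence $\alpha\ge m>(1-\kappa)\alpha'$, placing $\alpha$ in $\big((1-\kappa)\alpha',(1+\kappa)\alpha'\big)\setminus\{\alpha'\}$ and contradicting (\ref{2.78}). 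If instead $\beta<\alpha'$, the first half of (\ref{2.77}) applied with $\alpha'$ gives $a_2,a_3<(1+\kappa)\alpha'$; moreover $M\ne\beta$ (otherwise $\alpha''\le M=\beta<\alpha'$, contradicting $\alpha''>\alpha'$), so $M\in\{a_2,a_3\}$ and $\alpha''\le M<(1+\kappa)\alpha'$, again placing $\alpha''$ in the forbidden window of $\alpha'$. Either way (\ref{2.78}) is violated, which proves (\ref{2.79}).

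The only delicate point I anticipate is inside (\ref{2.77}): one must keep the monotone comparisons consistently oriented, noting in particular that the exact identity $L^u=L^B_{N_u(B)}$ holds only on $B$, so for the lower bound on the $F$-average one is forced to use the one-sided inequality $F^B_{N_u(B)}\le\sum_{x\in B}F((L^u_{x+\point}))$ (the sites $x\in B\setminus B_R$ whose $R$-neighbourhood pokes out of $B$ can only help) — which is precisely why (\ref{2.77}) is phrased with the sum over $B_R$ in the upper bound and over $B$ in the lower bound. The combinatorial step (\ref{2.79}) presents no real obstacle once one thinks of evaluating (\ref{2.77}) and (\ref{2.78}) at the central of the three offending levels.
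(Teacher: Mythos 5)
Your proposal is correct and follows essentially the same route as the paper: the identity $L^u_x=L^B_{N_u(B),x}$ on $B$ (with the one-sided domination off $B$), monotonicity in the number of excursions, and the non-occurrence of the two bad events in (\ref{2.11}) for $F$ and $F_0$ give (\ref{2.77}), including the correct asymmetry $B_R$ versus $B$. For (\ref{2.79}) you argue by contradiction at the middle level while the paper brackets $N_u(B)/{\rm cap}(B)$ between consecutive points of $\{0,\infty\}\cup\Sigma$ and bounds all three quantities inside $((1-\kappa)\underline{\alpha},(1+\kappa)\ov{\alpha})$, but this is only a presentational variant of the same bookkeeping based on (\ref{2.77}) and (\ref{2.78}).
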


\begin{proof}
We first prove (\ref{2.77}). We consider $u > 0$, $\alpha \in \Sigma$ and a $(\Sigma,\kappa,\mu)$-good box $B$. If we have $N_u(B) \le \alpha\,{\rm cap}(B)$, then in the notation of (\ref{1.23}) and above (\ref{1.1})
\begin{equation}\label{2.80}
|B| \,\langle m_B,L^u\rangle = \dsl_{x \in B} L^u_x = \dsl_{x \in B} \,L^B_{N_u(B),x} \le |B| \,\langle m_B, L^B_{\alpha \, {\rm cap}(B)}\rangle  < |B| \,\alpha(1 + \kappa),
\end{equation}

\n
and likewise $\langle \ov{e}_B,L^u\rangle \le \langle \ov{e}_B, L^B_{\alpha \, {\rm cap}(B)} \rangle < \alpha(1 + \kappa)$. In addition, $\sum_{x \in B_R} F((L^u_{x + \point})) \le F^B_{\alpha \, {\rm cap}(B)} < |B|(\theta((1 + \kappa) \,\alpha)  + \mu$).

\medskip
In a similar fashion, if $N_u(B) \ge \alpha \, {\rm cap}(B)$, one finds that $\langle m_B,L^u\rangle > (1 -\kappa)\,\alpha$, $\langle \ov{e}_B,L^u\rangle > (1-\kappa)\,\alpha$ and $\sum_{x \in B} F(L^u_{x + \point})) > |B|(\theta((1-\kappa)\,\alpha) - \mu$), and this completes the proof of (\ref{2.77}).

\medskip
We now prove (\ref{2.79}). We assume that (\ref{2.78}) holds, that $B$ is $(\Sigma,\kappa,\mu)$-good, and consider $u > 0$. We denote by $\ov{\Sigma}$ the finite set $\{0,\infty\} \cup \Sigma$. Let $[\underline{\alpha}, \ov{\alpha}]$ be the unique interval between consecutive values of $\ov{\Sigma}$ that contains $N_u(B)/{\rm cap}(B)$. By (\ref{2.77}), both $\langle m_B,L^u\rangle$ and $\langle \ov{e}_B,L^u\rangle$ are smaller than $(1 + \kappa)\,\ov{\alpha}$, and bigger than $(1-\kappa)\,\underline{\alpha}$, if $\underline{\alpha} > 0$. By (\ref{2.78}), when $\underline{\alpha} > 0$, the interval $((1-\kappa)\,\underline{\alpha}$, $(1 + \kappa)\,\ov{\alpha})$ contains at most two values of $\Sigma$. If instead $\underline{\alpha} = 0$, then $[0,(1+ \kappa)\, \ov{\alpha})$ contains at most one value of $\Sigma$. The claim (\ref{2.79}) follows, and this concludes the proof of Lemma \ref{lem2.4}.
\end{proof}

\begin{remark}\label{rem2.5} \rm 
In essence, the above Lemma \ref{lem2.4} permits to track the ``undertow'' $N_u(B)/$ ${\rm cap}(B)$ created by the interlacements at level $u$ in the box $B$, when $B$ is $(\Sigma,\kappa,\mu)$-good by means of the quantities $\langle \ov{e}_B,L^u\rangle$ and $\langle m_B,L^u\rangle$ (informally, the undertow in the box $B$  corresponds to a ``local value'' of the parameter of the interlacements for the box $B$). Also, in the case where (\ref{2.2}) holds (that is, when~$\theta$ is strictly increasing), one could track the undertow in the box $B$, when the box is $(\Sigma,\kappa,\mu = 0)$-good by means of the quantity $\theta^{-1} (\frac{1}{|B|} \, \sum_{x \in B} F((L^u_{x + \point}))$), but we will not need this fact in what follows.  \hfill $\square$
\end{remark}

\section{Super-exponential decay}
\setcounter{equation}{0}

In this section, we consider a local function $F$ satisfying (\ref{2.1}), as well as a non-empty finite subset $\Sigma$ of $(0,\infty)$, $\kappa \in (0,1)$, and $\mu \ge 0$ (with $\mu = 0$ only in the case when (\ref{2.2}) additionally holds, i.e.~when~$\theta$ is strictly increasing). We introduce large boxes centered at the origin $[-N_L,N_L]^d$, with $N_L$ of order $L^{d/2} / \sqrt{\log L}$, and show in the main Proposition \ref{prop3.1} that most boxes $B_z$, $z \in \IL$ (see (\ref{1.19})) within such large boxes are $(\Sigma, \kappa, \mu)$-good, except on a set with exponentially decaying probability at a rate faster than $N_L^{d-2}$, as $L$ tends to infinity. The main preparation has already taken place in the previous section, and the arguments in this section are similar to the proofs of Proposition 5.4 of \cite{Szni15} or of Theorem 5.1 of \cite{Szni17}.

\medskip
More precisely, we consider $F$ as in (\ref{2.1}) as well as
\begin{equation}\label{3.1}
\begin{array}{l}
\mbox{$\Sigma$ a non-empty finite subset of $(0,\infty)$, $\kappa \in (0,1)$ and $\mu \ge 0$,}
\\
\mbox{where $\mu = 0$ is allowed if $F$ satisfies (\ref{2.2}), and $\mu > 0$ otherwise.}
\end{array}
\end{equation}

\n
We recall that $F_0$ denotes the local function $F_0(\ell) = \ell$ for $\ell \ge 0$, see (\ref{2.3}), and in the notation of Proposition \ref{prop2.2} we introduce the constant 
\begin{equation}\label{3.2}
c_3 (\Sigma, \kappa, \mu, F) = \sup\limits_{\alpha \in \Sigma} \,c_1(\alpha, \kappa,\mu, F) \vee c_1(\alpha, \kappa, \mu = 0, F_0) ( \ge 100).
\end{equation}
In this section, from now on, we assume that
\begin{equation}\label{3.3}
K \ge c_3(\Sigma,\kappa,\mu, F).
\end{equation}
For $L \ge 1$ and $\cC$ a non-empty finite subset of $\IL$ as in (\ref{2.12}), and $B \in \cC$ (i.e.~$B = B_z$ with $z \in \cC$), we define (see (\ref{2.27}) for notation)
\begin{equation}\label{3.4}
\wt{\cB}^{B,F}_{\Sigma,\kappa,\mu} = \textstyle{\bigcup\limits_{\alpha \in \Sigma}} \,\big(\wt{\cB}^{B,F}_{\alpha,\kappa,\mu} \cup \wt{B}^{B,F_0}_{\alpha, \kappa, \mu = 0}\big),
\end{equation}
as well as
\begin{equation}\label{3.5}
\eta = \IQ^\cC [\wt{\cB}^{B,F}_{\Sigma,\kappa,\mu}].
\end{equation}

\medskip\n
(this probability does not depend on the choice of $\cC$ or of $B \in \cC$). By (\ref{2.30}) of Proposition \ref{prop2.2} and (\ref{3.3}) above, we know that
\begin{equation}\label{3.6}
\lim\limits_{L \r \infty} \; \mbox{\f $\dis\frac{1}{\log L}$} \; \log \eta = - \infty .
\end{equation}
We define $\rho$ (that depends on $F, \Sigma,\kappa,\mu, K$ and $L$) as
\begin{equation}\label{3.7}
\mbox{$\rho = \sqrt{\mbox{\f $\dis\frac{\log L}{|\log \eta |}$}}$,  so that $\lim\limits_{L \r \infty} \; \rho =0$}.
\end{equation}

\n
We sometimes write $\eta_L$ and $\rho_L$ to underline the $L$-dependence of these quantities. In addition, we note that by (\ref{2.17}) (and just as in (\ref{2.28}))
\begin{equation}\label{3.8}
\mbox{under $\IQ^\cC$, the events $\wt{B}^{B,F}_{\Sigma,\kappa,\mu}$, $B \in \cC$ are i.i.d. with probability $\eta$}.
\end{equation}

\n
We then choose $N_L$ for $L \ge 2$ (this size will roughly correspond to $N$ in Sections 4 and 5):
\begin{equation}\label{3.9}
\mbox{$N_L = L^{\frac{d}{2}} (\log L)^{-\frac{1}{2}}$, so that $(\frac{N_L}{L})^d \asymp N_L^{d-2} / \log N_L$, as $L \r \infty$},
\end{equation}

\medskip\n
where ``$\asymp$'' means that the ratio of both sides remains bounded away from $0$ and $\infty$, as $L$ goes to infinity. We then introduce the ``bad event'' (see (\ref{2.76}) for notation)
\begin{equation}\label{3.10}
\begin{array}{l}
\cB = \{\mbox{there are at least $\rho_L(\frac{N_L}{L})^d$ boxes $B_z$ intersecting $[-N_L,N_L]^d$,}
\\
\mbox{which are $(\Sigma,\kappa,\mu)$-bad}\}.
\end{array}
\end{equation}

\n
The main result of this section shows that $\cB$ is a negligible event for our (later) purpose. The proof is similar to that of Proposition 5.4 of \cite{Szni15} or Theorem 5.1 of \cite{Szni17}, but is briefly sketched for the reader's convenience.

\begin{proposition}\label{prop3.1}
Assume that $F$ satisfies (\ref{2.1}), $\Sigma, \kappa, \mu$ satisfies (\ref{3.1}) and $K \ge c_3(\Sigma, \kappa, \mu, F)$ as in (\ref{3.2}). Then, we have
\begin{equation}\label{3.11}
\lim\limits_{L \r \infty} \;N_L^{-(d-2)} \log \,\IP[\cB] = - \infty.
\end{equation}
\end{proposition}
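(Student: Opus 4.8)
The plan is to reduce matters, by a pigeonhole argument over a bounded number of well-separated sub-families of boxes, to a binomial tail estimate for i.i.d.\ bad events whose common probability $\eta_L$ is super-polynomially small in $L$ by (\ref{3.6}), and then to check that the resulting exponential rate beats $N_L^{d-2}$ thanks to the choices of $\rho_L$ in (\ref{3.7}) and of $N_L$ in (\ref{3.9}).

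First I would partition the index lattice. Set $\ov{K} = 2K+3$ as in (\ref{2.12}) and $M = \ov{K}^{\,d}$, and write $\IL = L\IZ^d$ as the disjoint union of the $M$ cosets $\cC^{(j)} = \{z \in \IL;\ z/L \equiv j \ (\mathrm{mod}\ \ov{K})\}$, $j \in \{0,\dots,\ov{K}-1\}^d$. Any two distinct points of a given $\cC^{(j)}$ are at $|\cdot|_\infty$-distance $\ge \ov{K}L$, so every finite subset of $\cC^{(j)}$ is admissible in (\ref{2.12}). Let $\cJ = \{z \in \IL;\ B_z \cap [-N_L,N_L]^d \ne \emptyset\}$ and $\cJ^{(j)} = \cJ \cap \cC^{(j)}$; then $|\cJ| \le c\,(N_L/L)^d$ for large $L$, hence also $n_j := |\cJ^{(j)}| \le c\,(N_L/L)^d$. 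If at least $\rho_L (N_L/L)^d$ boxes of $\cJ$ are $(\Sigma,\kappa,\mu)$-bad, then by pigeonhole some $\cJ^{(j)}$ contains at least $k_L := \big\lceil \rho_L (N_L/L)^d / M\big\rceil$ bad boxes, so
\[
\cB \subseteq \bigcup_{j=1}^{M}\big\{\text{at least }k_L\text{ boxes }B_z,\ z\in\cJ^{(j)},\text{ are }(\Sigma,\kappa,\mu)\text{-bad}\big\}.
\]

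Next I would use, for each $j$, the soft-local-time coupling $\IQ^{\cC}$ of Section 2 with $\cC = \cJ^{(j)}$. Because $K \ge c_3(\Sigma,\kappa,\mu,F)$, see (\ref{3.2})--(\ref{3.3}), Proposition \ref{prop2.2} applies for large $L$: for every $B \in \cJ^{(j)}$ and $\alpha \in \Sigma$ one has $\cB^{B,F}_{\alpha,\kappa,\mu} \subseteq \wt{\cB}^{B,F}_{\alpha,\kappa,\mu}$ and $\cB^{B,F_0}_{\alpha,\kappa,0} \subseteq \wt{\cB}^{B,F_0}_{\alpha,\kappa,0}$ under $\IQ^{\cJ^{(j)}}$, so by (\ref{2.76}) and (\ref{3.4}) every $(\Sigma,\kappa,\mu)$-bad box $B\in\cJ^{(j)}$ satisfies $\wt{\cB}^{B,F}_{\Sigma,\kappa,\mu}$; moreover, by (\ref{3.8}) the events $\wt{\cB}^{B,F}_{\Sigma,\kappa,\mu}$, $B\in\cJ^{(j)}$, are i.i.d.\ with probability $\eta_L$. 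Since $\IQ^{\cJ^{(j)}}$ has $\IP$ for its $\IP$-marginal, the number of $(\Sigma,\kappa,\mu)$-bad boxes in $\cJ^{(j)}$ is, under $\IQ^{\cJ^{(j)}}$, stochastically dominated by a $\mathrm{Bin}(n_j,\eta_L)$ random variable, whence
\[
\IP\big[\text{at least }k_L\text{ boxes of }\cJ^{(j)}\text{ are bad}\big] \le \binom{n_j}{k_L}\,\eta_L^{\,k_L} \le \Big(\frac{e\,n_j\,\eta_L}{k_L}\Big)^{\!k_L}.
\]

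It remains to collect the scales. By (\ref{3.6}), $|\log\eta_L|/\log L \to \infty$, so from (\ref{3.7}) we get $\rho_L^2 = \log L/|\log\eta_L| \to 0$, while $\rho_L|\log\eta_L| = (\log L\cdot|\log\eta_L|)^{1/2} \to \infty$ faster than any multiple of $\log L$, and $\log(1/\rho_L) = \tfrac12\big(\log|\log\eta_L| - \log\log L\big) = o(|\log\eta_L|)$. Using $n_j/k_L \le cM/\rho_L$, this gives $\log\!\big(e\,n_j\eta_L/k_L\big) \le \log(ecM/\rho_L) - |\log\eta_L| \le -\tfrac12|\log\eta_L|$ for large $L$, and therefore
\[
\log\IP\big[\text{at least }k_L\text{ boxes of }\cJ^{(j)}\text{ are bad}\big] \le -\tfrac12\,k_L\,|\log\eta_L| \le -\tfrac{1}{2M}\,\rho_L\,|\log\eta_L|\,(N_L/L)^d.
\]
A union bound over the $M$ values of $j$ only adds $\log M$, and (\ref{3.9}) gives $(N_L/L)^d \asymp N_L^{d-2}/\log N_L$ with $\log N_L \asymp \log L$; hence $N_L^{-(d-2)}\log\IP[\cB] \le -c\,\rho_L|\log\eta_L|/\log N_L + o(1) \to -\infty$, which is (\ref{3.11}). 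The one genuinely delicate point is this last step of scale bookkeeping: one must check that the number $(N_L/L)^d$ of relevant boxes, the super-polynomial smallness of $\eta_L$, and the interpolating choice $\rho_L = (\log L/|\log\eta_L|)^{1/2}$ conspire so that $\rho_L|\log\eta_L| \gg \log N_L$; everything else is the coupling already constructed in Section 2 together with an elementary binomial tail bound.
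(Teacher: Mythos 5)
Your proposal is correct and follows essentially the same route as the paper's proof: the decomposition of $\IL$ into the $\ov{K}^{\,d}$ well-separated cosets, the inclusion of bad events into the i.i.d.\ events $\wt{\cB}^{B,F}_{\Sigma,\kappa,\mu}$ via Proposition \ref{prop2.2} under the coupling $\IQ^{\cC}$, a binomial tail estimate, and the same bookkeeping showing $\rho_L|\log\eta_L|\,(N_L/L)^d \gg N_L^{d-2}$. The only (cosmetic) difference is that you use the elementary bound $\binom{n}{k}\eta^k \le (en\eta/k)^k$ where the paper invokes the relative-entropy Chernoff bound, which changes nothing in the conclusion.
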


\begin{proof}
We write $\IL$ as the disjoint union of the sets $L \,\tau + \ov{K} \,L\,\IZ^d$, where $\tau$ runs over $\{0,\dots,\ov{K}-1\}^d$ (recall $\ov{K} = 2 K + 3$, see (\ref{2.12})). For such a $\tau$ we write
\begin{equation}\label{3.12}
\cC^\tau = \{z \in L \,\tau + \ov{K} \, L \, \IZ^d; \;B_z \cap [-N_L,N_L]^d \not= \emptyset\}.
\end{equation}

\n
Making use of (\ref{2.29}) (and $K \ge c_3$), we see that for large $L$ and any $\tau \in \{0,\dots,\ov{K}-1\}^d$, and $B \in \cC^\tau$, one has $\cB^{B,F}_{\Sigma, \kappa, \mu} \subseteq \wt{\cB}^{B,F}_{\Sigma, \kappa,\mu}$, so that
\begin{equation}\label{3.13}
\begin{split}
\IP[\cB] & \le \dsl_\tau \, \IP[ \mbox{at least $\ov{K}\,^{-d} \rho_L\big(\frac{N_L}{L}\big)^d$ boxes $B \in \cC^\tau$ are $(\Sigma, \kappa, \mu)$-bad}]
\\[-1ex]
&\!\!\! \stackrel{(\ref{2.29})}{\le} \dsl_\tau \, \IQ^{\cC^\tau} [ \wt{\cB}^{B,F}_{\Sigma, \kappa, \mu} \; \mbox{occurs for at least $\ov{K}\,^{-d} \rho_L\big(\frac{N_L}{L}\big)^d$ boxes $B \in \cC^\tau] \stackrel{\rm def}{=} A$}.
\end{split}
\end{equation}

\n
We now make use of (\ref{3.8}). We let $U_i, i \ge 1$, stand for i.i.d. Bernoulli variables with success probability $\eta_L$. Then, $m = [\ov{c}_4 (\frac{N_L}{L})^d]$ is an upper bound on $|\cC^\tau|$ for each $\tau \in \{0,\dots,\ov{K}-1\}^d$, and by usual exponential bounds on the sums of i.i.d. variables, we see that
\begin{equation}\label{3.14}
A \le \ov{K}\,^d \,P\Big[\dsl_{i=1}^m \,U_i \ge \ov{K}\,^{-d} \,\rho_L \Big(\mbox{\f $\dis\frac{N_L}{L}$}\Big)^d \Big] \le \ov{K}\,^d \,\exp\{ -m \, I_L\},
\end{equation}
where we have $I_L = 0$ if $\eta_L = 1$ and otherwise
\begin{equation}\label{3.15}
I_L = \wt{\rho} \log \;\mbox{\f $\dis\frac{\wt{\rho}}{\eta}$} + (1 - \wt{\rho}) \; \log \;\mbox{\f $\dis\frac{1- \wt{\rho}}{1 - \eta}$}, \;\mbox{with}\;\; \wt{\rho} = \min  \Big\{\mbox{\f $\dis\frac{1}{m\,\ov{K}\,\!^d}$} \;\rho \,\Big(\mbox{\f $\dis\frac{N_L}{L}$}\Big)^d, \,1\Big\}.
\end{equation}
Note that for large $L$, $\wt{\rho} \sim \frac{1}{\ov{c}_4 \,\ov{K}\,^{\!d}}\, \rho$.

\medskip
We will now see that $N_L^{d-2}$ is small compared to $m\,I_L$, as $L \r \infty$, see (\ref{3.19}) below. Indeed, for large $L$, one has
\begin{equation}\label{3.16}
\begin{split}
\log \;\mbox{\f $\dis\frac{\wt{\rho}}{\eta}$} & = \log \rho + \log \mbox{\f $\dis\frac{1}{\eta}$} + \log \Big(\Big(\mbox{\f $\dis\frac{N_L}{L}$}\Big)^d\; \mbox{\f $\dis\frac{1}{m\,\ov{K}\,\!^d}$}\Big)
\\
&\!\!\!\stackrel{(\ref{3.7})}{=} \mbox{\f $\dis\frac{1}{2}$} \;\log \log L - \mbox{\f $\dis\frac{1}{2}$}\;\log \log \; \mbox{\f $\dis\frac{1}{\eta}$} + \log \;\mbox{\f $\dis\frac{1}{\eta}$} + \log \Big(\Big(\mbox{\f $\dis\frac{N_L}{L}$}\Big)^d\; \mbox{\f $\dis\frac{1}{m\,\ov{K}\,\!^d}$}\Big) \stackrel{(\ref{3.6})}{\sim} \log \mbox{\f $\dis\frac{1}{\eta}$}.
\end{split}
\end{equation}
Hence, for $L \r \infty$, we have
\begin{align}
I_L \sim &\;\wt{\rho} \;\log \Big(\mbox{\f $\dis\frac{\wt{\rho}}{\eta}$}\Big) \sim \wt{\rho} \;\log \;\mbox{\f $\dis\frac{1}{\eta}$}, \;\mbox{so that} \label{3.17}
\\[1ex]
m\,I_L \sim &\; \ov{K}\,^{-d} \rho \Big(\mbox{\f $\dis\frac{N_L}{L}$}\Big)^d \; \log \; \mbox{\f $\dis\frac{1}{\eta}$} \stackrel{(\ref{3.7})}{=}  \ov{K}\,^{-d} \sqrt{\log L \, \log \textstyle{\frac{1}{\eta}}} \;\;\Big(\mbox{\f $\dis\frac{N_L}{L}$}\Big)^d \stackrel{(\ref{3.9})}{\ge} \label{3.18}
\\
&  c\,\ov{K}\,^{-d}  \sqrt{\log L \, \log \textstyle{\frac{1}{\eta}}}\;\; \mbox{\f $\dis\frac{N_L^{d-2}}{\log N_L}$}  \stackrel{(\ref{3.9})}{\ge}   c'\,\ov{K}\,^{-d} \Big(\mbox{\f $\dis\frac{\log \frac{1}{\eta}}{\log L}$}\Big)^{\frac{1}{2}} \,N^{d-2}_L.\nonumber
\end{align}
By (\ref{3.6}) it follows  that
\begin{equation}\label{3.19}
N_L^{d-2} = o(m\,  I_L), \;\mbox{as $L \r \infty$}.
\end{equation}

\medskip\n
Inserting this information in (\ref{3.14}) and (\ref{3.13}) yields the claim (\ref{3.11}). This concludes the proof of Proposition \ref{prop3.1}.
\end{proof}

\section{Large deviation lower bounds}
\setcounter{equation}{0}

The principal object of this section is the proof of Theorem \ref{theo4.2} that states our main asymptotic lower bound on the probability of the excess deviation event $\cA_N$, see (\ref{0.7}) and (\ref{4.6}) below. The proof proceeds in a number of steps. First, with the help of the super-exponential estimates of the previous section, we replace the event $\cA_N$ by an event $\cA^1_N$ that imposes lower bounds on the average value $\langle m_B, L^u \rangle$ of the occupation time in the boxes $B$ of size $\sim N^{\frac{2}{d}} (\log N)^{\frac{1}{d}}$ inside $B(0,N)$, see Proposition \ref{prop4.3}. Then, we obtain the main asymptotic lower bound on $\IP[\cA^1_N]$ with the help of the change of probability method and {\it tilted interlacements} that were recalled in Section 1, see Proposition \ref{prop4.4}. The central estimate showing that $\cA^1_N$ is a typical event for the tilted interlacements appears in Lemma \ref{lem4.5}. The proof of the Theorem \ref{theo4.2} is completed at the end of this section and Remark \ref{rem4.6} describes some extensions.

\medskip
We now introduce our assumptions. We consider a local function $F$ satisfying (\ref{2.1}), with associated range $R \ge 0$ and function $\theta(v) = \IE[F((L^v_\point))]$, $v \ge 0$. Recall that $\theta$ is non-decreasing, continuous, and $\theta(0) = 0$. We additionally require that
\begin{equation}\label{4.1}
\mbox{$F$ is bounded and not identically equal to $0$}.
\end{equation}
In particular (\ref{2.2}) holds, and we write
\begin{equation}\label{4.2}
\theta_\infty = \lim\limits_{v \r \infty} \;\theta(v) \in (0, \infty).
\end{equation}

\begin{remark}\label{rem4.1} \rm 1) The examples (\ref{2.4}), (\ref{2.5}), (\ref{2.6}), (\ref{2.7}) all satisfy (\ref{4.1}), and $\theta_\infty = 1$ in all four cases.

\bigskip\n
2) Since $\theta$ is a non-decreasing, continuous, bounded function, it readily follows that
\begin{equation}\label{4.3}
\mbox{$\theta$ is a uniformly continuous function on $\IR_+$}.
\end{equation}

\n
In addition, by monotonicity of the non-negative local function $F$, we have $\|F\|_\infty = \lim_{v \r \infty} F((L^v_\point))$, $\IP$-a.s., see also below (\ref{2.1}). By monotone convergence, one finds with (\ref{4.2}) that $F$ is bounded with sup-norm $\theta_\infty$:
\begin{equation}\label{4.4}
\| F\|_\infty = \lim\limits_{v \r \infty} \;\theta(v) = \theta_\infty < \infty.
\end{equation}

\vspace{-3ex}
\hfill $\square$
\end{remark}

Given a level $u > 0$, we will sample $F((L^u_{x + \point}))$ at the points $x$ of the discrete blow-up $D_N = (DN) \cap \IZ^d$, $N \ge 1$, of a compact model shape $D \subseteq \IR^d$ satisfying (\ref{0.6}). We now come to the main result of this section.

\begin{theorem}\label{theo4.2} (large deviation lower bound)

\smallskip\n
Assume that the local function $F$ satisfies (\ref{2.1}) and (\ref{4.1}).  For $N \ge 1$, let $D_N = (ND) \cap \IZ^d$, where $D \subseteq \IR^d$ is a compact set satisfying (\ref{0.6}). Then, for any $u > 0$ and $\nu$ in $(\theta(u),\theta_\infty)$, one has
\begin{equation}\label{4.5}
\begin{array}{l}
\liminf_N \; \mbox{\f $\dis\frac{1}{N^{d-2}}$} \;\log \IP[\cA_N] 
\\[1ex]
\ge - \inf\Big\{ \fd \;\dis\int_{\IR^d} | \nabla \varphi|^2dz; \;\varphi \in C^\infty_0(\IR^d) \;\mbox{and}
\; \strokedint_D \theta\big((\sqrt{u} + \varphi)^2\big) \,dz > \nu\Big\}  = - I^D_\nu ,
\end{array}
\end{equation}
where we have set
\begin{equation}\label{4.6}
\cA_N = \Big\{\dsl_{x \in D_N} \;F\big((L^u_{x + \point})\big) > \nu \, |D_N|\Big\},
\end{equation}
and for $a \ge 0$,
\begin{equation}\label{4.7}
I^D_a = \inf \Big\{ \fd \; \dis\int_{\IR^d} | \nabla \varphi |^2 \,dz; \, \varphi \ge 0, \;\varphi \in C^\infty_0 (\IR^d) \;\mbox{and} \; \strokedint_D \theta\big((\sqrt{u} + \varphi)^2\big) \,dz > a\Big\}.
\end{equation}

\n
($\strokedint_D \dots dz$ refers to the normalized integral $\frac{1}{|D|} \int_D \dots dz$, as in (\ref{0.8}), and the equality in the second line of (\ref{4.5}) will be proved at the end of this section).
\end{theorem}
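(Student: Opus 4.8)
The plan is to derive the lower bound on $\frac{1}{N^{d-2}}\log\IP[\cA_N]$ in several stages, following the blueprint sketched in the Introduction. \textbf{Step 1 (replacing $\cA_N$ by a good-box event).} Fix $\varphi\in C^\infty_0(\IR^d)$, $\varphi\ge 0$, with $\strokedint_D\theta((\sqrt u+\varphi)^2)\,dz>\nu$; by uniform continuity of $\theta$ (see (\ref{4.3})) and smoothness of $\varphi$, one can discretize: choose a finite set $\Sigma\subseteq(0,\infty)$, $\kappa\in(0,1)$ small and $\mu\ge 0$ (with $\mu=0$ if (\ref{2.2}) holds, otherwise $\mu>0$ small) and step-approximate $z\mapsto(\sqrt u+\varphi(z))^2$ from below by a function constant $=\alpha_z\in\Sigma$ on each block of boxes $B_z$, $z\in\IL$, with $L\sim N^{2/d}(\log N)^{1/d}$ as in (\ref{3.9})/(\ref{4.8}), such that the average of $\theta((1-\kappa)\alpha_z)-\mu$ over $D_N$ still exceeds $\nu$. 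Invoking Proposition \ref{prop3.1}: off the negligible event $\cB$ of (\ref{3.10}), all but $\rho_L(N_L/L)^d$ boxes meeting $B(0,N)$ are $(\Sigma,\kappa,\mu)$-good, and on good boxes Lemma \ref{lem2.4}, (\ref{2.77}), says that $\frac{N_u(B_z)}{{\rm cap}(B_z)}\ge\alpha_z$ forces $\frac{1}{|B_z|}\sum_{x\in B_z}F((L^u_{x+\point}))\ge\theta((1-\kappa)\alpha_z)-\mu$. Summing over boxes and using $\|F\|_\infty=\theta_\infty<\infty$ (see (\ref{4.4})) to absorb the contribution of the few bad boxes, one obtains $\cA^1_N\cap\cB^c\subseteq\cA_N$, where $\cA^1_N$ imposes only the $\langle m_{B_z},L^u\rangle$-type (equivalently, via Lemma \ref{lem2.4} again, $N_u(B_z)/{\rm cap}(B_z)$-type) lower bounds $\langle m_{B_z},L^u\rangle\ge(1-\kappa)\alpha_z$ on each relevant box. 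This is Proposition \ref{prop4.3}. Since $\IP[\cB]$ decays faster than $e^{-cN^{d-2}}$, it suffices to bound $\IP[\cA^1_N]$ from below.

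\textbf{Step 2 (change of probability via tilted interlacements).} Following \cite{LiSzni14}, introduce $\wt f_N:\IZ^d\to(0,\infty)$ with $\wt f_N(x)=\sqrt u+\varphi(x/N)$ on a large box and $=1$ outside, so that $\wt f_N^2(x)\approx(\sqrt u+\varphi(x/N))^2$, and let $\wt\IP_N=e^{\langle L^u,\wt V_N\rangle}\IP$ be the corresponding tilted measure (see (\ref{1.7})--(\ref{1.14})). The relative entropy inequality gives
\begin{equation}\label{plan.1}
\log\IP[\cA^1_N]\ge -\frac{1}{\wt\IP_N[\cA^1_N]}\Big(H(\wt\IP_N\,|\,\IP)+\tfrac{1}{e}\Big),
\end{equation}
and one computes $H(\wt\IP_N\,|\,\IP)=\wt\IE_N[\langle L^u,\wt V_N\rangle]$. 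The point is that, since $\wt f_N$ varies on scale $N$, a discrete Dirichlet-form computation gives $H(\wt\IP_N\,|\,\IP)=N^{d-2}\big(\tfrac1d\int_{\IR^d}|\nabla\varphi|^2\,dz+o(1)\big)$ as $N\to\infty$ — this is the standard scaling of the discrete energy to the continuous Dirichlet integral, using (\ref{1.12}), (\ref{1.13}) (or Lemma \ref{lem1.2}) and the fact that $\wt\IE_N[L^u_x]$ is close to $(\sqrt u+\varphi(x/N))^2$ in the bulk. This is Proposition \ref{prop4.4}.

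\textbf{Step 3 ($\cA^1_N$ is typical under $\wt\IP_N$).} This is Lemma \ref{lem4.5}, and is the crux. Under $\wt\IP_N$ the occupation field behaves like random interlacements with slowly varying level $(\sqrt u+\varphi(x/N))^2$, so $\wt\IE_N[\langle m_{B_z},L^u\rangle]\approx(\sqrt u+\varphi(x_{B_z}/N))^2\ge\alpha_z/(1-\kappa')$ (say) with room to spare by our choice of the step approximation; one then needs a concentration estimate showing $\wt\IP_N$-a.s. each $\langle m_{B_z},L^u\rangle$ is within a small multiplicative error of its mean, uniformly over the $O(N^{d-2}/\log N)$ boxes, so that $\wt\IP_N[\cA^1_N]\to1$. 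Here the exponential moment bounds of Lemma \ref{lem1.1} under the walk $\wt P_x$ of (\ref{1.9})--(\ref{1.10}), combined with the Laplace-transform identity (\ref{1.15}) of Lemma \ref{lem1.2}, give exponential Chebyshev control: the fluctuation of $\langle m_{B_z},L^u\rangle$ around its mean is governed by $L^2/L^d=L^{2-d}$, which beats $\log(N^{d-2})$ comfortably. A union bound over boxes then yields $\wt\IP_N[(\cA^1_N)^c]\to 0$. \textbf{The main obstacle} I anticipate is precisely this uniform concentration under the tilted measure together with controlling the systematic bias between $\wt\IE_N[\langle m_{B_z},L^u\rangle]$ and $(\sqrt u+\varphi(x_{B_z}/N))^2$ caused by the non-constancy of $\wt f_N$ near $\partial(ND)$ and by boundary effects of the boxes $U_z$; this is what dictates how $\kappa,\mu$ and the step size of $\Sigma$ must be chosen (and ultimately why one lets them tend to $0$ after $N\to\infty$).

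\textbf{Step 4 (conclusion and the equality $I^D_\nu=$ r.h.s. of (\ref{4.5})).} Plugging Steps 2 and 3 into (\ref{plan.1}) gives $\liminf_N\frac{1}{N^{d-2}}\log\IP[\cA_N]\ge-\frac1d\int|\nabla\varphi|^2\,dz$ for the chosen $\varphi$; optimizing over all admissible smooth $\varphi\ge0$, and then removing the sign constraint, yields the bound by $-I^D_\nu$. For the stated equality $I^D_\nu=\inf\{\tfrac1d\int|\nabla\varphi|^2;\varphi\in C^\infty_0,\ \strokedint_D\theta((\sqrt u+\varphi)^2)\,dz>\nu\}$ (i.e. that dropping $\varphi\ge0$ does not change the infimum), one argues that replacing $\varphi$ by $|\varphi|$ (or by $\varphi\vee 0$ suitably mollified back into $C^\infty_0$) does not increase the Dirichlet integral while it only increases $\theta((\sqrt u+\varphi)^2)$ pointwise — since $\theta$ is non-decreasing and $(\sqrt u+t)^2$ is increasing in $t$ for $t\ge-\sqrt u$, and one may assume $\varphi\ge-\sqrt u$ — so the constrained infimum is unchanged; a short mollification argument handles the loss of smoothness from taking absolute values. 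This last point is routine and is the ``equality in the second line of (\ref{4.5})'' referred to in the statement.
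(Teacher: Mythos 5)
Your overall strategy coincides with the paper's own proof: reduce $\cA_N$ to an event constraining only box averages $\langle m_B,L^u\rangle$ by means of good boxes and the super-exponential estimate of Proposition \ref{prop3.1} (this is Proposition \ref{prop4.3}), tilt the interlacements and use the relative entropy inequality (Proposition \ref{prop4.4}), show the reduced event is typical under the tilted law (Lemma \ref{lem4.5}), then optimize over $\varphi$ and remove the sign constraint. Two concrete defects remain, however. First, your tilt is mis-normalized: with $\wt{f}_N=\sqrt{u}+\varphi(\cdot/N)$ the tilted occupation intensity is $u\,\wt{f}_N^{\,2}=u(\sqrt{u}+\varphi(\cdot/N))^2$, not $(\sqrt{u}+\varphi(\cdot/N))^2$; moreover the framework (\ref{1.6}), (\ref{1.14}) requires $\wt{f}=1$ off a finite set, and your patch ``$=1$ outside a large box'' creates an $O(1)$ jump of $\wt{f}_N$ across the boundary of a box of side $\asymp N$, whose contribution to $H(\wt{\IP}_N\,|\,\IP)$ is of order $N^{d-1}\gg N^{d-2}$ and destroys the bound. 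The correct choice is $\wt{f}=1+\varphi(\cdot/N)/\sqrt{u}$, which equals $1$ automatically off the support of $\varphi(\cdot/N)$ and produces local level $(\sqrt{u}+\varphi(\cdot/N))^2$, as in (\ref{4.18}). Relatedly, the entropy scales as $\frac{1}{2d}\int_{\IR^d}|\nabla\varphi|^2dz\cdot N^{d-2}$, see (\ref{4.31})--(\ref{4.32}), not $\frac{1}{d}\int|\nabla\varphi|^2\cdot N^{d-2}$; with your constant the optimization would only yield $-2I^D_\nu$, which does not prove the statement.

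Second, your Step 3 --- which you correctly identify as the crux --- is asserted rather than proved, and the tool you invoke is not available as stated: Lemma \ref{lem1.1} concerns the simple random walk, whereas what is needed is the analogous estimate for the tilted walk, namely $\|\wt{G}\,1_B\|_\infty\le c(u,\varphi)\,L^2$ uniformly over the boxes $B$, even though the drift region of $\wt{P}_x$ has diameter of order $N\gg L$. This is exactly (\ref{4.34}) in the paper, obtained by a relative-entropy comparison of $\wt{P}_x$ with $P_x$ in (\ref{4.41})--(\ref{4.44}) combined with the exit/return estimates (\ref{4.46})--(\ref{4.51}) controlling the time spent in $B(0,cN)$; only with this input do Lemma \ref{lem1.2} and the exponential Chebyshev inequality give the per-box failure probability $\exp\{-c\,L^{d-2}\}$, after which your union bound over the polynomially many boxes goes through. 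Without such an estimate the claim that fluctuations are ``governed by $L^{2-d}$'' under $\wt{\IP}_N$ has no justification. Your Step 1 and the removal of the sign constraint are fine; in fact your argument via $|\varphi|$ needs no restriction $\varphi\ge-\sqrt{u}$, since $(\sqrt{u}+\varphi)^2\le(\sqrt{u}+|\varphi|)^2$ pointwise and mollification preserves the strict constraint.
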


As mentioned at the beginning of this section, we will prove Theorem \ref{theo4.2} in a number of steps. We now keep $u > 0$ and $\nu \in (\theta(u), \theta_\infty)$ fixed. We intend to use the super-exponential estimates of Section 3, and for this purpose we relate the scale $L$ of the previous section to the scale $N$ of the present section. In essence, $L$ is chosen so that the number of boxes $B = B_z$, $z \in \IL$ (see (\ref{1.19})) that intersect $B(0,N)$ is of order $N^{d-2}/\log N$ for large $N$. More precisely, we set for $N \ge 1$
\begin{equation}\label{4.8}
L = \big[N^{\frac{2}{d}} (\log N)^{\frac{1}d}\big]\; \vee 2.
\end{equation}
Then, with $N_L$ as in (\ref{3.9}), we have
\begin{equation}\label{4.9}
N_L =\mbox{\f $\dis\frac{L^{\frac{d}{2}}}{(\log L)^{\frac{1}{2}}}$}  \sim {\textstyle \sqrt{\frac{d}{2}}} \; N, \;\;\mbox{as} \;\;N \r \infty.
\end{equation}
We assume that in addition to (\ref{0.6}), that $D$ satisfies
\begin{equation}\label{4.10a}
D \subseteq [-1,1]^d .
\end{equation}
We will later see at the end of this section that ``by scaling'', the general case can be reduced to this special case (see above Remark \ref{rem4.6}). We then consider a fixed
\begin{equation}\label{4.10}
\begin{array}{l}
\mbox{non-negative, smooth, compactly supported function $\varphi$ on $\IR^d$}, 
\\
\mbox{positive on $[-2,2]^d$},
\end{array}
\end{equation}
and such that
\begin{equation}\label{4.11}
\strokedint_D \;\theta\big(\psi^2(z)\big) \, dz > \nu, \; \mbox{with $\psi = \sqrt{u} + \varphi$}.
\end{equation}

\n
Our next step is to introduce an event $\cA^1_N$, see (\ref{4.21}) below, which up to a ``negligible probability'' is contained in $\cA_N$, and solely consists of constraints on the average occupation times $\langle m_B, L^u \rangle$ in boxes $B = B_z, z \in \IL$, contained in $B(0,N)$. We will later see that this event $\cA^1_N$ is typical for the tilted interlacements that we will introduce in (\ref{4.28}) below, see Lemma \ref{lem4.5}. We first need some notation.

\medskip
We introduce the parameter $\ve(D,\theta,\psi,\nu) > 0$ via (see (\ref{4.11})):
\begin{equation}\label{4.12}
\strokedint_D \;\theta(\psi^2) \, dz = \nu + 10 \ve.
\end{equation}
We then choose a finite non-empty set $\Sigma (\ve, \theta,\psi) \subseteq (0,\infty)$ such that
\begin{align}
&\mbox{$\theta(\cdot)$ varies at most by $\ve$ between consecutive points of $\Sigma$},\label{4.13}
\\[1ex]
&\mbox{$\Sigma \cap (0, \min\limits_{[-2,2]^d} \,\psi^2)$ and $\Sigma \cap (\sup\limits_{[-2,2]^d} \,\psi^2, \infty)$, each contain at least 5 points} \label{4.14}
\end{align}

\n
(the first condition will be used in (\ref{4.20}) below, and the second condition would come up in the context of Remark \ref{rem4.6} 1)\,).

\medskip\n
In essence, $\Sigma$ stands as a discretization of the range of $\psi^2$. We then introduce
\begin{equation}\label{4.15}
\mbox{$\Delta =$ the minimal distance between points of $\Sigma$},
\end{equation}
as well as $\kappa(\ve,\theta,\psi) \in (0,1)$ such that
\begin{equation}\label{4.16}
\mbox{for each $\alpha \in \Sigma$, $\big(\alpha(1-\kappa), \alpha (1 + \kappa)\big) \cap \Sigma = \{\alpha\}$ \;(i.e.~(\ref{2.78}) holds)},
\end{equation}
together with $\mu \ge 0$ specified via (since (\ref{2.2}) holds, so (\ref{3.1}) is fulfilled)
\begin{equation}\label{4.17}
\mu = 0 .
\end{equation}
With $\psi = \sqrt{u} + \varphi$ as in (\ref{4.11}), we introduce the functions on $\IZ^d$:
\begin{equation}\label{4.18}
f(x) = \psi \Big(\mbox{\f $\dis\frac{x}{N}$}\Big),\; \wt{f}(x) = \mbox{\f $\dis\frac{1}{\sqrt{u}}$} \;f(x) = 1 +  \mbox{\f $\dis\frac{1}{\sqrt{u}}$} \;\varphi\,\Big(\mbox{\f $\dis\frac{x}{N}$}\Big), \;\;\mbox{for $x \in \IZ^d$}.
\end{equation}
From now on, we assume that (with $L$ as in (\ref{4.8}))
\begin{equation}\label{4.19}
\begin{array}{l}
\mbox{$N$ is large enough so that $N \ge 10L$ and the oscillations of $\psi^2$ and}
\\
\mbox{$\theta(\psi^2)$ on a box of side-length $\frac{L}{N}$ in $\IR^d$ are respectively smaller}
\\
\mbox{than $\Delta$ in (\ref{4.15}) and $\ve$ in (\ref{4.12})}.
\end{array}
\end{equation}

\n
For $B = B_z, z \in \IL$, we recall the notation $x_B = z$ from (\ref{1.20}). When $B \subseteq B(0,N)$, we choose $f_{1,B} > f_{0,B} > 0$ such that (see (\ref{4.14}))
\begin{equation}\label{4.20}
\left\{ \begin{array}{rl}
{\rm i)} & f^2_{1,B} \in \Sigma \cap \big(0, f^2(x_B)\big) \;\;\mbox{and} \;\; | \Sigma \cap [f^2_{1,B}, f^2(x_B)] \,| = 3,
\\[2ex]
{\rm ii)} & f^2_{0,B} \in \Sigma \cap \big(0, f^2(x_B)\big) \;\;\mbox{and} \;\; | \Sigma \cap [f^2_{0,B}, f^2(x_B)] \,| = 5,
\end{array}\right.
\end{equation}
and introduce the (crucial) event
\begin{equation}\label{4.21}
\cA^1_N = \bigcap\limits_{B \subseteq B(0,N)} \{\langle m_B,L^u \rangle \ge f^2_{1,B}\}
\end{equation}

\n
(where $B$ is the form $B_z,z \in \IL$ in the above intersection).

\medskip
With the help of the super-exponential bounds from the last section, we will now reduce the task of bounding $\IP[\cA_N]$ from below to that of bounding $\IP[\cA^1_N]$ from below.

\begin{proposition}\label{prop4.3}
\begin{equation}\label{4.22}
\liminf\limits_N \;\mbox{\f $\dis\frac{1}{N^{d-2}}$} \;\log \IP[\cA_N] \ge \liminf\limits_N \; \mbox{\f $\dis\frac{1}{N^{d-2}}$} \; \log \IP[\cA_N^1]. 
\end{equation}
\end{proposition}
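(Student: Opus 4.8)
The plan is to show that $\cA^1_N \subseteq \cA_N \cup \cB$ for all large $N$, with $\cB$ the negligible event from (\ref{3.10}). Granting this, $\IP[\cA_N] \ge \IP[\cA^1_N] - \IP[\cB]$, while Proposition \ref{prop3.1} together with $N_L \sim \sqrt{d/2}\,N$ (see (\ref{4.9})) gives $N^{-(d-2)}\log\IP[\cB] \to -\infty$. A routine comparison then yields (\ref{4.22}): one may assume $\liminf_N N^{-(d-2)}\log\IP[\cA^1_N] > -\infty$ (otherwise the right side of (\ref{4.22}) is $-\infty$), and then $\IP[\cA^1_N] - \IP[\cB] \ge \frac{1}{2}\,\IP[\cA^1_N]$ for $N$ large.

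To prove the inclusion I would work on $\cA^1_N\cap\cB^c$ and first treat a single box $B = B_z$ with $B \subseteq D_N$ (note $D_N \subseteq B(0,N)$ by (\ref{4.10a})) that is $(\Sigma,\kappa,\mu)$-good. Let $\alpha_B\in\Sigma$ be the predecessor of $f^2_{1,B}$ in $\Sigma$. By (\ref{4.16}), $(\alpha_B(1-\kappa),\alpha_B(1+\kappa))$ contains no point of $\Sigma$ other than $\alpha_B$, so $\alpha_B(1+\kappa)\le f^2_{1,B}$, and on $\cA^1_N$ one has $\langle m_B,L^u\rangle \ge f^2_{1,B} \ge \alpha_B(1+\kappa)$. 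The first part of (\ref{2.77}) then forces $N_u(B)/{\rm cap}(B) > \alpha_B$ (else $\langle m_B,L^u\rangle < \alpha_B(1+\kappa)$), whence the second part of (\ref{2.77}) gives $\frac{1}{|B|}\sum_{x\in B}F((L^u_{x+\point})) > \theta(\alpha_B(1-\kappa)) - \mu$. Using (\ref{4.13}), (\ref{4.16}) and the choice (\ref{4.20}) i) of $f^2_{1,B}$ — so that $\alpha_B$ is the fourth point of $\Sigma$ at or below $f^2(x_B) = \psi^2(x_B/N)$, counting downwards, and $\alpha_B(1-\kappa)$ lies at or above the fifth such point — one checks $\theta(\alpha_B(1-\kappa)) \ge \theta(f^2(x_B)) - 5\ve$, and since $\mu = \ve$ by (\ref{4.17}),
\[
B\subseteq D_N,\ B\ (\Sigma,\kappa,\mu)\text{-good}\ \Longrightarrow\ \dsl_{x\in B}F\big((L^u_{x+\point})\big)\ \ge\ |B|\,\big(\theta(f^2(x_B)) - 6\ve\big).
\]

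Summing over the good boxes $B\subseteq D_N$, using $F\ge 0$ and that the boxes $B_z$, $z\in\IL$, tile $\IZ^d$, I would then obtain on $\cA^1_N\cap\cB^c$
\[
\dsl_{x\in D_N}F\big((L^u_{x+\point})\big)\ \ge\ \dsl_{\substack{B\subseteq D_N\\ B\ {\rm good}}}|B|\,\theta(f^2(x_B))\ -\ 6\ve\,|D_N|.
\]
On $\cB^c$ at most $\rho_L(N_L/L)^d$ of the boxes meeting $[-N_L,N_L]^d\supseteq D_N$ are bad, so, $\theta$ being bounded, discarding them costs at most $\|\theta\|_\infty\,\rho_L\,N_L^d = o(N^d)$; by the oscillation control (\ref{4.19}) each term $|B|\,\theta(\psi^2(x_B/N))$ is within $\ve|B|$ of $\int_{z+[0,L)^d}\theta(\psi^2(w/N))\,dw$; and the union of the cubes $z+[0,L)^d$ over $B_z\subseteq D_N$ covers $ND$ up to a boundary layer of volume $O(N^{d-1}L) = o(N^d)$ (using (\ref{0.6}) and $L = o(N)$). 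Since $|D_N|\sim|D|\,N^d$ and $\strokedint_D\theta(\psi^2)\,dz = \nu + 10\ve$ by (\ref{4.12}), combining these bounds gives, on $\cA^1_N\cap\cB^c$ and for $N$ large,
\[
\dsl_{x\in D_N}F\big((L^u_{x+\point})\big)\ \ge\ |D_N|\,\big(\nu + 10\ve - \ve - 6\ve\big) - o(N^d)\ =\ |D_N|(\nu+3\ve) - o(N^d)\ >\ \nu\,|D_N|,
\]
i.e. $\cA_N$ occurs, which proves the inclusion. The one delicate point is the bookkeeping in these last displays: one must verify that the combined losses coming from $\kappa$ (via (\ref{4.16})), from the spacing of $\Sigma$ (via (\ref{4.13}) and the buffer counts in (\ref{4.14}), (\ref{4.20})), from $\mu$, and from the Riemann-sum and boundary approximations, stay strictly below the surplus $10\ve$ built into (\ref{4.12}); everything else is a direct assembly of Lemma \ref{lem2.4} and Proposition \ref{prop3.1}.
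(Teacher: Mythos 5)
Your proposal is correct and follows essentially the same route as the paper: show $\cA^1_N\setminus\cB\subseteq\cA_N$ for large $N$ by using Lemma \ref{lem2.4} on the good boxes (tracking the undertow through $\langle m_B,L^u\rangle\ge f^2_{1,B}$ and the buffers built into (\ref{4.13})--(\ref{4.20})) together with a Riemann-sum comparison against (\ref{4.12}), and then conclude via the super-exponential decay of $\IP[\cB]$ from Proposition \ref{prop3.1}. The only cosmetic difference is that you run the contrapositive of the first statement in (\ref{2.77}) with the predecessor of $f^2_{1,B}$ in $\Sigma$, whereas the paper invokes (\ref{2.79}); both are part of Lemma \ref{lem2.4} and play the same role, and your bookkeeping of the $\ve$-losses against the $10\ve$ surplus is sound.
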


\begin{proof}
With the above choices of $F,\Sigma,\kappa,\mu$ and with $K \ge c_3(\Sigma,\kappa,\mu,F)$ from Proposition \ref{prop3.1}, we consider the bad event $\cB$ from (\ref{3.10}). Our main objective is to show in (\ref{4.25}) below that for large $N$, $\cA_N$ contains $\cA^1_N \backslash \cB$, and our claim (\ref{4.22}) will then quickly follow from the super-exponential bounds in Proposition \ref{prop3.1}.

\medskip
With this in mind, we note that for large $N$, by (\ref{4.9}), $B(0,N) \subseteq [-N_L ,N_L]^d$. As a result, for large $N$, on $\cA^1_N \backslash \cB$ there are at most $\rho_L(\frac{N_L}{L})^d$ boxes $B \subseteq B(0,N)$ that are $(\Sigma, \kappa,\mu)$-bad. By (\ref{2.79}) and (\ref{2.77}) of Lemma \ref{lem2.4}, and (\ref{4.20}), we find that for large $N$
\begin{equation}\label{4.23}
\begin{array}{l}
\mbox{on $\cA^1_N \backslash \cB$ there are at most $\rho_L(\frac{N_L}{L})^d$ boxes $B \subseteq B(0,N)$ such that}
\\
\langle m_B, L^u \rangle \ge f^2_{1,B} \; \mbox{but}\; \sum\limits_{x \in B} F\big((L^u_{x + \point})\big) < \big(\theta(f^2_{0,B})-\mu\big) |B| \stackrel{(\ref{4.17})}{=} \theta(f^2_{0,B}) \,|B|.
\end{array}
\end{equation}
As a result, we see that for large $N$ on $\cA^1_N \backslash \cB_N$, we have (recall (\ref{4.4})):
\begin{equation}\label{4.24}
\begin{split}
\sum\limits_{x \in D_N} \;F\big((L^u_{x + \point})\big) & \ge \dsl_{B \subseteq D_N} \;\theta(f^2_{0,B}) \, |B| - \theta_\infty \,\rho_L \,N_L^d
\\
&\hspace{-5ex} \stackrel{(\ref{4.13}),(\ref{4.20}), {\rm ii)}}{\ge}  \dsl_{B \subseteq D_N} \,\big(\theta(f^2(x_B)\big) - 5 \ve)_+ |B| - \theta_\infty \,\rho_L \,N_L^d
\\
&\!\!\!\!\stackrel{(\ref{4.19})}{\ge} \dsl_{z \in \IL;z + [0,L)^d \subseteq N D} \;\dis\int_{z + [0,L]^d} \Big(\theta\big(\psi^2 \big(\textstyle \frac{w}{N}\big)\big) - 6 \ve\Big)_+ \,dw - \theta_\infty \,\rho_L\,N^d_L
\\
&\!\!\!\!\stackrel{(\ref{4.12}), (\ref{0.6})}{\ge} (\nu + 9 \ve) \,N^d |D| - 6 \ve\,N^d |D| - \theta_\infty \,\rho_L\,N^d_L
\\
& \ge (\nu + \ve)\,N^d |D| \stackrel{(\ref{0.6})}{>} \nu \,|D_N|
\end{split}
\end{equation}
(where $|D|$ denotes the Lebesgue measure of $D$ but $|D_N|$ stands for the number of points in $D_N \subseteq \IZ^d$). This proves that
\begin{equation}\label{4.25}
\mbox{for large $N$, $\cA_N \supseteq \cA^1_N \backslash \cB$}.
\end{equation}
Taking into account the super-exponential decay of $\IP[\cB]$ proven in Proposition \ref{prop3.1}, the claim (\ref{4.22}) follows.
\end{proof}

Our next step towards the proof of Theorem \ref{theo4.2} is to derive an asymptotic lower bound on $\IP[\cA^1_N]$. Recall (\ref{4.10}) and (\ref{4.21}) for notation. 

\begin{proposition}\label{prop4.4}
\begin{equation}\label{4.26}
\liminf\limits_N \; \mbox{\f $\dis\frac{1}{N^{d-2}}$} \; \log \IP[\cA^1_N] \ge - \fd \; \dis\int_{\IR^d} |\nabla \varphi|^2 dz. 
\end{equation}
\end{proposition}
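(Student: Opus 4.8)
The plan is to prove~(\ref{4.26}) via the change of probability (relative entropy) method, using the tilted interlacements $\wt{\IP}$ from~(\ref{1.14}) associated with the functions $f,\wt{f}$ from~(\ref{4.18}). Concretely, with $\wt{V} = - L\wt{f}/\wt{f}$ as in~(\ref{1.7}) and $\wt{\IP} = e^{\langle L^u,\wt{V}\rangle}\IP$, the entropy inequality gives, for any event $\cA$,
\begin{equation}\label{pf4.26.1}
\IP[\cA] \ge \wt{\IP}[\cA] \,\exp\Big\{ - \mbox{\f $\dis\frac{1}{\wt{\IP}[\cA]}$}\big(\wt{\IE}[\langle L^u,\wt{V}\rangle \,1_\cA] + e^{-1}\big)\Big\},
\end{equation}
so that $\frac{1}{N^{d-2}}\log\IP[\cA^1_N] \ge \frac{1}{N^{d-2}}\log\wt{\IP}[\cA^1_N] - \frac{1}{N^{d-2}}\,\frac{\wt{\IE}[\langle L^u,\wt{V}\rangle\,1_{\cA^1_N}] + e^{-1}}{\wt{\IP}[\cA^1_N]}$. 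Thus~(\ref{4.26}) will follow once I establish two facts: (i) $\wt{\IP}[\cA^1_N] \to 1$ (or at least stays bounded away from $0$) as $N\to\infty$ — this is exactly the content of Lemma~\ref{lem4.5}, which I am entitled to assume; and (ii) the leading asymptotics of the ``cost'' term $\wt{\IE}[\langle L^u,\wt{V}\rangle]$ match $\frac{N^{d-2}}{2d}\int_{\IR^d}|\nabla\varphi|^2\,dz$, up to $o(N^{d-2})$.

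For step (ii), the first move is to compute $\wt{\IE}[\langle L^u, \wt{V}\rangle]$ exactly. Differentiating the Laplace transform identity~(\ref{1.15}) of Lemma~\ref{lem1.2} at the relevant point, or more directly appealing to the known first-moment formula for tilted interlacements (as in the proof of~(2.7) of \cite{LiSzni14}), one gets $\wt{\IE}[\langle L^u, \wt{V}\rangle] = u\,\langle \wt{V}, 1\rangle_{\wt{\lambda}} = u \sum_{x} \wt{V}(x)\,\wt{f}^2(x)$. Now, by definition of $\wt{V}$ in~(\ref{1.7}) and $L$ in~(\ref{1.1}), $\wt{V}\,\wt{f}^2 = -\wt{f}\,(L\wt{f})$, and a discrete summation by parts identifies $-u\sum_x \wt{f}(x)(L\wt{f})(x) = u\,\cE(\wt{f},\wt{f}) = u\cdot\frac{1}{2d}\sum_{y\sim x}\frac12(\wt{f}(y)-\wt{f}(x))^2$ with the discrete Dirichlet form~(\ref{1.2}) of $\wt{f}$. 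Since $\wt{f}(x) = 1 + \frac{1}{\sqrt u}\varphi(x/N)$ and $\varphi$ is smooth and compactly supported, the differences $\wt{f}(x+e_i)-\wt{f}(x)$ are $\frac{1}{\sqrt u}(\partial_i\varphi)(x/N)\,\frac1N + O(N^{-2})$, and a Riemann-sum approximation gives $u\,\cE(\wt{f},\wt{f}) = u\cdot\frac{1}{2d}\cdot\frac{1}{u}\sum_{i=1}^d \sum_x \big((\partial_i\varphi)(x/N)\big)^2 N^{-2} + o(N^{d-2}) = \frac{N^{d-2}}{2d}\int_{\IR^d}|\nabla\varphi|^2\,dz + o(N^{d-2})$. (The constant and $1/N$ cancel in pairs: $\langle 1, L\wt{f}\rangle$ contributes nothing since $L$ kills constants, and the cross term also vanishes by the same reason.) One must also check that restricting the expectation to the event $\cA^1_N$ in~(\ref{pf4.26.1}) costs nothing in the limit — i.e. $\wt{\IE}[\langle L^u,\wt V\rangle\,1_{(\cA^1_N)^c}] = o(N^{d-2})$ — which follows from a Cauchy–Schwarz argument using $\wt{\IP}[(\cA^1_N)^c]\to 0$ together with a crude second-moment (or exponential-moment) bound on $\langle L^u,\wt V\rangle$ under $\wt\IP$, itself obtainable from~(\ref{1.15}).

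Assembling: plugging $\wt{\IP}[\cA^1_N]\to 1$ from Lemma~\ref{lem4.5} and $\wt{\IE}[\langle L^u,\wt V\rangle\,1_{\cA^1_N}] = \frac{N^{d-2}}{2d}\int|\nabla\varphi|^2 + o(N^{d-2})$ into~(\ref{pf4.26.1}), and dividing by $N^{d-2}$, the $e^{-1}/N^{d-2}$ term vanishes and one obtains $\liminf_N \frac{1}{N^{d-2}}\log\IP[\cA^1_N] \ge -\frac{1}{2d}\int_{\IR^d}|\nabla\varphi|^2\,dz$, which is~(\ref{4.26}). The main obstacle is really step (i) — proving that $\cA^1_N$ is typical under the tilted measure, i.e. Lemma~\ref{lem4.5} — which is why the event $\cA_N$ was replaced by the simpler $\cA^1_N$ involving only the averages $\langle m_B, L^u\rangle$ in the first place: under $\wt\IP$ the interlacement at $x$ behaves like a random interlacement with local level $f^2(x) = \psi^2(x/N) > f^2_{1,B}$ on each box $B\subseteq B(0,N)$ (by the choice~(\ref{4.20}) i), leaving a strict gap of several $\Sigma$-steps), so a law-of-large-numbers / concentration estimate for $\langle m_B, L^u\rangle$ under $\wt\IP$ across the $\asymp N^{d-2}/\log N$ boxes forces $\cA^1_N$ to hold with overwhelming probability; but making that rigorous requires the quantitative control of tilted interlacements from Lemma~\ref{lem1.2} together with the box-concentration machinery, and that is assumed here as Lemma~\ref{lem4.5}. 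The remaining steps — the Dirichlet-form computation and the Riemann-sum limit — are routine discrete analysis.
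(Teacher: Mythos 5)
Your proposal follows the paper's proof of Proposition \ref{prop4.4} essentially step for step: the change of probability with the tilted measure $\wt{\IP}_N = e^{\langle L^u,\wt{V}\rangle}\IP$, the relative entropy inequality, the identification $\wt{\IE}_N[\langle L^u,\wt{V}\rangle] = -u\sum_x \wt{f}\,L\wt{f} = \cE(f-\sqrt{u},f-\sqrt{u})$, and the Riemann-sum limit $\frac{1}{N^{d-2}}\,\cE(f-\sqrt{u},f-\sqrt{u}) \to \frac{1}{2d}\int_{\IR^d}|\nabla\varphi|^2\,dz$ are exactly the paper's (\ref{4.29})--(\ref{4.32}), and your computations there are correct. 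Two remarks. First, the paper uses the full entropy $H(\wt{\IP}_N\,|\,\IP)$ in the inequality, so no restriction to $\cA^1_N$ ever arises; your variant with $\wt{\IE}[\langle L^u,\wt{V}\rangle 1_{\cA^1_N}]$ is legitimate (Jensen applied to $\wt{\IP}_N[\,\cdot\,|\cA^1_N]$), but it makes the Cauchy--Schwarz step to remove the indicator an unnecessary extra chore; also, your parenthetical ``or at least stays bounded away from $0$'' is not sufficient, since the cost term is divided by $\wt{\IP}_N[\cA^1_N]$, so one genuinely needs $\wt{\IP}_N[\cA^1_N]\to 1$ to recover the constant $\frac{1}{2d}$.

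Second, and more substantially: the statement $\wt{\IP}_N[\cA^1_N]\to 1$ is Lemma \ref{lem4.5}, which in the paper is stated and proved \emph{inside} the proof of Proposition \ref{prop4.4} and constitutes the bulk of that proof — it rests on the uniform resolvent bound (\ref{4.34}) for the tilted walk (itself obtained via the entropy inequality for path measures, (\ref{1.4}), and the return/departure estimates (\ref{4.47})--(\ref{4.50})), then on Lemma \ref{lem1.2} and an exponential Chebyshev bound (\ref{4.36})--(\ref{4.40}) applied to each of the polynomially many boxes $B\subseteq B(0,N)$, using the gap built into the choice (\ref{4.20}) i). By taking Lemma \ref{lem4.5} as given you have deferred the only genuinely technical ingredient of the proposition; your closing heuristic (concentration of $\langle m_B,L^u\rangle$ near the local level $f^2$ under the tilted law) is indeed the right idea, but as written it is a sketch, not a proof, of that step. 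Everything you do carry out coincides with the paper's argument and is correct.
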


\begin{proof}
We will use the change of probability method, and for this purpose we consider tilted interlacements, see (\ref{1.14}). Namely, with $\wt{f}$ as in (\ref{4.18}), we set
\begin{equation}\label{4.27}
\wt{V} = -\mbox{\f $\dis\frac{L\,\wt{f}}{\wt{f}}$} \;\;\mbox{(a finitely supported function)},
\end{equation}

\n
and introduce the probability governing the tilted interlacements:
\begin{equation}\label{4.28}
\wt{\IP}_N = e^{\langle L^u,\wt{V}\rangle} \IP \;\; \mbox{(we denote by $\wt{\IE}_N$ the corresponding expectation)}.
\end{equation}
By the relative entropy inequality (see p.~76 of \cite{DeusStro89}), one knows that
\begin{equation}\label{4.29}
\IP[\cA^1_N] \ge \wt{\IP}_N[\cA^1_N] \; \exp\Big\{- \mbox{\f $\dis\frac{1}{\wt{\IP}_N[\cA^1_N]}$} \;\Big(H(\wt{\IP}_N \,| \, \IP) +\mbox{\f $\dis\frac{1}{e}$} \Big)\Big\},
\end{equation}
where
\begin{equation}\label{4.30}
\mbox{$H(\wt{\IP}_N \,| \, \IP) = \wt{\IE}_N \,\Big[\log \;\mbox{\f $\dis\frac{d\,\wt{\IP}_N}{d \,\IP}$}\Big]$ is the relative entropy of $\wt{\IP}_N$ with respect to $\IP$}.
\end{equation}
By the same calculation as in (\ref{2.23}) of \cite{LiSzni14}, we have
\begin{equation}\label{4.31}
\begin{split}
H(\wt{\IP}_N \, | \, \IP) & = \wt{\IE}_N [\langle L^u,\wt{V}\rangle] = - u \dsl_{x \in \IZ^d} \wt{f}(x) \,L \wt{f}(x)
\\[-1ex]
&\!\!\!\! \stackrel{(\ref{4.18})}{=} - \dsl_{x \in \IZ^d} f(x)\,Lf(x) = - \dsl_{x \in \IZ^d} \big(f(x) - \sqrt{u}\big) \,Lf(x)
\\
&  = \cE (f - \sqrt{u}, f - \sqrt{u}) \;\; \mbox{(with $\cE(\cdot,\cdot)$ the Dirichlet form, see (\ref{1.2}))}.
\end{split}
\end{equation}
In view of (\ref{4.18}), we thus find that
\begin{equation}\label{4.32}
\begin{split}
\lim\limits_N \;\mbox{\f $\dis\frac{1}{N^{d-2}}$} \; H(\wt{\IP}_N \, | \, \IP) & = \lim\limits_N \;\mbox{\f $\dis\frac{1}{N^{d-2}}$} \; \mbox{\f $\dis\frac{1}{4d}$}  \; \dsl_{y \sim x} \;\Big(\varphi \big(\frac{y}{N}\big) - \varphi \big(\frac{x}{N}\big) \Big)^2
\\
& = \fd \;\dis\int_{\IR^d} \,|\nabla \varphi |^2 dz ,
\end{split}
\end{equation}
where in the last step we have used the same Riemann-sum argument as in (\ref{2.27}) - (\ref{2.28}) of \cite{LiSzni14}. Taking into account (\ref{4.29}) and (\ref{4.32}), the proof of Proposition \ref{prop4.4} will be completed once we show
\begin{lemma}\label{lem4.5}
\begin{equation}\label{4.33}
\lim\limits_N \;\wt{\IP}_N [\cA^1_N] = 1.
\end{equation}
\end{lemma}

\begin{proof}
We recall the notation (\ref{1.11}) for $\wt{G}$ where $\wt{f}$ is now chosen as in (\ref{4.18}). The proof of (\ref{4.33}) relies on the following estimate (recall (\ref{4.8})):
\begin{equation}\label{4.34}
\mbox{for all $N \ge 1$, and $B = B_z$, $z \in \IL$, $c_4(u,\varphi) \,\|\wt{G} \, 1_B\|_\infty \le \fr \;L^2$}.
\end{equation}
Let us admit (\ref{4.34}) for the time being and explain how (\ref{4.33}) follows. By (\ref{4.34}), we see that for $N$ and $B$ as in (\ref{4.34}) and $|a| \le c_4$, one has:
\begin{equation}\label{4.35}
\begin{split}
\Big\| \Big(I -\mbox{\f $\dis\frac{a}{L^2}$} \;\wt{G} \,1_B\Big)^{-1} \,1 - 1\Big\|_\infty & \le \dsl_{k \ge 1} \;\Big(\mbox{\f $\dis\frac{|a|}{L^2}$} \;\|\wt{G} \,1_B\|_\infty\Big)^k
\\
&\!\!\!\! \stackrel{(\ref{4.34})}{\le} \dsl_{k \ge 1} \;\Big(\mbox{\f $\dis\frac{|a|}{2c_4}$}\Big)^k = \mbox{\f $\dis\frac{|a|}{2c_4}$} \; \Big(1 - \mbox{\f $\dis\frac{|a|}{2c_4}$}\Big)^{-1} \le  \mbox{\f $\dis\frac{|a|}{c_4}$} \;.
\end{split}
\end{equation}
It now follows from Lemma \ref{lem1.2} applied to $V = \frac{-a}{L^2} \;1_B$, where $0 \le a \le c_4$, that
\begin{equation}\label{4.36}
\begin{split}
\wt{\IE}_N \Big[\exp\Big\{- \faa\;\dsl_{x \in B} L^u_x\Big\}\Big] & = \exp\Big\{ - u \;  \faa \; \Big\langle 1_B, \Big(I + \faa \;\wt{G} \,1_B\Big)^{-1} \,1\Big\rangle_{\wt{\lambda}}\Big\}
\\[-1ex]
&\!\!\!\! \stackrel{(\ref{4.35})}{\le} \exp\Big\{- u \;  \faa\;\wt{\lambda}(B) \Big(1 - \mbox{\f $\dis\frac{a}{c_4}$}\Big)\Big\},
\end{split}
\end{equation}
where $\wt{\lambda}(B) = \sum_{x \in B} \wt{f}^2(x)$ (see (\ref{1.8})).

\medskip
Hence, by the exponential Chebyshev inequality, we find that for large $N$, $B \subseteq B(0,N)$ and $0 \le a \le c_4$, we have in the notation of (\ref{4.20})
\begin{equation}\label{4.37}
\wt{\IP}_N[\langle m_B, L^u \rangle  < f^2_{1,B}] \le \exp\Big\{\faa\;\dsl_{x \in B} \,\Big(f^2_{1,B} - \Big(1 -  \mbox{\f $\dis\frac{a}{c_4}$}\Big)\,\wt{f}^2(x)\Big)\Big\}.
\end{equation}

\n
Recall the definition of $\Delta$ in (\ref{4.15}). Then, by (\ref{4.20}) i), we have $f^2_{1,B} \le f^2(x_B) - 2 \Delta$, and by (\ref{4.19}), for all $x \in B$, $f^2(x) \ge f^2(x_B) - \Delta$. As a result, we have
\begin{equation}\label{4.38}
\begin{split}
\dsl_{x \in B} \, \Big(f^2_{1,B} - \Big(1 -  \mbox{\f $\dis\frac{a}{c_4}$}\Big)\,\wt{f}^2(x)\Big) & \le |B| \;\Big\{f^2(x_B) - 2 \Delta - \Big(1 -  \mbox{\f $\dis\frac{a}{c_4}$}\Big) (f^2(x_B) - \Delta )\Big\}  
\\
& = |B| \; \Big\{ - \Big(1 +  \mbox{\f $\dis\frac{a}{c_4}$}\Big) \;\Delta +   \mbox{\f $\dis\frac{a}{c_4}$} \;f^2(x_B)\Big\}
\\
&\!\!\!\! \stackrel{(\ref{4.18})}{\le} |B| \; \Big\{- \Big(1 + \mbox{\f $\dis\frac{a}{c_4}$}\Big)\;\Delta +  \mbox{\f $\dis\frac{a}{c_4}$} \;\|\psi \|^2_\infty\Big\}.
\end{split}
\end{equation}
We then choose $a$ via
\begin{equation}\label{4.39}
\mbox{\f $\dis\frac{a}{c_4}$} = \fr \; \mbox{\f $\dis\frac{\Delta}{\|\psi \|^2_\infty - \Delta}$} \; \mbox{($< \frac{1}{2}$ by (\ref{4.13}), (\ref{4.14}))}.
\end{equation}
It now follows from (\ref{4.37}) that for large $N$ and all $B \subseteq B(0,N)$
\begin{equation}\label{4.40}
\wt{\IP}_N \,[ \langle m_B, L^u \rangle < f^2_{1,B}] \le \exp\Big\{ -\mbox{\f $\dis\frac{c_4}{4}$}  \; \mbox{\f $\dis\frac{\Delta^2}{\|\psi \|^2_\infty - \Delta}$} \;L^{d-2}\Big\}.
\end{equation}

\n
Since the number of boxes $B \subseteq B(0,N)$ grows polynomially in $N$, with $L$ as in (\ref{4.8}), and $\cA^1_N$ defined by (\ref{4.21}), the above bound readily implies (\ref{4.33}).

\medskip
So, there remains to prove (\ref{4.34}). For this purpose we consider $t  > 0$, as well as $N$ and $B$ as in (\ref{4.34}). Recall $c_0$ from. Lemma \ref{lem1.1}. Then, for any $x \in \IZ^d$, by the relative entropy inequality, and the relative entropy calculations from (1.36) - (1.41) of \cite{LiSzni14}, we find that
\begin{align}
&\wt{E}_x \Big[\dis\int^t_0 \; \mbox{\f $\dis\frac{c_0}{L^2}$} \;1_B(X_s)\,ds\Big] \le \log E_x \big[e^{\int^t_0 \,\frac{c_0}{L^2} \;1_B(X_s)ds}\big] + \wt{E}_x \Big[\dis\int^t_0 H(X_s)\,ds\Big], \;\mbox{where} \label{4.41}
\\[1ex]
&H(y) = \fd \;\dsl_{|e| = 1} \;\Big\{\mbox{\f $\dis\frac{\wt{f}(y+e)}{\wt{f}(y)}$} \;\log \;\mbox{\f $\dis\frac{\wt{f}(y+e)}{\wt{f}(y)}$} - \mbox{\f $\dis\frac{\wt{f}(y+e) - \wt{f}(y)}{\wt{f}(y)}$}\Big\} \ge 0 \;\; \mbox{for $y \in \IZ^d$}. \label{4.42}
\end{align}
By inspection of $\wt{f}$ in (\ref{4.18}), we see that
\begin{equation}\label{4.43}
0 \le H(y) \le \mbox{\f $\dis\frac{c(u,\varphi)}{N^2}$} \;1 \{|y|_\infty \le  c_5(u,\varphi)\, N \}
\end{equation}

\n
Inserting this bound in (\ref{4.41}), and letting $t$ tend to infinity, we find with (\ref{1.4})  that
\begin{equation}\label{4.44}
\wt{E}_x\;  \Big[ \dis\int^\infty_0 \mbox{\f $\dis\frac{c_0}{L^2}$} \;1_B(X_s)\,ds\Big] \le \log 2  + \mbox{\f $\dis\frac{c(u,\varphi)}{N^2}$} \wt{E}_x \Big[\dis\int^\infty_0 1\{|X_s|_\infty \le c_5(u,\varphi)\,N\} ds\Big].
\end{equation}

\n
To bound the last term of (\ref{4.44}), we note that $\wt{V}$ in (\ref{4.27}) satisfies
\begin{equation}\label{4.46}
|\wt{V}(y)| \le \mbox{\f $\dis\frac{c(u, \varphi)}{N^2}$} \,1\{|y|_\infty \le c_6 (u,\varphi)\,N\}, \; \mbox{for all $y \in \IZ^d$},
\end{equation}

\n
where we choose $c_6(u,\varphi) \ge c_5(u,\varphi)$ in (\ref{4.44}) (and such that $\varphi$ is supported in $B_{\IR^d} (0,c_6)$). Then, setting $B_N = B(0,c_6\,N)$ and $\wh{B}_N = B(0,2 c_6\,N)$, we denote by $H_N$ the entrance time in $B_N$ and $\wh{T}_N$ the exit time from $\wh{B}_N$. We see that for all $y \in \wh{B}_N$
\begin{equation}\label{4.47}
\begin{split}
\wt{P}_y [\wh{T}_N < N^2] & \stackrel{(\ref{1.9})}{=} \mbox{\f $\dis\frac{1}{\wt{f}(y)}$} \;E_y \Big[\wt{f}(X_{N^2}) \;\exp\Big\{\dis\int^{N^2}_0 \wt{V}(X_s)\,ds\Big\}, \; \wh{T}_N < N^2\Big]
\\
&\hspace{-3ex} \stackrel{(\ref{4.18}), (\ref{4.46})}{\ge} c(u,\varphi) \; P_y[\wh{T}_N < N^2] \ge c'(u,\varphi) \in (0,1),
\end{split}
\end{equation}
using the invariance principle in the last step. It then follows that for all $y \in \wh{B}_N$, using the Markov property and (\ref{4.47}),
\begin{equation}\label{4.48}
\wt{E}_y [\wh{T}_N] \le N^2 \, \dsl_{k \ge 0} \;\wt{P}_y [\wh{T}_N > k N^2] \le N^2 \dsl_{k \ge 0} \big(1 - c'(u,\varphi)\big)^k = N^2 \, c'(u,\varphi)^{-1}.
\end{equation}
On the other hand, for all $y \notin \wh{B}_N$, we have
\begin{equation}\label{4.49}
\wt{P}_y [H_N < \infty] = P_y [H_N < \infty] \le c < 1.
\end{equation}

\n
We can then introduce the successive returns $R_n$ to $B_N$ and departures $D_n$ from $\wh{B}_N$, so that for $y \in B_N$, $0 = R_1 \le D_1 \le R_2 \le D_2 \le \dots \le \infty$, $\wt{P}_y$-a.s., and write
\begin{equation}\label{4.50}
\begin{split}
\wt{E}_y \;\Big[\int^\infty_0 \!\!1_{B_N}(X_s) \,ds\Big] & \le \wt{E}_y \big[\dsl_{n \ge 1} (D_n - R_n) \,1\{R_n < \infty\}\big] = \dsl_{n \ge 1} \wt{E}_y \big[R_n < \infty, \wt{E}_{X_{R_n}}[\wh{T}_N]\big]
\\
&\!\!\!\!\! \stackrel{(\ref{4.48})}{\le} \dsl_{n\ge 1} c(u,\varphi) \, N^2 \;\wt{P}_y [R_n < \infty] \stackrel{(\ref{4.49})}{\le} c(u,\varphi)\,N^2 \dsl_{n \ge 1} c^{n-1}
\\
& = \wt{c}\;(u,\varphi) \,N^2 .
\end{split}
\end{equation}

\medskip\n
Thus, coming back to (\ref{4.44}), we find with (\ref{4.50}) that for $N$ and $B$ as in (\ref{4.34}), for all $x \in \IZ^d$ one has
\begin{equation}\label{4.51}
\wt{E}_x\Big[\dis\int^\infty_0 \mbox{\f $\dis\frac{c_0}{L^2}$} \;1_B(X_s) \,ds\Big] \le  \; \log 2 + c'(u,\varphi).
\end{equation}
Hence, $\sup_{x \in \IZ^d} \wt{E}_x [\int^\infty_0 \frac{b}{L^2} \;1_B(X_s) \,ds] \le \frac{b}{c_0}\;(\log 2 + c'(u,\varphi))$, for $0 \le b \le c_0$, and (\ref{4.34}) follows. This completes the proof of Lemma \ref{lem4.5}.
\end{proof}

As explained above Lemma \ref{lem4.5}, this completes the proof of Proposition \ref{prop4.4}.
\end{proof}

We now have all the elements to complete the proof of Theorem \ref{theo4.2}.

\bigskip\n
{\it Proof of Theorem \ref{theo4.2}:} Observe that in the infimum that appears in the first line of (\ref{4.5}) the Dirichlet integral $\int_{\IR^d} |\nabla \varphi|^2 dz$ decreases if we replace $\varphi$ by $\varphi^+ = \max (\varphi,0)$, and $\strokedint_D \theta((\sqrt{u} + \varphi)^2)dz$ increases, when $\varphi$ is replaced by $\varphi^+$. Using regularization by convolution, we thus find that the infimum in (\ref{4.5}) remains the same if we additionally require that $\varphi \in C^\infty_0$ is non-negative. In the notation (\ref{4.7}), this proves the equality of the infimum on the second line of (\ref{4.5}) with $I^D_\nu$.

\medskip
Then, for $M \ge 1$ integer, we have $D_N = (\frac{1}{M} \;D)_{MN}$ for all $N \ge 1$, and $I^D_\nu = M^{d-2} \,I^{\frac{1}{M} \,D}_\nu$ by direct inspection. Hence, if we prove the claim (\ref{4.5}) for $\frac{1}{M}\,D$, the claim (\ref{4.5}) holds for $D$ as well. Without loss of generality, we can thus assume, as in (\ref{4.10}), that $D \subseteq [-1,1]^d$. Adding a small smooth compactly supported function, which is non-negative and positive on $[-2,2]^d$, we see that the infimum defining $I^D_\nu$ is unchanged if we additionally require that $\varphi > 0$ on $[-2,2]^d$. For such a $\varphi$, we know by Proposition \ref{prop4.3} and \ref{prop4.4} that $\liminf_N \;\frac{1}{N^{d-2}} \log \IP[\cA_N] \ge - \frac{1}{2d} \;\int_{\IR^d} |\nabla \varphi |^2 dz$. This proves (\ref{4.5}) and completes the proof of Theorem \ref{theo4.2}. \hfill $\square$

\begin{remark}\label{rem4.6}  \rm 1) When $0 < \nu < \theta(u)$ in place of $\theta(u) < \nu < \theta_\infty$, the proof of Theorem \ref{theo4.2} given in this section can be adapted to provide a lower bound on  $\liminf_N$ $\frac{1}{N^{d-2}} \log \IP[\cA'_N]$, where $\cA'_N$ denotes the deficit deviation event $\{\sum_{x \in D_N} F((L^u_{x + \point})) < \nu\, |D_N|\}$ (now replacing the condition $\strokedint_D \theta((\sqrt{u} + \varphi)^2) \,dz > \nu$ by $\strokedint_D  \theta((\sqrt{u} + \varphi)^2) \,dz < \nu$ in the infimum in (\ref{4.5}), as well the condition $\varphi \ge 0$ in (\ref{4.7}) by $-\sqrt{u} < \varphi \le 0$). We will see in the next section that the proof of the upper bound does not seem to accommodate such a change, see Remark \ref{rem5.5}. 

\medskip\n
2) Note that in Theorem \ref{theo4.2} one can straightforwardly remove the boundedness assumption on $F$. Indeed, for large enough $M > 0$, $F \wedge M$ satisfy the assumptions (\ref{2.1}) and (\ref{4.1}). The corresponding event $\cA^{(M)}_N$ is included in $\cA_N$ and by monotone convergence, as $M \r \infty$, the corresponding function $\theta^{(M)}$ increases to $\theta$, and for each $\varphi \in C^\infty_0 (\IR^d)$, $\strokedint_D \theta^{(M)} ((\sqrt{u} + \varphi)^2)\,dz$ increases to $\strokedint_D \theta((\sqrt{u} + \varphi)^2)\,dz$. The claim readily follows. However it is unclear whether the boundedness assumption can be relaxed in the case of Theorem \ref{theo5.1}, see Remark \ref{5.2} 2).  \hfill $\square$
\end{remark}

\section{Large deviation upper bounds}
\setcounter{equation}{0}

The main object of this section is the proof of Theorem \ref{theo5.1} that states our main asymptotic upper bound on the probability of the excess deviation event $\cA_N$, see (\ref{0.7}) or (\ref{4.6}). At the end of the section, as an application of Theorems \ref{theo4.2} and \ref{theo5.1}, we derive the principal exponential rate of decay of $\IP[\cA_N]$. We also provide an application to the simple random walk in Corollary \ref{cor5.11}. We keep the notation from the previous section.

\begin{theorem}\label{theo5.1} (large deviation upper bound)

\smallskip\n
Assume that the local function $F$ satisfies (\ref{2.1}), (\ref{4.1}), and that the compact set $D \subseteq \IR^d$ is of the form (\ref{0.6}). Then, for any $u > 0$ and $\nu$ in $(\theta (u), \theta_\infty)$, one has
\begin{equation}\label{5.1}
\limsup\limits_N \; \mbox{\f $\dis\frac{1}{N^{d-2}}$} \;\log \IP[\cA_N] \le - J^D_\nu ,
\end{equation}
where $\cA_N$ is as in (\ref{4.6}) (or (\ref{0.7})) and for $a \ge 0$ we set
\begin{equation}\label{5.2}
\begin{split}
J^D_a = \inf\Big\{ & \fd \;\dis\int_{\IR^d} |\nabla \varphi|^2 dz; \varphi \ge 0, \; \mbox{compactly supported, in $H^1(\IR^d)$, and}
\\
& \strokedint_D \theta\big((\sqrt{u} + \varphi)^2\big) \,dz \ge a\Big\}
\end{split}
\end{equation}
(with $\strokedint_D \dots dz$ the normalized integral $\frac{1}{|D|} \int_D \dots dz$).
\end{theorem}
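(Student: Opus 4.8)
The upper bound in Theorem \ref{theo5.1} is the harder direction, and the strategy is the coarse-graining scheme sketched in the introduction. The plan is to first use the super-exponential estimates of Proposition \ref{prop3.1} to replace the event $\cA_N$ by a manageable union of events, each of which imposes, on a sparse (``porous'') family of well-separated boxes $B$ of side-length $L$ at mutual distance of order $KL$, a prescribed lower bound on the averages $\langle \ov{e}_B, L^u\rangle$; this is the content of the coarse-graining Proposition \ref{prop5.3}. The number of such events must be shown to be subexponential in $N^{d-2}$ (roughly $\exp(o(N^{d-2}))$), so that a union bound is harmless; this is where the choice $L \asymp N^{2/d}(\log N)^{1/d}$ in (\ref{4.8}) pays off, since the number of relevant boxes is of order $N^{d-2}/\log N$ and each can be assigned one of boundedly many values from $\Sigma$.

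\textbf{The exponential Chebyshev step.} For each event of the coarse-graining, I would attach a potential $V$ supported near the chosen boxes — a superposition of scaled equilibrium-type potentials $-\frac{a_B}{L^2}\ov{e}_B$ or similar, tuned to the prescribed lower bounds — as in (\ref{5.25}), and apply the exponential Chebyshev inequality together with the Laplace transform formula (\ref{1.13}) for occupation times of interlacements. This produces a bound of the form $\IP[\text{event}] \le \exp\{u\langle V, \sum_{n\ge 0}(GV)^n 1\rangle - (\text{linear term in the }a_B)\}$. The crucial input is Proposition \ref{prop5.4}, which controls the quadratic form $\langle V, \sum_{n\ge0}(GV)^n 1\rangle$ in terms of a discrete Dirichlet energy plus lower-order corrections arising from the porosity of the box family; here the sign of the potential (negative, because we deal with an \emph{excess} event) is essential to make the exponential Chebyshev bound go the right way, cf.\ Remark \ref{rem5.5}. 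Optimizing the bound over the free parameters $a_B \ge 0$ then yields an upper bound for each coarse-grained event in terms of a \emph{discrete} constrained variational problem involving a discrete Dirichlet form and only the porous information $\langle\ov e_B,L^u\rangle \ge f_B^2$.

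\textbf{Passage to the continuum.} The last main step, Proposition \ref{prop5.7}, is to pass from these discrete variational problems with porous constraints to the continuous problem defining $J^D_\nu$. One builds, from a near-optimal discrete profile $(f_B)$, a function on $\IR^d$ by interpolation/mollification; the discrete Dirichlet energy converges to $\frac{1}{2d}\int |\nabla\varphi|^2$ by a Riemann-sum argument as in (\ref{4.32}), while the porous constraints $\langle \ov e_B, L^u\rangle \ge f_B^2$ on the sparse family of boxes have to be upgraded, using the good-box estimates of Lemma \ref{lem2.4} (which relate $\langle\ov e_B,L^u\rangle$, $\langle m_B,L^u\rangle$ and the undertow $N_u(B)/\mathrm{cap}(B)$) and the super-exponential control of Section 3, into the full constraint $\strokedint_D \theta((\sqrt u + \varphi)^2)\,dz \ge \nu$. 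The monotonicity of $\theta$ and of $F$, together with the inclusions (\ref{2.20})--(\ref{2.21}) and the concentration estimates (\ref{2.53}), are what make this localization-to-averaging step work, despite the porosity; the fact that we deal with an excess (rather than a deficit) is again used here. Finally one lets the coarse-graining parameters ($\kappa \to 0$, $\mu\to0$, $K\to\infty$, $\Sigma$ refining) tend to their limits to recover exactly $J^D_\nu$, and reduces to $D\subseteq[-1,1]^d$ by the same scaling argument ($D_N = (\frac1M D)_{MN}$, $J^D_\nu = M^{d-2}J^{\frac1M D}_\nu$) used at the end of the proof of Theorem \ref{theo4.2}.

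\textbf{Main obstacle.} I expect the principal difficulty to be Proposition \ref{prop5.7}: matching the porous, discrete, Dirichlet-form lower bound coming out of the exponential Chebyshev step with the continuous constraint written in terms of $\theta$ in full. One must quantify how a lower bound on the occupation-time averages over a sparse family of $KL$-separated boxes forces, up to negligible corrections, the corresponding $\theta$-constraint on \emph{all} of $D_N$; controlling the error terms uniformly over the (subexponentially many) coarse-grained events, and showing they vanish in the relevant limits, is the technical heart of the argument. The construction of the well-adapted potential in (\ref{5.25}) and the associated estimate in Proposition \ref{prop5.4} — in particular keeping the cross terms between distant boxes under control — is the other place where care is required.
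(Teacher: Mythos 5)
Your outline reproduces the paper's own architecture (coarse-graining as in Proposition \ref{prop5.3}, a box-adapted potential as in (\ref{5.25}) fed into an exponential Chebyshev bound via (\ref{1.13}) and Proposition \ref{prop5.4}, then the continuum passage of Proposition \ref{prop5.7}), but two of the assertions you flag as essential are off in a way that would break the argument if taken literally. First, the sign of the potential: to bound the probability of the excess event $\cA_{\ov{f}}=\bigcap_{B\in\cC_\tau}\{\langle \ov{e}_B,L^u\rangle \ge (1-\ve)f_B^2\}$ one must tilt with a \emph{nonnegative} potential; in the paper $W=\sum_{B}\frac{-L\wh{f}(B)}{f(y_B)}\,\ov{e}_B\ge 0$ and (\ref{5.50}) uses $\IE[e^{a\langle L^u,W\rangle}]$ with $a\in(0,1)$. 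A negative potential of the form $-\frac{a_B}{L^2}\ov{e}_B$ makes Chebyshev go the wrong way (it controls deficit, not excess, events). What the excess structure buys, cf.\ Remark \ref{rem5.5}, is precisely that the \emph{positive} exponential moment can be controlled, and this hinges on taking $\wh{f}$ to be the smallest nonnegative superharmonic majorant of the profile $(f_B-\sqrt{u})_+$: this is what yields $\sqrt{u}+G(aWf)\le f$ in Proposition \ref{prop5.4}, hence $\sum_{k\ge 0}(GaW)^k\sqrt{u}\le f$ and $\log\IE[e^{a\langle L^u,W\rangle}]\le a\sqrt{u}\,(1+\frac{c}{K})\,\langle -L\wh{f},1\rangle$, which Cauchy--Schwarz then absorbs into $\cE(\wh{f},\wh{f})$ plus an error $\ve\,{\rm cap}(C)$. ``Optimizing over free parameters $a_B\ge 0$'' is not a substitute for this superharmonicity structure; without it there is no usable bound on the exponential moment, and no identification of the rate with a discrete Dirichlet energy.

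Second, the upgrade from the porous constraint to the full $\theta$-constraint is not carried out with Lemma \ref{lem2.4} or the super-exponential estimates at the continuum stage. Those enter only in Proposition \ref{prop5.3}, where good boxes allow one to record, deterministically inside the definition (\ref{5.11}) of $\cF_\tau$, the constraint $\sum_{B\in\cC_\tau}\theta(f_B^2)\,|B|\ge \frac{\nu-\ve}{\ov{K}^d}\,|D_N|$ for at least one sub-lattice $\tau$ (by pigeonhole from (\ref{5.20})). Proposition \ref{prop5.7} is then purely analytic: after the cut-off lemma and compactness, the limit function $\Phi$ inherits the constraint along the single porous family $C_{\tau_0,\ell}$, and the full constraint $\strokedint_D\theta((\sqrt{u}+\Phi)^2)\,dz\ge\nu-\ve$ is recovered because the $\ov{K}^d$ families $C_{\tau,\ell}$ are asymptotic translates of one another whose union tends to $D$, together with the continuity of $z\mapsto\theta((\sqrt{u}+\Phi(z+\cdot))^2)$ in $L^1_{\rm loc}$, see (\ref{5.67})--(\ref{5.71}). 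So the ``main obstacle'' you identify is resolved by a deterministic pigeonhole-plus-translation argument applied to a fixed limiting profile, not by further probabilistic input from Sections 2--3.
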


\begin{remark}\label{rem5.2} \rm 1) We will see in Corollary \ref{cor5.9} that $I^D_\nu = J^D_\nu$, so that the upper and lower bounds of Theorems \ref{theo5.1} and \ref{4.2} are matching. 

\medskip\n
2) In the proof of Theorem \ref{theo5.1} the boundedness assumption on $F$ plays an important role, notably in (\ref{5.5}) and the coarse graining performed in Proposition \ref{prop5.3}. \hfill $\square$
\end{remark}

At this stage it is maybe helpful to provide  a short outline of the poof of Theorem \ref{theo5.1}. The proof is based on a coarse-graining procedure. By the same scaling argument as in the proof of Theorem \ref{theo4.2}, we can assume that $D \subseteq [-1,1]^d$. In the first main step, namely Proposition \ref{prop5.3}, we replace the event $\cA_N$ by a not too large collection of events (the coarse-graining) that covers $\cA_N$ up to a ``negligible'' part. This step relies on the super-exponential estimates of Section 3. The events entering this coarse-graining collection that essentially covers $\cA_N$, each impose a specific constraint on the occupation time of random interlacements in a collection of well-separated boxes $B$ of side-length $L$ intersecting $D_N\, (= (ND) \cap \IZ^d$). The next objective is to devise uniform bounds on the probability of such events. In essence, this goes via the application of the exponential Chebyshev inequality, see (\ref{5.50}) in the proof of Proposition \ref{prop5.6}. A crucial feature for this purpose is the introduction for each event of the coarse-graining of a well-adapted potential, see (\ref{5.25}). When tested against the field of occupation times, this potential behaves well, namely, one has a natural lower bound, see (\ref{5.26}), on the event of the coarse-graining to which it is attached. In addition, one has an efficient upper bound of the corresponding exponential moment, see (\ref{5.51}), (\ref{5.52}). The key estimate for this last feature is developed in Proposition \ref{prop5.4}. The bound that we derive on each event entering the coarse-graining of $\cA_N$  has a similar flavor to Section 3 of \cite{Szni17} about ``occupation-time bounds'', but the situation here is definitely more delicate. In Proposition \ref{prop5.7}, we then investigate the scaling limit of the discrete variational problems entering the asymptotic upper bounds on $\IP[\cA_N]$ obtained in Proposition \ref{prop5.6}, somewhat in the spirit of $\Gamma$-convergence (see \cite{Dalm93}). Some of the controls from Section 5 of \cite{LiSzni15} are very helpful at this stage. Then, the proof of Theorem \ref{theo5.1} can relatively quickly be completed.

\medskip
We now set this program in motion. We assume that $D$ in addition to (\ref{0.6}) also satisfies
\begin{equation}\label{5.3}
D \subseteq [-1,1]^d.
\end{equation}

\n
By the same scaling argument as in the proof of Theorem \ref{theo4.2} at the end of Section 4 (namely, for $M, N \ge 1$ integers $(\frac{1}{M}D)_{MN} = D_N$, and $M^{d-2} J^{\frac{1}{M}D}_\nu = J^D_\nu$), this causes no loss of generality in the proof that follows. Next, we wish to introduce a (not too large) collection of events that covers $\cA_N$ up to a negligible part (the ``coarse-graining'' of $\cA_N$). We first need some notation. We consider $\ve \in (0,1)$ (this parameter is unrelated to the choice in (\ref{4.12}), but in spirit plays a similar role), such that with $u > 0$ and $\nu > 0$ as in Theorem \ref{theo5.1},
\begin{equation}\label{5.4}
\theta(u) + 10 \ve < \nu.
\end{equation}

\n
We then consider (see (\ref{4.3})) a non-empty finite subset $\Sigma(\ve, \theta) \subseteq (0,\infty)$ such that
\begin{equation}\label{5.5}
\begin{array}{l}
\mbox{the non-decreasing function $\theta$ varies at most by $\frac{\ve}{10}$ between consecutive} 
\\
\mbox{points of $\{0,\infty\} \cup \Sigma$}.
\end{array}
\end{equation}
We then choose $\kappa(\ve,\theta)$ such that
\begin{equation}\label{5.6}
\mbox{$0 < \kappa < \mbox{\f $\dis\frac{\ve}{2}$}$ and for each $\alpha \in \Sigma, \big((1 - \kappa) \,\alpha, (1 + \kappa)\,\alpha\big) \cap \Sigma = \{\alpha\}$}
\end{equation}
(so, (\ref{2.78}) holds), and we pick $\mu$ via (so (\ref{3.1}) holds due to (\ref{4.1}))
\begin{equation}\label{5.7}
\mu = 0.
\end{equation}
The above choices now determine a constant (see Proposition \ref{prop3.1})
\begin{equation}\label{5.8}
c_7(F,\ve) = c_3(\Sigma,\kappa,\mu,F) (\ge 100),
\end{equation}
and we assume that (this will not be needed in Propositions \ref{prop5.4}, \ref{prop5.7} and (\ref{5.48}))
\begin{equation}\label{5.9}
K \ge c_7(F,\ve).
\end{equation}

\n
We keep the choice (\ref{4.8}) for $L$ as a function of $N$, and introduce the collections (recall that $\IL = L \, \IZ^d$, see (\ref{1.18})):
\begin{equation}\label{5.10}
\begin{split}
\cC & = \{z \in \IL; B_z \cap D_N \not= \phi\}, \;\mbox{and}
\\ 
\cC_\tau & = \{z \in \cC; z \in L \tau + \ov{K} \,\IL\}, \;\mbox{for $\tau \in \{0,\dots,\ov{K} - 1\}^d$ where $\ov{K} \stackrel{(\ref{2.12})}{=} 2 K + 3$}
\end{split}
\end{equation}
($\cC_\tau$ should not be confused with $\cC^\tau$ from (\ref{3.12})).

\medskip
For each $\tau \in \{0,\dots,\ov{K}-1\}^d$, we further define
\begin{equation}\label{5.11}
\begin{split}
\cF_\tau = \Big\{\ov{f} = (f_B)_{B \in \cC_\tau}; &\; f_B \ge 0, f^2_B \in \{0\} \cup \Sigma \;\mbox{for each $B \in \cC_\tau$, and}
\\ 
&\dsl_{B \in \cC_\tau} \theta(f^2_B) |B| \ge \mbox{\f $\dis\frac{\nu - \ve}{\ov{K}\,^d}$} \;|D_N|\Big\},
\end{split}
\end{equation}
as well as for each $\ov{f} \in \cF_\tau$ the event
\begin{equation}\label{5.12}
\cA_{\ov{f}} = \bigcap\limits_{B \in \cC_\tau} \{ \langle \ov{e}_B, L^u\rangle \ge (1- \ve) \, f^2_B\}
\end{equation}

\n
(by convention when $\cC_\tau$ is empty, $\cF_\tau$ is defined as a singleton, and for the unique $\ov{f} \in \cF_\tau$ the event $\cA_{\ov{f}}$ is the entire sample space of random interlacements). Of course, by (\ref{0.6}) and (\ref{4.8}),  for large $N$, all $\cC_\tau$, $\tau \in \{0,\dots\ov{K} - 1\}^d$ are non-empty. As we will now see, the collections $\cF_\tau$ are not too large, the union of events $\cA_{\ov{f}}$, as $\ov{f}$ runs over $\cF_\tau$ and $\tau$ over $\{0,\dots,\ov{K} = 1\}^d$ essentially covers $\cA_N$, and the task of bounding $\IP[\cA_N]$ is essentially reduced to bounding $\IP[\cA_{\ov{f}}]$ in a uniform fashion. We recall the definition of the (bad) event $\cB$ in (\ref{3.10}), where our current choice of $\Sigma, \kappa,\mu$, see (\ref{5.5}) - (\ref{5.7}), is now in force.

\begin{proposition}\label{prop5.3} (coarse-graining)

\smallskip\n
As $N$ goes to infinity,
\begin{equation}\label{5.13}
\mbox{for each $\tau \in \{0,\dots,\ov{K} - 1\}^d,\;|\cF_\tau | = \exp \{o(N^{d-2})\}$}.
\end{equation}
Moreover, for large $N$, with $\cB$ mentioned above, one has
\begin{equation}\label{5.14}
\cA_N \backslash \cB \subseteq \bigcup\limits_{\tau \in \{0,\dots,\ov{K} - 1\}^d} \;\bigcup\limits_{\ov{f} \in \cF_\tau} \; \cA_{\ov{f}},
\end{equation}
and
\begin{equation}\label{5.15}
\limsup\limits_N \; \mbox{\f $\dis\frac{1}{N^{d-2}}$} \;\log \IP[\cA_N] \le \limsup\limits_N \;\sup\limits_{\tau \in \{0,\dots,\ov{K} - 1\}^d} \;\; \sup\limits_{\ov{f} \in \cF_\tau} \; \mbox{\f $\dis\frac{1}{N^{d-2}}$} \; \log \IP[\cA_{\ov{f}}].
\end{equation}
\end{proposition}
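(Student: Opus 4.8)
The plan is to (a) bound the combinatorial size of each $\cF_\tau$, (b) use the super-exponential estimate of Section~3 to replace $\cA_N$, up to the negligible event $\cB$, by the union of the $\cA_{\ov f}$, and (c) finish with a union bound. For \emph{(a), i.e.~(\ref{5.13})}, I would just observe that an element of $\cF_\tau$ is a vector $(f_B)_{B\in\cC_\tau}$ with $f_B^2\in\{0\}\cup\Sigma$, so $|\cF_\tau|\le(1+|\Sigma|)^{|\cC_\tau|}$; since the boxes $B_z$, $z\in\IL$, are disjoint of cardinality $L^d$ and those meeting $D_N\subseteq[-N,N]^d$ lie in $[-(N+L),N+L]^d$, one has $|\cC_\tau|\le|\cC|\le c\,(N/L)^d\le c\,N^{d-2}/\log N$ for large $N$ by the choice (\ref{4.8}) of $L$, and as $|\Sigma|$ depends only on $\ve,\theta$ this gives $|\cF_\tau|\le\exp\{c'\,N^{d-2}/\log N\}=\exp\{o(N^{d-2})\}$.

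\emph{For (b)} I would establish the inclusion $\cA_N\setminus\cB\subseteq\bigcup_\tau\bigcup_{\ov f\in\cF_\tau}\cA_{\ov f}$ for large $N$, which is all of (\ref{5.14}) that (\ref{5.15}) requires. Fix a configuration in $\cA_N\setminus\cB$. For large $N$, $D_N\subseteq[-N_L,N_L]^d$ by (\ref{4.9}) (note $\sqrt{d/2}>1$), so every $B\in\cC$ is one of the boxes entering the definition (\ref{3.10}) of $\cB$; hence on $\cB^c$ at most $\rho_L(N_L/L)^d$ boxes of $\cC$ are $(\Sigma,\kappa,\mu)$-bad, and by (\ref{4.4}) the $F$-mass they carry is $\le\theta_\infty\rho_L N_L^d=o(N^d)$. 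To each good box $B=B_z\in\cC$ I would attach $f_B\ge0$ as follows: $f_B=0$ if $N_u(B)=0$; otherwise, with $\underline\alpha_B\le N_u(B)/\mathrm{cap}(B)\le\ov\alpha_B$ the consecutive elements of $\{0,\infty\}\cup\Sigma$ bracketing $N_u(B)/\mathrm{cap}(B)$, set $f_B^2=\underline\alpha_B$ if $\ov\alpha_B<\infty$ and $f_B^2=\max\Sigma$ otherwise; and $f_B=0$ on bad boxes. The lower-bound half of (\ref{2.77}) in Lemma~\ref{lem2.4} (applied with $\alpha=\underline\alpha_B$, resp.~$\alpha=\max\Sigma$) together with $\kappa<\ve/2$ from (\ref{5.6}) then gives $\langle\ov e_B,L^u\rangle\ge(1-\kappa)f_B^2\ge(1-\ve)f_B^2$ on every good box (trivially on bad ones), so the constraints defining $\cA_{\ov f}$ hold for $(f_B)_{B\in\cC}$ no matter which $\tau$ I finally select. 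For the $F$-mass, the upper-bound half of (\ref{2.77}) (with $\alpha=\ov\alpha_B$) combined with the spacing conditions (\ref{5.5})--(\ref{5.6}) — used to replace $\theta\big((1+\kappa)\ov\alpha_B\big)$ by $\theta(f_B^2)$ up to $O(\ve)$ — and the bound $F\le\theta_\infty$ at the $O(RL^{d-1})$ sites $x\in B\setminus B_R$ of each box and on the bad boxes, would yield $\nu\,|D_N|<\sum_{x\in D_N}F\big((L^u_{x+\point})\big)\le\sum_{B\in\cC}\theta(f_B^2)\,|B|+\tfrac{\ve}{2}\,|D_N|+o(N^d)$, whence $\sum_{B\in\cC}\theta(f_B^2)\,|B|>(\nu-\ve)\,|D_N|$ for large $N$. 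Splitting $\cC=\bigsqcup_\tau\cC_\tau$ and pigeonholing over the $\ov K^d$ values of $\tau$, some $\tau$ has $\sum_{B\in\cC_\tau}\theta(f_B^2)\,|B|>(\nu-\ve)|D_N|/\ov K^d$, i.e.~$\ov f:=(f_B)_{B\in\cC_\tau}\in\cF_\tau$, and $\cA_{\ov f}$ holds; this is the inclusion.

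\emph{For (c), i.e.~(\ref{5.15})}, the inclusion above gives $\IP[\cA_N]\le\IP[\cB]+\ov K^d\,(\max_\tau|\cF_\tau|)\,(\max_{\tau,\ov f}\IP[\cA_{\ov f}])$ for large $N$; taking $N^{-(d-2)}\log$, using $\log(a+b)\le\log2+\max(\log a,\log b)$, that $N^{-(d-2)}\log\IP[\cB]\to-\infty$ by Proposition~\ref{prop3.1}, and that $N^{-(d-2)}\log\max_\tau|\cF_\tau|\to0$ by (\ref{5.13}), one gets (\ref{5.15}).

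The main obstacle is the covering step (b): one must transfer the single global constraint defining $\cA_N$ onto box-wise occupation-time constraints through the good-box dictionary of Lemma~\ref{lem2.4}, while keeping the accumulated error strictly below $\ve\,|D_N|$ across (i) the $O(RL^{d-1})$ boundary sites of each box that are not seen by $F^B_{N_u(B)}$, (ii) the $F$-mass carried by bad boxes, (iii) boxes whose undertow $N_u(B)/\mathrm{cap}(B)$ overshoots $\max\Sigma$, and (iv) the passage from $\theta\big((1\pm\kappa)\alpha\big)$ to $\theta(\alpha)$ via (\ref{5.5})--(\ref{5.6}) — the last being exactly what the factor-$10$ numerical slack in (\ref{5.4})--(\ref{5.7}) is designed to absorb. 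Steps (a) and (c) are routine.
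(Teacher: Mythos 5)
Your proposal is correct and follows essentially the same route as the paper: the same counting bound $(1+|\Sigma|)^{|\cC|}$ for (\ref{5.13}); the same covering step, namely attaching to each good box the largest element of $\{0\}\cup\Sigma$ below $N_u(B)/{\rm cap}(B)$, invoking both halves of (\ref{2.77}) together with the spacing conditions (\ref{5.5})--(\ref{5.6}), absorbing the $B\setminus B_R$ sites, the overshoot boxes and the at most $\rho_L(N_L/L)^d$ bad boxes into the $\ve$-slack, and pigeonholing over the $\ov{K}^{\,d}$ classes $\cC_\tau$; and the same union bound with Proposition \ref{prop3.1} and (\ref{5.13}) for (\ref{5.15}). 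Your remark that only the inclusion $\cA_N\setminus\cB\subseteq\bigcup_\tau\bigcup_{\ov f\in\cF_\tau}\cA_{\ov f}$ is needed matches the paper, whose own argument likewise establishes just this inclusion (the equality sign in (\ref{5.14}) plays no role in deriving (\ref{5.15})).
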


\begin{proof}
We first prove (\ref{5.13}). To this end we note that for all $\tau \in \{0,\dots,\ov{K}-1\}^d$
\begin{equation}\label{5.16}
|\cF_\tau | \le (1 + |\Sigma |)^{|\cC|} \stackrel{(\ref{5.10})}{\le} (1 + |\Sigma |)^{c \,(\frac{N}{L})^d} \stackrel{(\ref{4.8})}{\le} (1 + | \Sigma |)^{c\,\frac{N^{d-2}}{\log N}}= \exp\{o(N^{d-2})\}, \;\mbox{as $N \r \infty$}.
\end{equation}
This proves (\ref{5.13}).

\medskip
We now turn to the proof of (\ref{5.14}). We note that for large $N$ on $\cA_N \backslash \cB$ the number of $(\Sigma,\kappa,\mu)$-bad boxes $B$ in $\cC$ is at most the number of $(\Sigma,\kappa,\mu)$-bad boxes meeting $[-N_L,N_L]^d$ and hence at most $\rho_L (\frac{N_L}{L})^d$. The number of sites belonging to such bad boxes is at most $\rho_L \,N_L^d = o(N^d)$, by (\ref{3.7}), (\ref{4.9}). For a $(\Sigma,\kappa,\mu)$-good box $B$ in $\cC$, we denote by $f^2_B$ (with $f_B \ge 0$) the largest element in $\{0\} \cup \Sigma$ smaller or equal to $N_u(B) / {\rm cap}(B)$.

\medskip
We observe that either $f^2_B = \max \Sigma$, and by (\ref{5.5}), $\theta_\infty \le \theta(f^2_B) + \frac{\ve}{10}$, so that (recall that $\theta_\infty = \|F\|_\infty$, see (\ref{4.4})):
\begin{equation}\label{5.17}
\sum\limits_{x \in B} \; F\big((L^u_{x + \point})\big) \le \Big(\theta(f^2_B) + \mbox{\f $\dis\frac{\ve}{10}$}\Big) \;|B|,
\end{equation}
or $f^2_B < \max \Sigma$, and denoting by $\alpha$ the smallest element in $\Sigma$ bigger than $f^2_B \in \{0\} \cup \Sigma$, we have $N_u(B) / {\rm cap}(B) < \alpha$. Since $B$ is a $(\Sigma,\kappa,\mu)$-good box, it follows by (\ref{2.77}) that $\Sigma_{x \in B_R} F((L^u_{x + \point})) \le (\theta((1 + \kappa)\alpha) + \mu) \,|B|$, and taking (\ref{5.5})~--~(\ref{5.6}) into account, as well as $\mu = 0$ by (\ref{5.7}), we find that
\begin{equation}\label{5.18}
\sum\limits_{x \in B} F\big((L^u_{x + \point})\big) \le \Big(\theta(f^2_B) + \mbox{\f $\dis\frac{2}{10}$} \;\ve\Big) \,|B| + \| F \|_\infty \, |B \backslash B_R|.
\end{equation}
Using once more that $B$ is a $(\Sigma,\kappa,\mu)$-good box for which $N_u(B) / {\rm cap}(B) \ge f^2_B$, we also find from (\ref{2.77}) that

\vspace{-5ex}
\begin{equation}\label{5.19}
\langle \ov{e}_B, L^u\rangle \ge f^2_B (1- \kappa) \stackrel{(\ref{5.6})}{\ge} f^2_B (1 - \ve).
\end{equation}
When $B$ is a $(\Sigma,\kappa,\mu)$-bad box, we simply set $f_B = 0$.

\medskip
When $N$ is large $\|F\|_\infty \, |B \backslash B_R | \le \frac{\ve}{10} \;|B|$, and we see that for large $N$, on $\cA_N \backslash \cB$,
\begin{equation}\label{5.20}
\begin{split}
\nu\,|D_N| & \stackrel{(\ref{4.6})}{\le} \sum\limits_{x \in D_N}F\big((L^u_{x + \point})\big) \stackrel{(\ref{5.10})}{\le} \dsl_{B \in \cC} \; \dsl_{x \in B} F \big((L^u_{x + \point})\big)
\\
&\; \;\le \dsl_{{\rm good} \,B\,{\rm in} \, \cC} \Big(\theta(f^2_B) + \mbox{\f $\dis\frac{3}{10}$} \;\ve\Big) \,|B| + \theta_\infty \,\rho_L \,N_L^d \le \dsl_{B \in \cC} \theta(f^2_B) \,|B| + \ve \,|D_N|.
\end{split}
\end{equation}
In addition, we have
\begin{equation}\label{5.21}
\mbox{for all} \;B \in \cC, \,\langle \ov{e}_B, L^u \rangle \ge f^2_B (1-\ve).
\end{equation}
Taking the definition (\ref{5.11}) of $\cF_\tau$ into account, we see by (\ref{5.20}), (\ref{5.21}) that for large $N$
\begin{equation}\label{5.22}
\cA_N \backslash \cB \subseteq \bigcup\limits_{\tau \in \{0,\dots,\ov{K}-1\}^d} \; \bigcup\limits_{f \in \cF_\tau} \;\bigcap\limits_{B \in \cC_\tau} \{ \langle \ov{e}_B,L^u\rangle \ge f^2_B (1- \ve)\},
\end{equation}
and this proves (\ref{5.14}). The last claim (\ref{5.15}) is now an immediate consequence of the super-exponential decay in Proposition \ref{prop3.1}, a union bound, and (\ref{5.13}). This concludes the proof of Proposition \ref{prop5.3}.
\end{proof}

{\begin{samepage} Our next task is to develop efficient exponential Chebyshev bounds on the quantities $\IP[\cA_{\ov{f}}]$, where $\ov{f}$ runs over $\bigcup_\tau \cF_\tau$, as in the right member of (\ref{5.15}). To this end, for $\tau \in \{0,\dots,\ov{K} -1\}^d$ and $\ov{f} \in \cF_\tau$, we set
\begin{equation}\label{5.23}
\begin{array}{ll}
f = \sqrt{u} + \wh{f}, &\mbox{where $\wh{f}$ is the smallest non-negative superharmonic function}
\\
&\mbox{on $\IZ^d$ such that $\wh{f} \ge (f_B - \sqrt{u})_+$ on each $B \in \cC_\tau$}
\end{array}
\end{equation}
(see for instance Proposition 6-2-8, p.~130 of \cite{Neve75}).
\end{samepage}}

\medskip
We note that $\wh{f}$ tends to $0$ at infinity (actually, with $G$ as in (\ref{1.3}), one has $\wh{f} = G(-L \wh{f}) \le (\max_{B \in \cC_\tau} f_B) \,G(e_{\bigcup_{\cC_\tau} B}))$, that $\wh{f}$ is either positive or identically equal to $0$ (as a non-negative superharmonic function), and in addition (see \cite{Neve75}, p.~130),
\begin{equation}\label{5.24}
\left\{ \begin{array}{rl}
{\rm i)} & -L f = - L \wh{f} \ge 0 \;\; \mbox{on} \;\; \IZ^d
\\[1ex]
{\rm ii)} & \{- L  f > 0\} \subseteq \bigcup\limits_{B \in \cC_\tau, f_B > \sqrt{u}} \;\{f = f_B\} \cap B.
\end{array}\right.
\end{equation}
To produce for each $\ov{f}$ the desired exponential Chebyshev bound on $\IP[\cA_{\ov{f}}]$, we will not directly work with the potential $V = - \frac{Lf}{f}$ because the (random) quantity $\inf_{\cA_{\ov{f}}} \langle L^u,V \rangle$ is delicate to control. In place, we will use the coarse-grained potential (with $\ov{f} \in \cF_\tau$):
\begin{equation}\label{5.25}
\begin{array}{l}
W(\cdot) = \sum\limits_{B \in \cC_\tau} \; \mbox{\f $\dis\frac{-L \wh{f} (B)}{f(y_B)}$} \;\ov{e}_B(\cdot) (\ge 0), \;\mbox{where $L\,\wh{f}(B) = \sum_{x \in B} L \wh{f}(x)$,}
\\[2ex]
\mbox{and we have set}
\\[1ex]
\mbox{$y_B = x_B (= z)$,  if $\{f = f_B\} \cap B =\phi$ (with $B = B_z)$},
\\ 
\quad  \;\,\mbox{$=$ some chosen point in $\{f = f_B\} \cap B$ otherwise}.
\end{array}
\end{equation}
The above definition of $W$ shares some similar flavor to (\ref{3.4}) of \cite{Szni17}, see also Proposition 4.6 and (4.63) of \cite{ChiaNitz19}. But the situation here is more intricate. To illustrate why the choice (\ref{5.25}) is pertinent, we note that for $\tau \in \{0,\dots,\ov{K}-1\}^d$, $\ov{f} \in \cF_\tau$, and $W$ as above, we have on $\cA_{\ov{f}}$\,:

\vspace{-5ex}
\begin{equation}\label{5.26}
\begin{split}
\langle L^u, W\rangle & = \dsl_{B \in \cC_\tau} \; \mbox{\f $\dis\frac{-L\,\wh{f}(B)}{f(y_B)}$} \;\langle L^u,\ov{e}_B\rangle   \stackrel{(\ref{5.12})}{\ge} \dsl_{B \in \cC_\tau}\; \mbox{\f $\dis\frac{-L\,\wh{f}(B)}{f(y_B)}$} \;f^2_B (1- \ve)
\\[1ex]
&\hspace{-4ex}\! \stackrel{(\ref{5.24}) \,{\rm ii)}, (\ref{5.25})}{=} (1-\ve)  \dsl_{B \in \cC_\tau} - L\,\wh{f}(B)\,f_B \stackrel{(\ref{5.24})\, {\rm ii)}}{=} (1-\ve) \,\dsl_{x \in \IZ^d} - L\,\wh{f}(x) \,f(x).
\end{split}
\end{equation}
We now wish to obtain an upper bound on the exponential moment $\IE[e^{a \langle L^u,W\rangle}]$ when the spacing between the $B$-boxes as measured by $K$ is large (and $a \in (0,1)$ will eventually tend to $1$). The next proposition contains the main controls to this effect.

\begin{proposition}\label{prop5.4}
Consider $a \in (0,1)$. Then, for $K \ge c_8(a) (\ge 100)$, $N \ge 1$, $\tau \in \{0,\dots,\ov{K} - 1\}^d$ and $\ov{f} \in \cF_\tau$, with $f$ as in (\ref{5.23})
\begin{equation}\label{5.27}
\mbox{for each $B \in \cC_\tau$, $f \le f(y_B) \Big(1 + \mbox{\f $\dis\frac{c}{K}$}\Big)$ on $B$},
\end{equation}
and
\begin{equation}\label{5.28}
\sqrt{u} + G(a\,W f) \le f.
\end{equation}
\end{proposition}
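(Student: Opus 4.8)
\textbf{Proof strategy for Proposition \ref{prop5.4}.} The plan is to derive both bounds from elementary potential theory for the Green function $g(\cdot,\cdot)$ of the simple random walk. I first record two facts about $f=\sqrt u+\wh f$. Since $\wh f$ is the smallest non‑negative superharmonic function on $\IZ^d$ dominating the obstacles $(f_B-\sqrt u)_+$ on the boxes $B\in\cC_\tau$, and $\sqrt{\max\Sigma}\,h_{\bigcup_{B\in\cC_\tau}B}$ is such a competitor, one has $0\le\wh f\le\sqrt{\max\Sigma}$ on $\IZ^d$. Moreover, by (\ref{5.24}) ii) the non‑negative charge $-L\wh f=-Lf$ is supported on $\bigcup_{B\in\cC_\tau}\big(\{f=f_B\}\cap B\big)$, and since the boxes are pairwise disjoint we may decompose $\wh f=G(-L\wh f)=\sum_{B\in\cC_\tau}G\mu_B$, where $\mu_B:=1_B\,(-L\wh f)$ is a non‑negative measure carried by $\{f=f_B\}\cap B$ (which is empty, hence $\mu_B=0$, unless $f_B>\sqrt u$).

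\textbf{Proof of (\ref{5.27}).} Fix $B\in\cC_\tau$. The boxes of $\cC_\tau$ are at mutual sup‑distance $\ge\ov K L$, so $-L\wh f$ vanishes on a ball of radius of order $KL$ around $B$, except possibly on $\{f=f_B\}\cap B$ itself. If $\{f=f_B\}\cap B=\emptyset$, then $\wh f$ is harmonic on that ball, so by the difference estimate for discrete harmonic functions (see \cite{Lawl91}) its oscillation over $B$ is at most $c\,L/(KL)\cdot\|\wh f\|_\infty\le c/K$; since $y_B=x_B\in B$, this gives $\wh f\le\wh f(y_B)+c/K$ on $B$. If $\{f=f_B\}\cap B\neq\emptyset$, write $\wh f=G\mu_B+G\mu_{B^c}$ with $\mu_{B^c}=(-L\wh f)-\mu_B$; then $G\mu_{B^c}$ is harmonic on the same large ball and bounded by $\sqrt{\max\Sigma}$, so again its oscillation over $B$ is $\le c/K$. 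By the maximum principle for Green potentials, $\sup_{\IZ^d}G\mu_B=\sup_{{\rm supp}\,\mu_B}G\mu_B$; on ${\rm supp}\,\mu_B\subseteq\{f=f_B\}\cap B$ one has $G\mu_B=(f_B-\sqrt u)-G\mu_{B^c}$, whereas on all of $B$ (in particular at $y_B$) the obstacle inequality $\wh f\ge f_B-\sqrt u$ gives $G\mu_B\ge(f_B-\sqrt u)-G\mu_{B^c}$; combining these with the oscillation bound for $G\mu_{B^c}$ yields $\sup_BG\mu_B\le G\mu_B(y_B)+c/K$, and hence once more $\wh f\le\wh f(y_B)+c/K$ on $B$. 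In either case $f\le f(y_B)+c/K\le f(y_B)(1+c/K)$ on $B$, since $f(y_B)\ge\sqrt u$; this is (\ref{5.27}), and the same argument shows that the oscillation of $G\mu_B$ over $B$ is $\le c/K$.

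\textbf{Proof of (\ref{5.28}).} Rewrite it as $G(aWf)\le\wh f$. In (\ref{5.25}) only boxes with $-L\wh f(B)>0$ contribute, and for such a box $y_B\in\{f=f_B\}\cap B$, so $f(y_B)=f_B$ and $-L\wh f(B)=\mu_B(B)$; thus $W=\sum_{B:\,-L\wh f(B)>0}\tfrac{\mu_B(B)}{f_B}\,\ov e_B$, a measure carried by $\bigcup_B\partial_iB$. By (\ref{5.27}), $f\le(1+c/K)f_B$ on $\partial_iB={\rm supp}\,\ov e_B$, so $aWf\le a(1+c/K)\sum_B\mu_B(B)\,\ov e_B$ as measures. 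Since $\wh f$ is non‑negative superharmonic, the domination principle reduces $G(aWf)\le\wh f$ to verifying it on ${\rm supp}(aWf)\subseteq\bigcup_{B:-L\wh f(B)>0}\partial_iB$. Fix such $B_0$ and $x_0\in\partial_iB_0$, and split $G\big(\sum_B\mu_B(B)\ov e_B\big)(x_0)$ into its $B_0$‑term and the rest. Using $h_{B_0}\equiv1$ on $B_0\supseteq\{x_0\}\cup{\rm supp}\,\mu_{B_0}$, the $B_0$‑term equals $\tfrac{\mu_{B_0}(B_0)}{{\rm cap}(B_0)}=\langle G\mu_{B_0},\ov e_{B_0}\rangle$, which by the oscillation bound for $G\mu_{B_0}$ is $\le G\mu_{B_0}(x_0)+c/K$. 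For $B\neq B_0$, $x_0$ lies at distance of order $KL$ from $B$, so $g(x_0,\cdot)$ varies over $B$ only by a factor $1+c/K$ (difference estimate for $g$, \cite{Lawl91}); comparing $\tfrac{\mu_B(B)}{{\rm cap}(B)}\sum_{w\in B}g(x_0,w)e_B(w)$ with $G\mu_B(x_0)=\sum_{w\in B}g(x_0,w)\mu_B(\{w\})$ — both built from measures of total mass $\mu_B(B)$ — shows this term is $\le(1+c/K)G\mu_B(x_0)$. Summing, $G\big(\sum_B\mu_B(B)\ov e_B\big)(x_0)\le(1+c/K)\wh f(x_0)+c/K$, hence $G(aWf)(x_0)\le a(1+c/K)\wh f(x_0)+c/K$. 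Finally $x_0\in B_0$ and $-L\wh f(B_0)>0$ force $f_{B_0}>\sqrt u$ with $f_{B_0}^2\in\Sigma$, so $\wh f(x_0)\ge f_{B_0}-\sqrt u\ge\underline c:=\min\{\sqrt\alpha-\sqrt u:\alpha\in\Sigma,\ \alpha>u\}>0$, a constant depending only on $\ve,\theta,u$. Thus $G(aWf)(x_0)\le\wh f(x_0)$ once $a(1+c/K)+c/(K\underline c)\le1$, which holds for $K\ge c_8(a)$ with a suitable $c_8(a)\ge100$, proving (\ref{5.28}).

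I expect the main obstacle to be the oscillation estimate for $G\mu_{B_0}$ over $B_0$ and its use in the diagonal term: $\mu_{B_0}$ is carried by the contact set $\{f=f_{B_0}\}\cap B_0$, which may be a small region lying deep inside $B_0$, so $G\mu_{B_0}$ has no a priori reason to be nearly constant there — it is precisely the combination of the obstacle inequality $\wh f\ge f_{B_0}-\sqrt u$ on all of $B_0$ (with equality on the contact set) and the maximum principle for potentials that pins it down. A second, easily overlooked point is that $-L\wh f(B)>0$ can occur only when $f_B>\sqrt u$; since $\Sigma$ is finite this keeps $\wh f$ uniformly bounded below on ${\rm supp}(aWf)$, which is what allows the $O(1/K)$ errors to be absorbed even when $-L\wh f(B_0)$ is itself small.
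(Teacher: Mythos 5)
Your argument is sound step by step for fixed parameters, and it shares the paper's skeleton: box-by-box decomposition of the charge $-L\wh f$, oscillation estimates at scale $KL$, the maximum principle for potentials, and the domination principle to reduce (\ref{5.28}) to the support of the charge. Your handling of the diagonal term -- pinning down $G\mu_B$ on $B$ via the obstacle identity on the contact set together with the maximum principle, so that ${\rm osc}_B\,G\mu_B \le {\rm osc}_B\,G\mu_{B^c}$ -- is a legitimate alternative to the paper's device, which is to prove a \emph{relative} oscillation bound $|\wh f_1(x)-\wh f_1(y_{B^0})|\le \frac{c}{K}\,\wh f_1(y_{B^0})$ (gradient estimate plus Harnack, (\ref{5.31})), to bound $\wh f_0$ by $c\,K^{-(d-2)}\wh f(y_{B^0})$ on the other boxes (\ref{5.35}), and then to build the explicit superharmonic competitor $\wh f_2$ of (\ref{5.36}) and invoke the \emph{minimality} of $\wh f$ to get $\wh f\le \wh f(y_{B^0})(1+\frac cK)$ on $B^0$, with all subsequent errors in (\ref{5.41})--(\ref{5.46}) proportional to $\wh f(y_{B^0})$ or $\wh f_1(y_{B^0})$.

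The genuine gap is quantitative: your errors are additive, not multiplicative, and this does not give the uniformity asserted in the Proposition. Your oscillation bound for $G\mu_{B^c}$ over $B$ is $\frac cK\,\|\wh f\|_\infty\le \frac cK\sqrt{\max\Sigma}$, you convert it to a multiplicative bound in (\ref{5.27}) only by dividing by $f(y_B)\ge\sqrt u$, and at the end of (\ref{5.28}) you absorb the leftover $c/K$ using the lower bound $\wh f(x_0)\ge \min\{\sqrt{\alpha}-\sqrt u:\ \alpha\in\Sigma,\ \alpha>u\}$. Hence your constant in (\ref{5.27}) depends on $u,\ve,F$ (through $\max\Sigma$), and your threshold for $K$ in (\ref{5.28}) depends on $a,u,\ve,F$ (through $\max\Sigma$ and the gap $\underline{c}$); your closing claim that the condition ``holds for $K\ge c_8(a)$'' is therefore not established by your own estimates, since by the paper's conventions $c$ in (\ref{5.27}) depends only on $d$ and $c_8$ only on $a$ (and $d$). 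In the paper's proof no lower bound on $\wh f$ is ever needed: every $O(1/K)$ term carries a factor $\wh f(y_{B^0})$, and since $\wh f\ge\wh f(y_{B^0})$ on $B^0$ everything is absorbed into $a(1+\frac cK)\wh f$, which is what yields $c_8(a)$. Your weaker version would still suffice for the use made of the Proposition later in Section 5 (there $K\to\infty$ with $u,F,\ve,a$ fixed, so parameter-dependent thresholds are harmless), but it is not the statement as given; to repair it, replace the absolute oscillation bound by the relative one via Harnack and track the diagonal error as a multiple of $\wh f(y_B)$, essentially as in (\ref{5.41})--(\ref{5.43}).
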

\begin{proof}
{ \begin{samepage} We begin by the proof of (\ref{5.27}). It will be the immediate consequence of a similar inequality with $\wh{f}$ in place of $f$ (recall that $f = \sqrt{u} + \wh{f})$, see (\ref{5.38}), (\ref{5.39}) below. With this goal in mind, we consider $N \ge 1$, and $\tau \in \{0,\dots,\ov{K} - 1\}^d$. We can assume that $\cC_\tau$ is not empty, otherwise there is nothing to prove (and $W = 0$, $f = \sqrt{u}$ so that incidentally (\ref{5.28}) holds as well). We thus consider an $\ov{f} \in \cF_\tau$ with associated $\wh{f}$, and some given box $B^0 \in \cC_\tau$. We introduce the following decomposition of $\wh{f}$ in (\ref{5.23}):
\begin{equation}\label{5.29}
\wh{f} = \wh{f}_1 + \wh{f}_0 \; \; \mbox{where} \;\; \wh{f}_1 = \dsl_{B \not= B^0} G\big(( -L\,\wh{f}) 1_B\big), \;\mbox{and} \;\; \wh{f}_0 = G\big(( -L\,\wh{f}) 1_{B^0}\big)
\end{equation}
(in the first sum $B$ runs over all $B$ in $\cC_\tau$ different from $B^0$).
\end{samepage}}

\medskip
Thus, $\wh{f}_1$ and $\wh{f}_0$ are non-negative superharmonic functions on $\IZ^d$ and
\begin{equation}\label{5.30}
\mbox{$\wh{f}_1$ is non-negative harmonic in $\IZ^d\, \backslash \, \big(\bigcup\limits_{B \not= B^0} B\big)$}
\end{equation}
(and the notation is similar to what is explained below (\ref{5.29})).

\medskip
This last set contains a $(2 K + 2) \,L$ neighborhood in sup-distance of $B^0$. Hence, when $K \ge c$, by a gradient estimate, see Theorem 1.7.1, p.~42 of \cite{Lawl91}, and Harnack inequality, see Theorem 1.7.2, p.~42 of the same reference,
\begin{equation}\label{5.31}
|\wh{f}_1(x) - \wh{f}_1(y_{B^0})| \le \mbox{\f $\dis\frac{c}{KL}$} \;|x - y_{B^0}|_\infty \;\wh{f}_1(y_{B^0}), \; \mbox{for all $x \in B^0$}.
\end{equation}

\n
We will now bound $\wh{f}_0$ from above, see (\ref{5.32}), and record a lower bound on $\wh{f}_1$, see (\ref{5.33}). To this end, we first observe that either $\wh{f}_0 = 0$, or $\wh{f}_0 = G((-L \wh{f})1_{B^0}) > 0$, and by \hbox{(\ref{5.24})\, ii)} $\{(-L \wh{f})1_{B^0} > 0\} \subseteq \{\wh{f} = (f_{B^0} - \sqrt{u})_+\} \cap \{\wh{f} > 0\} \subseteq \{\wh{f} \le f_{B^0} - \sqrt{u}\}$. Note that $\wh{f}_0$ is the potential of the finitely supported function $(- L \wh{f})1_{B^0}$, and its maximal value, if it does not identically vanish, occurs on the set $\{(- L \wh{f})1_{B^0} > 0\}$, where it is at most $f_{B^0} - \sqrt{u}$, so that taking into account the definition of $y_{B^0}$ in (\ref{5.25}), we find
\begin{equation}\label{5.32}
0 \le \wh{f}_0 \le \wh{f}(y_{B^0}) \; \; \mbox{on} \;\; B^0.
\end{equation}
In addition, by (\ref{5.31}), we see that
\begin{equation}\label{5.33}
\wh{f}_1 \ge  \wh{f}_1(y_{B^0}) \; \Big(1 - \mbox{\f $\dis\frac{c}{K}$}\Big) \; \; \mbox{on} \;\; B^0.
\end{equation}
(and we assume $K> c$ from now on, so the last factor is positive).

\medskip
We then introduce the two (finite) subsets of $\IZ^d$
\begin{equation}\label{5.34}
C = \bigcup\limits_{B \in \cC_\tau} B \;\; \mbox{and} \;\; C^0 = \bigcup\limits_{B \not= B^0, B \in \cC_\tau} B.
\end{equation}

\n
Since $\wh{f}_0$ is non-negative harmonic outside $B^0$, tends to $0$ at infinity, and is bounded by $\wh{f}(y_{B^0})$ on $B^0$ by (\ref{5.32}), one has
\begin{equation}\label{5.35}
\wh{f}_0 \le  \mbox{\f $\dis\frac{c'}{K^{d-2}}$}\;\wh{f}(y_{B^0}) \;\;\mbox{on} \;\;C^0.
\end{equation}

\n
We will now bound $\wh{f}$ from above (and this will lead to the desired (\ref{5.38}), (\ref{5.39})). We let $h_{B^0}$ and $h_{C^0}$ denote the respective equilibrium potentials of $B^0$ and $C^0$ (i.e.~$h_{B^0} = G \,e_{B^0}$ and $h_{C^0} = G \,e_{C^0}$). We introduce the function
\begin{equation}\label{5.36}
\begin{array}{l}
\wh{f}_2 = \wh{f}_1   + \mbox{\f $\dis\frac{c'}{K^{d-2}}$}\;\wh{f}(y_{B^0}) \,h_{C^0} + \Big(\wh{f}(y_{B^0}) -  \wh{f}_1(y_{B^0}) \Big(1 - \mbox{\f $\dis\frac{c}{K}$}\Big)\Big) \;h_{B^0} 
\\
\mbox{(with $c,c'$ as in (\ref{5.33}), (\ref{5.35}))}.
\end{array}
\end{equation}

\n
Thus, $\wh{f}_2$ is a non-negative superharmonic function. On $C^0$, by (\ref{5.35}), it is bigger or equal to $\wh{f}_1 + \wh{f}_0 = \wh{f}$, and on $B^0$ it is bigger or equal to $\wh{f}_1 + \wh{f}(y_{B^0}) - \wh{f}_1(y_{B^0})  (1 -  \frac{c}{K})   \stackrel{(\ref{5.33})}{\ge} \wh{f}(y_{B^0}) \ge (f_{B^0} - \sqrt{u})_+$. By the minimality of $\wh{f}$ (see (\ref{5.23})), we thus find that
\begin{equation}\label{5.37}
\wh{f}  \le \wh{f}_2 \;\;\mbox{on} \;\; \IZ^d.
\end{equation}
As a result we find that on $B^0$
\begin{equation}\label{5.38}
\begin{split}
\wh{f}  \le \wh{f}_2 &\le \wh{f}_1 + \mbox{\f $\dis\frac{c'}{K^{d-2}}$}\; \wh{f}(y_{B^0})  + \wh{f}(y_{B^0}) - \wh{f}_1 (y_{B^0}) \, \Big(1 - \mbox{\f $\dis\frac{c}{K}$}\Big)
\\
&\!\!\!\! \stackrel{(\ref{5.31})}{\le} \wh{f}_1 (y_{B^0}) \Big(1 + \mbox{\f $\dis\frac{c}{K}$}\Big) +  \mbox{\f $\dis\frac{c'}{K^{d-2}}$}\;\wh{f}(y_{B^0}) + \wh{f}(y_{B^0}) - \wh{f}_1(y_{B^0}) + \mbox{\f $\dis\frac{c}{K}$} \;\wh{f}_1(y_{B^0})
\\
&\le \wh{f}(y_{B^0}) \Big(1 + \mbox{\f $\dis\frac{c}{K}$} \Big).
\end{split}
\end{equation}
This now implies that for any $B$ in $\cC_\tau$ (playing the role of $B^0$)
\begin{equation}\label{5.39}
f = \sqrt{u} + \wh{f} \le \sqrt{u} + \wh{f} (y_B) \Big(1 + \mbox{\f $\dis\frac{c}{K}$} \Big) \le f(y_B) \Big(1 + \mbox{\f $\dis\frac{c}{K}$} \Big).
\end{equation}
This proves (\ref{5.27}). We now proceed with the proof of (\ref{5.28}), and note that
\begin{equation}\label{5.40}
\begin{split}
G(a\,W\,f) & \stackrel{(\ref{5.25})}{=} \dsl_{B \in \cC_\tau}\;  \mbox{\f $\dis\frac{-L\,\wh{f}(B)}{f(y_B)}$} \; a\,G(f \,\ov{e}_B)
\\
 & \stackrel{(\ref{5.39})}{\le}  \dsl_{B \in \cC_\tau}  - L\,\wh{f} (B) \, a\Big(1 + \mbox{\f $\dis\frac{c}{K}$} \Big) \,G(\ov{e}_B) = \dsl_{B \in \cC_\tau} \;  \mbox{\f $\dis\frac{-L\,\wh{f}(B)}{{\rm cap} (B)}$}\; a  \Big(1 + \mbox{\f $\dis\frac{c}{K}$} \Big) \,h_B
\end{split}
\end{equation}
(with $h_B = G \, e_B$ the equilibrium potential of $B$).

\medskip
We then consider some $B^0 \in \cC_\tau$ and derive upper bounds on $-L\,\wh{f}(B^0)$. For this purpose we use the fact that $\wh{f}$ and $\wh{f}_1$ do not move too much on $B^0$ (in the notation of (\ref{5.29})). Namely, either $\wh{f}_0 = 0$ identically, in which case $-L\,\wh{f}(B^0) = 0$, or $\wh{f}_0 \not= 0$ and $\wh{f}$ takes its minimal value on $B^0$ at $y_{B^0}$ (see (\ref{5.25}), (\ref{5.24})), so that on $B^0$
\begin{equation}\label{5.41}
\begin{array}{l}
\wh{f} (y_{B^0}) \le \wh{f} \stackrel{(\ref{5.38})}{\le} \wh{f} (y_{B^0})\Big(1 + \mbox{\f $\dis\frac{c}{K}$} \Big), \;\mbox{and by (\ref{5.31})}
\\[1ex]
\wh{f}_1 (y_{B^0})\Big(1 - \mbox{\f $\dis\frac{c}{K}$} \Big) \le \wh{f}_1 \le \wh{f}_1(y_{B^0}) \Big(1 + \mbox{\f $\dis\frac{c}{K}$} \Big).
\end{array}
\end{equation}
It now follows that on $B^0$ (actually, we will only use the second inequality):
\begin{equation}\label{5.42}
\wh{f}_0(y_{B^0}) - \mbox{\f $\dis\frac{c}{K}$} \;\wh{f}_1 (y_{B^0}) \le \wh{f}_0 = \wh{f} - \wh{f}_1 \le \wh{f}_0(y_{B^0}) + \mbox{\f $\dis\frac{c}{K}$} \;\wh{f} (y_{B^0}).
\end{equation}

\n
Recall that $\wh{f}_0 = G((-L\,\wh{f}) 1_{B^0})$, and note that $-L\,\wh{f} (B^0) =  \langle h_{B^0}, (-L\,\wh{f}) 1_{B^0}\rangle = \langle G\,e_{B^0}$, $(-L\,\wh{f})1_{B^0} \rangle = \langle e_{B^0},\wh{f}_0\rangle$, so that by (\ref{5.42})
\begin{equation}\label{5.43}
\left\{ \begin{array}{rl}
{\rm i)} & -L \wh{f} (B^0)= \langle e_{B^0}, \wh{f}_0\rangle \le \big(\wh{f}_0(y_{B^0}) +  \mbox{\f $\dis\frac{c}{K}$} \; \wh{f}(y_{B^0})\big) \;{\rm cap}(B^0),
\\[1ex]
{\rm ii)} & -L \wh{f} (B^0)= \langle e_{B^0}, \wh{f}_0\rangle \ge \big(\wh{f}_0(y_{B^0}) -  \mbox{\f $\dis\frac{c}{K}$} \; \wh{f}_1(y_{B^0})\big) \;{\rm cap}(B^0)
\end{array}\right.
\end{equation}
(we will only need i) in what follows).

\medskip
Note that for $B \not= B^0$ we have $h_B/ {\rm cap}(B) = G(\ov{e}_B)$. For $x,x'$ in $B$ and $x_0$ in $B^0$ we have $g(x,x_0) \le (1 + \frac{c}{K}) \;g(x',x_0)$ by a similar argument as in (\ref{5.31}) (with now $B$ in place of $B^0$ and $x'$ in place of $y_{B^0}$). We now find that on $B^0$ (keeping in mind (\ref{5.40}))
\begin{equation}\label{5.44}
\dsl_{B \not= B^0} \; \mbox{\f $\dis\frac{L\,\wh{f}(B)}{{\rm cap}(B)}$}\; a\Big(1 + \mbox{\f $\dis\frac{c}{K}$} \Big) \; h_B \le \dsl_{B \not= B^0}\; a\Big(1 + \mbox{\f $\dis\frac{c'}{K}$} \Big)\;G\big((-L\wh{f}) 1_{B}\big) = a\Big(1 + \mbox{\f $\dis\frac{c'}{K}$} \Big)\;\wh{f}_1
\end{equation}
(and the summation refers to $B \not= B^0$ in $\cC_\tau$).

\medskip
Thus, on $B^0$, if $-L\,\wh{f}(B^0) \not= 0$, we have by (\ref{5.40}), (\ref{5.44}), and (\ref{5.43}) i)
\begin{equation}\label{5.45}
\begin{split}
G(a\,W\,f) & \le a\Big(1 + \mbox{\f $\dis\frac{c'}{K}$} \Big)\wh{f}_1 + a \Big(1 + \mbox{\f $\dis\frac{c}{K}$} \Big)  \;\Big(\wh{f}_0 (y_{B^0}) + \mbox{\f $\dis\frac{c}{K}$}  \; \wh{f} (y_{B^0})\Big)
\\[1ex]
&\!\!\!\!\stackrel{(\ref{5.31})}{\le} a\Big(1 + \mbox{\f $\dis\frac{c{''}}{K}$} \Big)\;\wh{f}_1(y_{B^0}) + a\big(\wh{f}_0(y_{B^0}) + \mbox{\f $\dis\frac{c{'''}}{K}$} \;\wh{f} (y_{B^0})\big) \le a \Big(1 + \mbox{\f $\dis\frac{c}{K}$}\Big)  \; \wh{f} (y_{B^0})
\\[1ex]
& \le a\Big(1 + \mbox{\f $\dis\frac{c}{K}$} \Big)\; \wh{f} 
\end{split}
\end{equation}
(recall that when $-L \wh{f}(B^0) \not= 0$, $\wh{f}(y_{B^0}) = (f_{B^0} - \sqrt{u})_+ \le \wh{f}$ on $B^0$, see (\ref{5.25}), (\ref{5.24})). On the other hand, if $-L \wh{f}(B^0) = 0$, then on $B^0$
\begin{equation}\label{5.46}
G(a\, W\, f) \stackrel{(\ref{5.40}),(\ref{5.44})}{\le} a\Big(1 + \mbox{\f $\dis\frac{c'}{K}$} \Big)\;\wh{f}_1 = a\Big(1 + \mbox{\f $\dis\frac{c'}{K}$} \Big)\;\wh{f} .
\end{equation}
Note that $B^0 \in \cC_\tau$ is arbitrary. Thus, taking (\ref{5.45}) and (\ref{5.46}) into account, we see that
\begin{equation}\label{5.47}
\mbox{when $a(1 + \frac{\wt{c}}{K}) \le 1$, then $\sqrt{u} + G(a W f) \le \sqrt{u} + \wh{f} = f$}.
\end{equation}
This proves (\ref{5.28}) and completes the proof of Proposition \ref{prop5.4}.
\end{proof}

The above proposition contains the crucial control to bound the exponential moment of $a \langle L^u,W\rangle$ in Proposition \ref{prop5.6} below. Combined with the lower bound of $\langle L^u,W\rangle$ on  the event $\cA_{\ov{f}}$ derived in (\ref{5.26}), it will produce the efficient upper bounds on $\IP[\cA_{\ov{f}}]$ that we are looking for, see (\ref{5.54}).

\begin{remark}\label{rem5.5} \rm
If instead of the excess deviation event $\cA_N$ from (\ref{4.6}), one considers the deficit deviation event $\cA'_N = \{\sum_{x \in D_N} F((L^u_{x + \point})) < \nu \,|D_N|\}$, where $0 < \nu < \theta(u)$, as mentioned in Remark 4.6, the derivation of an asymptotic lower bound $\liminf_N \frac{1}{N^{d-2}} \;\log \IP[\cA'_N]$ can be achieved with similar methods as in Section 4. However, the derivation of an asymptotic upper bound runs into trouble at the stage we are now. In essence, without entering into details, it requires efficient controls on the negative exponential moments of $\langle L^u,W'\rangle$, where $W'$ is a coarse-grained potential attached to an event of the form $\cA'_{\underline{f}} = \bigcap_{B\in \cC_\tau} \{\langle \ov{e}_B, L^u \rangle \le (1 + \ve) \,f^2_B\}$ (in place of (\ref{5.12})), and $W'$ has a similar form as in (\ref{5.25}), except that now $f = \sqrt{u} - \wh{f}$, and $\wh{f}$ is the smallest non-negative superharmonic function on $\IZ^d$ such that $\wh{f} \ge (\sqrt{u} - f_B)_+$ for each $B$ in $\cC_\tau$ (and the $f_B$ remain uniformly positive). The steps corresponding to (\ref{5.51}), (\ref{5.52}) below are lacking. \hfill $\square$
\end{remark}

For $\ve$ as in (\ref{5.4}) and $K \ge 100$, we define (see (\ref{5.11}), (\ref{5.23}) for notation)
\begin{equation}\label{5.48}
I_{\ve,K} = \liminf\limits_N \; \inf\limits_{\tau \in \{0,\dots,\ov{K} - 1\}^d} \;\;\inf\limits_{\ov{f} \in \cF_\tau} \;\;\mbox{\f $\dis\frac{1}{N^{d-2}}$} \;\cE(f- \sqrt{u}, \;f - \sqrt{u})
\end{equation}
(where $\cE(\cdot,\cdot)$ stands for the Dirichlet form, see (\ref{1.2})).

\begin{proposition}\label{prop5.6} (the exponential Chebyshev bound)

\smallskip\n
For $\ve$ as in (\ref{5.4}), $0 < a  < 1$ and $K \ge c_9(a,F,\ve) \ge c_7 (F,\ve) \vee c_8(a)$ (see (\ref{5.8}) and Proposition \ref{prop5.4}), one has
\begin{equation}\label{5.49}
\limsup\limits_N \; \mbox{\f $\dis\frac{1}{N^{d-2}}$} \;\log \IP[\cA_N] \le -a \big(1 - \ve (1 + \sqrt{u})\big) \,I_{\ve,K} + c \,\sqrt{u} \ve.
% \Big(\ve + \mbox{\f $\dis\frac{1}{K}$}\Big).
\end{equation}
\end{proposition}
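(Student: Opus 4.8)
The plan is to combine the coarse-graining of Proposition \ref{prop5.3} with an exponential Chebyshev inequality applied to the coarse-grained potential $W$ from (\ref{5.25}), using the lower bound (\ref{5.26}) and the upper bound (\ref{5.28}) on the exponential moment. First, by (\ref{5.15}) it suffices to bound $\frac{1}{N^{d-2}}\log \IP[\cA_{\ov f}]$ uniformly over $\tau\in\{0,\dots,\ov K-1\}^d$ and $\ov f\in\cF_\tau$, since $|\cF_\tau|=\exp\{o(N^{d-2})\}$ by (\ref{5.13}) and there are only $\ov K^d$ choices of $\tau$. Fix such a $\tau$ and $\ov f$, let $f$ be as in (\ref{5.23}) and $W$ as in (\ref{5.25}). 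For $0<a<1$ and $K\ge c_8(a)$ as in Proposition \ref{prop5.4}, we have $W\ge 0$, and on $\cA_{\ov f}$ the key lower bound (\ref{5.26}) gives $\langle L^u,W\rangle \ge (1-\ve)\langle -L\wh f, f\rangle = (1-\ve)\,\cE(\wh f, f) = (1-\ve)\,\cE(f-\sqrt u, f-\sqrt u)$, using $-L\sqrt u=0$ and $\wh f=f-\sqrt u$. Hence by Chebyshev
\begin{equation}\label{5.50bis}
\IP[\cA_{\ov f}] \le \exp\{-a(1-\ve)\,\cE(f-\sqrt u,f-\sqrt u)\}\,\IE[e^{a\langle L^u,W\rangle}].
\end{equation}

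Next I estimate the exponential moment using (\ref{1.13}). Since $W$ is finitely supported and $aW$ is small (one can arrange $\sup_x(G(aW))(x)<1$ by taking $K$ large, using the bound $G(aWf)\le a(1+c/K)\wh f$ implicit in the proof of (\ref{5.45})--(\ref{5.46}) together with boundedness of $f$), formula (\ref{1.13}) applies and yields $\IE[e^{a\langle L^u,W\rangle}]=\exp\{u\langle aW,\sum_{n\ge0}(G\,aW)^n 1\rangle\}=\exp\{u\langle aW,(I-G\,aW)^{-1}1\rangle\}$. Writing $\phi=(I-G\,aW)^{-1}1\ge1$, so $\phi = 1 + G(aW\phi)$, I want to compare $\phi$ with $f/\sqrt u$. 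The estimate (\ref{5.28}) says $\sqrt u + G(aWf)\le f$, i.e. $f/\sqrt u$ is a supersolution: $f/\sqrt u \ge 1 + G(aW\cdot f/\sqrt u)$. By a standard iteration/comparison argument (monotonicity of $G$ and of $aW\ge0$), this forces $\phi \le f/\sqrt u$. Therefore $u\langle aW,\phi\rangle \le \sqrt u\,\langle aW, f\rangle = a\sqrt u\,\langle W,f\rangle$. By the computation at the end of (\ref{5.26}), $\langle W,f\rangle = \sum_{B\in\cC_\tau}\frac{-L\wh f(B)}{f(y_B)}\langle\ov e_B,f\rangle$; using (\ref{5.27}), $\langle\ov e_B,f\rangle\le f(y_B)(1+c/K)$, so $\langle W,f\rangle \le (1+c/K)\sum_{B}(-L\wh f(B)) = (1+c/K)\langle -L\wh f,1\rangle$. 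A small extra argument bounds $\langle -L\wh f,1\rangle$ by a constant times $\cE(f-\sqrt u,f-\sqrt u)$ up to lower order: indeed $\langle -L\wh f,1\rangle = \sum_{x\in\{-L\wh f>0\}}(-L\wh f(x))$, and since on $\{-L\wh f(x)>0\}\subseteq\bigcup\{f=f_B\}\cap B$ one has $\wh f(y_B)=f_B-\sqrt u$ bounded below on the relevant boxes, one can absorb this into $\langle -L\wh f, \wh f\rangle = \cE(f-\sqrt u,f-\sqrt u)$ times $\sup(1/\wh f)$ on those boxes — more carefully, $\langle -L\wh f,1\rangle \le \ve^{-1}(1+\sqrt u)\langle -L\wh f,\wh f\rangle$ type bound, exploiting that the $f_B$ entering $\cF_\tau$ with $f_B>\sqrt u$ are uniformly positive and that $\wh f\ge(f_B-\sqrt u)_+$; this is where the factor $(1-\ve(1+\sqrt u))$ and the error $c\sqrt u\,\ve$ in (\ref{5.49}) come from.

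Putting these together, (\ref{5.50bis}) gives
\begin{equation}\label{5.52bis}
\frac{1}{N^{d-2}}\log\IP[\cA_{\ov f}] \le \Big(-a(1-\ve) + a\sqrt u\,(1+c/K)\cdot(\text{const}\cdot\ve)\Big)\frac{\cE(f-\sqrt u,f-\sqrt u)}{N^{d-2}} \le -a\big(1-\ve(1+\sqrt u)\big)\frac{\cE(f-\sqrt u,f-\sqrt u)}{N^{d-2}} + c\sqrt u\,\ve,
\end{equation}
after choosing $K$ large enough (depending on $a,F,\ve$) that the $c/K$ corrections are absorbed, and noting the Dirichlet energy is nonnegative so dropping it only helps where needed. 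Taking $\limsup_N$, then the supremum over $\ov f$ and $\tau$, and using $\inf_{\ov f,\tau}$ on the right to produce $I_{\ve,K}$ from (\ref{5.48}) (the $\liminf$ there matches the $-\sup(-\cdot)$ we get), yields (\ref{5.49}). The main obstacle is the careful bookkeeping in relating $\langle W,f\rangle$ and $\langle -L\wh f,1\rangle$ back to the Dirichlet form $\cE(f-\sqrt u,f-\sqrt u)$ with the precise constants $(1-\ve(1+\sqrt u))$ and error $c\sqrt u\,\ve$, and in justifying the comparison $\phi\le f/\sqrt u$ rigorously; both rest on monotonicity of the resolvent and on the uniform positivity of the relevant $f_B$'s, together with (\ref{5.27})--(\ref{5.28}) from Proposition \ref{prop5.4}.
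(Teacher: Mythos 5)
Your overall architecture (reduce to a uniform bound on $\IP[\cA_{\ov f}]$ via Proposition \ref{prop5.3}, Chebyshev with the potential $W$ using (\ref{5.26}), control of the exponential moment via (\ref{1.13}) and the comparison coming from (\ref{5.28}), then (\ref{5.27}) to get $\langle W,f\rangle\le(1+\frac{c}{K})\langle -L\wh f,1\rangle$) is the paper's argument. But there is a genuine gap at the step where you identify the exponent. The identity $\langle -L\wh f,f\rangle=\cE(f-\sqrt u,f-\sqrt u)$ is false: summation by parts against the constant $\sqrt u$ fails, since $\langle -L\wh f,1\rangle$ is the total mass of the (nonnegative, finitely supported) measure $-L\wh f$, which is strictly positive when $\wh f\not\equiv 0$ (compare $-Lh_A=e_A$, $\langle e_A,1\rangle={\rm cap}(A)>0$, while $\cE(h_A,1)=0$). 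The correct identity is $\langle -L\wh f,f\rangle=\cE(\wh f,\wh f)+\sqrt u\,\langle -L\wh f,1\rangle$, and the extra positive term is not a nuisance to be discarded: it is exactly what cancels, up to the small factor $(\ve+\frac{c}{K})$, the term $a\sqrt u(1+\frac{c}{K})\langle -L\wh f,1\rangle$ produced by the exponential moment. By dropping it you are left with an uncancelled $a\sqrt u(1+\frac{c}{K})\langle -L\wh f,1\rangle$, whose prefactor is not small.

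Your proposed repair, a bound of the type $\langle -L\wh f,1\rangle\le \ve^{-1}(1+\sqrt u)\,\langle -L\wh f,\wh f\rangle$, does not hold with any controlled constant: by (\ref{5.24}) ii), on the support of $-L\wh f$ one has $\wh f=f_B-\sqrt u$ for the relevant boxes, and these values are only bounded below by some constant depending on $\Sigma$ and $u$ (they can be arbitrarily small relative to $\ve$ and $u$), so the resulting coefficient cannot be absorbed into $-a(1-\ve)\cE(\wh f,\wh f)$ to yield $-a(1-\ve(1+\sqrt u))\cE(\wh f,\wh f)$; it could even flip the sign of the exponent. Moreover, your sketch never produces the additive error $c\sqrt u\,\ve$: in the paper it arises from a separate step you are missing, namely $\langle -L\wh f,1\rangle=\langle -L\wh f,h_C\rangle=\cE(\wh f,h_C)\le\frac12\,\cE(\wh f,\wh f)+\frac12\,{\rm cap}(C)$ (with $C=\bigcup_{B\in\cC_\tau}B$ and $h_C$ its equilibrium potential, using that $h_C=1$ on the support of $-L\wh f$), combined with ${\rm cap}(C)\le c\,N^{d-2}$. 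With the cancellation kept, the residual coefficient of $\langle -L\wh f,1\rangle$ is $a\sqrt u(\ve+\frac{c}{K})$, and this Cauchy--Schwarz step is what simultaneously produces the factor $(1-\ve(1+\sqrt u))$ and the additive term $c\sqrt u\,\ve$ in (\ref{5.49}). As written, your argument does not reach the stated bound.
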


\begin{proof}
By Proposition \ref{prop5.3} it suffices to bound the right member of (\ref{5.15}). For any $\tau \in \{0,\dots,\ov{K} - 1\}$, $\ov{f} \in \cF_\tau$, $a \in (0,1)$, and $W$ as in (\ref{5.25}), the lower bound of $\langle L^u,W \rangle$ on $\cA_{\ov{f}}$ in (\ref{5.26}) and the exponential Chebyshev bound yield
\begin{equation}\label{5.50}
 \IP[\cA_{\ov{f}}]  \le \exp\{-a(1-\ve) \,\langle - L \wh{f},f \rangle\} \;\IE[\exp\{a \langle L^u, W\rangle\}].
 \end{equation}
On the other hand by (\ref{5.28}) of Proposition \ref{prop5.4} and induction, we find that
\begin{equation}\label{5.51}
f \ge \sqrt{u} + G(a W f) \ge \sqrt{u} + G(a  W) \,\sqrt{u} + (G a W)^2 f \ge \dots \ge \dsl^\infty_{k=0} (G a W)^k \sqrt{u}.
\end{equation}
It now follows from the expression for the exponential moment of $a \langle L^u,W\rangle$ in (\ref{1.13}) that
\begin{equation}\label{5.52}
\begin{split}
\log \IE[\exp\{a \langle L^u, W\rangle\}] & = u \big\langle a\,W, \dsl^\infty_{k=0} (G a  W)^k 1\big\rangle \stackrel{(\ref{5.51})}{\le} a\,\sqrt{u} \langle W,f\rangle
\\
&\hspace{-4ex} \stackrel{(\ref{5.25}),(\ref{5.27})}{\le}   a\,\sqrt{u}  \dsl_{B \in \cC_\tau} - L\,\wh{f} (B) \Big(1 + \mbox{\f $\dis\frac{c}{K}$}\Big) = a \,\sqrt{u} \Big(1 + \mbox{\f $\dis\frac{c}{K}$}\Big) \;\langle - L\,\wh{f},1\rangle.
\end{split}
\end{equation}

\n
Now with $C = \bigcup_{B \in \cC_\tau} B$, as in (\ref{5.34}), and $h_C$ its equilibrium potential, the Cauchy-Schwarz inequality yields that
\begin{equation}\label{5.53}
\langle -L \wh{f}, 1 \rangle = \langle - L\wh{f}, h_C\rangle =  \cE(\wh{f}, h_C) \le \cE(\wh{f},\wh{f})^{\frac{1}{2}} {\rm cap} (C)^{\frac{1}{2}} \le \fr \;\cE (\wh{f},\wh{f}) + \fr \;{\rm cap}(C).
\end{equation}
Coming back to (\ref{5.50}), we see that for large $N$ and all $\tau \in \{0,\dots,\ov{K} -1\}^d, f \in \cF_\tau$, one has
\begin{equation}\label{5.54}
\begin{split}
\log \IP [\cA_{\ov{f}}] & \le -a(1- \ve) \langle -L \wh{f},f \rangle + a \,\sqrt{u} \Big(1 + \mbox{\f $\dis\frac{c}{K}$}\Big) \langle - L \wh{f},1\rangle
\\
&\!\!\!\! \stackrel{(\ref{5.23})}{=} -a(1- \ve) \langle -L \wh{f},\wh{f} \rangle + a \,\sqrt{u} \Big(\ve + \mbox{\f $\dis\frac{c}{K}$}\Big) \langle - L \wh{f},1\rangle
\\
&\!\!\!\!  \stackrel{(\ref{5.53})}{\le} -a(1- \ve)\cE(\wh{f},\wh{f})  +  \mbox{\f $\dis\frac{a}{2}$}\;\sqrt{u}   \Big(\ve + \mbox{\f $\dis\frac{c}{K}$}\Big)\big(\cE(\wh{f},\wh{f}) + {\rm cap}(C)\big)
\\
&\!\!\!\stackrel{\ve \,K \ge c}{\le} -a \big(1-\ve(1 + \sqrt{u})\big) \, \cE(\wh{f},\wh{f}) + a \, \sqrt{u} \ve  \;{\rm cap}(C).
\end{split}
\end{equation}
For large $N$ and all $\tau \in \{0,\dots, \ov{K} - 1\}^d$, $C \subseteq B(0,2N)$ so that ${\rm cap}(C) \le c \,N^{d-2}$ by (\ref{1.25}). With the notation (\ref{5.48}), we thus find that (recall that $a<1$):
\begin{equation}\label{5.55}
\limsup\limits_N \; \mbox{\f $\dis\frac{1}{N^{d-2}}$} \;\sup\limits_\tau \;\;\sup\limits_{\ov{f} \in \cF_\tau} \; \log \IP[\cA_{\ov{f}}] \le - a\big(1-\ve(1 + \sqrt{u})\big) \,I_{\ve,K} + c\, \sqrt{u} \ve.
\end{equation}
As mentioned above, in view of Proposition \ref{prop5.3}, the claim (\ref{5.49}) follows and Proposition \ref{prop5.6} is proved.
\end{proof}

We will now look for a meaningful lower bound on $I_{\ve,K}$ that feels the ``legacy'' of the constraint involving $\theta$ that appears in the second line of (\ref{5.11}). We set for $b \ge 0$ and $r \ge 1$
\begin{equation}\label{5.56}
\begin{split}
J^D_{b,r} =  \inf\Big\{&\; \fd \;\dis\int_{\IR^d} |\nabla \varphi |^2 dz; \varphi \ge 0 \;\mbox{supported in}\;  B_{\IR^d}(0,400r), 
\\ 
&\;\varphi \in H^1 (\IR^d), \strokedint_D \theta\big((\sqrt{u} + \varphi)^2\big)\,dz \ge b\Big\}.
\end{split}
\end{equation}

\begin{proposition}\label{prop5.7}
For $\ve$ as in (\ref{5.4}) and $K \ge 100$, one has for each integer $r \ge 10$
\begin{equation}\label{5.57}
\Big(1 + \mbox{\f $\dis\frac{c_{10}}{r^{d-2}}$}\Big)\; I_{\ve,K} \ge J^D_{\nu - \ve,r}.
\end{equation}
\end{proposition}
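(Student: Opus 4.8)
The plan is to run a $\Gamma$-liminf type argument, passing to the scaling limit from the (porous, discrete) variational problem defining $I_{\ve,K}$ to the continuum problem defining $J^D_{\nu-\ve,r}$. If $I_{\ve,K}=\infty$ there is nothing to prove, so assume it is finite, and use (\ref{5.48}) to pick $N_k\uparrow\infty$, $\tau_k\in\{0,\dots,\ov{K}-1\}^d$ and $\ov{f}_k\in\cF_{\tau_k}$ with, writing $f_k=\sqrt u+\wh f_k$ and $\wh f_k$ the superharmonic envelope of (\ref{5.23}), $\mathcal E_k:=\frac1{N_k^{d-2}}\,\cE(\wh f_k,\wh f_k)\to I_{\ve,K}$. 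Recall $\wh f_k=G g_k$ with $g_k=-L\wh f_k\ge0$ finitely supported in $C_k=\bigcup_{B\in\cC_{\tau_k}}B$, that $\cE(\wh f_k,\wh f_k)=\langle g_k,Gg_k\rangle$, that $\wh f_k$ is uniformly bounded (by $\sqrt{\max\Sigma}$), and that $C_k\subseteq B(0,2N_k)$ for large $k$ since $D\subseteq[-1,1]^d$ by (\ref{5.3}). A first elementary remark, used throughout: for every $B\in\cC_{\tau_k}$ one has $\theta(f_k(y_B)^2)\ge\theta(f_B^2)$, since $f_k(y_B)^2=(\sqrt u+\wh f_k(y_B))^2\ge u\vee f_B^2\ge f_B^2$ (using $\wh f_k(y_B)\ge(f_B-\sqrt u)_+$ and monotonicity of $\theta$); combined with the admissibility condition in (\ref{5.11}) this gives $\sum_{B\in\cC_{\tau_k}}\theta(f_k(y_B)^2)\,|B|\ge\frac{\nu-\ve}{\ov{K}{}^d}\,|D_{N_k}|$.

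Next I would pass to the continuum. Let $\varphi_k$ be a standard discretisation-to-continuum version of the rescaled field $z\mapsto\wh f_k(N_kz)$ (piecewise linear interpolation, lightly mollified at a scale $\to0$), so that $\varphi_k\ge0$, $\varphi_k\in H^1_{\mathrm{loc}}$, $\frac1{2d}\int_{\IR^d}|\nabla\varphi_k|^2\,dz\le\mathcal E_k(1+o(1))$, and $\varphi_k$ agrees with $\wh f_k(N_k\cdot)$ up to $o(1)$ in $L^2_{\mathrm{loc}}$. Since the charge of $\varphi_k$ sits in $B_{\IR^d}(0,2)$ and the Cauchy--Schwarz bound of (\ref{5.53}) (applied to $C_k$, with $\mathrm{cap}(C_k)\le cN_k^{d-2}$ by (\ref{1.25})) gives $\langle g_k,1\rangle^2\le c\,N_k^{d-2}\,\cE(\wh f_k,\wh f_k)$, the usual decay and gradient estimates for Newtonian potentials yield $|\varphi_k(z)|\le c\sqrt{\mathcal E_k}\,|z|^{-(d-2)}$ and $|\nabla\varphi_k(z)|\le c\sqrt{\mathcal E_k}\,|z|^{-(d-1)}$ for $|z|\ge3$, with $c$ depending only on $d$. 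Taking a cutoff $\chi_r$ with $\chi_r\equiv1$ on $B_{\IR^d}(0,200r)$, $\chi_r\equiv0$ off $B_{\IR^d}(0,400r)$, $|\nabla\chi_r|\le c/r$, and setting $\tilde\varphi_k=\chi_r\,\varphi_k$, a direct estimate of the annular contributions (the tail of $\int|\nabla\varphi_k|^2$ beyond radius $200r$, the cross term, and $\int|\varphi_k|^2|\nabla\chi_r|^2$, each $\le c\,\mathcal E_k/r^{d-2}$) gives
\[
\tfrac1{2d}\int_{\IR^d}|\nabla\tilde\varphi_k|^2\,dz\ \le\ \Big(1+\tfrac{c_{10}}{r^{d-2}}\Big)\,\mathcal E_k+o(1),\qquad \tilde\varphi_k\ge0,\ \ \mathrm{supp}\,\tilde\varphi_k\subseteq B_{\IR^d}(0,400r),
\]
and $\tilde\varphi_k=\varphi_k$ on $D$ since $D\subseteq[-1,1]^d\subseteq B_{\IR^d}(0,200r)$ for $r\ge10$.

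The $\tilde\varphi_k$ are bounded in $H^1(B_{\IR^d}(0,400r))$, so by Rellich's theorem, along a subsequence, $\tilde\varphi_k\to\varphi$ strongly in $L^2$ and a.e., with $\varphi\ge0$, $\mathrm{supp}\,\varphi\subseteq\ov{B_{\IR^d}(0,400r)}$, $\varphi\in H^1$, and $\frac1{2d}\int_{\IR^d}|\nabla\varphi|^2\,dz\le\liminf_k\frac1{2d}\int_{\IR^d}|\nabla\tilde\varphi_k|^2\,dz\le(1+c_{10}/r^{d-2})\,I_{\ve,K}$ by weak lower semicontinuity. The key remaining point is that the porous occupation constraint survives in the limit, i.e.\ $\strokedint_D\theta((\sqrt u+\varphi)^2)\,dz\ge\nu-\ve$; by dominated convergence ($\theta$ bounded, a.e.\ convergence on $D$, $\tilde\varphi_k=\varphi_k$ on $D$) this amounts to $\strokedint_D\theta((\sqrt u+\varphi_k)^2)\,dz\ge\nu-\ve-o(1)$. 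This is where I expect the main difficulty. One exploits that $\mathcal E_k$ stays bounded while $L/N_k\to0$: a Poincar\'e inequality on the cells $\hat Q_B$ (of side $\ov{K}L$) shows the $L^2(D_{N_k})$-averaged deviation of $\wh f_k$ from its cell averages is $O((\ov{K}L/N_k)^2\,\mathcal E_k)\to0$, and the same energy budget excludes a macroscopic density of ``mesoscopic spikes'', so that for all but a vanishing fraction of cells $\wh f_k(y_B)$ equals, up to $o(1)$, the value of $\varphi_k$ on almost all of $\hat Q_B$; together with $\theta(f_k(y_B)^2)\ge\theta(f_B^2)$, the admissibility bound $\sum_B\theta(f_B^2)|B|\ge\frac{\nu-\ve}{\ov{K}{}^d}|D_{N_k}|$, and the a priori bound $\varphi_k\le\max_B f_B$ on $D$, this yields the desired lower bound on $\strokedint_D\theta((\sqrt u+\varphi_k)^2)\,dz$. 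The quantitative forms of these ``porous--to--dense'' statements are precisely the controls of Section 5 of \cite{LiSzni15}; adapting them here, using the superharmonicity of $\wh f_k$ and the cell structure attached to $\ov{K}$, is the crux.

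Finally, $\varphi$ is admissible in the variational problem (\ref{5.56}) defining $J^D_{\nu-\ve,r}$, whence $J^D_{\nu-\ve,r}\le\frac1{2d}\int_{\IR^d}|\nabla\varphi|^2\,dz\le\big(1+\frac{c_{10}}{r^{d-2}}\big)\,I_{\ve,K}$, which is (\ref{5.57}). \hfill$\square$
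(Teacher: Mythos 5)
Your energy/compactness skeleton (extract a near-minimizing sequence via (\ref{5.48}), rescale, cut off at scale $r$ with an energy loss $1+c_{10}/r^{d-2}$, extract an a.e.\ and $L^2$ limit $\varphi$ supported in $B_{\IR^d}(0,400r)$, use lower semicontinuity) matches the paper, which does this by quoting the cut-off lemma and the compactness/lower-semicontinuity facts of Section 5 of \cite{LiSzni15} rather than your ad hoc decay estimates. But the step you yourself identify as the crux --- transferring the constraint, which in (\ref{5.11}) lives only on the porous union of the boxes $B\in\cC_{\tau_k}$ (a $\ov{K}^{-d}$ fraction of $D$), to the full normalized integral $\strokedint_D\theta((\sqrt u+\varphi)^2)\,dz\ge\nu-\ve$ --- is not proved. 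The cell-wise Poincar\'e inequality only controls the $L^2$-averaged deviation of $\wh f_k$ from its cell averages; it says nothing about the values of $\wh f_k$ on the charged boxes $B$ (or at the single point $y_B$), which is exactly where the obstacle $\wh f_k\ge(f_{\ell,B}-\sqrt u)_+$ sits and where the function is locally largest. Your ``no macroscopic density of mesoscopic spikes'' assertion is precisely the nontrivial point; making it rigorous requires a capacity--energy counting argument (a spike of height $\delta$ over a whole box $B$ inside its cell costs energy $\gtrsim\delta^2 L^{d-2}$, so at most $o((N/\ov K L)^d)$ cells can carry one), plus a Chebyshev step inside good cells, because a lower bound on the cell average of $\varphi_k$ alone does not lower-bound the cell average of $\theta((\sqrt u+\varphi_k)^2)$ (Jensen goes the wrong way for the bounded $\theta$). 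None of this is supplied, and the pointer to Section~5 of \cite{LiSzni15} is misplaced: that section provides the cut-off and compactness tools, not any porous-to-dense transfer of an occupation constraint.

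The paper avoids this quantitative homogenization altogether by a softer device, and this is the genuine difference in route. It first keeps the constraint on the porous set itself: by (\ref{5.23}) and (\ref{5.11}), $\int_{C_{\tau_0,\ell}}\theta((\sqrt u+\Phi_\ell)^2)\,dz\ge\frac{\nu-\ve}{\ov K^{\,d}}|D|+o(1)$, and passes to the limit function $\Phi$ on this set by dominated convergence, see (\ref{5.68})--(\ref{5.69}). Then it observes that the $\ov K^{\,d}$ porous sets $C_{\tau,\ell}$, $\tau\in\{0,\dots,\ov K-1\}^d$, are pairwise disjoint, asymptotically tile $D$, and are asymptotic translates of $C_{\tau_0,\ell}$ by the vanishing vectors $\frac{L_\ell}{N_\ell}\tau$, see (\ref{5.66})--(\ref{5.67}); since translation acts continuously on the \emph{fixed} limit $\Phi$ in $L^1_{\rm loc}$, all the integrals $\int_{C_{\tau,\ell}}\theta((\sqrt u+\Phi)^2)\,dz$ have the same limit, and summing over $\tau$ recovers the full constraint with the factor $\ov K^{\,d}$ exactly cancelling the $\ov K^{-d}$ of (\ref{5.11}), see (\ref{5.70})--(\ref{5.71}). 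So the porosity is resolved after passing to the continuum limit, with no statement whatsoever about near-constancy of $\wh f_k$ on mesoscopic cells. As written, your proposal has a real gap at its central step; if you want to keep your route, you must prove the spike-exclusion and in-measure transfer sketched above, but the translation-continuity argument of the paper is both simpler and what the structure of $\cF_\tau$ was designed for.
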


\begin{proof}
We first need some notation. For each $N \ge 1$, we denote by $\IL_N$ the scaled lattice $\frac{1}{N} \;\IZ^d$, and for $h$ a function on $\IL_N$ we consider the Dirichlet form
\begin{equation}\label{5.58}
\cE_N(h,h) = \mbox{\f $\dis\frac{1}{2N^{d-2}}$} \;\dsl_{y \underset{N}{\sim} y'} \;\fr \;\big(h(y') - h(y)\big)^2 \;(\le \infty),
\end{equation}
where $y \underset{N}{\sim} y'$ means that $y$ and $y'$ are neighbors in $\IL_N$. In particular, when $N =1$ and $h$ is a function on $\IZ^d = \IL_{N=1}$, we find that $\cE_{N=1} (h,h) = d \cE(h,h)$ in the notation of (\ref{1.2}). When $h_1,h_2$ are functions on $\IL_N$ with finite Dirichlet form, as usual, $\cE_N(h_1,h_2)$ is defined by polarization.

\medskip
Let us now prove (\ref{5.57}). Without loss of generality, we can assume that the left member of (\ref{5.57}) is finite. For any $K ( \ge 100)$, for which $I_{\ve,K} < \infty$, we can find by (\ref{5.48}) an increasing sequence $N_\ell, \ell \ge 1$, as well as $\tau_0$ in $\{0,\dots,\ov{K}-1\}^d$ and $\ov{f}_\ell$ in $\cF_{\tau_0}$ such that setting
\begin{equation}\label{5.59}
\mbox{$\varphi_\ell(z) = \wh{f}_\ell (N_\ell z)$, for $z \in \IL_{N_\ell}$, with $\wh{f}_\ell \ge 0$ attached to $\ov{f}_\ell$ as in (\ref{5.23})},
\end{equation}
one has
\begin{equation}\label{5.60}
I_{\ve,K} = \lim\limits_\ell \; \mbox{\f $\dis\frac{1}{d}$}  \;\cE_{N_\ell}(\varphi_\ell,\varphi_\ell) < \infty.
\end{equation}
For large $\ell$, the non-negative functions $\varphi_\ell$ on $\IL_{N_\ell}$ are (for the simple random walk on $\IL_{N_\ell}$) harmonic outside $[-2,2]^d \cap \IL_{N_\ell}$, and tend to zero at infinity. By the ``cut-off lemma'', i.e.~Lemma 5.7 of \cite{LiSzni15}, we can find for each integer $r \ge 10$ and each $\ell \ge 1$, $\ov{\varphi}_\ell$ on $\IL_{N_\ell}$ such that
\begin{equation}\label{5.61}
\left\{ \begin{array}{rl}
{\rm i)} &\mbox{$\ov{\varphi}_\ell = \varphi_\ell$ on $[-r,r]^d \cap \IL_{\IN_\ell}$},
\\[1ex]
{\rm ii)} & \mbox{$\ov{\varphi}_\ell = 0$ outside $C_r \cap \IL_{N_\ell}$, where $C_r= B_{\IR^d}(0,400 r)$},
\\[1ex]
{\rm iii)} & \cE_{N_\ell} (\ov{\varphi}_\ell, \ov{\varphi}_\ell) \le \cE_{N_\ell} (\varphi_\ell,\varphi_\ell) \;\Big(1 + \mbox{\f $\dis\frac{c_{10}}{r^{d-2}}$}\Big).
\end{array}\right.
\end{equation}
We then introduce the sequence of functions $\Phi_\ell$ on $\IR^d$ such that $\Phi_\ell$ is constant on each box $y + \frac{1}{N_\ell} \;[0,1)^d$, $y \in \IL_{N_\ell}$, where it equals $\ov{\varphi}_\ell(y)$, namely:
\begin{equation}\label{5.62}
\Phi_\ell(z) = \dsl_{y \in \IL_{N_\ell}} \ov{\varphi}_\ell(y) \;1\Big\{z \in y + \mbox{\f $\dis\frac{1}{N_\ell}$} \;[0,1)^d\Big\}, \;z \in \IR^d, \ell \ge 1.
\end{equation}

\n
By (\ref{5.37}) of \cite{LiSzni15}, we can extract a subsequence (still denoted by $\Phi_\ell$ for notational convenience) such that
\begin{equation}\label{5.63}
\mbox{$0 \le \Phi_\ell \;\underset{\ell}{\longrightarrow} \Phi$ in $L^2(\IR^d)$ and a.e., with $\Phi = 0$ outside $C_r$,} 
\end{equation}
and by (\ref{5.42}) of the same reference $\Phi$ belongs to $H^1(\IR^d)$, and
\begin{equation}\label{5.64}
\fd \;\dis\int_{\IR^d} | \nabla \Phi |^2 \,dz \le I_{\ve,K} \Big(1 + \mbox{\f $\dis\frac{c_{10}}{r^{d-2}}$}\Big).
\end{equation}

\n
At this stage, our aim is to see that $\Phi$ still feels the constraint on $\theta$ present in the definition of $\cF_{\tau_0}$ in (\ref{5.11}), see (\ref{5.71}) below. With this goal in mind, for $\tau \in \{0,\dots,\ov{K}-1\}^d$ and $\ell \ge 1$, we consider $C_{\tau,\ell}$ the ``scaled and solid'' union of the boxes $B$ in $\cC_\tau$, cf.~(\ref{5.10}), namely:
\begin{equation}\label{5.65}
C_{\tau,\ell} = \mbox{\f $\dis\frac{1}{N_\ell}$} \;\big(\bigcup\limits_{B \in \cC_\tau} \; \bigcup\limits_{x \in B} \;x + [0,1)^d\big) \subseteq \IR^d.
\end{equation}

\n
Note that for each $\ell \ge 1$ the sets $C_{\tau,\ell}$ are pairwise disjoint as $\tau$ varies over $\{0,\dots,\ov{K}-1\}^d$ (since the collections $\cC_\tau$ are pairwise disjoint), and by (\ref{5.10}), and the fact that $D$ is by (\ref{0.6}) either a closed box, or the closure of a smooth bounded domain in $\IR^d$, their union is for large $\ell$ close to $D$ in the sense that (with $\Delta$ denoting the symmetric difference)
\begin{equation}\label{5.66}
\big| \big(\bigcup\limits_\tau \;C_{\tau,\ell}\big) \,\Delta D \big| \underset{\ell}{\longrightarrow} 0 \;\;\mbox{(where $\tau$ ranges over $\{0,\dots,\ov{K} - 1\}^d$ in the union)}.
\end{equation}
Moreover, the sets $C_{\tau,\ell}$ are nearly translates of each other as $\tau$ varies over $\{0,\dots,\ov{K} - 1\}^d$. Namely, with $L_\ell$ as in (\ref{4.8}), where $N = N_\ell$, one has
\begin{equation}\label{5.67}
\Big| C_{\tau,\ell} \; \Delta \Big(\mbox{\f $\dis\frac{L_\ell}{N_\ell}$} \; \tau + C_{\tau = 0,\ell}\Big) \Big| \underset{\ell}{\longrightarrow}  0, \; \mbox{for each $\tau \in \{0,\dots,\ov{K} -1\}^d$}.
\end{equation}

\n
Recall the construction of $\varphi_\ell$ in (\ref{5.59}) and note that for each $B \in \cC_{\tau_0}$ one has $\wh{f}_\ell + \sqrt{u} \ge f_{\ell,B}$, if $\ov{f}_\ell = (f_{\ell,B})_{B \in \cC_{\tau_0}}$, see (\ref{5.23}). Hence, for large $\ell$, it holds that
\begin{equation}\label{5.68}
\begin{array}{l}
\dis\int_{C_{\tau_0,\ell}} \theta\big((\sqrt{u} + \Phi_\ell(z))^2\big) \,dz \; \underset{(\ref{5.65})}{\stackrel{(\ref{5.62}),(\ref{5.61})}{=}} \;\dsl_{x \in \bigcup_{\cC_{\tau_0}}B} \theta\big(\big(\sqrt{u} + \wh{f}_\ell (x)\big)^2\big)\,N_\ell^{-d} \stackrel{(\ref{5.23})}\ge
\\
\mbox{\f $\dis\frac{1}{N_\ell^d}$} \;\dsl_{B \in \cC_{\tau_0}} \theta(f^2_{\ell,B}) \,|B| \stackrel{(\ref{5.11})}{\ge} \mbox{\f $\dis\frac{1}{N_\ell^d}$}  \; \mbox{\f $\dis\frac{(\nu - \ve)}{\ov{K}\,^d}$} \;|D_{N_\ell}| \; \underset{\ell \r \infty}{\longrightarrow} \mbox{\f $\dis\frac{\nu - \ve}{\ov{K}\,^d}$} \;|D| 
\end{array}
\end{equation}
(where $|D_{N_\ell}|$ stands for the number of points in $D_{N_\ell}$ and $|D|$ for the Lebesgue measure of $D$).

\medskip
In addition, since $\theta((\sqrt{u} + \point)^2)$ is bounded continuous, and $C_{\tau_0,\ell}$ is contained in $[-2,2]^d$ for large $\ell$, it follows from (\ref{5.63}) by dominated convergence that
\begin{equation}\label{5.69}
\lim\limits_\ell \;\Big| \dis\int_{C_{\tau_0,\ell}} \theta\big(\big(\sqrt{u} + \Phi_\ell(z)\big)^2\big)\,dz - \dis\int_{C_{\tau_0,\ell}} \theta\big(\big(\sqrt{u} + \Phi (z)\big)^2\big)\,dz\Big| = 0.
\end{equation}
Moreover, the map $z \in \IR^d \r \Phi(z + \cdot) \in L^2(\IR^d)$ is continuous. This readily implies the continuity of the map $z \in \IR^d \r \theta((\sqrt{u} + \Phi(z+\cdot))^2) \in L^1_{{\rm loc}} (\IR^d)$. By (\ref{5.67}) and the fact that for large $\ell$ all $C_{\tau,\ell}$, $\tau \in \{0,\dots,\ov{K} -1\}^d$ are contained in $[-2,2]^d$, we then find that for all $\tau \in \{0,\dots, \ov{K} - 1\}^d$,
\begin{equation}\label{5.70}
\lim\limits_\ell \Big| \dis\int_{C_{\tau,\ell}}  \theta\big(\big(\sqrt{u} + \Phi(z)\big)^2\big)\,dz - \dis\int_{C_{\tau_0,\ell}} \theta\big(\big(\sqrt{u} + \Phi(z)\big)^2\big)\,dz\Big| = 0.
\end{equation}
Taking into account that the $C_{\tau,\ell}$ are pairwise disjoint as $\tau$ varies over $\{0,\dots,\ov{K} - 1\}^d$ and that (\ref{5.66}) holds, we find that
\begin{equation}\label{5.71}
\begin{split}
\dis\int_D  \theta\big(\big(\sqrt{u} + \Phi(z)\big)^2\big)\,dz & = \lim\limits_\ell \; \dsl_{\tau} \;\dis\int_{C_{\tau,\ell}}  \theta\big((\sqrt{u} + \Phi)^2\big)\,dz
\\
&\!\!\!\!\! \stackrel{(\ref{5.70})}{=} \ov{K}\,^{\!d}\; \lim\limits_\ell \; \dis\int_{C_{\tau_0,\ell}} \; \theta\big((\sqrt{u} + \Phi)^2\big)\,dz 
\\
&\!\!\!\!\! \stackrel{(\ref{5.69})}{=}  \ov{K}\,^{\!d}\; \lim\limits_\ell \;\dis\int_{C_{\tau_0,\ell}} \theta\big((\sqrt{u} + \Phi_\ell)^2\big)\,dz
\\[-1ex]
&\!\!\!\!\! \stackrel{(\ref{5.68})}{\ge} (\nu - \ve) \,|D|.
\end{split}
\end{equation}
Combining (\ref{5.63}), (\ref{5.64}), and (\ref{5.71}), we see that for $r \ge 10$, the left member of (\ref{5.57}) is bigger or equal to the right member of (\ref{5.57}). This proves Proposition \ref{prop5.7}.
\end{proof}

We are now ready to complete the proof of Theorem \ref{theo5.1}.

\bigskip\n
{\it Proof of Theorem \ref{theo5.1}:} Consider $\ve$ as in (\ref{5.4}) and $r \ge 10$ integer. Letting successively $K$ tend to infinity and $a$ tend to $1$, Propositions \ref{prop5.6} and \ref{prop5.7} show that
\begin{equation}\label{5.72}
\limsup\limits_N \; \mbox{\f $\dis\frac{1}{N^{d-2}}$} \; \log \IP[\cA_N] \le - \big(1 - \ve(1 + \sqrt{u})\big) \,J^D_{\nu - \ve,r} \Big(1 + \mbox{\f $\dis\frac{c_{10}}{r^{d-2}}$}\Big)^{-1} + c \,\sqrt{u} \;\ve.
\end{equation}
We can now let $\ve$ to $0$ and find (note that $J^D_{b,r}$ is non-decreasing in $b$)

\vspace{-3ex}
\begin{equation}\label{5.73}
\limsup\limits_N \; \mbox{\f $\dis\frac{1}{N^{d-2}}$} \; \log \IP[\cA_N] \le -   \Big(1 + \mbox{\f $\dis\frac{c_{10}}{r^{d-2}}$}\Big)^{-1} \;\lim\limits_{\ve \r 0} \;J^D_{\nu - \ve,r}  .
\end{equation}
We can assume that the right member of (\ref{5.73}) is finite (otherwise Theorem \ref{theo5.1} is proved). If we now consider $\ve_n \r 0$, and $\varphi_n \ge 0$ in $H^1(\IR^d)$ supported in $C_r$ such that $\strokedint_D \theta((\sqrt{u} + \varphi_n)^2)\,dz \ge \nu - \ve_n$ with $\frac{1}{2d} \,\int_{\IR^d} |\nabla \varphi_n|^2 \,dz \underset{n}{\longrightarrow} \lim_{\ve \r 0} \,J^D_{\nu - \ve,r} < \infty$, we can, after extraction of a subsequence, assume that $\varphi_n$ converges in $L^2(C_r)$ and a.e.~to $\varphi \ge 0$ in $H^1(\IR^d)$ supported in $C_r$ with $\frac{1}{2d} \,\int_{\IR^d} |\nabla \varphi|^2 \,dz \le \lim_{\ve \r 0} J^D_{\nu - \ve, r}$ (see \cite{LiebLoss01}, p.~208 and 212). Hence, by dominated convergence, we have $\int_D \theta((\sqrt{u} + \varphi)^2)\,dz \ge \nu$, and this proves that for any $r \ge 10$ integer, in the notation of (\ref{5.2}):
\begin{equation}\label{5.74}
\limsup\limits_N \; \mbox{\f $\dis\frac{1}{N^{d-2}}$} \; \log \IP[\cA_N] \le -  \Big(1 + \mbox{\f $\dis\frac{c_{10}}{r^{d-2}}$}\Big)^{-1}  \; J^D_{\nu,r} \ge -  \Big(1 + \mbox{\f $\dis\frac{c_{10}}{r^{d-2}}$}\Big)^{-1}  \; J^D_\nu .
\end{equation}
Letting $r$ tend to infinity proves (\ref{5.1}) and concludes the proof of Theorem \ref{theo5.1}. \hfill $\square$

\begin{remark}\label{rem5.8} \rm
The argument below (\ref{5.73}) shows that when $r \ge \sup\{|z|_\infty$; $z \in D\}$, 
\begin{equation}\label{5.75}
\mbox{the non-decreasing function $b \in [0,\theta_\infty) \r J^D_{b,r} \in \IR_+$ is left-continuous}.
\end{equation}

\vspace{-3ex}
\hfill $\square$
\end{remark}

As we will now see, the respective lower and upper bounds on the principal exponential decay of $\IP[\cA_N]$ are matching. In what follows, $D^1(\IR^d)$ stands for the space of locally integrable functions $\varphi$ on $\IR^d$ that vanish at infinity, i.e.~that are such that $|\{|\varphi | > a\}| < \infty$ for all $a > 0$, and have a finite Dirichlet integral $\int_{\IR^d} |\nabla \varphi|^2 \,dz < \infty$, see Chap. 8~\S 2 in \cite{LiebLoss01}.

\begin{corollary}\label{cor5.9}
Assume that the local function $F$ satisfies (\ref{2.1}), (\ref{4.1}), and that $D$ is of the form (\ref{0.6}). Then, for any $u > 0$ and $\nu$ in $(\theta(u),\theta_\infty)$, one has in the notation of (\ref{4.6}), (\ref{4.7}) and (\ref{5.2}),

\vspace{-3ex}
\begin{equation}\label{5.76}
\begin{array}{l}
\lim\limits_N \; \mbox{\f $\dis\frac{1}{N^{d-2}}$} \; \log \IP[\cA_N]  = -I^D_\nu = - J^D_\nu
\\[1ex]
= -\min \Big\{\fd \;\dis\int_{\IR^d} |\nabla \varphi |^2\,dz; \varphi \ge 0, \varphi \in D^1(\IR^d),  \strokedint_D \theta\big((\sqrt{u} + \varphi)^2\big) \,dz = \nu\Big\}
\end{array}
\end{equation}

\medskip\n
(in particular there is a minimizer in the variational problem on the last line of (\ref{5.76}), moreover, one can drop the condition $\varphi \ge 0$ in the variational problem).
\end{corollary}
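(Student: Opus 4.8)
The plan is to combine the lower bound of Theorem \ref{theo4.2} and the upper bound of Theorem \ref{theo5.1}, and then to show that when $\theta$ is strictly increasing the quantities $I^D_\nu$ and $J^D_\nu$ coincide with the ``equality-constrained'' minimum displayed on the last line of (\ref{5.76}). First I would record that Theorems \ref{theo4.2} and \ref{theo5.1} give
$$
-I^D_\nu \le \liminf_N \tfrac{1}{N^{d-2}}\log\IP[\cA_N] \le \limsup_N \tfrac{1}{N^{d-2}}\log\IP[\cA_N] \le -J^D_\nu,
$$
so it suffices to prove $I^D_\nu \ge J^D_\nu$ (the reverse inequality $I^D_\nu \le J^D_\nu$ being immediate, since enlarging the admissible class of test functions and relaxing ``$>\nu$'' to ``$\ge \nu$'' only decreases the infimum: every smooth compactly supported $\varphi$ with $\strokedint_D\theta((\sqrt u+\varphi)^2)\,dz>\nu$ is in particular admissible for $J^D_\nu$). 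To get $I^D_\nu\ge J^D_\nu$, I would start from a near-optimal $\varphi\ge 0$ in $H^1(\IR^d)$, compactly supported, with $\strokedint_D\theta((\sqrt u+\varphi)^2)\,dz\ge\nu$ and Dirichlet energy within $\delta$ of $J^D_\nu$; by truncation and mollification one may assume $\varphi\in C^\infty_0(\IR^d)$, $\varphi\ge 0$. The point where strict monotonicity of $\theta$ enters is that for any $\eta>0$ the slightly enlarged function $(1+\eta)\varphi$ (or $\varphi+\eta$ on a neighborhood of $D$) strictly increases $\strokedint_D\theta((\sqrt u+\cdot)^2)\,dz$ above $\nu$ — here one uses that $\theta$ is strictly increasing and $\varphi\not\equiv$ constant unfavorably, i.e. that perturbing $\varphi$ upward on a set of positive measure in $D$ strictly raises the average of $\theta((\sqrt u+\varphi)^2)$. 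Since the Dirichlet energy changes continuously under such a perturbation, letting $\eta\downarrow 0$ gives $I^D_\nu\le J^D_\nu+\delta$ for every $\delta>0$, hence $I^D_\nu\le J^D_\nu$; combined with the trivial reverse inequality this yields $I^D_\nu=J^D_\nu$ and the limit in (\ref{5.76}).

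Next I would identify this common value with the equality-constrained minimum over $D^1(\IR^d)$. The class $D^1(\IR^d)$ of locally integrable functions vanishing at infinity with finite Dirichlet integral contains $C^\infty_0(\IR^d)$ densely in Dirichlet energy (see Chapter 8 \S2 of \cite{LiebLoss01}), so passing from $C^\infty_0$ to $D^1$ does not change the infimum; this handles the ``$\ge$'' between $J^D_\nu$ and the displayed minimum. For the reverse direction, and to establish that a minimizer is actually attained, I would take a minimizing sequence $\varphi_n\ge 0$ in $D^1(\IR^d)$ with $\strokedint_D\theta((\sqrt u+\varphi_n)^2)\,dz=\nu$ and bounded Dirichlet energy; by the standard compactness for $D^1$ (Sobolev inequality for $d\ge 3$, weak-$L^{2^*}$ bounds, and $L^2_{\mathrm{loc}}$ compactness, as in \cite{LiebLoss01} pp.~208, 212, exactly the tools already invoked below (\ref{5.73})) one extracts a subsequence converging weakly in Dirichlet norm and a.e. to some $\varphi\ge 0$ in $D^1(\IR^d)$, with $\tfrac{1}{2d}\int|\nabla\varphi|^2\,dz\le\liminf_n\tfrac{1}{2d}\int|\nabla\varphi_n|^2\,dz$ by weak lower semicontinuity. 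Since $\theta((\sqrt u+\cdot)^2)$ is bounded and continuous and the relevant integrals are over the bounded set $D$, dominated convergence gives $\strokedint_D\theta((\sqrt u+\varphi)^2)\,dz=\nu$ (the possible loss of mass at infinity is irrelevant because $D$ is bounded, and a.e. convergence plus boundedness suffices); hence $\varphi$ is admissible and realizes the minimum, so the infimum is attained. Finally, replacing $\varphi$ by $\varphi^+$ never increases the Dirichlet energy and never decreases $\strokedint_D\theta((\sqrt u+\varphi)^2)\,dz$ (because $\theta$ is non-decreasing and $(\sqrt u+\varphi^+)^2\ge(\sqrt u+\varphi)^2$ when $\sqrt u>0$); combined with the strict monotonicity argument above this shows the constraint ``$\varphi\ge 0$'' can be dropped in all three variational problems without changing their value, giving the parenthetical claim.

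The main obstacle I anticipate is the careful handling of the constraint when moving between the open inequality ``$>\nu$'' appearing in $I^D_\nu$ and the closed one ``$\ge\nu$'' or ``$=\nu$'' in $J^D_\nu$ and the final minimum: this is exactly where strict monotonicity of $\theta$ is indispensable, since without it one cannot guarantee that an upward perturbation of a near-optimal $\varphi$ strictly exceeds $\nu$ while keeping the energy under control, and one cannot guarantee that the constraint level can be saturated. A secondary technical point is verifying that the limiting function in the compactness argument lies in $D^1(\IR^d)$ and still satisfies the constraint exactly; this is routine given the structure of $D^1$ and the boundedness of $D$ and of $\theta$, and essentially duplicates the argument already carried out below (\ref{5.73}) in the proof of Theorem \ref{theo5.1}, but it must be stated cleanly. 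Everything else — the truncation/mollification reductions, the density of $C^\infty_0$ in $D^1$, the lower semicontinuity, and the $\varphi\mapsto\varphi^+$ comparison — is standard and I would state it with appropriate references to \cite{LiebLoss01} rather than reprove it.
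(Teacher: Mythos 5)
Your treatment of the first two equalities in (\ref{5.76}) is essentially the paper's: combine Theorems \ref{theo4.2} and \ref{theo5.1} and close the sandwich by proving $I^D_\nu \le J^D_\nu$, using strict monotonicity of $\theta$ to push a near-optimal competitor for $J^D_\nu$ strictly above the constraint level and then mollifying. Two small remarks: your opening sentence swaps which inequality is trivial (the easy one is $I^D_\nu\ge J^D_\nu$, and it is $I^D_\nu\le J^D_\nu$ that needs the monotonicity argument, as your own computation in fact delivers), and the order ``mollify, then perturb'' needs care since mollification alone can push the constraint below $\nu$; the paper first adds a small smooth $\gamma>0$ on $D$ and mollifies afterwards.

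The genuine gap is in the third equality, i.e.\ in identifying $J^D_\nu$ with the equality-constrained minimum over $D^1(\IR^d)$: neither of the two tools you invoke does the job you assign to it. Density of $C^\infty_0$ in $D^1$ in Dirichlet energy does not handle the change of constraint from ``$\ge\nu$'' to ``$=\nu$'' (energy approximation only gives convergence of the constraint functional, not preservation of the inequality); the easy direction, $J^D_\nu\ge{}$displayed minimum, is instead obtained by scaling a competitor down, $a\varphi$ with $a\in[0,1]$, and using continuity to hit the level $\nu$ exactly. More seriously, your ``reverse direction'' is argued only through the compactness/attainment step, which shows the $D^1$ minimum is attained but does not compare it with $J^D_\nu$. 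To prove that the displayed minimum is $\ge J^D_\nu$ one must convert a $\varphi\ge0$ in $D^1(\IR^d)$ with $\strokedint_D\theta((\sqrt u+\varphi)^2)\,dz=\nu$ into a compactly supported $H^1$ competitor with constraint $\ge\nu$ and almost the same energy; this is not automatic, since for $d=3,4$ a $D^1$ function need not belong to $H^1$ and in general has unbounded support. The paper does this by scaling up by $a>1$ (strict monotonicity and the fact that $\varphi$ is not a.e.\ $0$ on $D$ give a strict excess over $\nu$), then truncating $(\varphi_a-\delta)_+\wedge\delta^{-1}$ and multiplying by a cutoff; some such step is missing from your proposal. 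Finally, your argument for dropping $\varphi\ge0$ is wrong as stated: $(\sqrt u+\varphi^+)^2\ge(\sqrt u+\varphi)^2$ fails wherever $\varphi<-2\sqrt u$, so passing to $\varphi^+$ can decrease the constraint integral; use $|\varphi|$ instead, for which $(\sqrt u+|\varphi|)^2-(\sqrt u+\varphi)^2=2\sqrt u\,(|\varphi|-\varphi)\ge0$ pointwise, and then rescale down to restore the equality constraint.
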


\begin{proof}
We begin with the proof of the first two equalities. By Theorems \ref{theo4.2} and \ref{theo5.1}, it suffices to show the second equality. By direct inspection $I^D_\nu \ge J^D_\nu$, and it thus remains to show that
\begin{equation}\label{5.77}
I^D_\nu \le J^D_\nu .
\end{equation}

\n
For this purpose, note that for any non-negative compactly supported $\varphi$ in $H^1(\IR^d)$ that satisfies $\strokedint_D \theta((\sqrt{u} + \varphi)^2) \,dz \ge \nu$, one can add a non-negative smooth compactly supported function $\gamma$, which is positive on the compact set $D$, with small Dirichlet energy. Since $\theta$ is strictly increasing, $\strokedint_D \theta((\sqrt{u} + \varphi + \gamma)^2) \,dz >\nu$. Using regularization by convolution of $\varphi + \gamma$, one sees that $\int |\nabla \varphi |^2\,dz$ can be approximated by Dirichlet integrals of non-negative functions $\wt{\varphi} \in C^\infty_0(\IR^d)$, such that $\strokedint_D \theta((\sqrt{u} + \wt{\varphi})^2) \,dz > \nu$. This proves (\ref{5.77}) and the two equalities on the first line of (\ref{5.76}) follow.

\medskip
Let us now prove the last equality. We denote by $\wt{J}^D_\nu$ the quantity after the minus sign on the last line of (\ref{5.76}) with $\min$ replaced by $\inf$. Our next step is to establish that
\begin{equation}\label{5.78}
J^D_\nu = \wt{J}^D_\nu .
\end{equation}

\n
To this end, note that whenever $\varphi \ge 0$ with compact support is such that $\strokedint_D \theta((\sqrt{u} + \varphi)^2) \,dz \ge \nu$, then, by continuity, for some $0 \le a \le 1$, $\varphi_a = a \varphi$ satisfies $\strokedint_D \theta((\sqrt{u} + \varphi_a)^2) \,dz =\nu$. As a result, $J^D_\nu \ge \wt{J}^D_\nu$. To see the reverse inequality, note that any $\varphi \ge 0$, in $D^1(\IR^d)$, such that $\strokedint_D \theta((\sqrt{u} + \varphi)^2) \,dz = \nu$ ($> \theta(u)$), is not a.e.~equal to $0$ on $D$. Since $\theta$ is strictly increasing, $\varphi_a = a \varphi$, with $a > 1$ (close to $1$) satisfies $\strokedint_D \theta((\sqrt{u} + \varphi_a)^2) \,dz > \nu$. Then, see \cite{LiebLoss01}, p.~204, setting for $\delta > 0$ (small) $\varphi_{a,\delta} = (\varphi_a - \delta)_+ \wedge \delta^{-1}$, one has a bounded function with support of finite Lebesgue measure, smaller or equal Dirichlet integral, and such that $\strokedint_D \theta((\sqrt{u} + \varphi_{a,\delta})^2) \,dz > \nu$. Multiplying by a cut-off function, one can approximate $\varphi_{a,\delta}$ by a compactly supported $\wt{\varphi} \ge 0$ in $H^1(\IR^d)$ with Dirichlet integral close to that of $\varphi_{a,\delta}$ and $\strokedint_D \theta((\sqrt{u} + \wt{\varphi})^2) \,dz > \nu$, This shows that $\wt{J}^D_\nu \ge J^D_\nu$ and finishes the proof of (\ref{5.78}).

\medskip
To complete the proof of the second equality in (\ref{5.76}), there remains to show the existence of a minimizer for $\wt{J}^D_\nu$. To this end, consider a minimizing sequence $\varphi_n$. By Theorem 8.6, p.~208 and Corollary 8.7, p.~212 of \cite{LiebLoss01}, we can extract a subsequence still denoted by $\varphi_n$ that converges a.e.~and in $L^2_{\rm loc}(\IR^d)$ to $\varphi \ge 0$ in $D^1(\IR^d)$ such that $\int_{\IR^d} |\nabla \varphi |^2\,dz \le \liminf_n \int_{\IR^d} |\nabla \varphi_n|^2 \,dz$. By dominated convergence, we thus find that $\strokedint_D \theta((\sqrt{u} + \varphi)^2) \,dz = \nu$, and this shows that $\varphi$ is a minimizer. Finally, note that when $\varphi \in D^1(\IR^d)$, $|\varphi | \in D^1(\IR^d)$ and has smaller or equal Dirichlet integral, see p.~204 of \cite{LiebLoss01}. One can thus drop the condition $\varphi \ge 0$ in the last line of (\ref{5.76}) without changing the value of the minimum. This concludes the proof of Corollary \ref{cor5.9}.
\end{proof}

\begin{remark}\label{rem5.10} \rm 1) Under the assumptions of Corollary \ref{cor5.9}, note that when $\varphi$ is a minimizer of the variational problem on the second line of (\ref{5.76}), it follows using smooth perturbations, which are compactly supported in $\IR^d \backslash D$, that $\Delta \varphi = 0$ in $\IR^d \backslash D$ in the distribution sense, so that $\varphi$ is a non-negative smooth harmonic function in $\IR^d \backslash D$, see p.~127 of \cite{Dono69}. Then, from Harnack inequality, and the fact that $\varphi$ vanishes at infinity (in the sense stated above Corollary \ref{cor5.9}), one sees that $\varphi$ tends to $0$ at infinity. Hence, by routine direct comparison, setting $r_D = \sup\{|z|; z \in D\}$, $\varphi$ satisfies the bound
\begin{equation}\label{5.79}
0 \le \varphi(z) \le \Big(\mbox{\f $\dis\frac{2r_D}{|z|}$}\Big)^{d-2} \;\sup\limits_{|z'| = 2r_D} \varphi(z'), \;\mbox{when} \; |z| \ge 2r_D.
\end{equation}
In particular, when $d \ge 5$, $\varphi$ belongs to $L^2(\IR^d)$ and this shows that under the assumptions of Corollary \ref{cor5.9},
\begin{equation}\label{5.80}
\begin{array}{l}
\mbox{when $d \ge 5$, any minimizer in the second line of (\ref{5.76}) belongs to}
\\
H^1(\IR^d) \subseteq D^1(\IR^d)
\end{array}
\end{equation}
(and one can replace $D^1(\IR^d)$ by $H^1(\IR^d)$ in the second line of (\ref{5.76})).

\bigskip\n
2) when $D$ is a closed ball of positive radius centered at the origin, given a minimizer $\varphi$ of the variational problem in the second line of (\ref{5.76}), one can consider its symmetric-decreasing rearrangement $\varphi^*$, see Chapter 3~\S 3 of \cite{LiebLoss01}, and one knows, see p.~188-189 of the same reference that $\varphi^* \in D^1(\IR^d)$ and $\int_{\IR^d} |\nabla \varphi^*|^2\,dz \le \int_{\IR^d} |\nabla \varphi |^2\,dz$. In addition, if $\mu_D$ stands for the normalized Lebesgue measure on $D$, one has $\mu_D(\varphi > s) \le \mu_D (\varphi^* > s)$ for  each $s \in \IR$, so that
\begin{equation}\label{5.81}
\begin{split}
\nu = \strokedint_D \theta\big((\sqrt{u} + \varphi)^2\big)\,dz & = \dis\int^\infty_0 \mu_D \big(\theta\big((\sqrt{u} + \varphi)^2\big) > t \big) \,dt
\\[1ex]
& = \dis\int^\infty_0 \mu_D \big(\varphi > \sqrt{\theta^{-1}(t)} - \sqrt{u}\big) \, dt
\\[1ex]
&\le \dis\int^\infty_0 \mu_D \big(\varphi^* > \sqrt{\theta^{-1}(t)} - \sqrt{u}\big)\,dt = \strokedint_D \theta\big((\sqrt{u} + \varphi^*)^2\big)\,dz.
\end{split}
\end{equation}

\medskip\n
Using continuity, this implies that for some $0 < a^* \le 1$, $a^* \,\varphi^*$ is a minimizer of the variational problem in the second line of (\ref{5.76}). But $a^* < 1$ is impossible (it would contradict the minimality of $\int |\nabla \varphi |^2\,dz$, which is positive since $\nu > \theta(u)$). So, the inequality in (\ref{5.81}) is an equality, and in particular, when $D$ is a closed ball of positive radius centered at the origin, under the assumption of Corollary \ref{cor5.9},
\begin{equation}\label{5.82}
\begin{array}{l}
\mbox{$\varphi^*$ is a minimizer of the variational problem in the second line of (\ref{5.76})}
\\
\mbox{whenever $\varphi$ is a minimizer of that variational problem}
\end{array}
\end{equation}

\n
(note that by 1) above $\varphi^* \ge 0$ is harmonic in $\IR^d \backslash D$). 

\bigskip\n
3) When the function $\eta(a) = \theta (a^2)$, $a \in \IR$, belongs to $C^2_b(\IR)$, the functional $A(\varphi) = \strokedint_D \theta((\sqrt{u} + \varphi)^2)\,dz$, $\varphi \in D^1(\IR^d)$ is $C^1$. This follows for instance from the identity (for $\varphi, \psi$ in $D^1(\IR^d)$):
\begin{equation*}
A(\varphi + \psi) - A(\varphi) - \strokedint_D \eta' (\sqrt{u} + \varphi) \,\psi\,dz = \dis\int^1_0 ds \,\dis\int^s_0 dt \, \strokedint_D \eta'' (\sqrt{u} + \varphi + t \psi)\, \psi^2 dz,
\end{equation*}

\n
and the control of the $L^{\frac{2d}{d-2}}(\IR^d)$-norm by the norm on $D^1(\IR^d)$, see p.~202 of \cite{LiebLoss01}.

\medskip
In addition, if $\theta '$ remains positive on $(0,\infty)$, then for any minimizer $\varphi$ of the variational problem on the second line of (\ref{5.76}) the differential of $A$ at $\varphi$ does not vanish, and by usual variational methods (see also p.~27 of \cite{Lang72}) the non-negative function $\varphi$, which vanishes at infinity, satisfies in the weak sense the semilinear equation
\begin{equation}\label{5.83a}
- \mbox{\footnotesize $\dis\frac{1}{2d}$} \;\Delta \varphi = \lambda(\sqrt{u} + \varphi) \,\theta ' \big((\sqrt{u} + \varphi)^2\big)\,1_D,
\end{equation}
for a suitable constant $\lambda$ (Lagrange multiplier), and by (\ref{5.79}), denoting by $\cG$ the convolution with the Brownian Green function (i.e.~$\frac{1}{2 \pi^{d/2}}\, \Gamma(\frac{d}{2} -1) \,| \cdot |_2^{-(d-2)}$), actually one has
\begin{equation}\label{5.84a}
\varphi = d \lambda \,\cG \big((\sqrt{u} + \varphi) \,\theta' \big((\sqrt{u} + \varphi)^2\big)\,1_D\big), \;\mbox{with $\lambda > 0$}
\end{equation}
(note that $\lambda \le 0$ is impossible).

\bigskip\n
4) As we now explain, under the assumptions of Corollary \ref{cor5.9}, for $u>0$,
\begin{equation}\label{5.84b}
\mbox{the map}\, \, \nu \in [\theta(u),\theta_\infty) \longrightarrow I^D_\nu \in [0,\infty) \,\,  \mbox{is an increasing homeomorphism}.
\end{equation}

\medskip
Indeed, by direct inspection, see (\ref{4.7}), the map is non-decreasing, right-continuous. It is also straightforward to argue that it vanishes for $\nu = \theta(u)$. By the equality with the minimum $\wt{J}^D_\nu$ on the last line of (\ref{5.76}) the map is also left-continuous: this follows by considering an increasing sequence $\nu_n$ tending to $\nu \in (\theta(u),\theta_\infty)$ and a corresponding sequence $\varphi_n$ of minimizers. As in the last paragraph of the proof of Corollary \ref{cor5.9}, after extraction of a subsequence still denoted by $\varphi_n$ that converges a.e. to $\varphi$, one finds that $\wt{J}^D_\nu$ is at most (and hence equal to) $\lim_n \wt{J}^D_{\nu_n}$. The map in (\ref{5.84b}) is thus continuous. 

\n
It is also strictly increasing. Indeed, if $\nu < \nu'$ and $\varphi'$ is a corresponding minimizer for $\wt{J}^D_{\nu'}$ then $\varphi'$ is not the null function, and for some $a$ in $[0,1)$, the function $a\varphi'$ belongs to the set that appears in the minimization problem defining $\wt{J}^D_\nu$, so that $\wt{J}^D_\nu <  \wt{J}^D_{\nu'}$. 

\medskip\n
There only remains to see that the map in (\ref{5.84b}) tends to infinity as $\nu$ tends to $\theta_\infty$. But otherwise, by a similar argument as in the above proof of the left-continuity, we would find $\varphi \ge 0$ in $D^1(\IR^d)$ such that $\strokedint_D \theta((\sqrt{u} + \varphi)^2) \,dz = \theta_\infty$. This is impossible since $\theta$ is a strictly increasing function. Thus, (\ref{5.84b}) holds true. 

\medskip
Incidentally, we provide in Remark \ref{rem6.6} 1) an example that shows that the increasing homeomorphism in (\ref{5.84b}) above need not be a convex function. \hfill $\square$
\end{remark}

We close this section with a consequence for the simple random walk of the upper bound that we have obtained in (\ref{5.74}). To this end, we denote by $L_x = \int^\infty_0 1\{X_s = x\}\,ds$, for $x \in \IZ^d$, the total time spent at $x$ by the continuous-time simple random walk $X_\point$, and for a local function $F$ as in (\ref{2.1}), $D$ as in (\ref{0.6}), and $D_N = (ND) \cap \IZ^d$ for $N \ge 1$, we consider the event
\begin{equation}\label{5.83}
\cA^0_N = \big\{\dsl_{x \in D_N} F\big((L_{x + \point})\big) > \nu \,|D_N|\big\}, \;\mbox{where $0 < \nu < \theta_\infty$}.
\end{equation}
We have the following
\begin{corollary}\label{cor5.11}
({\it large deviation upper bound for the simple random walk}) 

\smallskip\n
Assume that the local function $F$ satisfies (\ref{2.1}), (\ref{4.1}), and $D$ is of the form (\ref{0.6}). Then, for any $\nu$ such that $0 < \nu < \theta_\infty$ and $y \in \IZ^d$, one has
\begin{equation}\label{5.84}
\begin{array}{l}
\limsup\limits_N \; \mbox{\f $\dis\frac{1}{N^{d-2}}$} \;\log P_y [\cA^0_N] \le 
\\
- \inf \Big\{\fd \;\dis\int_{\IR^d} |\nabla \varphi |^2\,dz;  \varphi \ge 0, \; \mbox{compactly supported in} \; H^1(\IR^d), \strokedint_D \theta(\varphi^2) \ge \nu\Big\}.
\end{array}
\end{equation}
\end{corollary}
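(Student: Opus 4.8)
\medskip\noindent\textit{Sketch of proof.}\quad The simple random walk is the ``$u=0$ instance'' of the set-up of Sections 3 and 5, and the plan is to revisit the coarse-graining argument of Section 5 with the occupation field $L^u$ of random interlacements replaced by the occupation field $(L_x)_{x\in\IZ^d}$ of a single walk under $P_y$, and with the Laplace-transform identity (\ref{1.13}) replaced by its random-walk counterpart: for $V$ finitely supported with $\sup_x (G|V|)(x)<1$ one has $E_y\big[\exp\{\langle L,V\rangle\}\big]=\sum_{k\ge 0}(GV)^k 1(y)=(I-GV)^{-1}1(y)$. By the same scaling argument as in the proof of Theorem \ref{theo5.1} it suffices to treat $D\subseteq[-1,1]^d$; since $y$ is fixed and $D$ contains a neighbourhood of the origin, $y\in D_N$ and $D_N\subseteq B(0,N)$ for all large $N$.

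The first step is the analogue of the coarse-graining Proposition \ref{prop5.3}. For $z\in\IL$ let $Z^B_\ell$, $\ell\ge1$, now denote the successive excursions of $X_\point$ from $B=B_z$ to $\partial U_z$; over a porous collection $\cC$ of boxes as in (\ref{2.12}) the soft local time coupling of \cite{PopoTeix15}, \cite{ComeGallPopoVach13} recalled in Section 2 applies to these excursions exactly as it does to the interlacement excursions, so that the super-polynomial estimates of Proposition \ref{prop2.2} and hence the super-exponential estimate of Proposition \ref{prop3.1} carry over with $L^u$ replaced by $L$. One point requires care here: a box $B$ far from $y$ carries only finitely many (typically $O(1)$) walk excursions, so a ``good box'' should only be required to exhibit the concentration properties for those $\alpha\in\Sigma$ with $\alpha\,{\rm cap}(B)$ not exceeding the number of walk excursions in $B$; equivalently one phrases the bad events $\wt{\cB}^B$ through the coupled infinite i.i.d.\ sequences $\wt{Z}^B_\ell$ of Section 2, which bounds their probability regardless. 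With this reading Lemma \ref{lem2.4} still controls $\frac1{|B|}\sum_{x\in B}F((L_{x+\point}))$ through $N(B)/{\rm cap}(B)$ and $\langle\ov{e}_B,L\rangle$, and the argument of Proposition \ref{prop5.3} gives, for large $N$,
\begin{equation*}
\cA^0_N\setminus\cB\subseteq\bigcup_{\tau}\ \bigcup_{\ov{f}\in\cF_\tau}\cA^0_{\ov{f}},\qquad \cA^0_{\ov{f}}=\bigcap_{B\in\cC_\tau}\big\{\langle\ov{e}_B,L\rangle\ge(1-\ve)f^2_B\big\},
\end{equation*}
with $\cF_\tau$ as in (\ref{5.11}) ($f^2_B\in\{0\}\cup\Sigma$, $\sum_{B\in\cC_\tau}\theta(f^2_B)|B|\ge\frac{\nu-\ve}{\ov{K}^{\,d}}|D_N|$) and $|\cF_\tau|=\exp\{o(N^{d-2})\}$.

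Next I would run the exponential Chebyshev step of Proposition \ref{prop5.6} at $u=0$. For $\ov{f}\in\cF_\tau$ use the coarse-grained potential $W\ge0$ of (\ref{5.25}) with $f=\wh{f}$, where $\wh{f}$ is the smallest non-negative superharmonic function with $\wh{f}\ge(f_B)_+$ on each $B\in\cC_\tau$; the computation (\ref{5.26}) then gives $\langle L,W\rangle\ge(1-\ve)\,\cE(\wh{f},\wh{f})$ on $\cA^0_{\ov{f}}$, so
\begin{equation*}
\log P_y[\cA^0_{\ov{f}}]\le -a(1-\ve)\,\cE(\wh{f},\wh{f})+\log\big((I-GaW)^{-1}1(y)\big).
\end{equation*}
The clean interlacement formula (\ref{5.52}), whose exponent is proportional to $\sqrt u$ and thus vanishes at $u=0$, is not available, so the exponential moment must be bounded by hand. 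Inspecting the proof of Proposition \ref{prop5.4} (see (\ref{5.45})) one obtains, for $a(1+\wt{c}/K)<1$, the strict contraction $G(aWf)\le a(1+c/K)f$ on ${\rm supp}\,W$ with $a(1+c/K)<1$. Since $f=\wh{f}$ is bounded above by $\sqrt{\max\Sigma}$ and below by $\sqrt{\min\Sigma}$ on ${\rm supp}\,W$, iterating the contraction yields $\sum_{k\ge0}(GaW)^kf\le C f$ on ${\rm supp}\,W$ with $C$ independent of $\ov{f}$, whence $(I-GaW)^{-1}1(y)\le 1+C'\,(GW)(y)$; and $(GW)(y)=\sum_{B\in\cC_\tau}\frac{-L\wh{f}(B)}{f(y_B)\,{\rm cap}(B)}\,h_B(y)\le(1+c/K)\,|\cC_\tau|$ is at most polynomial in $N$ uniformly in $\tau,\ov{f}$ (using $\frac{-L\wh{f}(B)}{f(y_B)\,{\rm cap}(B)}\le 1+c/K$, which follows from (\ref{5.43}) since at $u=0$ one has $f(y_B)=\wh{f}(y_B)=f_B$, and $h_B\le1$). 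Hence $\log((I-GaW)^{-1}1(y))=O(\log N)=o(N^{d-2})$ uniformly, and $\log P_y[\cA^0_{\ov{f}}]\le -a(1-\ve)\,\cE(\wh{f},\wh{f})+o(N^{d-2})$.

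To finish I would invoke Proposition \ref{prop5.7} with $\sqrt u$ replaced by $0$ (its proof uses no positivity of $u$): for every integer $r\ge10$ it bounds $\liminf_N\inf_\tau\inf_{\ov{f}\in\cF_\tau}\frac1{N^{d-2}}\cE(\wh{f},\wh{f})$ below by $\big(1+\frac{c_{10}}{r^{d-2}}\big)^{-1}$ times $\inf\{\frac1{2d}\int_{\IR^d}|\nabla\varphi|^2\,dz;\ \varphi\ge0,\ {\rm supp}\,\varphi\subseteq B_{\IR^d}(0,400r),\ \varphi\in H^1(\IR^d),\ \strokedint_D\theta(\varphi^2)\,dz\ge\nu-\ve\}$. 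Combining with the previous display, letting $K\to\infty$ and $a\to1$ (the factor $(1-\ve(1+\sqrt u))$ of (\ref{5.49}) becoming $(1-\ve)$ and the error $c\sqrt u\,\ve$ vanishing at $u=0$), then $\ve\to0$ and $r\to\infty$ exactly as in the passage following (\ref{5.73})--(\ref{5.74}), yields (\ref{5.84}). The main obstacle is precisely this control of the single-walk exponential moment: in the interlacement proof $\IE[\exp\{a\langle L^u,W\rangle\}]$ was handled by an exact formula giving an exponent $O(\sqrt u\,N^{d-2})$, which is simply $0$ here, whereas for the walk one must show $(I-GaW)^{-1}1(y)$ is only polynomially large in $N$ uniformly over the coarse-graining, and this hinges on upgrading Proposition \ref{prop5.4} to a genuine contraction on ${\rm supp}\,W$ together with the uniform two-sided bound on $f$ there. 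A secondary, more routine, point is the adaptation of the good-box constructions of Sections 2--3, designed for interlacement excursions, to the finitely many excursions of a single transient walk in the boxes of $\cC_\tau$.
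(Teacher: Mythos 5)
Your plan takes a genuinely different and much heavier route than the paper, and as written it has a real gap at its first stage. The good-box/super-exponential apparatus of Sections 2--3 is built for the interlacement excursion process: infinitely many excursions $Z^B_\ell$ in every box, the undertow $N_u(B)$, the coupling $\IQ^\cC$ with its independence property (\ref{2.17}), and the sandwich (\ref{2.20})--(\ref{2.21}). For a single transient walk started at $y$ the total number of excursions is a.s.\ finite and boxes away from $y$ typically see none, so Proposition \ref{prop2.2}, Proposition \ref{prop3.1}, Lemma \ref{lem2.4} and the coarse-graining Proposition \ref{prop5.3} do not ``carry over'' by substituting $L$ for $L^u$: the coupling statement itself, the definition of good boxes, and the implications relating $N(B)/{\rm cap}(B)$, $\langle \ov{e}_B,L\rangle$ and $\sum_{x\in B}F((L_{x+\point}))$ all have to be restated and re-proved for finitely many excursions, and your parenthetical fix (``only require concentration for $\alpha$ with $\alpha\,{\rm cap}(B)$ below the excursion count'') is precisely the point that needs a proof rather than a remark. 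Your second new ingredient --- replacing the exact identity (\ref{1.13}) by a bound on $(I-G\,aW)^{-1}1(y)$ via a strict contraction on the support of $W$ together with the two-sided bounds on $\wh{f}$ there --- is plausible (and you correctly locate where the lower bound $\wh{f}\ge\sqrt{\min\Sigma}$ comes from), but it is substantial extra work that the paper never needs.

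The paper's own proof is a few lines and bypasses all of this. As in Corollary 7.3 of \cite{Szni15}, for any $u>0$ with $\theta(u)<\nu$ there is a coupling under which $L_x\le L^u_x$ for all $x\in\IZ^d$, with $(L^u_\point)$ distributed as the interlacement occupation field under $\IP[\,\cdot\,|\,y\in\cI^u]$. By monotonicity of $F$ this gives $P_y[\cA^0_N]\le\IP[\cA_N\,|\,y\in\cI^u]\le\IP[\cA_N]\,/\,(1-e^{-u/g(0,0)})$, so the already established bound (\ref{5.74}) of Theorem \ref{theo5.1} applies directly at level $u$; one then lets $u$ decrease to $0$ (an a.e.-convergence/compactness argument, as below (\ref{5.73}), identifies the limit of the constrained infima with the $u=0$ infimum) and finally lets $r\to\infty$. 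So either you genuinely develop a single-walk version of Sections 2--3 (non-trivial), or you should use this reduction by stochastic domination, which is the intended and far shorter argument.
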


\begin{proof}
We consider $u > 0$ such that $\theta(u) < \nu$ and $y \in \IZ^d$. As explained below (\ref{5.3}), we can also assume that $D \subseteq [-1,1]^d$. As in the proof of Corollary 7.3 of \cite{Szni15}, we can find a coupling $\ov{P}$ of the field of occupation times $(L^u_\point)$ of random interlacements under $\IP [\cdot \,|y \in \cI^u]$, with $(L_\point)$ under $P_y$, such that $\ov{P}$-a.s., $L_x \le L^u_x$ for all $x \in \IZ^d$. Then, we see that for $N \ge 1$, with $\cA_N$ as in (\ref{4.6}),

\vspace{-4ex}
\begin{equation}\label{5.85}
P_y [\cA^0_N] \le \IP [\cA_N \, | \,y \in \cI^u],
\end{equation}

\n
and since $\IP[y \in \cI^u] = 1 - e^{-u/g(0,0)}$, we find that by (\ref{5.74}), that for any $r \ge 10$, and $u > 0$ with $\theta(u) < \nu$ (and $C_r = B_{\IR^d}(0,400 r)$),

\vspace{-3ex}
\begin{equation}\label{5.86}
\begin{array}{l}
\limsup\limits_N \; \mbox{\f $\dis\frac{1}{N^{d-2}}$} \;\log P_y [\cA^0_N]  \le  \limsup\limits_N \;\mbox{\f $\dis\frac{1}{N^{d-2}}$} \;\log \IP[\cA_N]  \stackrel{(\ref{5.74})}{\le} 
\\[1ex]
- \Big(1 + \mbox{\f $\dis\frac{c_{10}}{r^{d-2}}$}\Big)^{-1}  \inf\Big\{ \fd \;\dis\int_{\IR^d} |\nabla \varphi |^2 \,dz;\;\varphi \ge 0 \;\mbox{supported in $C_r$,} 
\\[1ex]
\hspace{3.2cm}\mbox{in $H^1(\IR^d), \strokedint_D \theta\big((\sqrt{u} + \varphi)^2\big)\,dz \ge \nu\Big\}$}.
\end{array}
\end{equation}
As $u$ decreases to $0$, the infimum in the right member of (\ref{5.86}) increases to a limit $I \le I^0 = \inf\{\frac{1}{2d} \,\int |\nabla \varphi |^2 \,dz; \varphi \ge 0$, supported in $C_r$, in $H^1(\IR^d)$, $\strokedint_D \theta(\varphi^2)\,dz \ge \nu\}$. As we now explain, actually $I = I^0$. The argument is similar to the proof below (\ref{5.73}). One considers $u_n \downarrow 0$ and a corresponding sequence $\varphi_n \ge 0$, compactly supported in $C_r$, in $H^1(\IR^d)$ with $\strokedint_D \theta((\sqrt{u}_n + \varphi_n)^2)\,dz \ge \nu$ such that $\frac{1}{2d} \,\int |\nabla \varphi_n|^2 \,dz \r I$. After the possible extraction of a subsequence, one can assume that $\varphi_n$ converges a.e.~to $\varphi \ge 0$ compactly supported in $C_r$, with $\frac{1}{2d} \;\int |\nabla \varphi |^2 \, dz \le I$ and $\strokedint_D \theta(\varphi^2) \,dz \ge \nu$. This shows that $I^0 \le I$, and hence $I = I^0$. Thus, letting successively $u$ tend to $0$ and then $r$ to infinity proves (\ref{5.84}).
\end{proof}

\begin{remark}\label{rem5.12} \rm 1) One can naturally wonder whether (in the spirit of Theorem \ref{theo4.2})
\begin{equation}\label{5.87}
\begin{array}{l}
\liminf_N \; \mbox{\f $\dis\frac{1}{N^{d-2}}$} \; \log P_y [\cA^0_N] \ge
\\[1ex]
 -\inf\Big\{ \fd \;\dis\int_{\IR^d} |\nabla \varphi |^2 \,dz; \varphi \in C^\infty_0 (\IR^d) \;\mbox{and} \; \strokedint_D \theta(\varphi^2)\, dz > \nu\Big\}
 \end{array}
\end{equation}
that is, whether a lower bound can also be derived. The consideration of so-called {\it tilted walks}, as was performed in \cite{Li17}, in the context of a problem of disconnection by the simple random walk, should be helpful on this matter. 

\bigskip\n
2) In Corollary \ref{cor5.11}, the same arguments as below (\ref{5.78}) show that the infimum in the right member of (\ref{5.84}) actually coincides with
\begin{equation}\label{5.88}
\min \Big\{ \fd \;\dis\int_{\IR^d} |\nabla \varphi|^2 \,dz; \;\varphi \ge 0, \varphi \in D^1 (\IR^d), \;\strokedint_D \theta(\varphi^2) \,dz = \nu\Big\}
\end{equation}
(and one can drop the condition $\varphi \ge 0$ without changing the above value). \hfill $\square$
\end{remark}

\section{Some applications}
\setcounter{equation}{0}

In this short section we will discuss some consequences of the results in the previous sections for the interlacement sausage, see Theorem \ref{theo6.1}, and for the finite pockets in the vacant set, see Theorem \ref{theo6.3} and Proposition \ref{prop6.5}. We also explain in Remark \ref{rem6.6} how these last results relate to the study of macroscopic holes in connected components of the vacant set of random interlacements in the strongly percolative regime, from \cite{Szni}.

\bigskip\n
{\bf Interlacement sausage}

\smallskip\n
We consider an integer $R \ge 0$ and define the interlacement sausage of radius $R$ at level $u \ge 0$, as
\begin{equation}\label{6.1}
\cI^{u,R} = \bigcup\limits_{x \in \cI^u} B(x,R) \subseteq \IZ^d
\end{equation}
(in particular $\cI^{u,R = 0} = \cI^u$ the interlacement at level $u$).

\medskip
We can now apply Corollary \ref{cor5.9} to the local function $F(\ell) = 1\{\sum_{x \in B(0,R)} \ell_x > 0\}$ from (\ref{2.5}). Note that when $R = 0$, then ${\rm cap}(B(0,R)) = g(0,0)^{-1}$, with $g(\cdot,\cdot)$ the Green function of the simple random walk. We recall that $D$ is as in (\ref{0.6}) (for instance, a closed ball of positive radius centered at the origin for the supremum or the Euclidean distance on $\IR^d$), and $D_N = (ND) \cap \IZ^d$.

\begin{theorem}\label{theo6.1}
For $R \ge 0$ integer, $u > 0$, and $1 - e^{-u \,{\rm cap}(B(0,R))} < \nu < 1$, one has
\begin{equation}\label{6.2}
\begin{array}{l}
\lim\limits_N \; \mbox{\f $\dis\frac{1}{N^{d-2}}$} \;\log \IP\big[ | \cI^{u,R} \cap D_N| > \nu \,|D_N|\big]
\\[1ex]
= - \inf\Big\{ \fd \;\dis\int_{\IR^d} |\nabla \varphi |^2 \,dz; \;\varphi \ge 0, \varphi \in C^\infty_0(\IR^d), \;\strokedint_D 1 - e^{-(\sqrt{u} + \varphi)^2 {\rm cap}(B(0,R))} dz > \nu\Big\}
\\[1ex]
= - \min \Big\{ \fd \;\dis\int_{\IR^d} |\nabla \varphi |^2 \,dz; \;\varphi \ge 0, \varphi \in D^1(\IR^d), \;\strokedint_D 1 - e^{-(\sqrt{u} + \varphi)^2 {\rm cap}(B(0,R))} dz = \nu\Big\}
\end{array}
\end{equation}
(one can drop the requirement $\varphi \ge 0$ in the above minimum, and when $d \ge 5$ replace $D^1(\IR^d)$ by the Sobolev space $H^1(\IR^d)$).
\end{theorem}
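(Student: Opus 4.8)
The plan is to recognize that $|\cI^{u,R} \cap D_N|$ is exactly an excess sum of a local function of the occupation-time field, and then to invoke Corollary \ref{cor5.9} essentially verbatim. First I would record the pointwise identity: for $x \in \IZ^d$ one has $x \in \cI^{u,R}$ if and only if $B(x,R) \cap \cI^u \not= \emptyset$, i.e.\ if and only if $\sum_{|y|_\infty \le R} L^u_{x+y} > 0$. Hence
\begin{equation*}
|\cI^{u,R} \cap D_N| = \dsl_{x \in D_N} F\big((L^u_{x + \point})\big),
\end{equation*}
where $F$ is the local function $F(\ell) = 1\{\sum_{|y|_\infty \le R} \ell_y > 0\}$ of range $R$ from (\ref{2.5}), so that the event $\{|\cI^{u,R}\cap D_N| > \nu\,|D_N|\}$ is precisely the excess deviation event $\cA_N$ of (\ref{4.6}) for this $F$.

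Next I would check the hypotheses. The function $F$ is bounded, non-decreasing in each variable, and $F(0)=0$, so (\ref{2.1}) i)--iii) hold (with $c(F)=1$ in iii), which is automatic for bounded $F$). Its associated function is $\theta(a) = \IP[\cI^a \cap B(0,R) \not= \emptyset] = 1 - e^{-a\,{\rm cap}(B(0,R))}$ by the standard interlacement identity, which is analytic, so (\ref{2.1}) iv) holds as well. Moreover $\theta$ is bounded, not identically $0$, with $\theta_\infty = \lim_{a\to\infty}\theta(a) = 1 \in (0,\infty)$, so (\ref{4.1}) is satisfied, and being non-decreasing, analytic and non-constant, $\theta$ is strictly increasing, see (\ref{2.8}). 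All of this is essentially already contained in Example \ref{exam2.1} 3) and (\ref{2.8}), so this step is pure bookkeeping.

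With the hypotheses in hand, I would apply Corollary \ref{cor5.9} for $u>0$ and $\nu \in (\theta(u),\theta_\infty) = \big(1-e^{-u\,{\rm cap}(B(0,R))},\,1\big)$. Substituting $\theta\big((\sqrt{u}+\varphi)^2\big) = 1 - e^{-(\sqrt{u}+\varphi)^2\,{\rm cap}(B(0,R))}$ into the chain of equalities (\ref{5.76}) turns it into (\ref{6.2}), and the corollary also supplies the existence of a minimizer in the last variational problem as well as the fact that the constraint $\varphi \ge 0$ may be dropped there. For the $d \ge 5$ addendum I would quote Remark \ref{rem5.10} 1): any minimizer is harmonic on $\IR^d \backslash D$, vanishes at infinity, hence satisfies the decay estimate (\ref{5.79}), hence lies in $L^2(\IR^d)$ and so in $H^1(\IR^d)$, allowing $D^1(\IR^d)$ to be replaced by $H^1(\IR^d)$ in the minimum.

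Honestly there is no genuine obstacle here: the theorem is a direct specialization of Corollary \ref{cor5.9}, and the only content is the identification of $F$ and the verification --- already carried out in Section 2 --- that its $\theta$ is bounded, analytic and strictly increasing. The one minor point I would be careful about is matching constants, in particular that ${\rm cap}(B(0,0)) = g(0,0)^{-1}$ when $R=0$, which is exactly what makes the $R=0$ case reduce to the interlacement-set statement (\ref{0.9}) announced in the Introduction.
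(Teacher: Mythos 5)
Your proposal is correct and follows exactly the route of the paper: the paper likewise proves Theorem \ref{theo6.1} by identifying $|\cI^{u,R}\cap D_N|$ with the excess sum for the local function of (\ref{2.5}), noting via (\ref{2.8}) that its $\theta(a)=1-e^{-a\,{\rm cap}(B(0,R))}$ is bounded, analytic and strictly increasing (with ${\rm cap}(B(0,0))=g(0,0)^{-1}$ for $R=0$), and then applying Corollary \ref{cor5.9} together with Remark \ref{rem5.10} 1) for the $d\ge 5$ refinement. No gaps.
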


\begin{remark}\label{rem6.2} \rm 1) For instance when $D_N = B(0,N)$, the above result pertains to the probability of an {\it excess} of the interlacement sausage in a large box centered at the origin. The result has a similar flavor to Theorem 1 of \cite{BoltHollVanb01}, where moderate deviations of the Wiener sausage were analyzed, and it was shown that for $W^R(t)$ the Wiener sausage of radius $R > 0$ of Brownian motion in $\IR^d$ up to time $t$, and any $\nu > 0$
\begin{equation}\label{6.3}
\begin{array}{l}
\lim\limits_{t \r \infty} \; t^{-\frac{(d-2)}{d}} \log \IP \big[ |W^R (t)| \le \nu \,t\big]
\\
= - \inf \Big\{ \fr \;\dis\int_{\IR^d} |\nabla \varphi |^2 \,dz; \;\varphi \in H^1(\IR^d), \dis\int_{\IR^d} \varphi^2 dz = 1, \dis\int_{\IR^d} 1 - e^{-\kappa_R \varphi^2} dz \le \nu\Big\}
\end{array}
\end{equation}
(where $\kappa_R$ is the Brownian capacity of the Euclidean ball of radius $R$, and in the above infimum $\le \nu$ can be replaced by $= \nu$, see Lemma 12 of \cite{BoltHollVanb01}).

\medskip
In our context, the ``Swiss cheese picture'' advocated by \cite{BoltHollVanb01}, see also the discussions in  \cite{AsseScha17}, \cite{AsseScha18}, corresponds to the {\it tilted interlacements} entering the change of probability method in the proof of the lower bound in Section 4, see Theorem \ref{theo4.2}.

\bigskip\n
2) We refer to Remark \ref{rem5.10} for further properties of minimizers $\varphi$ of the variational formula on the last line of (\ref{6.2}). In particular, $\varphi$ is harmonic outside $D$ and tends to $0$ at infinity, see (\ref{5.79}). In addition, the considerations of Remark \ref{rem5.10} 3) apply, and the non-negative function $\varphi$ satisfies in a weak sense the semilinear equation
\begin{equation}\label{6.4}
- \mbox{\footnotesize $\dis\frac{1}{2d}$} \;\Delta \varphi = \lambda (\sqrt{u} + \varphi) \,e^{-(\sqrt{u} + \varphi)^2 {\rm cap}(B(0,R))} 1_D
\end{equation}
for a suitable constant $\lambda >0$ (Lagrange multiplier). \hfill $\square$
\end{remark}

\medskip\n
{\bf Finite pockets in the vacant set}

\smallskip\n
We now consider $0 \le r < R$, and the set of $(r,R)$-disconnected sites at level $u \ge 0$:
\begin{equation}\label{6.5}
\begin{split}
\cD^{u,r,R} & = \{x \in \IZ^d; \;\mbox{there is no path in $\cV^u$ between $B(x,r)$ and $S(x,R)\}$}
\\
& = \{B(x,r) \Vu S(x,R)\}
\end{split}
\end{equation}
(so when $r = 0$, $\cD^{u,r = 0,R}$ corresponds to the sites disconnected by $\cI^u$ within sup-distance $R$). We also define
\begin{equation}\label{6.6}
\theta_{r,R}(u) = \IP\big[B(0,r) \Vu S(0,R)\big], \;\mbox{for $u \ge 0$}.
\end{equation}
As noted in (\ref{2.8}), this is an analytic function, increasing on $\IR_+$. It is also convenient to introduce for $D$ satisfying the condition (\ref{0.6}) and $u>0$ the notation
\begin{equation}\label{6.7}
\begin{array}{l}
K_{r,R} (a) = 
\\
\inf \Big\{ \fd \;\dis\int_{\IR^d} |\nabla \varphi |^2 \,dz; \;\varphi \ge 0, \;\mbox{in $C^\infty_0(\IR^d)$, and} \; \strokedint_D \theta_{r,R}\big((\sqrt{u} + \varphi)^2\big)\,dz  > a \Big\}, \;a \ge 0.
\end{array}
\end{equation}

\medskip
The application of Corollary \ref{cor5.9} now yields (with the notation (\ref{6.5}))

\begin{theorem}\label{theo6.3}
For $0 \le r < R$ integers, $u > 0$, and $\theta_{r,R}(u) < \nu < 1$, one has
\begin{equation}\label{6.8}
\begin{array}{l}
\lim\limits_N \; \mbox{\f $\dis\frac{1}{N^{d-2}}$} \;\log \IP\big[ |\cD^{u,r,R} \cap D_N| > \nu \,|D_N|\big] = - K_{r,R}(\nu)
\\
= - \min \Big\{ \fd \;\dis\int_{\IR^d} |\nabla \varphi |^2 \,dz; \;\varphi \ge 0, \varphi \in D^1(\IR^d), \;\mbox{and} \; \strokedint_D \theta_{r,R}\big((\sqrt{u} + \varphi)^2\big) \,dz = \nu\Big\}
\end{array}
\end{equation}
(one can drop the requirement $\varphi \ge 0$ in the above minimum, and when $d \ge 5$ replace $D^1(\IR^d)$ by the Sobolev space $H^1(\IR^d)$).
\end{theorem}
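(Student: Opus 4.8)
The plan is to deduce Theorem~\ref{theo6.3} directly from Corollary~\ref{cor5.9} by verifying that the local function
\[
F(\ell) = 1\{\mbox{any path from $B(0,r)$ to $S(0,R)$ in $B(0,R)$ meets a $y$ with $\ell_y>0$}\}
\]
of Example~\ref{exam2.1}~5) (see (\ref{2.7})) satisfies all the hypotheses of that corollary, and then matching up the event $\{|\cD^{u,r,R}\cap D_N|>\nu\,|D_N|\}$ with the event $\cA_N$ of (\ref{4.6}). First I would recall that $F$ has range $R$, is $\{0,1\}$-valued (hence bounded, so (\ref{2.1})~iii) holds with $c(F)=1$), is non-decreasing in each coordinate (adding occupation weight can only create more blocking sites), and vanishes at $\ell=0$; the associated function is $\theta(u)=\theta_{r,R}(u)=\IP[B(0,r)\stackrel{\cV^u}{\longleftrightarrow}\hspace{-2.8ex}\mbox{\f /}\;\;\; S(0,R)]$ as in (\ref{6.6}). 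By (\ref{2.8}) this $\theta$ is analytic, not identically $0$, non-decreasing, hence strictly increasing, and it is bounded with $\theta_\infty=\lim_{u\to\infty}\theta_{r,R}(u)=1$ (as $u\to\infty$ the vacant set becomes empty and $B(0,r)$ is certainly disconnected from $S(0,R)$). Thus $\theta$ satisfies (\ref{4.1}), (\ref{2.2}), and the continuity requirement (\ref{2.1})~iv), so Corollary~\ref{cor5.9} applies with this $F$, this $\theta=\theta_{r,R}$, and this $\theta_\infty=1$.

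Next I would identify the two events. Observe that $x\in\cD^{u,r,R}$ means precisely that in the box $B(x,R)$ every path from $B(x,r)$ to $S(x,R)$ passes through $\cI^u$, i.e.\ $F((L^u_{x+\point}))=1$; since $F$ is $\{0,1\}$-valued we get $\sum_{x\in D_N}F((L^u_{x+\point}))=|\cD^{u,r,R}\cap D_N|$, so that $\{|\cD^{u,r,R}\cap D_N|>\nu\,|D_N|\}=\cA_N$ in the notation of (\ref{4.6}). The hypothesis $\theta_{r,R}(u)<\nu<1=\theta_\infty$ is exactly the hypothesis $\nu\in(\theta(u),\theta_\infty)$ of Corollary~\ref{cor5.9}. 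Applying that corollary verbatim yields
\[
\lim_N\mbox{\f $\dis\frac{1}{N^{d-2}}$}\log\IP[\cA_N]=-I^D_\nu=-J^D_\nu=-\min\Big\{\mbox{\f $\dis\frac{1}{2d}$}\dis\int_{\IR^d}|\nabla\varphi|^2\,dz;\ \varphi\ge0,\ \varphi\in D^1(\IR^d),\ \strokedint_D\theta_{r,R}\big((\sqrt u+\varphi)^2\big)\,dz=\nu\Big\},
\]
and, by the definition (\ref{6.7}) of $K_{r,R}$ together with the equality of the infimum in (\ref{4.7}) (equivalently the first line of (\ref{4.5})) with $I^D_\nu$ established in the proof of Theorem~\ref{theo4.2}, the common value $I^D_\nu$ equals $K_{r,R}(\nu)$. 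This is exactly (\ref{6.8}). The remaining parenthetical assertions — that the constraint $\varphi\ge0$ may be dropped and that $D^1(\IR^d)$ may be replaced by $H^1(\IR^d)$ when $d\ge5$ — are immediate from the corresponding statements in Corollary~\ref{cor5.9} and in Remark~\ref{rem5.10}~1), since those arguments use nothing about $\theta$ beyond boundedness, continuity and strict monotonicity, all of which hold for $\theta_{r,R}$.

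Honestly, there is essentially no obstacle here beyond bookkeeping: the entire content of Theorem~\ref{theo6.3} is contained in Corollary~\ref{cor5.9}, and the only thing to check is that $F$ from (\ref{2.7}) meets the standing assumptions. The one point that deserves a sentence of care is the verification that $\theta_{r,R}$ is genuinely strictly increasing and not merely non-decreasing — this is where the analyticity remark (\ref{2.8}), resting on the fact that $\IP[\cI^u\cap B(0,R)=A]$ depends analytically on $u$ (cf.\ \cite{Szni10}), together with the non-triviality $\theta_{r,R}\not\equiv0$, does the work, since a non-decreasing analytic function on $\IR_+$ that is not constant is strictly increasing. Once that is in place, one simply invokes Corollary~\ref{cor5.9} and unwinds the notation to obtain (\ref{6.8}), completing the proof.
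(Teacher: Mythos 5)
Your proposal is correct and matches the paper's own (very brief) argument: Theorem \ref{theo6.3} is obtained precisely by applying Corollary \ref{cor5.9} to the local function $F$ of (\ref{2.7}), whose associated $\theta=\theta_{r,R}$ is bounded, continuous and strictly increasing by (\ref{2.8}) with $\theta_\infty=1$, and then identifying $\sum_{x\in D_N}F((L^u_{x+\point}))=|\cD^{u,r,R}\cap D_N|$ so that the infimum in (\ref{4.7}) is exactly $K_{r,R}(\nu)$ from (\ref{6.7}). Your extra care about strict monotonicity via analyticity and about the parenthetical claims (dropping $\varphi\ge 0$, replacing $D^1(\IR^d)$ by $H^1(\IR^d)$ for $d\ge 5$ via Remark \ref{rem5.10} 1)) is exactly the bookkeeping the paper leaves implicit.
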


\begin{remark}\label{rem6.4} \rm The application of Russo's formula for random interlacements, see (2) in Theorem 1 of \cite{DebePopo15}, shows that for all $u > 0$, $\theta'_{r,R}(u) > 0$. In addition, by the inclusion-exclusion formula (2.17) of \cite{Szni10}, the bounded function $\theta_{r,R}(u)$ is a finite linear combination of functions $e^{- \rho u}$, where $\rho \ge 0$. Thus, the function $\eta(a) = \theta_{r,R} (a^2)$ belongs to $C^2_b(\IR)$, and the considerations of Remark \ref{rem5.10} 1) and 3) apply:
any minimizer $\varphi$ in the second line of (\ref{6.8}) decays at infinity, see (\ref{5.79}), and satisfies in a weak sense the equation
\begin{equation}\label{6.9}
- \mbox{\footnotesize $\dis\frac{1}{2d}$} \;\Delta \varphi = \lambda (\sqrt{u} + \varphi) \,\theta'_{r,R} \big((\sqrt{u} + \varphi)^2\big) \,1_D
\end{equation}
for a suitable positive constant $\lambda$ (Lagrange multiplier) (or the integral equation corresponding to (\ref{5.84a})). Moreover, when $D$ is a closed ball of positive radius centered at the origin, there is a spherically symmetric function, which is non-increasing in the radius, minimizing the variational problem on the second line of (\ref{6.8}). \hfill $\square$
\end{remark}

We will now prove a result that provides a heuristic link between Theorem \ref{theo6.3} and Theorems 3.1 and 3.2 of \cite{Szni}, where upper and lower exponential bounds on the large deviation probability that the adequately thickened component of $S(0,N)$ in the vacant set $\cV^u$ leaves a macroscopic volume $\wt{\nu}\,N^d$ in its complement, see Remark \ref{rem6.6} 2) below.

\medskip
We first collect some facts concerning the functions $K_{r,R}(\cdot)$ in (\ref{6.7}). We note that the functions $\theta_{r,R}(\cdot)$ increase with $R$ and decrease with $r$, so that:
\begin{equation}\label{6.10}
\left\{ \begin{array}{l}
K_{r,R_1}(\cdot) \ge K_{r,R_2} (\cdot), \; \mbox{if $0 \le r < R_1 \le R_2$, and}
\\[1ex]
K_{r_1,R}(\cdot) \le K_{r_2,R}, \; \mbox{if $0 \le r_1 \le r_2 < R$}.
\end{array}\right.
\end{equation}
We can thus let $R$ tend to infinity and define for $r \ge 0$,
\begin{equation}\label{6.11}
K_r(a) = \lim\limits_{R \r \infty} \downarrow K_{r,R}(a), \; \mbox{for $a \ge 0$}
\end{equation}
(and the arrow $\downarrow$ reflects that this is a non-increasing limit). As a consequence of the second line of (\ref{6.10}), one sees that $K_r(\cdot)$ is non-decreasing in $r$, and we define
\begin{equation}\label{6.12}
K(a) = \lim\limits_{r \r \infty} \uparrow K_r(a), \; \mbox{for $a \ge 0$}
\end{equation}
(and the arrow $\uparrow$ reflects that this is a non-decreasing limit).

\medskip
The next proposition relates Theorem \ref{theo6.3} to the results of Section 3 of \cite{Szni}, see Remark \ref{rem6.6} below. For simplicity, we state the result in the case $D = [-1,1]^d$. We let $\omega_d$ stand for the volume of the Euclidean ball of unit radius in $\IR^d$.

\begin{proposition}\label{prop6.5}
Assume that $D = [-1,1]^d$. For $\nu \ge 0$ set $\wt{\nu} = \nu \, |D| ( = 2^d \nu)$, and denote by $B_{\wt{\nu}}$ the Euclidean ball of volume $\wt{\nu}$ centered at the origin. Then,
\begin{equation}\label{6.13}
K(\nu) = \mbox{\f $\dis\frac{1}{d}$} \;(\sqrt{u_*} - \sqrt{u})^2 \,{\rm cap}_{\IR^d}(B_{\wt{\nu}}), \; \mbox{when $\wt{\nu} < \omega_d$ and $0 < u < u_*$},
\end{equation}

\n
with $u_* \in (0,\infty)$ the critical level for the percolation of the vacant set of random interlacements, and ${\rm cap}_{\IR^d}(\cdot)$ the Brownian capacity.
\end{proposition}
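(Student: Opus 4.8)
\textbf{Proof proposal for Proposition \ref{prop6.5}.} The plan is to carry the iterated limit (first $R\to\infty$, then $r\to\infty$) through the variational problems $K_{r,R}(\cdot)$ of (\ref{6.7}), (\ref{6.11}), (\ref{6.12}). The point is that, as $R\to\infty$ and then $r\to\infty$, the disconnection probabilities $\theta_{r,R}(a)$ degenerate to the indicator $1\{a\ge u_*\}$, so the soft constraint $\strokedint_D\theta_{r,R}((\sqrt u+\varphi)^2)\,dz>\nu$ turns, in the limit, into the hard volume constraint $|\{z\in D;\ \varphi(z)\ge\sqrt{u_*}-\sqrt u\}|\ge\wt{\nu}$, whose associated capacity problem can be solved explicitly, using that $D=[-1,1]^d$ and $\wt{\nu}<\o_d$ make the extremal ball fit inside $D$. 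Concretely, I would first show that for every $a>0$ with $a\ne u_*$ one has $\lim_{r\to\infty}\lim_{R\to\infty}\theta_{r,R}(a)=1\{a\ge u_*\}$: for fixed $r$ the events $\{B(0,r)$ is not connected to $S(0,R)$ in $\cV^a\}$ are nondecreasing in $R$, and truncating a vacant path at its first visit to sup-distance $R$ shows their union over $R$ is exactly $\{$every vacant cluster meeting $B(0,r)$ is finite$\}$, so $\lim_R\theta_{r,R}(a)=\IP[$every vacant cluster meeting $B(0,r)$ is finite$]$; letting $r\to\infty$, these events decrease to $\{\cV^a$ has no unbounded cluster$\}$, which has probability $1$ if $a>u_*$ and probability $0$ if $a<u_*$. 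Along the way this records the bound $\theta_{r,R}(a)\le\IP[B(0,r)$ meets no unbounded component of $\cV^a]$, uniform in $R$, which is used below.

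For the upper bound on $K(\nu)$, set $c=\sqrt{u_*}-\sqrt u>0$; fix $\eta,\epsilon>0$ small enough that the Euclidean ball $B'$ of volume $\wt{\nu}(1+\epsilon)$ is contained in $D$ (possible since $\wt{\nu}<\o_d$), let $\delta=(\sqrt{u_*}+\eta)^2-u_*>0$, and take $\varphi\in C^\infty_0(\IR^d)$, $\varphi\ge0$, obtained by mollifying and cutting off $(c+\eta)$ times the equilibrium potential of a ball slightly larger than $B'$, so that $\varphi\ge c+\eta$ on $B'$ and $\fd\int_{\IR^d}|\nabla\varphi|^2\,dz\le\frac1d(c+\eta)^2\,{\rm cap}_{\IR^d}(B')+\eta$. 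On $B'$ one has $(\sqrt u+\varphi)^2\ge u_*+\delta$, hence $\theta_{r,R}((\sqrt u+\varphi)^2)\ge\theta_{r,R}(u_*+\delta)$ there, and by the first step $\theta_{r,R}(u_*+\delta)\to1$ as $R\to\infty$ for each fixed $r$; thus for $R$ large, $\strokedint_D\theta_{r,R}((\sqrt u+\varphi)^2)\,dz\ge\theta_{r,R}(u_*+\delta)\,\nu(1+\epsilon)>\nu$. Hence $K_{r,R}(\nu)\le\fd\int|\nabla\varphi|^2\,dz$ for such $R$, so $K_r(\nu)\le\fd\int|\nabla\varphi|^2\,dz$ uniformly in $r$, and $K(\nu)\le\frac1d(c+\eta)^2\,{\rm cap}_{\IR^d}(B')+\eta$; letting $\eta,\epsilon\to0$ and using continuity of $v\mapsto{\rm cap}_{\IR^d}(B_v)$ gives $K(\nu)\le\frac1d(\sqrt{u_*}-\sqrt u)^2\,{\rm cap}_{\IR^d}(B_{\wt{\nu}})$. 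Here I use the normalization of ${\rm cap}_{\IR^d}$ consistent with (\ref{0.11}), namely $\inf\{\fd\int_{\IR^d}|\nabla h|^2\,dz;\ h\in D^1(\IR^d),\ h\ge1$ on $A\}=\frac1d\,{\rm cap}_{\IR^d}(A)$.

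For the lower bound we may assume $K(\nu)<\infty$ (by the upper bound). Since $K(\nu)=\sup_r\inf_R K_{r,R}(\nu)$ by (\ref{6.11}), (\ref{6.12}), for each $r$ I pick $R_r\ge r$ large enough and $\varphi_r\in C^\infty_0(\IR^d)$, $\varphi_r\ge0$, with $\fd\int|\nabla\varphi_r|^2\,dz\le K(\nu)+1/r$ and $\strokedint_D\theta_{r,R_r}((\sqrt u+\varphi_r)^2)\,dz>\nu$ (note $R_r\to\infty$). These functions have bounded Dirichlet energy, so exactly as in the proof of Corollary \ref{cor5.9} (via Theorem 8.6, p.~208 and Corollary 8.7, p.~212 of \cite{LiebLoss01}) a subsequence $\varphi_{r_k}$ converges a.e.\ and in $L^2_{\rm loc}(\IR^d)$ to some $\varphi\ge0$ in $D^1(\IR^d)$ with $\fd\int|\nabla\varphi|^2\,dz\le K(\nu)$. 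I then check $|\{z\in D;\ \varphi(z)\ge c\}|\ge\wt{\nu}$: fixing $\beta\in(0,c)$, on $\{z\in D;\ \varphi(z)\le c-\beta\}$ one has $(\sqrt u+\varphi_{r_k}(z))^2\le(\sqrt{u_*}-\beta/2)^2<u_*$ for $k$ large depending on $z$, so by the uniform bound of the first step $\theta_{r_k,R_{r_k}}((\sqrt u+\varphi_{r_k}(z))^2)\le\IP[B(0,r_k)$ meets no unbounded component of $\cV^{(\sqrt{u_*}-\beta/2)^2}]\to0$; dominated convergence (the integrand is $\le1$) then gives $\wt{\nu}<\int_D\theta_{r_k,R_{r_k}}(\cdot)\,dz\le|\{z\in D;\ \varphi(z)>c-\beta\}|+o(1)$, so $|\{z\in D;\ \varphi(z)>c-\beta\}|\ge\wt{\nu}$ for every small $\beta>0$, whence (letting $\beta\downarrow0$) $|\{z\in D;\ \varphi(z)\ge c\}|\ge\wt{\nu}$. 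Finally $\psi=\varphi/c\in D^1(\IR^d)$ satisfies $\psi\ge1$ on a set of Lebesgue measure $\ge\wt{\nu}$; passing to its symmetric decreasing rearrangement $\psi^*$ (as in Remark \ref{rem5.10} 2), which has no larger Dirichlet integral and satisfies $\psi^*\ge1$ on $B_{\wt{\nu}}$), one gets $\fd\int|\nabla\varphi|^2\,dz=c^2\,\fd\int|\nabla\psi|^2\,dz\ge c^2\,\fd\int|\nabla\psi^*|^2\,dz\ge\frac{c^2}{d}\,{\rm cap}_{\IR^d}(B_{\wt{\nu}})$, so $K(\nu)\ge\frac1d(\sqrt{u_*}-\sqrt u)^2\,{\rm cap}_{\IR^d}(B_{\wt{\nu}})$, matching the upper bound and yielding (\ref{6.13}).

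The main obstacle is the interchange of the iterated limit ($R\to\infty$, then $r\to\infty$) with the infimum defining $K_{r,R}$ — specifically, showing that the limit $\varphi$ of near-minimizers still obeys the hard volume constraint. This is where the percolation input is essential: $\IP[\cV^a$ has no unbounded cluster$]=0$ for $a<u_*$ (and $=1$ for $a>u_*$), combined with the truncation at level $c-\beta$ and the dominated convergence argument carried out above. The remaining capacity step (monotonicity of $v\mapsto{\rm cap}_{\IR^d}(B_v)$ and the rearrangement inequality for the Dirichlet integral) is classical, and the upper bound is comparatively soft.
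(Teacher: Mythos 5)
Your proposal is correct and follows essentially the same route as the paper: the same percolation input ($\theta_{r,R}\uparrow\theta_r$ as $R\to\infty$ and $\theta_r\to 1\{\cdot>u_*\}$ as $r\to\infty$), an explicit near-equilibrium test function on a ball of volume slightly larger than $\wt{\nu}$ for the upper bound, and for the lower bound compactness of near-minimizers in $D^1(\IR^d)$, a volume constraint on the limit at level $\sqrt{u_*}-\sqrt{u}$, and symmetric decreasing rearrangement. The only (harmless) difference is bookkeeping in the lower bound: you diagonalize in $(r_k,R_{r_k})$ and use the uniform bound $\theta_{r,R}\le\theta_r$ with truncation at $c-\beta$ plus dominated convergence, whereas the paper fixes a large $r$, uses Fatou with the right-continuous modification $\ov{\theta}_r$, and removes the resulting $\delta,\ve$ losses at the end.
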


\begin{proof}
We first collect some observations concerning the $[0,1)$-valued functions $\theta_{r,R}$ in (\ref{6.6}). As already pointed out, $\theta_{r,R}$ is non-decreasing in $R$ and from (\ref{6.6})
\begin{equation}\label{6.14}
\lim\limits_{R \r \infty} \uparrow \theta_{r,R}(u) = \theta_r(u) \stackrel{\rm def}{=} \IP[B(0,r) \Vu \infty], \; \mbox{for $u \ge 0$}.
\end{equation}

\begin{center}
\psfrag{0}{$0$}
\psfrag{1}{$1$}
\psfrag{u}{$u_*$}
\psfrag{t0}{$\theta_0$}
\psfrag{tr}{$\theta_r$}
\psfrag{trR}{$\theta_{r,R}$}
\includegraphics[width=10cm]{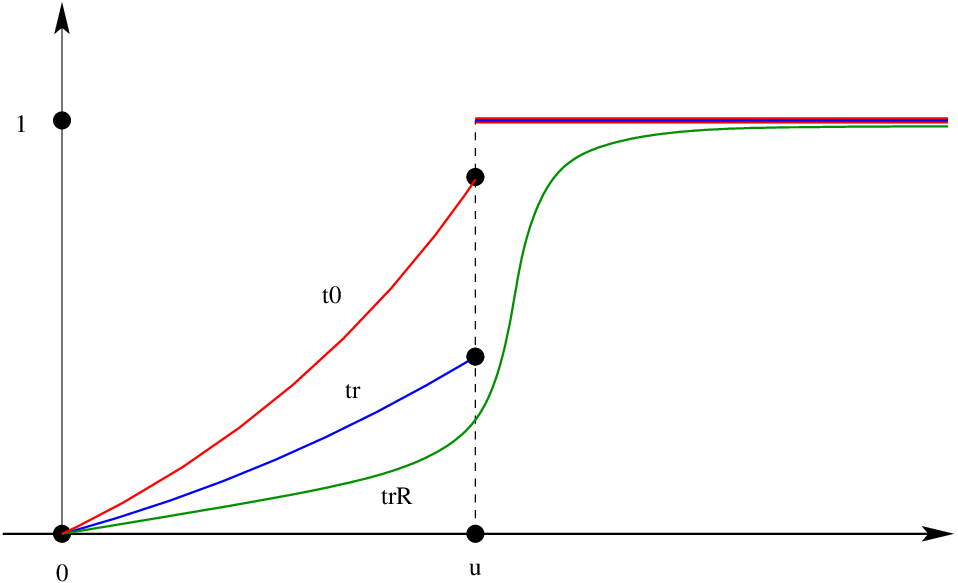}
\end{center}

\begin{center}
\begin{tabular}{ll}
Fig.~1: & A heuristic sketch of the functions $\theta_0,\theta_r, \theta_{r,R}$ (with a possible
\\
&but not expected jump at $u_*$ for $\theta_0$ and $\theta_r$).
\end{tabular}
\end{center}

The function $\theta_r$ is non-decreasing, equals $1$ for $u > u_*$. It is left-continuous, and actually continuous except maybe at $u_*$, by the same proof as in the case $r = 0$, see Corollary 1.2 in \cite{Teix09b}. Note that $\theta_r(\cdot)$ is non-increasing in $r$ and
\begin{equation}\label{6.15}
\begin{array}{l}
\lim\limits_{r \r \infty} \; \downarrow \theta_r(u) = \IP \big[\bigcap\limits_{r \ge 0} \{B(0,r)  \Vu \infty\}] = \mbox{$\IP[\cV^u$ does not percolate}]
\\
\qquad \qquad \quad  \mbox{($= 1$ for $u > u_*$ and $0$ for $u < u_*$)}.
\end{array}
\end{equation}

\n
We now consider $0 < u < u_*$ and $\nu > 0$ such that $\wt{\nu} =\nu |D| < \omega_d$. We will first bound $K(\nu)$ from above, see (\ref{6.20}) below. For this purpose we consider $\ve > 0$, and note that by (\ref{6.14}), and the fact that $\theta_r(u_* + \ve) = 1$, for $r \ge 0$,
\begin{equation}\label{6.16}
\nu < \nu' \stackrel{\rm def}{=} \nu / \theta_{r,R}(u_* + \ve) < \o_d / |D|, \;\mbox{when $r \ge 0$ and $R \ge c(\nu,\ve,r)$}.
\end{equation}
We then consider $\varphi \in C^\infty_0 (\IR^d)$, non-negative, such that $\varphi > \sqrt{u_* + \ve} - \sqrt{u}$ on $B_{\nu' |D|} ( \subseteq D)$. Since $\theta_{r,R}(\cdot)$ is increasing, we have $\strokedint_D \theta_{r,R} ((\sqrt{u} + \varphi)^2) \,dz  >  \nu' \theta_{r,R}(u_* + \ve) = \nu$, and 
\begin{equation}\label{6.17}
K_{r,R} (\nu) \le \fd \;\dis\int_{\IR^d} |\nabla \varphi |^2 \,dz.
\end{equation}
Optimizing over $\varphi$, we find that
\begin{equation}\label{6.18}
K_{r,R}(\nu) \le \mbox{\f $\dis\frac{1}{d}$} \;(\sqrt{u_* + \ve} - \sqrt{u})^2 \;{\rm cap}_{\IR^d}(B_{\nu' |D|}).
\end{equation}
We can let $R$ tend to infinity and find with (\ref{6.11}) and (\ref{6.14}) (recall that $\theta_r (u_* + \ve) = 1$)  \begin{equation}\label{6.19}
K_r(\nu) \le \mbox{\f $\dis\frac{1}{d}$}\;(\sqrt{u_* + \ve} - \sqrt{u})^2 \,{\rm cap}_{\IR^d} (B_{\nu |D|}).
\end{equation}
Recall that $\wt{\nu} = \nu |D|$, so that letting $r$ tend to infinity and then $\ve$ to $0$, we find by (\ref{6.12}) the desired upper bound
\begin{equation}\label{6.20}
K(\nu) \le \mbox{\f $\dis\frac{1}{d}$} \;(\sqrt{u_*} - \sqrt{u})^2 \,{\rm cap}_{\IR^d} (B_{\wt{\nu}}), \;\mbox{for $\nu > 0$}
\end{equation}

\n
(and this bound immediately extends to $\nu = 0$).

\medskip
We now turn to the derivation of a lower bound. We consider
\begin{equation}\label{6.21}
0 < \delta < \nu \;\;\mbox{and} \;\; 0 < \ve < u_* - u .
\end{equation}
In view of (\ref{6.15}), we assume that
\begin{equation}\label{6.22}
\mbox{$r \ge r_1(\delta,\ve)$, so that $\theta_r (u_* - \ve) \le \delta$ and $\theta_r (u_* + \ve) \ge 1 - \delta$}.
\end{equation}
Then, given $r \ge r_1(\delta,\ve)$, we consider a sequence $R_n > r$ tending to infinity and a sequence $\varphi_n \in C^\infty_0 (\IR^d)$ of non-negative functions so that
\begin{equation}\label{6.24}
\strokedint_D \theta_{r,R_n} \big((\sqrt{u} + \varphi_n)^2) \,dz > \nu \;\;\mbox{and} \;\; \fd \; \dis\int_{\IR^d} |\nabla \varphi_n|^2 \,dz \underset{n}{\longrightarrow} \lim\limits_n \downarrow K_{r,R_n} (\nu) \stackrel{(\ref{6.11})}{=} K_r(\nu).
\end{equation}

\n
By Theorems 8.6 and 8.7 on p.~208 and 212 of \cite{LiebLoss01}, we can extract a subsequence, still denoted by $\varphi_n$, and find $\varphi \ge 0$ in $D^1(\IR^d)$, so that $\varphi_n \r \varphi$ a.e. and $\frac{1}{2d} \;\int_{\IR^d} |\nabla \varphi |^2 \,dz \le \liminf_n \frac{1}{2d} \;\int_{\IR^d}|\nabla \varphi_n |^2 \,dz = K_r(\nu)$. Then, letting $\ov{\theta}_r(\cdot)$ denote the right-continuous modification of $\theta_r(\cdot)$, we see with Fatou's lemma that $\strokedint_D 1 - \ov{\theta}_r ((\sqrt{u} + \varphi)^2)\,dz \le \strokedint_D \liminf_n (1 - \theta_r ((\sqrt{u} + \varphi_n)^2))\,dz \stackrel{\rm Fatou}{\le} \liminf_n \strokedint_D 1 - \theta_r ((\sqrt{u} + \varphi_n)^2)\,dz$ so that 
\begin{equation}\label{6.25}
\strokedint_D \ov{\theta}_r \big((\sqrt{u} + \varphi)^2\big) \,dz  \ge \limsup_n \strokedint_D \theta_r  \big((\sqrt{u} + \varphi_n)^2\big) \,dz \ge \nu.
\end{equation}
As a result we find that
\begin{equation}\label{6.26}
K_r(\nu) \ge \inf\Big\{ \fd \;\dis\int_{\IR^d} |\nabla \varphi |^2 \,dz, \varphi \ge 0, \,\varphi \;\mbox{in $D^1(\IR^d)$, and} \; \strokedint_D \ov{\theta}_r \big((\sqrt{u} + \varphi)^2\big) \,dz \ge \nu\Big\}.
\end{equation}
Note that for $\varphi$ in the above set, where the infimum is taken, one has by (\ref{6.22}) (and $\ov{\theta}_r(v) = \theta_r(v)$ for $v < u_*$)
\begin{equation}\label{6.27}
\nu \le \strokedint_D \ov{\theta}_r \big((\sqrt{u} + \varphi)^2\big) \,dz \le \delta \,\mu_D (\{\varphi \le \sqrt{u_* - \ve} - \sqrt{u}\}) + \mu_D (\{\varphi > \sqrt{u_* - \ve} - \sqrt{u}\})
\end{equation}
with $\mu_D$ the normalized Lebesgue measure on $D$. This implies that
\begin{equation}\label{6.28}
\mu_D (\{\varphi > \sqrt{u_* - \ve} - \sqrt{u}\}) \ge \mbox{\f $\dis\frac{1}{1 - \delta}$} \;(\nu - \delta) \stackrel{\rm def}{=} \nu_\delta.
\end{equation}

\n
If $\varphi^*$ denotes the symmetric decreasing rearrangement of $\varphi$, one knows by \cite{LiebLoss01}, p.~188-189, that $\int_{\IR^d} | \nabla \varphi^*|^2 \,dz \le \int_{\IR^d} | \nabla \varphi |^2 \,dz$. Moreover, by (\ref{6.28}) one has  $|\{\varphi^* > \sqrt{u_* - \ve} - \sqrt{u}\}| \ge \nu_\delta \,|D|$, and we find that
\begin{equation}\label{6.29}
\fr \;\dis\int_{\IR^d} | \nabla \varphi |^2 \,dz \ge \fr \; \dis\int_{\IR^d} | \nabla \varphi^* |^2 \,dz \ge (\sqrt{u_* - \ve} - \sqrt{u})^2 \,{\rm cap}_{\IR^d} (B_{\nu_\delta |D|}).
\end{equation}
This shows that for $r \ge r_1 (\delta,\ve)$
\begin{equation}\label{6.30}
K_r(\nu) \ge \mbox{\f $\dis\frac{1}{d}$} \;  (\sqrt{u_* - \ve} - \sqrt{u})^2  \,{\rm cap}_{\IR^d} (B_{\nu_\delta |D|}).
\end{equation}
Letting $r$ tend to infinity and then $\delta$ and $\ve$ to $0$, we find by (\ref{6.12}) that
\begin{equation}\label{6.31}
K(\nu) \ge \mbox{\f $\dis\frac{1}{d}$} \; (\sqrt{u_*} - \sqrt{u})^2  \,{\rm cap}_{\IR^d} (B_{\wt{\nu}}).
\end{equation}
Combined with the upper bound (\ref{6.20}), this concludes the proof of Proposition \ref{prop6.5}.
\end{proof}

\begin{remark}\label{rem6.6} \rm 1) It follows from Proposition \ref{prop6.5} and the formula for the Brownian capacity of a ball, see p.~58 of \cite{PortSton78}, that when $\nu$ tends to $0$, $K(\nu) \sim c(u) \,\nu^{\frac{d-2}{d}}$. In particular, the function $K(\nu)$, $0 \le \nu < 1$ is not convex, as follows by looking at its values at $0,  \frac{\nu_0}{2}, \nu_0$ for some small $\nu_0$ in $[0,1)$. By the definition (\ref{6.12}), (\ref{6.11}) of $K(\cdot)$, we see that for large $r$ and sufficiently large $R$ (depending on $r$), the increasing homeomorphism $\nu \in [0,1) \rightarrow K_{r,R}(\nu) \in [0,\infty)$, see (\ref{5.84b}), is not a convex function (as follows by looking at its value at $0$, $\frac{\nu_0}{2}$, and $\nu_0$).

\bigskip\n
2) The above Proposition \ref{prop6.5} offers a heuristic link to the results of \cite{Szni} where $\cC^u_N$, the connected component of $S(0,N)$ in $\cV^u \cup S(0,N)$ was considered. To streamline the statement of the results, see (\ref{0.11}), (\ref{0.12}) in the Introduction, we recall them in the case where the equalities $\ov{u} = u_* = u_{**}$ hold, with $u < \ov{u}$ the strong percolative regime, and $u > u_{**}$ the strong non-percolative regime for $\cV^u$. These equalities are plausible, but open at the moment. Then, Theorems 3.1 and 3.2 of \cite{Szni} show that there is a scale $\wt{L}_0(N) = o(N)$, such that looking at $\wt{\cC}^u_N$ the $\wt{L}_0(N)$-thickening in sup-distance of $\cC^u_N$ and the hole $B(0,N) \backslash \wt{\cC}^u_N$ left by $\wt{\cC}^u_N$ in $B(0,N)$, one has:
\begin{equation}\label{6.32}
\begin{array}{l}
\lim\limits_N \; \mbox{\f $\dis\frac{1}{N^{d-2}}$} \;\log \IP\big[|B(0,N) \backslash \wt{\cC}^u_N| \ge \nu \,|B(0,N)|\big] = - \mbox{\f $\dis\frac{1}{d}$} \;(\sqrt{u_*} - \sqrt{u})^2 \,{\rm cap}_{\IR^d} (B_{\wt{\nu}}),
\\
\mbox{when $\wt{\nu} = 2^d \,\nu < \o_d$, and $0 < u < u_*$}.
\end{array}
\end{equation}
(and the hole is nearly spherical, see (3.66) of \cite{Szni}).

\medskip
Proposition \ref{prop6.5} sheds some light at a heuristic level  on the role of the thickening of $\cC^u_N$, as already pointed out below (\ref{0.13}) in the Introduction. This naturally leads to the question of understanding what happens, in the absence of thickening, when one directly considers $|B(0,N) \backslash \cC^u_N|$. As we now explain, the results obtained in the present article imply that for $0 < u < u_*$ and $\IP[0\Vu \infty] < \nu < 1$, 
\begin{equation}\label{6.33}
\begin{array}{l}
\liminf\limits_N \; \mbox{\f $\dis\frac{1}{N^{d-2}}$} \;\log \IP\big[|B(0,N) \backslash \cC^u_N| \ge \nu \,|B(0,N)|\big] \ge
\\
- \inf \Big\{ \fd \;\dis\int_{\IR^d} |\nabla \varphi |^2\,dz ; \, \varphi \ge 0, \, \varphi \in C^\infty_0(\IR^d), \;\strokedint_D \theta_0 \big((\sqrt{u} + \varphi)^2\big) \,dz > \nu\Big\}
\end{array}
\end{equation}
(with $D = [-1,1]^d$ and $\theta_0(v) = \IP [ 0 \Vv \infty ]$ as in (\ref{6.14})).

\n
Indeed, for any $R\ge0$  and large $N$, the $(0,R)$-disconnected sites (see (\ref{6.5})), contained in $B(0, N - R -1)$ belong to $B(0,N) \backslash \cC^u_N$. Applying the lower bound part of  (\ref{6.8}) with $\nu'$ in place of $\nu$, where $1>\nu'>\nu>\IP[0\Vu \infty]$($\ge  \theta_{0,R}(u)$), we find using the continuity of $K_{0,R} (\cdot)$, see (\ref{5.84b}), and letting successively $\nu'$ decrease to $\nu$ and $R$ tend to infinity that

\begin{equation}\label{6.34}
\liminf\limits_N \; \mbox{\f $\dis\frac{1}{N^{d-2}}$} \;\log \IP\big[|B(0,N) \backslash \cC^u_N| \ge \nu \,|B(0,N)|\big] \ge - K_{0} (\nu).
\end{equation}

\n
Now any $\varphi \ge 0$ in $C^\infty_0(\IR^d)$ such that $\strokedint_D \theta_0 \big((\sqrt{u} + \varphi)^2\big) \,dz > \nu$ satisfies for large $R$ $\strokedint_D \theta_{0,R} \big((\sqrt{u} + \varphi)^2\big) \,dz > \nu$. So,with (\ref{6.7}) and (\ref{6.11}),the right member of (\ref{6.33}) is smaller or equal to $- K_{0} (\nu)$, and (\ref{6.33}) follows.

The following questions are then natural. Can one replace the inequality in (\ref{6.33}) by an equality, and the $\liminf$ by a limit ? Can one attach non-negative minimizers $\varphi$ to the corresponding variational problems, and if so, get some insight into the nature, in particular the presence or absence, of sets where they reach the value $\sqrt{u_*} - \sqrt{u}$ \,? (When non-empty, such sets could be viewed as ``droplets''.) %Last but not least, can a similar paradigm be put to light for various models with continuous symmetry ? (The simplest test case probably corresponds to the massless Gaussian free field.) 

 \hfill $\square$
\end{remark}

%\newpage

%\bibliographystyle{plain}
%\bibliography{./literature.bib}
\end{document}